\def\G{\Gamma}
\def\L{\Lambda}
\def\kk{{\mathbf k}}
\def\wcdast{{\circledast\circledast}}
\def\LL{{\Bbb L}}
\def\NN{{\Bbb N}}
\def\PP{{\Bbb P}}
\def\RR{{\Bbb R}}
\def\ZZ{{\Bbb Z}}
\def\bfA{{\mathbf{A}}}
\def\bfB{{\mathbf{B}}}
\def\bfD{{\mathbf{D}}}
\def\L{\Lambda}
\def\G{\Gamma}
\def\cA{{\cal A}}
\def\cB{{\cal B}}
\def\cC{{\cal C}}
\def\cD{{\cal D}}
\def\cE{{\cal E}}
\def\cF{{\cal F}}
\def\cG{{\cal G}}
\def\cI{{\cal I}}
\def\cJ{{\cal J}}
\def\cM{{\cal M}}
\def\cN{{\cal N}}
\def\cO{{\cal O}}
\def\cP{{\cal P}}
\def\cR{{\cal R}}
\def\cS{{\cal S}}
\def\cT{{\cal T}}
\def\cU{{\cal U}}
\def\tuH{{\textup{H}}}
\def\tuZ{{\textup{Z}}}
\def\awe{{a^{\wedge}}}
\def\bwe{{b^{\wedge}}}
\def\frka{{\mathfrak{a}}}
\def\id{\operatorname{id}}
\def\pr{\operatorname{\textup{pr}}}
\def\Ob{\operatorname{Ob}}
\def\ct{\operatorname{\textup{ct}}}
\def\Mod{\operatorname{Mod}}
\def\coker{\operatorname{coker}}
\def\image{\operatorname{im}}
\def\Hom{\operatorname{Hom}}
\def\End{\operatorname{End}}
\def\Bic{\operatorname{Bic}}
\def\Ext{\operatorname{Ext}}
\DeclareMathOperator*{\colim}{colim}
\def\dga{\operatorname{\textup{dga}}}
\def\dgCat{\operatorname{\textup{dgCat}}}
\def\cCdg{\operatorname{\mathcal{C}_{\textup{dg}}}}
\def\cCss{\operatorname{\mathcal{C}_{\textup{ss}}}}
\newcommand{\cohm}{\textup{H}^{\bullet}}
\newcommand{\cone}{\textup{c}}
\newcommand{\cocone}{\textup{cc}}
\newcommand{\Path}{\textup{Path}}
\newcommand{\Cyl}{\textup{Cyl}}
\DeclareMathOperator*{\holim}{holim}
\DeclareMathOperator*{\hocolim}{hocolim}
\def\Perf{\textup{Perf}}
\def\dgPerf{\textup{Perf}_{\textup{dg}}}
\def\BBic{\operatorname{\mathbb{B}ic}}
\def\dgBic{\operatorname{\mathcal{B}ic}}
\def\sSet{\operatorname{sSet}}
\def\sCat{\operatorname{sCat}}
\def\sMod{\operatorname{sMod}}
\def\Map{\operatorname{Map}}
\def\infCat{\operatorname{\infty \textup{Cat}}}
\def\leftcone{\operatorname{{}^{\lhd}\hspace{-2pt}}}
\def\sner{\operatorname{\textup{N}}}
\def\Homr{\operatorname{\Hom^{\textup{R}}}}
\def\Ho{\operatorname{\textup{Ho}}}
\def\cofright{\rightarrowtail}
\def\trivcofright{\stackrel{\sim}{\rightarrowtail}}
\def\trivfibright{\stackrel{\sim}{\twoheadrightarrow}}
\def\ulcJ{\operatorname{\underline{\cJ}}}
\newtheorem{lemma}{Lemma}[section]
\newtheorem{proposition}[lemma]{Proposition}
\newtheorem{theorem}[lemma]{Theorem}
\newtheorem{corollary}[lemma]{Corollary}
\newtheorem{nitheorem}[lemma]{``Theorem"}
\newtheorem{nicorollary}[lemma]{``Corollary"}
\newtheorem{Assumption}[lemma]{Assumption}
\theoremstyle{definition}
\newtheorem{const}[lemma]{Construction}
\newtheorem{example}[lemma]{Example}
\newtheorem{definition}[lemma]{ Definition}
\theoremstyle{remark}
\newtheorem{remark}[lemma]{Remark}
\title{
Derived bi-duality  via homotopy limit 
}
\author{
Hiroyuki MINAMOTO
}
\date{}
\begin{document}
\maketitle

\begin{abstract}
We show that a derived bi-duality dg-module  
is quasi-isomorphic to the homotopy limit of a certain tautological functor.  
This is a simple observation,   
which seems to be true in wider context. 
From the view point of derived Gabriel topology, 
this is a derived version of results of J. Lambek 
about localization and completion of ordinary rings. 
However   
the important point is that 
we can obtain a simple formula for the bi-duality modules 
only when we come to the derived world 
from the abelian world.

We give applications. 
1. we give a generalization and  an intuitive proof of Efimov-Dwyer-Greenlees-Iyenger Theorem 
which asserts that the completion of commutative ring satisfying some conditions is obtained as a derived bi-commutator.  
(We can also  prove Koszul duality for dg-algebras with Adams grading satisfying mild conditions.) 
2. We prove that every smashing localization of dg-category is obtained as a derived bi-commutator of some pure injective module. 
This is a derived version of  the classical results in localization theory of ordinary rings. 

These applications shows that 
our formula  together with the viewpoint that 
a derived bi-commutator is a completion in some sense, 
provide us a fundamental understanding of a derived bi-duality module. 
\end{abstract}

\tableofcontents

\section{Introduction}
The following situation and its variants are ubiquitous in Algebras and Representation theory: 

Let $R$ be a ring, $J$ an $R$-module 
and $E:= \End_{R}(J)^{\textup{op}}$ the opposite ring of  the endomorphism ring of $J$ over $R$. 
Then we have the duality 
\[
(-)^{\ast} :=\Hom_{R}(-,J) : \Mod R \rightleftarrows (\Mod E)^{\textup{op}} : \Hom_{E}(-,J)=:(-)^\ast  
\]
and 
the unite map $\epsilon_{M}:M \to M^{\ast\ast}$ is given by the evaluation map:
\[
\epsilon_{M}(m): \Hom_{R}(M,J) \to J ,\, f \mapsto f(m) \textup{ for } m \in M.
\]
The bi-dual $R^{\ast\ast}$ of $R$ is called the \textit{bi-commutator} 
(or the \textit{double centralizer}) and denoted by $\Bic_{R}(J)$.  
The following is more popular expression (or the usual definition) of the bi-commutator 
\[
\Bic_{R}(J) := \End_{E}(J)^{\textup{op}}.
\]
The bi-commutator has a ring structure and 
the evaluation map $\epsilon_{R}:R \to \Bic_{R}(J)$ 
become a ring homomorphism. 
In particular, the case when 
the canonical algebra homomorphism $R \to \Bic_{R}(J)$ become an isomorphism, 
the module $J$ is said to have the \textit{double centralizer property}. 
Dualities together with  evaluation maps, bi-commutators  
and double centralizer properties are one of the central topics in Algebras and Representation theory. 
(See e.g. \cite{Jacobson,KSX,Lambek1,Lambek2,Morita loc,Tachikawa})

Recently the concern with  the \textit{derived bi-commutators} (or the \textit{derived double centralizers}) 
has been growing: 

Let $R$ a ring (or more generally dg-algebra) 
$J$ an (dg-)$R$-module
 and $\cE:= \RR\End_R(J)^{\textup{op}}$  
 the opposite dg-algebra of the  endomorphism dg-algebra of $J$. 
Then the derived bi-commutator is defined by  
\[
\BBic_{R}(J):= \RR\End_{\cE}(J)^{\textup{op}}.  
\]
There also exists a canonical algebra homomorphism $R \to \BBic_{R}(J)$. 
 Derived double centralizer property for special modules has been extensively studied 
 as a part of Koszul duality. (See e.g. \cite{LPWZ,Posi}.) 

In \cite[Section 4.16]{DGI}, Dwyer-Greenlees-Iyenger call 
a pair $(R,J)$ \textit{dc-complete}, 
in the case when $J$ has derived double centralizer property.  
They proved the following surprising and impressive theorem, which we will refer as 
\textit{completion theorem}.

\begin{theorem}[\textup{\cite{DGI},\cite{Efi}}]\label{EDGI thm}
Let $R$ be a commutative Noetherian ring and $\frka$ an ideal such that 
the residue ring $R/\frka$ is of finite global dimension. 
We denote by $\widehat{R}$ the $\frka$-adic completion.  
Then we have a quasi-isomorphism 
\[
\widehat{R} \simeq \BBic_{R}(R/\frka)
\]
where 
$\BBic_{R}(R/\frka)$ is the derived bi-commutator of $R/\frka$ over $R$.
\end{theorem}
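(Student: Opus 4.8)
The plan is to realize the $\frka$-adic completion $\widehat{R}$ as a homotopy limit and then match that homotopy limit with the derived bi-commutator $\BBic_R(R/\frka)$ via the main formula of the paper (the identification of a derived bi-duality dg-module with the homotopy limit of a tautological functor). The first step is to observe that the classical completion $\widehat{R} = \lim_n R/\frka^n$ can be upgraded, under the finite global dimension hypothesis on $R/\frka$, to a \emph{derived} limit: one shows $\widehat{R} \simeq \holim_n R/\frka^n$ in the derived category, i.e.\ the higher $\varprojlim$ terms vanish. This is where the Noetherian hypothesis and the finiteness of $\gldim(R/\frka)$ enter; the latter guarantees that each $R/\frka^n$ is built from $R/\frka$ by finitely many extensions, so the pro-system is well-behaved and the derived limit collapses to the ordinary one. (Equivalently, one works with $L\Lambda_\frka R$, the left-derived $\frka$-completion, and uses that under these hypotheses $L\Lambda_\frka R \simeq \widehat{R}$.)

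The second step is the conceptual heart: apply the paper's formula. Taking $J = R/\frka$, we have $\cE = \RR\End_R(R/\frka)^{\textup{op}}$ and $\BBic_R(R/\frka) = \RR\End_\cE(R/\frka)^{\textup{op}}$, and by the main theorem this derived bi-commutator is quasi-isomorphic to the homotopy limit of the tautological functor over the relevant indexing diagram — concretely, a $\holim$ over the category of (perfect) $\cE$-modules mapping to $R/\frka$, or over the cosimplicial cobar-type object $\RR\End_R(R/\frka), \RR\End_R(R/\frka^{\otimes 2}), \dots$ arising from the comonad associated to the duality $(-)^\ast = \RR\Hom_R(-, R/\frka)$. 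So what must be checked is that \emph{this} homotopy limit agrees with $\holim_n R/\frka^n$ from Step 1. The mechanism is that the cobar/Amitsur-type tower built from $R/\frka$ has the same homotopy limit as the tower $\{R/\frka^n\}$: both compute the $\frka$-adic (derived) completion of $R$, the former because $R/\frka$ generates the same smashing/localizing data as the Koszul complex on generators of $\frka$, the latter by definition. One formalizes this by identifying both with $\RR\Hom_R(\mathrm{Kos}(R;\underline{x})^{\otimes\infty}, R)$-type expressions, or more cleanly by invoking that $\{R/\frka^n\}_n$ and the tautological diagram are cofinal/equivalent as pro-objects in the derived category under the Noetherian hypothesis (Artin--Rees).

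The main obstacle I expect is precisely this cofinality/comparison step: showing the abstract homotopy limit produced by the bi-duality formula coincides with the concrete $\frka$-adic tower. The subtlety is that the tautological diagram is indexed by a large category (or a cobar cosimplicial object) and a priori has no visible connection to the powers $\frka^n$; bridging them requires knowing that $R/\frka$, as an object, "sees" exactly the $\frka$-adic topology — i.e.\ that the derived category of $\frka$-complete/torsion modules is generated by $R/\frka$. Under the stated hypotheses this follows from the theory of local (co)homology (the Koszul complex on a generating set of $\frka$ is a compact generator of the $\frka$-torsion category, and $R/\frka$ generates the same subcategory since it is built from the Koszul complex by finite extensions, using $\gldim(R/\frka)<\infty$), but assembling this into the cofinality statement is the technical core. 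A secondary, more routine point is checking that the algebra structures match — that the evaluation map $R \to \BBic_R(R/\frka)$ corresponds under all these identifications to the completion map $R \to \widehat{R}$ — which should follow formally from naturality of the tautological cone in the paper's construction.
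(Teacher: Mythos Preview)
Your overall two-step strategy is correct and matches the paper: apply the main formula to write $\BBic_R(R/\frka)$ as a homotopy limit, then identify that homotopy limit with $\widehat{R}$. But you have misidentified the indexing diagram, and this matters because the entire technical content lives in the cofinality argument.

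The homotopy limit in the main theorem is taken over the under category $\langle R/\frka\rangle_{R/}$: objects are morphisms $k:R\to K$ with $K$ in the thick subcategory of $\cD(R)$ generated by $R/\frka$, and the functor is the co-domain functor $\Gamma(k)=K$. It is \emph{not} a cobar/Amitsur cosimplicial object, and while perfect $\cE$-modules do appear in the \emph{proof} of the main theorem (via a contravariant Dwyer--Kan equivalence), they are not the diagram one works with in the application. Your proposed comparison via ``both compute derived completion'' or Koszul-complex bookkeeping is therefore aimed at the wrong target.

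The paper's actual argument is much more concrete. One shows that the non-full subcategory $\cI\subset\langle R/\frka\rangle_{R/}$ consisting of the canonical projections $\pi^n:R\to R/\frka^n$ (with the canonical maps $R/\frka^m\to R/\frka^n$) is homotopy left cofinal. This is exactly where the two hypotheses enter: the assumption $R/\frka^n\in\langle R/\frka\rangle$ (which follows from $\gldim(R/\frka)<\infty$ and Noetherianity) ensures the $\pi^n$ land in the diagram at all; the equivalence $\cD(\frka\textup{-tor})\simeq\cD_{\frka\textup{-tor}}(R)$ (Artin--Rees for Noetherian $R$) ensures that any $k:R\to K$ with $K\in\langle R/\frka\rangle$ factors through some $\pi^n$, because $K$ can be represented by a complex of $\frka$-torsion modules. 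Proving \emph{homotopy} cofinality (weak contractibility of the relevant slice simplicial sets) then requires some care --- the factorization is not canonical --- and this is the genuine work, carried out in the paper via a fibration/long-exact-sequence computation of $\Map_\cU(\pi^m,\pi^n)$.

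Finally, your Step 1 is over-engineered: one does not need to invoke derived completion or $L\Lambda_\frka$. Since each $R/\frka^{n+1}\to R/\frka^n$ is surjective (a projective fibration), the ordinary limit $\lim_n R/\frka^n=\widehat{R}$ already computes the homotopy limit directly.
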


From the view point of Derived-Categorical Algebraic Geometry (DCAG), 
all important procedure in Algebraic Geometry should have derived-categorical interpretation. 
In \cite{Konchan} Kontsevich proposed that 
 formal completion for a scheme is obtained as a derived bi-commutator. 
Following this idea, 
Efimov \cite{Efi} 
introduced derived bi-commutator of subcategory $\cJ \subset \cD(R)$ 
and  proved a scheme version of above theorem. 
Since formal completion plays an important role in Algebraic Geometry, 
completion theorem and its scheme version  are expected to become  important in  DCAG.  
Therefore it is desirable to obtain better understanding of this theorem.

In the proof of completion theorem, 
Grothedieck vanishing theorem for local cohomology is used. 
Since it is special theorem for commutative Noetherian rings, 
it is preferable to obtain more categorical proof. 
Recently Porta-Shaul-Yekutieli \cite{PSY2} generalize completion theorem 
for a commutative ring $R$ and a weakly proregular ideal $\frka$ 
based on their work \cite{PSY1} about 
the derived functors of the completion functors and the  torsion functors.
However it is still remain unclear that 
to what extent we can obtain a transcendental outcome by a homological operation 
with finite input. 
In this paper we establish a simple description of the derived bi-commutator, 
which  enable us to  give a more intuitive  proof of completion theorem. 
Actually the description is given by a certain tautological homotopy limit,  
and hence seems to state 
 that every  derived bi-commutator is completion in some sense. 
 (We can make this precise by introducing a notion of derived Gabriel topology.)
 
For this purpose, 
 we study  derived bi-duality: 
\[
(-)^{\circledast} :=\RR\Hom_{R}(-,J) : \cD(R) 
\rightleftarrows \cD(\cE)^{\textup{op}} : \RR\Hom_{\cE}(-,J)=:(-)^\circledast.  
\]
For a special class of modules $J$, 
derived bi-duality is already studied in the context of Gorenstein dg-algebras \cite{FJ,Jorgensen,MW}. 
We consider general dg-module $J$ and 
 establish a simple description of the derived bi-dual module $M^{\wcdast}$ 
via a certain tautological homotopy limit.  
This is the main theorem of this paper.
As an application other than completion theorem, 
we discuss smashing localization of dg-categories.

As mentioned above, 
derived bi-dualities, derived bi-commutator and 
derived double centralizer property  are expected to play prominent roles 
in Algebras, Representation theory, 
Derived-Categorical Algebraic Geometry. 
Our main theorem together with the view point that 
derived bi-commutators are completion in some sense, 
would have many applications. 
Moreover 
since the main theorem is proved in a formal argument,  
the same formula should hold in more wider context. 
Bi-duality is a basic operation which appears in every where of mathematics.    
So it can be expect that 
our main theorem become an indispensable tool in many area of mathematics.

Below we explain the contents of the present paper. 
First,  we give the main theorem with omitting some details. 
Next, we show that the main theorem leads to an intuitive proof of a generalization of Theorem \ref{EDGI thm}. 

\subsection{Derived bi-duality via homotopy limit}\label{926 1121}

Let $\cA$ be a dg-algebra and $J$  a dg $\cA$-module.
We denote $\cE := (\RR\End_\cA(J))^{\textup{op}}$ be the opposite dg-algebra of  the endomorphism dg-algebra. 
Then $J$ has a natural dg $\cE$-module structure. 
We obtain the dualities 
\[
(-)^{\circledast} :=\RR\Hom_{\cA}(-,J) : 
\cD(\cA) \rightleftarrows \cD(\cE)^{\textup{op}}: \RR\Hom_{\cE}(-,J)=: (-)^{\circledast}
\]
There are natural transformations $\epsilon:1_{\cD(\cA)} \to (-)^{\wcdast}$ 
induced from evaluation morphisms. 

We denote by $\langle J \rangle$ 
the smallest thick subcategory containing $J$. 
Namely $\langle J\rangle$ is 
the full subcategory of $\cD(\cA)$ consisting those objects which 
constructed from $J$ by taking cones, shifts, and direct summands finitely many times. 

Let $M$ be a dg $\cA$-module. 
We denote by $\langle J \rangle_{M/}$ 
the under category. 
Namely, the objects of $\langle J\rangle_{M/}$ are morphisms $k:M \to K$ with $K\in \langle J \rangle$ 
and the morphisms from 
$k: M \to K$ to $\ell: M \to L$ are the morphisms $\phi: K\to L$ in $\langle J \rangle$ 
such that $\ell = \phi \circ k$. 
This category $\langle J \rangle_{M/} $ comes naturally equipped with 
the co-domain functor 
$\Gamma: \langle J \rangle_{M/}\to \cD(\cA)$ 
which sends an object $k:M \to K$ to its co-domain $K$. 
\[
\Gamma:  \langle J \rangle_{M/}  \to \cD(\cA),\quad [k:M \to K] \mapsto K.
\]
An elementary observation given in  Appendix \ref{obs for bic holim thm}  
suggest 
the following simple formula for the derived bi-dual module $M^{\wcdast}$.  

\begin{nitheorem}\label{out main thm}
We have the following quasi-isomorphism 
\[
M^{\wcdast}  \simeq \holim_{\langle J \rangle_{M/}} \Gamma
\]
\end{nitheorem}

\begin{remark}
In the above theorem and the following corollary 
we omit homotopy theoretical details. 
For the rigorous statement see Theorem \ref{bic holim thm}. 
\end{remark} 

Since the bi-dual  $\cA^{\wcdast}$ of $\cA$  is naturally isomorphic to 
the derived bi-commutator $\BBic_{\cA}(J)$, 
\[
\cA^{\wcdast} = \RR\Hom_{\cE}(\RR\Hom_{\cA}(\cA,J),J) \cong \RR\Hom_{\cE}(J,J) = \BBic_{\cA}(J) 
\]

in particular,  
we have the following corollary. 

\begin{nicorollary}\label{out main cor}
\[
\BBic_{\cA}(J) \simeq \holim_{\langle J \rangle_{\cA/}}\Gamma. \]
\end{nicorollary}

These theorem and corollary provide us a fundamental understanding of derived bi-duality functors. 
We give  applications in this paper. 


\subsection{Completion via derived bi-commutator 
}

As the first application, 
we generalize the completion theorem and 
give an intuitive proof. 

Let $R$ be a ring and $\frka$ a two-sided ideal. 
An (right) $R$-module $M$ is called $\frka$-\textit{torsion} 
if for any $m\in M$ there exists $n\in \NN$ such that 
$m\frka^{n}=0$. 
We denote by $\frka\textup{-tor}$ the full subcategory of $\Mod R$ 
consisting of $\frka$-torsion modules. 
We denote by $\cD_{\frka\textup{-tor}}(R)$ the full subcategory of $\cD(R)$ 
consisting of complexes with $\frka$-torsion cohomology groups.
We denote by $\cD(\frka\textup{-tor})$ the full subcategory of $\cD(R)$ 
consisting of complexes each term of which is $\frka$-torsion module.

\begin{theorem}\label{out app 1}
Assume that the canonical inclusion  functor 
$\cD(\frka\textup{-tor}) \to\cD_{\frka\textup{-tor}} (R)$ 
gives an equivalence and that 
$R/\frka^n$ belongs to $\langle R/\frka \rangle$ for $n \geq 0$. 
We denote by $\widehat{R}$  the $\frka$-adic completion. 
Then we have a quasi-isomorphism 
\[
\BBic_{R}(R/\frka) \simeq \widehat{R}. 
\]
\end{theorem}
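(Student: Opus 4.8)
The plan is to derive this from Corollary \ref{out main cor} applied to $\cA = R$ and $J = R/\frka$, so that $\BBic_{R}(R/\frka) \simeq \holim_{\langle R/\frka \rangle_{R/}} \Gamma$; the task then reduces to identifying this homotopy limit with $\widehat{R}$. First I would use the hypothesis that $R/\frka^n \in \langle R/\frka \rangle$ to produce a cofinal (or at least homotopy-final) subcategory of the under-category $\langle R/\frka \rangle_{R/}$. The natural candidate is the tower of quotient maps $R \to R/\frka^n$: these are objects of $\langle R/\frka \rangle_{R/}$, they assemble into a functor $\NN^{\textup{op}} \to \langle R/\frka \rangle_{R/}$ via the surjections $R/\frka^{n+1} \to R/\frka^n$, and the homotopy limit of $\Gamma$ restricted to this tower is by definition $\holim_n R/\frka^n$, which computes $\widehat{R}$ (here one uses that the tower has surjective transition maps, so $\lim^1$ vanishes and the derived limit is the ordinary completion). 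So the crux is a cofinality statement: every object $k : R \to K$ with $K \in \langle R/\frka \rangle$ is ``eventually dominated'' by some $R \to R/\frka^n$ in a sufficiently coherent way.

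The mechanism behind cofinality is that any $K \in \langle R/\frka \rangle$ is an $\frka$-torsion-type object in the derived sense: $\langle R/\frka \rangle \subset \cD_{\frka\textup{-tor}}(R)$, and under the hypothesized equivalence $\cD(\frka\textup{-tor}) \xrightarrow{\sim} \cD_{\frka\textup{-tor}}(R)$ one knows that such $K$ are built from honestly $\frka$-torsion modules. For such $K$, a map $R \to K$ factors through $R/\frka^n$ for $n$ large — at the level of the underlying modules because a cocycle representing the map has image killed by some power of $\frka$, and then this factorization can be promoted to the derived category and made compatible across the relevant diagram. Technically I expect to phrase this as: the functor from the tower $\{R \to R/\frka^n\}$ into $\langle R/\frka \rangle_{R/}$ is homotopy final, e.g. by checking that for each object $k : R \to K$ the comma category is weakly contractible, which in turn follows from the torsion factorization property showing these comma categories are nonempty and filtered in the homotopy-coherent sense.

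The main obstacle will be the homotopy-coherence bookkeeping in the cofinality argument: passing from the naive statement ``each map $R \to K$ factors through some $R/\frka^n$'' to a genuine homotopy-finality of the diagram $\NN^{\textup{op}} \to \langle R/\frka \rangle_{R/}$ requires controlling these factorizations functorially, not just objectwise, and this is exactly where the equivalence $\cD(\frka\textup{-tor}) \simeq \cD_{\frka\textup{-tor}}(R)$ must be used as more than a formality — it is what lets one replace derived-category objects by actual torsion complexes on which the factorization is strict. A secondary point to handle carefully is the identification $\holim_n R/\frka^n \simeq \widehat{R}$ in $\cD(R)$ rather than merely on cohomology; this is standard (the tower is Mittag-Leffler with surjective maps) but should be stated explicitly. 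Once homotopy-finality is in hand, the theorem follows by combining it with Corollary \ref{out main cor}: $\BBic_R(R/\frka) \simeq \holim_{\langle R/\frka\rangle_{R/}} \Gamma \simeq \holim_n R/\frka^n \simeq \widehat{R}$.
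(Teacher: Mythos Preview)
Your proposal is correct and matches the paper's approach essentially step for step: apply Corollary~\ref{out main cor}, exhibit the tower $\{R \to R/\frka^n\}$ as a homotopy left cofinal diagram in $\langle R/\frka\rangle_{R/}$ using the torsion factorization (which is exactly where the hypothesis $\cD(\frka\textup{-tor}) \simeq \cD_{\frka\textup{-tor}}(R)$ enters), and identify the tower's homotopy limit with $\widehat{R}$ via Mittag--Leffler. The paper carries out precisely this plan, with the homotopy-coherence bookkeeping you anticipate handled by splitting the cofinality into two pieces (Lemmas~\ref{comp lem 1} and~\ref{comp lem 2}) and invoking Joyal--Lurie cofinality criteria.
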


\begin{remark}
We put artificial conditions on the above theorem, 
in order to clarify that to what extent the derived bi-commutator gives the $\frka$-adic completion. 
\end{remark}

\noindent
``\textit{Proof}".
\begin{Assumption}\label{holim lim assu}
In this  ``Proof" we assume that 
$
\holim =\lim.
$
\end{Assumption}
We denote by $\cI$ the (non-full) subcategory of $ \langle R/\frka \rangle_{R/} $ 
which consists of objects $\pi^n: R \to R/\frka^n$ for $n \geq 1$ 
and 
of morphisms $\pi^m \to \pi^n$ induced from the 
canonical projections $\varphi^{m,n}: R/ \frka^m \to R/\frka^{n}$ for $ m\geq n$. 
In other words, 
$\cI$ is the image of the functor $(\ZZ_{\geq 1})^{\textup{op}} \to \cD(\cA)$ 
which sends an object $n$ to $\pi^{n}$ and a  morphism $m \to n$ to $\pi^{m} \to \pi^{n}$  
where we consider the ordered set $\ZZ_{\geq 1}$ as a category in the standard way.  
Then we have 
\[
\lim_{\cI}\Gamma|_{\cI} \cong \lim_{n\to \infty} R/\frka^n \cong \widehat{R}.
\]
Therefore  
by Corollary \ref{out main cor} 
 and Assumption \ref{holim lim assu}, it is enough to show that 
 $\lim_{\cI}\G|_{\cI} \cong \lim_{\langle R/\frka \rangle_{R/}} \G$. 
Therefore 
it is enough to show that $\cI$ is a left cofinal subcategory of $\langle R/\frka \rangle_{R/}$. 
Namely 
only we have to show that 
the over category $\cI_{/k}$ is non-empty and connected for each $k \in \langle R/\frka \rangle_{R/} $. 

Let $k:R \to K$ be an object of $ \langle R/\frka \rangle_{R/}$.  
Since we assume that $\cD(\frka\textup{-tor})\stackrel{\sim}{\rightarrow}\cD_{\frka\textup{-tor}} (R) $, 
$K$ belongs to $\cD(\frka\textup{-tor})$. 
It follows that  $K$ is quasi-isomorphic to  a complex $K'$ each terms of which is an $\frka$-torsion modules.  
Therefore a morphism $k :R \to  K$ canonically factors through some cyclic $\frka$-torsion module $R/\frka^n$. 
\[
\begin{xymatrix}{
R \ar[d]_{\pi^{n}} \ar[dr]^k & \\
R/\frka^n \ar[r]_{\psi} & K 
}
\end{xymatrix}\]
In other words there exists a morphism $\psi:\pi^n \to k$ in $\langle R/\frka \rangle_{R/}$. 
This proves the non-emptiness of $\cI_{/k}$.    
Since the factorization $k = \psi \circ \pi^n$ is canonical,  
we see that $\cI_{/k}$ is connected.   
This shows that $\cI $ is left co-final in $ \langle R/\frka \rangle_{R/}$ and completes the ``proof".  
\hfill $``\square"$

\subsection{Smashing localization  via derived bi-commutator} 

First we recall the following classical fact. 
\begin{theorem}[\textup{\cite[Corollary 3.4.1]{Lambek1}, \cite[Theorem 7.1]{Morita loc}}]\label{926 1140}
Let $f:R \to S$ be a (right) Gabriel localization of a ring $R$,  
that is, $f$ is an epimorphism in the category of rings and $S$ is left flat over $R$. 
Let $J$ be a co-generator of the torsion theory which corresponds to the Gabriel localization $f$. 
If we take a product $J': = J^{\kappa}$ of copies of $J$ over large enough cardinal $\kappa$, 
then we have an isomorphism 
\[
\Bic_{R}(J') \cong S. 
\]
\end{theorem}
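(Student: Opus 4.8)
The plan is to deduce this classical statement as a special case of the homotopy-limit formula for the bi-commutator (Corollary \ref{out main cor}), running essentially the same argument as in the ``proof'' of Theorem \ref{out app 1} but now in the setting of Gabriel localization rather than adic completion. First I would recall the dictionary between Gabriel localizations and hereditary torsion theories: the epimorphism $f : R \to S$ with $S$ flat corresponds to a hereditary torsion theory $(\cT,\cF)$, and $J$ being a cogenerator of this torsion theory means that $\cF = \{ M \in \Mod R \mid M \hookrightarrow J^{\lambda} \text{ for some }\lambda \}$; equivalently $J$ is an injective (relative to the torsion theory) cogenerator, and $S$ itself is torsion-free, indeed $S$ embeds into a product of copies of $J$. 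The first step is therefore to choose $\kappa$ large enough that every module appearing in the finite diagrams below embeds into $J' = J^{\kappa}$, so that $J'$ is again a cogenerator and moreover $S \in \langle J' \rangle$'s underlying additive closure in a suitable sense; more precisely, one wants $S$ itself to be a (retract of a) product of copies of $J'$, which holds once $\kappa$ is at least the cardinality of a generating set of the $R$-module $S$.

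The second step is the key cofinality computation. Since $f$ is a ring epimorphism, $S \otimes_R S \cong S$ and more generally $\RR\Hom$ and $\otimes$ over $R$ of $S$-modules agree with those over $S$; flatness of $S$ over $R$ makes $S \otimes_R -$ exact, so $\cD(S) \to \cD(R)$ is fully faithful and its essential image is the localizing (indeed smashing) subcategory generated by $S$. Because $J$, being torsion-free and divisible for the torsion theory, is naturally an $S$-module, we have $\RR\End_R(J') = \End_R(J') = \End_S(J')$, so the derived bi-commutator $\BBic_R(J')$ coincides with the classical bi-commutator $\Bic_R(J') = \End_{\End_R(J')}(J')$ — there is no higher $\Ext$ contribution, which is what collapses the homotopy limit to an ordinary limit and justifies invoking Assumption \ref{holim lim assu} in this case honestly rather than artificially. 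Then, exactly as in the completion argument, I would exhibit a small cofinal subcategory $\cI \subset \langle J' \rangle_{R/}$: here $\cI$ should consist of the single object $f : R \to S$ together with the torsion-free ``approximations'' $R \to R/t_\cF(M)$ for cyclic $R$-modules, and one shows $\lim_{\cI} \Gamma|_\cI \cong S$ because $S$ is the torsion-free reflection of $R$. Left cofinality of $\cI$ amounts to: every map $k : R \to K$ with $K \in \langle J' \rangle$ factors through $R \to S$ in an essentially unique way — this is immediate since $K$, being built from $J'$, is torsion-free, hence an $S$-module, hence $k$ extends over the localization $R \to S$, and uniqueness follows from $f$ being an epimorphism. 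Thus $\cI_{/k}$ is non-empty and connected, giving $\Bic_R(J') \cong \lim_{\langle J' \rangle_{R/}} \Gamma \cong S$.

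The main obstacle I anticipate is not the cofinality bookkeeping but verifying that the homotopy limit genuinely degenerates to the ordinary limit in this non-Noetherian, possibly infinite-diagram setting: one must check that the derived functors $\varprojlim^i$ over the category $\langle J' \rangle_{R/}$ vanish on the diagram $\Gamma$, or equivalently that each $K \in \langle J' \rangle$ is acyclic for the relevant $\RR\Hom_E$ computation. The cleanest route is to observe that every object of $\langle J' \rangle$ is a finite homotopy colimit of copies of $J'$, hence already an honest $E$-module (no resolution needed) and $E$-injective-enough that $\RR\Hom_E(-,J')$ is computed in degree zero; combined with the cofinal $\cI$ being (co)filtered this forces $\holim = \lim$. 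A secondary subtlety is the precise choice of $\kappa$: I would make it depend on $|S|$ and on the cardinality of a cogenerating family, and remark that ``large enough'' can be made explicit, matching the hypothesis in the cited references \cite{Lambek1,Morita loc}. Once these two points are settled, the theorem follows formally from Corollary \ref{out main cor}.
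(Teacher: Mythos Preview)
The paper does not prove Theorem~\ref{926 1140}. It is stated as a classical fact, with proofs in the cited references \cite[Corollary 3.4.1]{Lambek1} and \cite[Theorem 7.1]{Morita loc}, and serves only to motivate the paper's derived analogue, Theorem~\ref{out loc thm 0}. There is nothing in the paper to compare your proposal against.

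Your proposal is therefore not a reconstruction of the paper's argument but an independent attempt to recover the classical statement from the paper's derived machinery. The idea is appealing, but there are genuine gaps.

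First, Corollary~\ref{out main cor} is one of the informal ``nicorollary'' statements in the introduction; the rigorous result is Theorem~\ref{bic holim thm}, and it computes the \emph{derived} bi-commutator $\BBic_{R}(J')$, not the classical $\Bic_{R}(J')$. Your bridge between them (``$\RR\End_R(J')=\End_R(J')$ so there is no higher $\Ext$ contribution'') treats only the first duality; you still owe the statement that $J'$ is acyclic as a module over $\cE=\End_R(J')^{\textup{op}}$, i.e.\ that $\RR\Hom_{\cE}(J',J')$ is concentrated in degree zero. This is not automatic, and the paper handles the analogous passage in the completion setting by a separate $\tau^{\leq 0}$ argument at the end of Section~\ref{926 125}.

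Second, your cofinal subcategory $\cI$ contains the object $f:R\to S$, which requires $S\in\langle J'\rangle^{\cD}$, the \emph{thick} subcategory generated by $J'$. The fact that $S$ embeds in a product of copies of $J'$, or is a retract of such a product, does not place $S$ in $\langle J'\rangle^{\cD}$; you would need $S$ to be built from $J'$ by finitely many cones, shifts, and retracts. Nothing in the Gabriel-localization hypotheses guarantees this.

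Third, the assertion that $\holim=\lim$ here because objects of $\langle J'\rangle$ are ``$E$-injective-enough'' is precisely Assumption~\ref{holim lim assu}, which the paper flags as artificial and which Sections~6 and~7 are devoted to circumventing. You cannot simply assert it; the honest route in the paper's framework is to specialize Theorem~\ref{loc thm 0} (taking $\cA=R$, $\cB=S$ ordinary rings, noting that a flat ring epimorphism is a smashing localization) and then argue separately that the resulting $\BBic_{R}(f_{\ast}J')$ has cohomology concentrated in degree zero.
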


In this section we prove a derived version. 
A morphisms $f:\cA \to \cB$ of dg-algebras is called \textit{smashing localization} 
if the restriction functor $f_{\ast}: \cD(\cB) \to \cD(\cA)$ 
is fully faithful. 
Recall that 
a ring homomorphism $R\to S$ is an epimorphism 
in the category of rings if and only if 
the restriction functor $f_{\ast}: \Mod S \to \Mod R$ is fully faithful. 
Therefore smashing localization can be considered as a dg-version of epimorphisms.  

\begin{theorem}\label{out loc thm 0}
Let  $\cA \to \cB$ be a smashing localization of dg-algebras 
and $J$  be a pure injective co-generator of $\cD(\cB)$. 
Then we have a quasi-isomorphism over $\cA$ 
\[
\BBic_{\cA}(f_{\ast}J') \simeq \cB.\]
where $J'= J^{\Pi\kappa}$ is a large enough product of $J$. 
\end{theorem}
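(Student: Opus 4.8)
The plan is to reduce the statement to the case of the trivial localization $\id\colon\cB\to\cB$, where it becomes the assertion that a pure injective co-generator has the derived double centralizer property over $\cB$, and then to settle that assertion by a formal bi-duality argument. Throughout I write $(-)^{\circledast}=\RR\Hom_{\cB}(-,J')\colon\cD(\cB)\rightleftarrows\cD(\cE')^{\textup{op}}$ for the bi-duality over $\cB$, with $\cE':=\RR\End_{\cB}(J')^{\textup{op}}$, and I note at the outset that the large product $J'=J^{\Pi\kappa}$ of a co-generator is again a co-generator (and again pure injective), since $\RR\Hom_{\cB}(X,J')\simeq\RR\Hom_{\cB}(X,J)^{\Pi\kappa}$.

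First I would transport the problem along $f$, using Corollary \ref{out main cor}. Since $f$ is a smashing localization, $f_{\ast}\colon\cD(\cB)\to\cD(\cA)$ is fully faithful and exact, so its essential image is a thick subcategory of $\cD(\cA)$ (using that $\cD(\cB)$ is idempotent complete); in particular $\langle f_{\ast}J'\rangle$ lies inside it. A morphism from $\cA$ into an object of this image factors uniquely through $f\colon\cA\to f_{\ast}\cB$, because maps out of $\cA$ and out of $f_{\ast}\cB$ into $f_{\ast}$-local objects coincide, by full faithfulness together with the adjunction $f^{\ast}\dashv f_{\ast}$ and $f^{\ast}\cA\simeq\cB$. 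This identifies the under-category $\langle f_{\ast}J'\rangle_{\cA/}$ with $\langle J'\rangle_{\cB/}$ compatibly with the co-domain functors via $f_{\ast}$, and since $f_{\ast}$ is a right adjoint it preserves homotopy limits; hence Corollary \ref{out main cor}, applied over $\cA$ and over $\cB$, gives
\[
\BBic_{\cA}(f_{\ast}J')\simeq\holim_{\langle f_{\ast}J'\rangle_{\cA/}}\G\simeq f_{\ast}\holim_{\langle J'\rangle_{\cB/}}\G\simeq f_{\ast}\BBic_{\cB}(J').
\]
Concretely this reflects the quasi-isomorphism of dg-algebras $\cE:=\RR\End_{\cA}(f_{\ast}J')^{\textup{op}}\simeq\cE'$ carrying $f_{\ast}J'$ to $J'$ together with $\RR\Hom_{\cA}(\cA,f_{\ast}J')\simeq f_{\ast}J'$, so that $\BBic_{\cA}(f_{\ast}J')=\RR\Hom_{\cE}(f_{\ast}J',f_{\ast}J')\simeq\RR\Hom_{\cE'}(J',J')=\BBic_{\cB}(J')$, compatibly with the maps out of $\cA$ (on the right $\cA$ acting through $f$). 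It remains to prove $\BBic_{\cB}(J')\simeq\cB$ over $\cA$.

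For this I would show that the bi-duality unit $\epsilon_{\cB}\colon\cB\to\cB^{\wcdast}=\BBic_{\cB}(J')$ is a quasi-isomorphism, in three steps. (i) The unit $\epsilon_{J'}\colon J'\to J'^{\wcdast}$ is a quasi-isomorphism: indeed $J'^{\circledast}=\RR\Hom_{\cB}(J',J')=\RR\End_{\cB}(J')$ is the free module of rank one over $\cE'$, so $J'^{\wcdast}\simeq\RR\Hom_{\cE'}(\cE',J')\simeq J'$, and under this identification $\epsilon_{J'}$ becomes $\id_{J'}$. (ii) By the triangle identity for the bi-duality adjunction, $(\epsilon_{\cB})^{\circledast}\circ\epsilon_{\cB^{\circledast}}=\id_{\cB^{\circledast}}$; since $\cB^{\circledast}=\RR\Hom_{\cB}(\cB,J')\simeq J'$ in $\cD(\cE')$ and $\epsilon_{\cB^{\circledast}}=\epsilon_{J'}$ is a quasi-isomorphism by (i), it follows that $\RR\Hom_{\cB}(\epsilon_{\cB},J')=(\epsilon_{\cB})^{\circledast}$ is a quasi-isomorphism. (iii) Let $C$ be the cone of $\epsilon_{\cB}$; applying $\RR\Hom_{\cB}(-,J')$ to the triangle $\cB\to\cB^{\wcdast}\to C$ and using (ii) gives $\RR\Hom_{\cB}(C,J')\simeq0$, whence $C\simeq0$ because $J'$ is a co-generator of $\cD(\cB)$. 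Thus $\epsilon_{\cB}$ is a quasi-isomorphism, and combined with the previous paragraph this yields $\BBic_{\cA}(f_{\ast}J')\simeq f_{\ast}\cB=\cB$ over $\cA$.

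The triangulated formalism above is soft; the real work lies in the homotopy-coherent and dg-level bookkeeping. One must realise $\cE$ and $\cE'$ as honest dg-algebras with the stated $J'$-module structures, and verify that the comparison $\cE'\simeq\cE$, the identification $J'^{\circledast}\simeq\cE'$, and the triangle identity of (ii) are compatible on the nose, not merely in the homotopy category; and the route through Corollary \ref{out main cor} requires the rigorous homotopy-limit statement of Theorem \ref{bic holim thm}. This is precisely where pure injectivity of $J$ (hence of $J'$) is the natural hypothesis: it is the derived incarnation of ``injective co-generator'' underlying the classical Theorem \ref{926 1140}, it keeps the relevant $\RR\Hom$'s and endomorphism dg-algebras free of pathology, and taking $J'$ a large enough product of $J$, as in the classical statement, is what forces $\RR\End_{\cB}(J')$, and hence the bi-commutator, to ``see'' all of $\cB$. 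I expect pinning down these coherences, and confirming that the co-generator property is exactly what step (iii) consumes, to be the most delicate part.
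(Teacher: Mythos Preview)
Your reduction to showing that $\epsilon_{\cB}\colon\cB\to\BBic_{\cB}(J')$ is a quasi-isomorphism is sound and matches the paper's strategy (Proposition \ref{1010 314} plus Lemma \ref{1010 321}). The problem is step (ii).

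The triangle identity you invoke reads $D(\epsilon_{\cB})\circ\epsilon'_{D(\cB)}=\id_{D(\cB)}$, where $\epsilon'$ denotes the unit on the $\cE'$-side, namely $\epsilon'_N\colon N\to DD'(N)$ for $N\in\cD(\cE')$. Your assertion ``$\epsilon_{\cB^{\circledast}}=\epsilon_{J'}$ is a quasi-isomorphism by (i)'' conflates two different maps. Step (i) shows that the $\cB$-side unit $\epsilon_{J'}\colon J'\to D'D(J')$ is a quasi-isomorphism (this is just the case $K=J'$ of Lemma \ref{main mis lemma 2}(1)). What the triangle identity requires is the $\cE'$-side unit $\epsilon'_{D(\cB)}\colon D(\cB)\to DD'D(\cB)$, a morphism in $\cD(\cE')$, not in $\cD(\cB)$. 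That the underlying complexes of $D(\cB)$ and $J'$ agree does not identify these units. Worse, the triangle identity itself shows that $\epsilon'_{D(\cB)}$ is a quasi-isomorphism \emph{if and only if} $D(\epsilon_{\cB})$ is one, so step (ii) is circular. One could try to apply Lemma \ref{main mis lemma 2}(2) instead, but that needs $D(\cB)$ to lie in $\dgPerf\cE'$, which fails in general: the underlying complex of $D(\cB)$ is $J'$, not $\RR\End_{\cB}(J')$.

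This is not a matter of coherence bookkeeping. Your argument (i)--(iii) never uses pure injectivity of $J$, yet the classical Theorem \ref{926 1140} already requires it, and the paper's proof depends on it essentially. The paper establishes $\epsilon_{\cB}\simeq\id$ (Theorems \ref{loc thm 1} and \ref{loc thm 2}) by using the cohomological injectivity of $J'$ to construct a resolution $0\to\cB\to\tilde J^0\to\tilde J^1\to\cdots$ with each $\tilde J^i$ a summand of a finite sum of copies of $J'$; the totalizations $I^n$ then give a tower $\{\pi^n\colon\cB\to I^n\}$ inside $\langle J'\rangle^{\cC}_{\cB/}$, and the main Theorem \ref{bic holim thm} reduces the claim to showing this tower is homotopy left cofinal (Proposition \ref{loc key prop}). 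Injectivity is used both to build the resolution and to perform the extension arguments in the cofinality proof (Lemmas \ref{104 153} and \ref{loc cofinal lem}). There is no purely formal shortcut through the bi-duality adjunction.
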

The notion of  pure injective co-generator 
which is introduced by Krause \cite{Krause}  is a dg-version of injective co-generator for the module category.  
We will recall the definition in  Definition \ref{pure inj def}. 

A similar  theorems is   proved by Nicol\'as and Saorin \cite{NS}. 
In our way of the proof, 
an essential point is the following theorem, 
which is also intuitively proved from the view point that 
a bi-duality is a completion in some sense (See Appendix \ref{App 2}).  
\begin{theorem}\label{out loc thm}
Let $J$ a pure injective co-generator of $\cD(\cA)$ and $M$ a dg $\cA$-module.  
If we take a product $J' = J^{\Pi \kappa}$ of copies of $J$ over large enough cardinal $\kappa$, 
then the evaluation morphism is a quasi-isomorphism 
\[
\epsilon_{M}: M \stackrel{\sim}{\rightarrow} M^{\wcdast} 
\]
where the bi-dual is taken over $J'$. 
\end{theorem}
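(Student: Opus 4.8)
The strategy is to reduce the claim to the main theorem (Corollary \ref{out main cor}/Theorem \ref{bic holim thm}), which identifies $M^{\wcdast}$ with $\holim_{\langle J'\rangle_{M/}}\Gamma$, and then to exploit the special properties of a pure injective co-generator. First I would recall from Krause \cite{Krause} (to be set up in Definition \ref{pure inj def}) that a pure injective co-generator $J$ of $\cD(\cA)$ has the property that $J$ co-detects objects: $N\simeq 0$ if and only if $\RR\Hom_\cA(N,J)\simeq 0$; more precisely, for any $M$ the canonical map detects whether a morphism out of $M$ is split-mono up to the relevant purity. The key passage to large products is the observation that, after replacing $J$ by $J'=J^{\Pi\kappa}$ for $\kappa$ large enough relative to the cardinality of $M$ (more precisely, relative to the size of a set of generators of the relevant Hom-groups $\bigoplus_i \cD(\cA)(M,\Sigma^i J)$), the evaluation map $\epsilon_M\colon M\to M^{\wcdast}$ becomes, on each cohomology group, an isomorphism because purity guarantees that the natural transformation from $M$ to the product-completion governed by $J$ is an isomorphism onto its image and that image is everything once $\kappa$ dominates the relevant cardinal.

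Concretely, the plan is: (1) Using Theorem \ref{bic holim thm}, write $M^{\wcdast}\simeq \holim_{\langle J'\rangle_{M/}}\Gamma$. (2) Show that the ``tautological'' object built from $M$ and $J'$, namely the canonical morphism $M\to \prod_{\alpha}\Sigma^{i_\alpha}J'$ indexed by a generating set of $\bigoplus_i\cD(\cA)(M,\Sigma^iJ')$, is an initial object — or at least a left-cofinal subcategory — of $\langle J'\rangle_{M/}$ after the enlargement; here the point is that every $k\colon M\to K$ with $K\in\langle J'\rangle$ factors (non-uniquely, but compatibly enough to give connectedness of the comma categories) through such a product because $K$, being in the thick closure of $J'$, embeds into a finite iterated extension of shifts of $J'$, and pure injectivity of $J'$ lets us lift. (3) Conclude $M^{\wcdast}\simeq \holim$ over this cofinal piece, which is just the limit/homotopy-limit of the single product $\prod_\alpha\Sigma^{i_\alpha}J'$ together with $M$'s maps into it, and then (4) identify this homotopy limit with $M$ itself via the co-generation property: the fiber of $\epsilon_M$ is an object $N$ with $\RR\Hom_\cA(N,J')\simeq 0$, hence $N\simeq 0$ since $J'$ is a co-generator, giving $\epsilon_M\colon M\xrightarrow{\ \sim\ }M^{\wcdast}$.

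The main obstacle I expect is step (2)–(3): controlling the comma category $\langle J'\rangle_{M/}$ well enough to extract a small cofinal (or at least homotopy-cofinal) subcategory whose homotopy limit is computable, and in particular making precise ``large enough $\kappa$'' so that the set of maps $M\to\Sigma^iJ'$ that one must split through is actually attained by a single product. This is exactly where pure injectivity does the real work — it converts the a priori wild comma category into something controlled by a single functorial product — and where the cardinality bookkeeping (the dependence of $\kappa$ on $M$) must be handled carefully. The remaining steps are formal: step (1) is the cited main theorem, and step (4) is the standard ``co-generator kills the cofiber'' argument. An alternative route, sketched in Appendix \ref{App 2}, is to argue directly that $\epsilon_M$ is a quasi-isomorphism by viewing $M^{\wcdast}$ as a completion and checking that $M$ is already complete with respect to the derived Gabriel topology cut out by $J'$; I would present the homotopy-limit argument as the main proof and defer the ``completion'' heuristic to the appendix.
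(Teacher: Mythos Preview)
Your plan has a genuine gap at steps (2)--(4). If a single object $k\colon M\to \prod_\alpha\Sigma^{i_\alpha}J'$ were initial (equivalently, a left-cofinal singleton) in $\langle J'\rangle_{M/}$, then by definition of cofinality the homotopy limit of $\Gamma$ would be $\Gamma(k)=\prod_\alpha\Sigma^{i_\alpha}J'$, not $M$; so step (3) computes the wrong object. Step (4) then tries to rescue this by asserting that the fiber $N$ of $\epsilon_M$ satisfies $\RR\Hom_\cA(N,J')\simeq 0$, but ``co-generator'' is the implication $\RR\Hom(N,J')=0\Rightarrow N=0$, not the converse, and you give no argument for the vanishing. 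The triangle identity $D(\epsilon_M)\circ\epsilon'_{DM}=\id_{DM}$ only shows that $D(\epsilon_M)$ is a split epimorphism; to obtain $DN=0$ you would need $\epsilon'_{DM}\colon DM\to DD'DM$ to be a quasi-isomorphism, which holds when $DM$ is perfect over $\cE$ but fails in general.

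What the paper does is closer to the heuristic you postpone to the appendix, but carried out rigorously. Via Krause's identification of (cohomologically) injective objects through $\cG(\tuH^\bullet(\cA))$, one first builds a resolution $0\to\tuH^\bullet(M)\to\tuH^\bullet(J^0)\to\tuH^\bullet(J^1)\to\cdots$ with each $J^i$ a direct summand of a finite sum of $J$; ``large enough $\kappa$'' means $\kappa$ dominates all the cardinals occurring here. This is lifted to a $\#$-exact sequence in $\cC(\cA)$ with injectively fibrant cokernels, and one forms the tower $I^n$ of partial totalizations, giving $\Phi\colon(\ZZ_{\geq 0})^{\mathrm{op}}\to\cU$. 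The technical core is Proposition \ref{loc key prop}, that $\Phi$ is homotopy left cofinal: non-emptiness of $\sner(\Phi)_{/k}$ is an induction on the level $\langle J\rangle^\cD_n$ containing $K$, and weak contractibility needs a careful homotopy-coherence argument exploiting the explicit co-cone description of $I^{[n,m]}$. Once cofinality holds, $\holim_\cU\Gamma\simeq\holim_n I^n\simeq M$ because $\lim_n I^n$ is the totalization of an actual resolution of $M$. The key point you are missing is that no single product can be cofinal; one needs the entire tower, and it is the identification of its inverse limit with $M$ --- not a cofiber-vanishing argument --- that finishes the proof.
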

In the case when $\cA$ is  an ordinary ring and $M$ is a module, 
the same results is already   proved by Shamir \cite{Shamir}.


\subsection[Koszul duality for Adams graded dg-algebras \\(a part of joint work with A. Takahashi)]
{Koszul duality for Adams graded dg-algebras 
\\(a part of joint work with A. Takahashi)} 

The following theorem will be proved and applied in  \cite{MinTaka}. 
\begin{theorem}\label{out Koszul thm}
Let $\cA:= \cA_0 \oplus \cA_1 \oplus \cA_{2} \oplus \cdots $ be an $\NN$-Adams graded dg-algebra. 
If the $\cA_0$-modules $\cA_n$ satisfies a mild condition. 
Then we have a quasi-isomorphism 
\[
\BBic_{\cA}(\cA/\cA_{\geq 1}) \simeq \cA
\]
\end{theorem}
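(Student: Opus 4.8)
The plan is to follow the pattern of the ``proof'' of Theorem~\ref{out app 1}, with the $\frka$-adic filtration replaced by the Adams filtration $\cA\supseteq\cA_{\geq 1}\supseteq\cA_{\geq 2}\supseteq\cdots$, working throughout in the derived category of \emph{Adams-graded} dg $\cA$-modules (so that the relevant $\RR\Hom$, $\RR\End$, and hence $\BBic$, are the graded/internal versions, and $\langle J\rangle$ denotes the thick subcategory generated by the Adams twists of $J:=\cA/\cA_{\geq 1}=\cA_0$). Set $\cE:=(\RR\End_\cA(J))^{\textup{op}}$; it inherits an Adams grading. By Corollary~\ref{out main cor} it suffices to produce a quasi-isomorphism $\holim_{\langle J\rangle_{\cA/}}\Gamma\simeq\cA$, and I would do this by exhibiting a left cofinal subcategory of $\langle J\rangle_{\cA/}$ on which the homotopy limit is transparently $\cA$.

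Concretely, let $\cI\subseteq\langle J\rangle_{\cA/}$ be the (non-full) subcategory with objects the canonical projections $\pi^n\colon\cA\to\cA/\cA_{\geq n}$ ($n\geq1$) and morphisms $\pi^m\to\pi^n$ ($m\geq n$) the canonical surjections $\cA/\cA_{\geq m}\twoheadrightarrow\cA/\cA_{\geq n}$, i.e. the image of the tautological functor $(\ZZ_{\geq1})^{\textup{op}}\to\cD(\cA)$, $n\mapsto\pi^n$. The first point to check is that $\cI$ actually lies in $\langle J\rangle_{\cA/}$, i.e. that $\cA/\cA_{\geq n}\in\langle J\rangle$. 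This is where the ``mild condition'' enters, as the exact analogue of the hypothesis ``$R/\frka^n\in\langle R/\frka\rangle$'' in Theorem~\ref{out app 1}: since the Adams grading is non-negative, each $\cA_{\geq k}$ is a (one-sided) ideal, so $\cA/\cA_{\geq n}$ has a finite filtration by the $\cA$-submodules $\cA_{\geq k}/\cA_{\geq n}$ ($0\le k\le n$) whose subquotients are the restrictions along $\cA\to\cA_0$ of the $\cA_0$-modules $\cA_k$; the mild condition, which I would take to be that each $\cA_n$ is a perfect complex of $\cA_0$-modules (equivalently $\cA_n\in\langle\cA_0\rangle_{\cD(\cA_0)}$), then gives $\cA_k\in\langle\cA_0\rangle_{\cD(\cA)}=\langle J\rangle$ after restriction of scalars, and hence $\cA/\cA_{\geq n}\in\langle J\rangle$ by forming $n-1$ cones.

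The heart of the argument is to show that $\cI$ is left cofinal in $\langle J\rangle_{\cA/}$, i.e. that the over-category $\cI_{/k}$ is non-empty and connected for every $k\colon\cA\to K$ with $K\in\langle J\rangle$. Here the replacement for the torsion-theoretic input $\cD(\frka\textup{-tor})\xrightarrow{\,\sim\,}\cD_{\frka\textup{-tor}}(R)$ of Theorem~\ref{out app 1} is the observation that every $K\in\langle J\rangle$ has cohomology concentrated in finitely many Adams degrees, being built from finitely many Adams twists of $\cA_0$ by cones, shifts, and summands. Consequently an Adams-degree-preserving morphism $k\colon\cA\to K$ kills $\cA_{\geq n}$ once $n$ exceeds the top Adams degree of $H^{\bullet}(K)$, and therefore factors, necessarily uniquely, as $\cA\xrightarrow{\pi^n}\cA/\cA_{\geq n}\to K$; this provides an object of $\cI_{/k}$, and uniqueness of the factorisation together with the transition surjections shows $\cI_{/k}$ is connected. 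Finally, $\Gamma$ restricted to $\cI$ is just the tower $(\cA/\cA_{\geq n})_n$ with surjective transition maps; this tower is Mittag--Leffler, so its homotopy limit coincides with its ordinary limit, and since limits of Adams-graded modules are computed degree by degree and the tower stabilises to $\cA_d$ in Adams degree $d$ (for $n>d$), we get $\holim_{\cI}\Gamma|_{\cI}\simeq\lim_n\cA/\cA_{\geq n}\cong\cA$ --- here it is essential that we are in the graded world, so that the limit is $\cA$ rather than the ungraded product-completion $\prod_n\cA_n$. Stringing these together, $\BBic_\cA(\cA/\cA_{\geq1})\simeq\holim_{\langle J\rangle_{\cA/}}\Gamma\simeq\holim_{\cI}\Gamma|_{\cI}\simeq\cA$.

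I expect the main obstacle to be homotopy-theoretic rather than algebraic: unlike the displayed ``Proof'' of Theorem~\ref{out app 1}, which grants itself the Assumption $\holim=\lim$, here one must run the cofinality argument genuinely at the level of homotopy limits --- invoking the rigorous form Theorem~\ref{bic holim thm} together with a homotopy-cofinality statement --- and one must formulate the ``mild condition'' precisely enough that it simultaneously yields $\cA/\cA_{\geq n}\in\langle J\rangle$ and the finite-Adams-support property of the objects of $\langle J\rangle$. Verifying that the resulting quasi-isomorphism respects the dg-algebra structures (upgrading the conclusion to a statement about dg-algebras, compatibly with the canonical map $\cA\to\BBic_\cA(\cA/\cA_{\geq1})$) should be routine, since every map occurring in the argument is an algebra map.
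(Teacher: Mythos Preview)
Your proposal is correct and follows essentially the same approach the paper indicates: it states only that ``the proof is given in the same way of the proof of Theorem~\ref{out app 1}'' (with details deferred to \cite{MinTaka}), and you have accurately reconstructed that pattern---the cofinal tower $\pi^n:\cA\to\cA/\cA_{\geq n}$, the finite-Adams-support property of objects in $\langle\cA/\cA_{\geq1}\rangle$ giving the factorisation, and the degreewise stabilisation of the graded limit to $\cA$. Your identification of the ``mild condition'' as perfection of each $\cA_n$ over $\cA_0$ and your caution about the homotopy-theoretic upgrades needed beyond the heuristic $\holim=\lim$ are both on target.
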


The proof is given in the same way of the proof of Theorem \ref{out app 1}.




\subsection{From the view point of Derived Gabriel topology}

Gabriel topology is a special class of linear topology on rings, which plays an
important role in the theory of localization of rings \cite{Sten}.
The notion of derived Gabriel topology, which is a Gabriel topology for a dg-algebra, is introduced in \cite{DGT}. 
From the view point of derived Gabriel topology,  
Theorem \ref{out main thm} says that 
the derived bi-dual $M^{\wcdast}$ equipped with ``the finite topology"  
 is the ``$J$-adic completion" of $M$.  
In this sense 
Theorem \ref{out main thm} is 
inspired by the following results of J. Lambek. 

\begin{theorem}[\textup{\cite[Theorem 4.2]{Lambek1},(See also \cite[Theorem 3.7]{Lambek2})}]\label{lambek thm}
Let $R$ be a ring and $J$ an injective $R$-module. 
For an $R$-module $M$, we denote by $Q(M)$ the module of quotients with respect to $J$. 
Assume that 
every torsionfree factor module of $Q(M)$ is $J$-divisible.  
Then the (ordinary) bi-duality $\Hom_{\End_{R}(J)}(\Hom_{R}(M,J),J)$ 
equipped with the finite topology is the $J$-adic completion of $Q(M)$.  
\end{theorem}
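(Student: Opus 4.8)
The plan is to present $M^{**}$ as an inverse limit of ``finite'' pieces, realize $Q(M)$ as a subobject, and show it is a dense subobject carrying the correct topology. Write $E=\End_{R}(J)$, and for an $R$- (resp.\ $E$-)module $X$ put $X^{*}=\Hom_{R}(X,J)$ (resp.\ $\Hom_{E}(X,J)$) and $X^{**}=(X^{*})^{*}$, with evaluation map $\epsilon_{X}\colon X\to X^{**}$. Recall that, relative to the (hereditary) torsion theory cogenerated by the injective module $J$, a module is torsion iff it admits no nonzero map to $J$, is torsionfree iff it embeds in a product of copies of $J$, and a torsionfree module $D$ is $J$-divisible iff $\Ext^{1}_{R}(R/I,D)=0$ for every ideal $I$ of the associated Gabriel filter; since submodules of torsion modules are torsion, the latter forces $\Ext^{1}_{R}(T,D)=0$ for all torsion $T$.

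First I would factor $\epsilon_{M}$ through $Q(M)$. Because $J$ is injective, $\Hom_{R}(-,J)$ is exact, and since the kernel and cokernel of $M\to Q(M)$ are torsion this yields a canonical isomorphism $M^{*}\cong Q(M)^{*}$, hence $M^{**}\cong Q(M)^{**}$; moreover $\epsilon_{Q(M)}$ is a monomorphism because $Q(M)$ is torsionfree. So I obtain a canonical embedding $Q(M)\hookrightarrow M^{**}$ through which $\epsilon_{M}$ factors. Next I would equip $M^{**}=\Hom_{E}(M^{*},J)$ with the finite topology: writing $M^{*}$ as the filtered union of its finitely generated $E$-submodules $M^{*}_{\lambda}$, the submodules $V_{\lambda}=\ker(M^{**}\to\Hom_{E}(M^{*}_{\lambda},J))$ form a neighbourhood basis of $0$, so $M^{**}=\lim_{\lambda}(M^{**}/V_{\lambda})$ is complete and Hausdorff; restricting this description to $Q(M)\hookrightarrow M^{**}$ identifies the subspace topology on $Q(M)$ with its $J$-adic (finite) topology.

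The essential point is density, i.e.\ that the image of $Q(M)$ in each finite quotient $M^{**}/V_{\lambda}$ is all of it. A short diagram chase using injectivity of $J$ shows that $M^{*}_{\lambda}=\overline{M}^{*}$ for the canonical quotient $\overline{M}=M/\bigcap_{i}\ker f_{i}$ of $M$ attached to a generating set $f_{1},\dots,f_{n}$ of $M^{*}_{\lambda}$; this $\overline{M}$ embeds in the finite power $J^{n}$, so $M^{**}/V_{\lambda}$ sits inside the torsionfree module $\overline{M}^{**}$, and the image $N$ of $Q(M)$ in it is a torsionfree factor module of $Q(M)$, hence $J$-divisible by hypothesis. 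Since $N$ contains the image of $\overline{M}$ and is the image of the composite $Q(M)\to Q(\overline{M})\to\overline{M}^{**}$, one application of divisibility gives $Q(M)\twoheadrightarrow Q(\overline{M})$ and $N=Q(\overline{M})$; a second application, splitting the torsion quotient $(M^{**}/V_{\lambda})/Q(\overline{M})$ off from the torsionfree module $M^{**}/V_{\lambda}$, forces $N=M^{**}/V_{\lambda}$. Thus $Q(M)$ is dense in the complete Hausdorff $M^{**}$ and carries the correct subspace topology, so $M^{**}$ with the finite topology is the $J$-adic completion of $Q(M)$.

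The main obstacle is this density step, and inside it the control of the finite pieces: one must verify that $(M^{**}/V_{\lambda})/Q(\overline{M})$ is torsion, i.e.\ that the part of the double dual $\overline{M}^{**}$ seen from $M^{**}$ exceeds the module of quotients $Q(\overline{M})$ only by torsion, whereas a double dual can in general be vastly larger than the corresponding module of quotients. This is precisely where Lambek's module-of-quotients machinery and the divisibility hypothesis genuinely interact; by comparison, the inverse-limit set-up and the factoring of $\epsilon_{M}$ are formal. A secondary point to check is that ``$J$-adic completion'' in the statement means exactly completion in the finite topology, which the inverse-limit description makes transparent.
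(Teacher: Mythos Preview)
The paper does not prove this statement. Theorem~\ref{lambek thm} is explicitly a citation of Lambek's classical result (\cite[Theorem~4.2]{Lambek1}, see also \cite[Theorem~3.7]{Lambek2}), quoted in the introduction purely as motivation: the paper observes that its main theorem on derived bi-duality can be viewed as a derived analogue of Lambek's theorem, but it neither reproves nor relies on Lambek's argument. So there is nothing in the paper to compare your proof against.

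That said, your sketch follows the natural shape of Lambek's original argument: reduce to $Q(M)$ via the torsion kernel/cokernel of $M\to Q(M)$, present $M^{**}$ as the inverse limit over finitely generated $E$-submodules of $M^{*}$, and then argue density on each finite stage using the divisibility hypothesis. You are right to flag the density step as the real content, and in particular that one must control $(M^{**}/V_{\lambda})/Q(\overline{M})$. Your proposal handles this by a two-step use of divisibility, but the second step---``splitting the torsion quotient off from the torsionfree module $M^{**}/V_{\lambda}$''---presupposes exactly what needs proving, namely that this quotient \emph{is} torsion. This is the substantive lemma in Lambek's proof, and your outline does not supply it; you would need to invoke the structure of $\overline{M}$ as a submodule of a finite power $J^{n}$ together with the specific properties of localization at an injective to close the gap. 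If you want to reconstruct the argument, that is the place to look in \cite{Lambek1}.
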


We remark that 
in the original Lambek Theorem,  
there are several assumptions.  
Contrary to this, 
the main theorem has no assumptions. 
This is an important difference from 
an usual derived version of some result in classical theory of rings and modules. 
So the point is that 
we can obtain a simple formula for the bi-duality modules 
only when we come to the derived world 
from the abelian world.

At the first sight, 
three theorems below  concerning on derived bi-dualities 
\begin{itemize}
\item Completion theorem 

\item Localization theorem 

\item Koszul duality 
\end{itemize}
seem to be theorems of different kind. 
However in the present paper we will see that 
these are consequences of a simple formula, which is the main theorem \ref{out main thm}. 
From the view point of derived Gabriel topology, 
these theorems are consequences of completeness of each algebras 
with respect to appropriate topologies.

\subsection*{Notations and conventions}

Unless otherwise is stated the term `` modules " means   right modules. 

Let $M$ and $N$ be modules. 
Assume that direct sum decompositions $M = \bigoplus M_{j}$ and $N = \bigoplus N_i$ are given. 
We often use a matrix style expression of morphisms $(f_{i,j}): M \to N$. 
In this paper 
no modules are given more than one direct sum decompositions. 
Hence there are no danger of confusion. 
For the sake of space, 
we often use  the transposed expression ${}^{t}(f_1,\cdots, f_n): M \to N_1 \oplus \cdots \oplus N_n$ 
of a column vector 
$\begin{pmatrix}f_1 \\ \vdots \\ f_n \end{pmatrix} : M \to N_1 \oplus \cdots \oplus N_n$ .  

The totally ordered sets $(\ZZ_{\geq 1})^{\textup{op}}$ and $(\ZZ_{\geq 0})^{\textup{op}}$ 
are always  considered as categories 
with a unique morphism $\textbf{geq}^{m,n}: m\to n$ for $m \geq n$.






\subsection*{Acknowledgment}
The author thanks 
 I. Iwanari, Y. Kimura and  S. Moriya 
  for discussions on homotopy theory. 
He also thanks  
Y. Mizuno and 
K Yamaura 
for answering questions on algebras. 
He thanks M. Wakui, who gave him an interest on bi-commutators.  
He thanks P. Nicol\'as for notifying him the paper \cite{NS}. 
This work was supported by JSPS KAKENHI Grant Number 00241065.

 
\section{Preliminaries}

\subsection{Simplicial model categories} 

For the theory of simplicial model categories, we refer to \cite{Hirsch,Hovey,HTT}.

\noindent 
{\bf $\bullet$ Model categories.}  
For a model category $\cM$, we denote by $\cM^{\circ}$ the full subcategory of $\cM$ 
consisting of fibrant-cofibrant objects. 
For a full subcategory $\cN \subset \cM$ 
we denote by $\Ho(\cN)$ the full subcategory of the homotopy category $\Ho(\cM)$ 
of $\cM$ spanned the objects of $\cN$.  

Recall 
the model structure in $\cM$ induces 
the model structure in 
 the under category $\cM_{M/}$ in the following way: 
 let $\varphi: k \to \ell$ a morphism of  $\cM_{M/}$ 
 from $k: M \to K $ to $\ell : M \to L$. 
 We denote by $\varphi':K \to L $ the morphism of $\cM$  induced from $\varphi$. 
 Then $\varphi$ is defined to be  weak equivalence (resp. fibration, cofibration) in $\cM_{/M}$ 
 if $\varphi'$ is so in $\cM$. 
 In particular 
 we see that 
 an object $k : M\to K$ is fibrant if and only if 
 the morphism $k$ is a fibration in $\cM$ 
 and that 
 $k$ is cofibrant if and only if the domain $P$ is cofibrant in $\cM$.  

In a similar way the model structure in $\cM$ induces the model structure in 
 the over category $\cM_{M/}$. 
There is a natural functor $\Ho(\cM_{M/}) \to \Ho(\cM)_{M/}$. 
In general we can't expect that this functor is full. 
However we have the following lemma.

\begin{lemma}\label{model lem}
Let $\cM$ be a model category, 
$M$ a cofibrant object, $K$ a fibrant-cofibrant object and $L$ a fibrant object. 
Assume that a cofibration $k: M \rightarrowtail K$  and a morphism $\ell: M \to L$ are  given. 
Assume moreover that 
a morphism $\overline{\varphi}: k \to \ell$ in $\Ho(\cM)_{M/}$ is given. 
In other words 
 the following commutative diagram is given in the homotopy category $\textup{Ho}(\cM)$: 
\[
\begin{xymatrix}{
M \ar[d]_{k} \ar[dr]^{\ell}& \\
K \ar[r]_{\overline{\varphi}}& L. 
}
\end{xymatrix}\]
Then 
there is a morphism $\varphi: k \to \ell$ in $\cM_{M/}$ 
such that 
the image of $\varphi$ in $\textup{Ho}(\cM)$ coincide with $\overline{\varphi}$. 
Namely 
there exists a morphism $\varphi : K \to L$ 
such that  $\ell= \varphi\circ k$ and the image of $\varphi$ in $\textup{Ho}(\cM)$ coincide with $\overline{\varphi}$.
\end{lemma}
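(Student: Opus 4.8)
\textbf{Proof proposal for Lemma \ref{model lem}.}

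The plan is to reduce the problem to the standard fact that homotopies out of a cofibrant object into a fibrant object can be rectified, and then to use the lifting axioms to replace an honest map $K\to L$ that is only correct ``up to homotopy over $M$'' by one that is strictly compatible with the structure maps $k$ and $\ell$. First I would choose a representative: since $K$ is cofibrant and $L$ is fibrant, the morphism $\overline{\varphi}$ in $\Ho(\cM)$ is represented by an actual morphism $\varphi_0 : K \to L$ in $\cM$, and two such representatives are connected by a (say, right) homotopy. The composite $\varphi_0\circ k : M \to L$ and the given map $\ell : M \to L$ both represent $\overline{\varphi}\circ k = \ell$ in $\Ho(\cM)$, so $\varphi_0 \circ k$ is homotopic to $\ell$; because $M$ is cofibrant and $L$ is fibrant this can be taken to be a right homotopy, i.e.\ a map $H : M \to L^{I}$ into a path object $L^{I} = (L \trivcofright L^{I} \fibright L\times L)$ with $(p_0,p_1)\circ H = (\varphi_0\circ k,\ \ell)$.

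Next I would correct $\varphi_0$ by solving a lifting problem. Consider the cofibration $k : M \rightarrowtail K$ on the left and the trivial fibration $p_0 : L^{I} \trivfibright L$ on the right; we are given $H : M \to L^{I}$ and $\varphi_0 : K \to L$ with $p_0\circ H = \varphi_0 \circ k$, so there is a lift $\widetilde{H} : K \to L^{I}$ with $\widetilde{H}\circ k = H$ and $p_0 \circ \widetilde{H} = \varphi_0$. Now set $\varphi := p_1 \circ \widetilde{H} : K \to L$. Then $\varphi\circ k = p_1\circ H = \ell$, so $\varphi$ defines a morphism $k\to\ell$ in $\cM_{M/}$; and $\varphi$ is right-homotopic to $\varphi_0$ via $\widetilde{H}$, hence represents the same class $\overline{\varphi}$ in $\Ho(\cM)$. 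This is exactly the assertion.

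There is one bookkeeping point to get right, namely which homotopies are ``right'' versus ``left'' and whether the hypotheses ($M$ cofibrant, $K$ fibrant-cofibrant, $L$ fibrant) are exactly what is needed at each invocation; this is where the hypothesis that $K$ is \emph{also} fibrant is used (to represent $\overline{\varphi}$ by a genuine map and to know homotopy on $K$ behaves well), while cofibrancy of $K$ and $M$ together with fibrancy of $L$ is what makes the homotopy between $\varphi_0\circ k$ and $\ell$ realizable as a right homotopy and what makes $p_0 : L^{I}\to L$ usable against the cofibration $k$. I expect the main obstacle to be purely organizational: choosing the path object and the direction of the homotopies so that a single lifting axiom application does the job, rather than anything deep — there is no genuine difficulty once the factorization $M \trivcofright \cdot$, $L \trivcofright L^{I}$ and the lifting axioms are set up correctly. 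An alternative, essentially equivalent route would be to build a cylinder object on $M$ and lift along the cofibration $M\sqcup M \rightarrowtail \Cyl(M)$, but the path-object version above keeps the cofibration $k$ fixed on the left and seems cleanest.
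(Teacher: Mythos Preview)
Your proposal is correct and follows essentially the same route as the paper: pick a representative $\varphi_0:K\to L$, realize the homotopy $\varphi_0\circ k\simeq \ell$ as a right homotopy $H:M\to \Path(L)$, lift $H$ along the cofibration $k$ against the trivial fibration $p_0:\Path(L)\to L$, and take the other projection of the lift. The only difference is cosmetic notation ($L^{I}$ versus $\Path(L)$, $\widetilde{H}$ versus $\varphi''$).
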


\begin{proof}
There exists a morphism $\varphi':K  \to L $ such that $\varphi'\circ k$ and $\ell$ are homotopic. 
Therefore we have the following commutative diagram except dotted arrow: 
\[
\begin{xymatrix}{
K \ar[d]_{\varphi'}  \ar@{-->}[dr]^{\varphi"} & *++{M} \ar@{>->}_{k}[l] \ar[d] \ar[dr]^\ell & \\
L & \Path(L) \ar@{->>}[l]_-{\sim}^{\textup{pr}_1} \ar@{->>}[r]^-{\sim}_-{\textup{pr}_2} & L
}\end{xymatrix}\]
where $\Path(L)$ is a path object of $L$. 
Since $k$ is a cofibration and $\textup{pr}_1$ is a trivial fibration, 
there exists a morphism $\varphi": K \to \Path(L)$ which completes the above commutative diagram. 
Now the composite morphism $\varphi := \textup{pr}_2\circ \varphi"$ satisfies the desired property. 
\end{proof}

\vspace{8pt}

\noindent
{\bf $\bullet$ Simplicial categories.} 
Recall that a simplicial category $\cM$ is a category which is enriched over the category $\sSet$ of simplicial sets. 
For a simplicial category $\cM$, we denote by $\Map_{\cM}(K,L)$ the mapping complex for $K,L \in \cM$. 
The underlying  category $\cM$ of a simplicial category $\cM$  is a category  
the objects is the same with that of $\cM$ and 
the Hom set is defined to be $\Map_{\cM}(K,L)_0$ the set of $0$-simplexes of $\Map_{\cM}(K,L)$.  
The homotopy category $\textup{h}\cM$ of a simplicial category $\cM$  is a category  
the objects is the same with that of $\cM$ and 
the Hom set is defined to be $\Hom_{\textup{h}\cM}(K,L) := \pi_0(\Map_{\cM}(K,L))$. 
A simplicial functor $F: \cM \to \cN$ is called a \textit{Dwyer-Kan equivalence} 
if the induced functor  $\textup{h}F: \textup{h}\cM \to \textup{h}\cN$ gives an equivalence 
of categories and the induced map $\Map_{\cM}(K,L) \to \Map_{\cN}(F(K),F(L))$ is a weak equivalence 
for each pair $K,L \in \cM$. 

A simplicial category $\cM$ is called \textit{fibrant} 
if every mapping  complex $\Map_{\cM}(K,L)$ is Kan complex, 
i.e., a fibrant object with respect to the standard model structure in $\sSet$.  
Let $X$ be a simplicial set and $x\in X_{0}$ a $0$-simplex of $X$. 
We often denote by $\{x\} \to X$ 
the map of simplicial set $\Delta^{0} \to X$ corresponds to 
$x$ under the natural bijection $X_{0} \cong \Map_{\sSet}(\Delta^{0},X)$. 
In particular for a morphism $\phi: K \to L$ 
we often denote by $\{ \phi \} \to \Map_{\cM}(K,L)$ the map of simplicial sets 
$\Delta^{0} \to \Map_{\cM}(K,L)$ which corresponds to the $0$-simplex $\phi \in \Map_{\cM}(K,L)_{0}=\Hom_{\cM}(K,L)$.

Let $M$ be an object of $\cM$. 
We equip the under category $\cM_{M/}$ with a structure of simplicial category in the following way: 
let $k: M \to K $ and $\ell: M \to L$ be objects of $\cM$. 
Then the mapping complex $\Map_{\cM_{M/}}(\ell,k)$ is defined to be 
the sub simplicial set of $\Map_{\cM}(L,K)$ which fits into the following pull back diagram of simplicial sets 
\begin{equation}\label{925 1116}
\begin{xymatrix}{
\Map_{\cM_{M/}}(\ell,k) \ar@{^{(}->}[r] \ar[d] & \Map_{\cM}(L,K) \ar[d]^{\ell^{\ast}} \\
\{k\} \ar[r] & \Map_{\cM}(M,K). 
}\end{xymatrix}\end{equation}
where the right vertical arrow $\ell^{\ast}:= \Map_{\cM}(\ell,K)$ is the induced morphism.

In a similar way, we equip the over category $\cM_{/M}$ with a structure of simplicial category.

A functor $-\otimes - : \cM \times \sSet \to \cM$ is called \textit{tensor product} 
if we have  a natural  isomorphism of simplicial sets 
\[
\Map_{sSet}(X,\Map_{\cM}(K,L)) \cong \Map_{\cM}(K\otimes X,L)
\]
for $X \in \sSet$ and $K,L\in \cM$. 
A functor $\cM \times \sSet^{\textup{op}} \to \cM, (K,X) \mapsto K^{X}$ is called \textit{cotensor product} 
if we have a natural isomorphism of simplicial sets 
\[
\Map_{\sSet}(X,\Map_{\cM}(K,L)) \cong \Map_{\cM}(K,L^{X}). 
\]

Let $\cM$ be a simplicial category having a cotensor product $\cM \times \sSet^{\textup{op}} \to \cM$.  
For an object $k:M \to K$ of the under category $\cM_{M/}$  
and a  simplicial set $X$,  
we define an object $k^X$ of $\cM_{/M}$ 
to be the object given by the composition 
\[
M \xrightarrow{k} K \cong K^{\Delta^0} \xrightarrow{K^{\textbf{uni}}} K^{X}
\] 
where we denote by 
$\textbf{uni}$ a unique map $X \to \Delta^0$. 
Then we claim that 
the functor $\cM_{M/} \times \sSet^{\textup{op}} \to \cM, (k,X) \mapsto k^{X}$ 
is a cotensor product. 
Indeed we have a commutative diagram 
\[
\begin{xymatrix}{
\Map_{\sSet}(X,\Map_{\cM}(L,K)) \ar[d]_{\Map_{\sSet}(X,\ell^{\ast})} \ar@{=}[r]^{\qquad\sim} & 
\Map_{\cM}(L,K^{X}) \ar[d]^{\ell^{\ast}}\\
\Map_{\sSet}(X,\Map_{\cM}(M,K)) \ar@{=}[r]^{\qquad\sim}  & \Map_{\cM}(M,K^{X}) \\ 
\Map_{\sSet}(X,\{ k\}) \ar@{=}[r]^{\sim} \ar[u] & \{k^{X}\}, \ar[u]
}\end{xymatrix}
\]
which implies that we have a natural isomorphism 
\[
\Map_{\sSet}(X,\Map_{\cM_{M/}}(\ell,k)) \cong \Map_{\cM_{M/}}(\ell, k^{X}).
\]

In a similar way, 
for a simplicial category $\cM$ having a tensor product $-\otimes-: \cM \times \sSet \to \cM$, 
we equip the over category $\cM_{/M}$ with a tensor product in the following way: 
for an object $p : P \to M$ of $\cM_{/M}$ 
and a  simplicial set $X$ 
we define an object $p\otimes X $ of $\cM_{/M}$ 
to be the object given by the composition 
\[
P \otimes X \xrightarrow{1_{P}\otimes \textbf{uni}} P \otimes \Delta^0 \cong P \xrightarrow{p} M
\] 
where we denote by 
$\textbf{uni}$ a unique map $X \to \Delta^0$.

\vspace{8pt} 

\noindent
{\bf $\bullet$  Simplicial model categories.}
We recall that 
a simplicial model category $\cM$ is not only a simplicial category equipped with a model structure, 
but also assumed to have a tensor product and a cotensor product 
satisfying axioms which 
state that the structure of simplicial category and the model structure are consistent.   
We remained the readers that 
\begin{enumerate}
\item for a cofibration $K \rightarrowtail K'$ and a fibrant object $L$ of a simplicial model category $\cM$, 
the induce morphism $\Map_{\cM}(K',L) \to \Map_{\cM}(K,L)$ is a fibration of simplicial sets 

\item 
for a cofibrant object $K$ and a fibration $L \to L'$, 
the induced morphism  $\Map_{\cM}(K,L) \to \Map_{\cM}(K,L')$ is a fibration. 

\item the fibrant-cofibrant part $\cM^{\circ}$ is a fibrant simplicial category. 

\item the homotopy category $\Ho(\cM^{\circ})$ with respect to the model structure 
and the homotopy category $\textup{h}\cM^{\circ}$ with respect to the structure of simplicial category 
are equivalent. 
In other words the identity on the objects gives an equivalence $\Ho(\cM^{\circ}) \simeq \textup{h}\cM^{\circ}$. 
\end{enumerate}  
In particular, the last two properties are important in a relationship with $\infty$-categories.

The under category $\cM_{M/}$  of a simplicial model category $\cM$ 
fail to become a simplicial category, 
since $\cM_{M/}$ is not tensored over $\sSet$. 
However we can prove 

\begin{lemma}\label{925 1053}
Let $k: M \to K$ be a fibrant object and $\ell: M \to L$ a cofibrant object of the under category $\cM_{M/}$. 

(1) 
The mapping complex $\Map_{\cM_{M/}}(\ell,k)$ is Kan complex. 
In particular  the simplicial category $\cM^{\circ}$ is a fibrant simplicial category.

(2) 
The object  $k^{\Delta^1}$ is a path object of $p$. 
  
(3) 
Two  morphisms $f,g: \ell \to k$ are  homotopic if and only if  
then they are weak homotopy equivalence, i.e., 
the images of $f,g$ in $\pi_{0}(\Map_{\cM_{M/}}(\ell,k))$ coincide. 

(4) 
Assume that $k$ and $\ell$ are fibrant-cofibrant objects. 
Then the morphism $\phi: \ell \to k$ is weak homotopy equivalence 
if and only if $\phi$ is weak equivalence. 
In particular 
 the identity on the objects gives an equivalence  
 $\textup{h}\cM^{\circ}_{M/} \simeq \Ho(\cM_{M/}^{\circ})$. 
\end{lemma}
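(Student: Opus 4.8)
The plan is to treat the four parts in order, the central device being the cotensor $k^{\Delta^{1}}$, which bridges the simplicial enrichment of $\cM_{M/}$ and its (non-simplicial) model structure. For (1) I would feed the defining pull-back square for $\Map_{\cM_{M/}}(\ell,k)$ recalled above into the recalled fact that, for a cofibration and a fibrant object, the induced map of mapping complexes is a fibration of simplicial sets. Indeed, being cofibrant in $\cM_{M/}$ means that $\ell\colon M\to L$ is a cofibration of $\cM$ and being fibrant means that $K$ is fibrant in $\cM$, so $\ell^{\ast}=\Map_{\cM}(\ell,K)\colon\Map_{\cM}(L,K)\to\Map_{\cM}(M,K)$ is a fibration of simplicial sets; since $\Map_{\cM_{M/}}(\ell,k)$ is its pull-back along $\{k\}=\Delta^{0}$, the map $\Map_{\cM_{M/}}(\ell,k)\to\Delta^{0}$ is a fibration, hence the mapping complex is a Kan complex. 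Specializing to an arbitrary pair of fibrant--cofibrant objects then shows that $\cM^{\circ}_{M/}$ is a fibrant simplicial category.

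For (2) I would spell out $k^{\Delta^{1}}$ together with the map $K^{\mathbf{uni}}\colon K\to K^{\Delta^{1}}$ induced by $\mathbf{uni}\colon\Delta^{1}\to\Delta^{0}$ and the map $(K^{d^{0}},K^{d^{1}})\colon K^{\Delta^{1}}\to K\times K$ induced by the two vertices $d^{0},d^{1}\colon\Delta^{0}\to\Delta^{1}$. By the very definition of $k^{X}$ these underlie morphisms $k\to k^{\Delta^{1}}\to k\times k$ in $\cM_{M/}$ whose composite is the diagonal (using $\mathbf{uni}\circ d^{\epsilon}=\id_{\Delta^{0}}$). Since $d^{0}$ is a trivial cofibration and $K$ is fibrant, $K^{d^{0}}\colon K^{\Delta^{1}}\to K$ is a trivial fibration, so two-out-of-three against $K^{d^{0}}\circ K^{\mathbf{uni}}=\id_{K}$ makes $K^{\mathbf{uni}}$ a weak equivalence; since $\partial\Delta^{1}\hookrightarrow\Delta^{1}$ is a cofibration and $K$ is fibrant, $K^{\Delta^{1}}\to K^{\partial\Delta^{1}}=K\times K$ is a fibration. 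As weak equivalences and fibrations in $\cM_{M/}$ are exactly those detected by the forgetful functor to $\cM$, the same statements hold in $\cM_{M/}$, so $k^{\Delta^{1}}$ is a good path object of $k$. This is the step I expect to be the main obstacle: one must be careful about the variance of the SM7-type axioms when working with a cotensor rather than a tensor, and about the fact that the model structure on $\cM_{M/}$ is created from that of $\cM$. Once this is settled, the remaining parts are formal.

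For (3), assume $\ell$ cofibrant and $k$ fibrant. If $f,g\colon\ell\to k$ are homotopic in $\cM_{M/}$ then, by the standard comparison of homotopies in a model category (\cite{Hovey}), since $\ell$ is cofibrant and $k$ fibrant the homotopy relation is realised by a right homotopy through the good path object $k^{\Delta^{1}}$; conversely a right homotopy through $k^{\Delta^{1}}$ is in particular a homotopy. Now by the cotensor adjunction $\Map_{\sSet}(X,\Map_{\cM_{M/}}(\ell,k))\cong\Map_{\cM_{M/}}(\ell,k^{X})$ recalled above, a right homotopy $\ell\to k^{\Delta^{1}}$ is precisely a $1$-simplex of $\Map_{\cM_{M/}}(\ell,k)$, whose two vertices, extracted by applying $K^{d^{0}}$ and $K^{d^{1}}$, are $f$ and $g$. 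Thus $f$ and $g$ are homotopic iff they are joined by a single $1$-simplex of $\Map_{\cM_{M/}}(\ell,k)$, and since this mapping complex is a Kan complex by part (1), that is equivalent to $f$ and $g$ having the same image in $\pi_{0}(\Map_{\cM_{M/}}(\ell,k))$.

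For (4), assume $k,\ell$ fibrant--cofibrant. By Whitehead's theorem in the model category $\cM_{M/}$ (\cite{Hovey}), $\phi\colon\ell\to k$ is a weak equivalence iff it is a homotopy equivalence, i.e.\ invertible modulo the homotopy relation; by part (3) this relation is equality of images in $\pi_{0}\Map_{\cM_{M/}}$, so $\phi$ is a weak equivalence iff it becomes invertible in $\textup{h}\cM_{M/}$, that is, iff it is a weak homotopy equivalence. Finally, part (3) identifies $\Hom_{\textup{h}\cM^{\circ}_{M/}}(\ell,k)=\pi_{0}\Map_{\cM_{M/}}(\ell,k)$ with the set of homotopy classes of maps $\ell\to k$, which is $\Hom_{\Ho(\cM^{\circ}_{M/})}(\ell,k)$; compatibility with composition being clear, this yields the asserted identity-on-objects equivalence $\textup{h}\cM^{\circ}_{M/}\simeq\Ho(\cM^{\circ}_{M/})$.
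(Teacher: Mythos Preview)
Your proof is correct and follows essentially the same approach as the paper. Both arguments pivot on the same two facts: the pull-back description of $\Map_{\cM_{M/}}(\ell,k)$ together with the SM7 axiom to get the Kan complex in (1), and the identification of $1$-simplices of this mapping complex with right homotopies through the path object $k^{\Delta^{1}}$ in (3), with Hovey's comparison of homotopies supplying the converse direction; your treatment of (4) via Whitehead is just an explicit unpacking of the paper's terse ``follows from (3).''
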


\begin{proof}
(1) By the definition of the model structure in $\cM_{M/}$, 
the morphism $\ell: M \to  L$ is a cofibration in $\cM$ and 
the co-domain $K$ of $k$ is a fibrant object of $\cM$. 
Hence the induced  morphism $\ell^{\ast}=\Map_{\cM}(\ell,K): \Map_{\cM}(L,K) \to \Map_{\cM}(M,K)$ 
is a Kan fibration of simplicial sets. 
By the pull back diagram (\ref{925 1116}) we conclude that the simplicial set $\Map_{\cM_{M/}}(\ell,k)$ 
is a Kan complex.

(2) Using  the characterization of fibrations and cofibrations of $\cM_{/M}$ 
and the axioms of simplicial model category, we can check that 
the morphism $k^{\textbf{uni}}:k \to k^{\Delta^{1}} $ is weak equivalence and 
the induced morphism $k^{\iota}: k^{\Delta^{1}} \to k^{\{0\}\sqcup \{1\}} \cong k\times k$ is a fibration 
where  $\iota: \{0\} \sqcup\{1\} \to \Delta^1$ is the canonical inclusion. 

(3) Since  the mapping complex $\Map_{\cM_{M/}}(\ell,k)$ is a Kan complex by (1),  
 the $0$-th homotopy is given by the co-equalizer 
\[
\pi_{0}(\Map_{\cM_{M/}}(\ell,k)) =
 \textup{coeq}\left[d_0,d_1: \Map_{\cM_{M/}}(\ell,k)_1 \rightrightarrows \Map_{\cM_{M/}}(\ell,k)_0 \right]
\]
where $d_0,d_1$ are the boundary maps. 
Let $\iota_{0}:\Delta^{0} \cong \{0\} \to \Delta^1$ and $\iota_{1}: \Delta^{0}\cong  \{1\} \to \Delta^1$ be canonical inclusions. 
We have the following commutative diagram 
\[
\begin{xymatrix}{
\Map_{\cM_{M/}}(\ell,k)_{1} \ar[d]_{d_a} \ar@{=}[r]^{ \sim} & 
\Hom_{\cM_{M/}}(\ell,k^{\Delta^{1}}) \ar[d]^{\Hom_{\cM_{M/}}(\ell,k^{\iota_a})} \\
 \Map_{\cM_{M/}}(\ell,k)_{0}  \ar@{=}[r]^{ \sim} & 
\Hom_{\cM_{M/}}(\ell,k) 
}\end{xymatrix}
\]
for $a = 0,1$. 
Therefore two morphisms  $f,g: \ell \to k$ are weak homotopy equivalent 
if and only if we have a commutative diagram 
\begin{equation}\label{926 1254}
\begin{xymatrix}{
& \ell \ar[dl]_{f} \ar[d]^{H} \ar[dr]^{g} \\
k & k^{\Delta^{1}} \ar[l]^{k^{\iota_{0}}} \ar[r]_{k^{\iota_{1}}} & k.
}\end{xymatrix}\end{equation}
for some $H: \ell \to k^{\Delta^{1}}$. 
Thus by (2) if $f$ and $g$ are weak homotopy equivalent then they are homotopic.

Conversely assume that $f$ and $g$ are homotopic. 
Then by \cite[Proposition 1.2.5]{Hovey} 
there exists a right homotopy from $f$ to $g$ using the path object $k^{\Delta^{1}}$. 
Namely we have the above commutative diagram (\ref{926 1254}) for some $H$. 
Therefore $f$ and $g$ are weak homotopy equivalent.

(4) follows from (3).

\end{proof}

In a similar way we can prove 

\begin{lemma}\label{App B lem 1}
Let $p: P\to M $ be a fibrant object  and $q: Q \to M$ a cofibrant  of $\cM_{/M}$. 

(1) 
The mapping complex $\Map_{\cM_{/M}}(p,q)$ is a Kan complex. 
 In particular the simplicial category $\cM_{/M}^{\circ}$ is a fibrant simplicial category.

(2) 
The object  $q \otimes \Delta^1$ is a cylinder object of $q$. 
  
(3) 
Two morphisms $f,g: p\to q$ are  homotopic if and only if 
 they are weak homotopy equivalence, i.e., 
the images of $f,g$ in $\pi_{0}(\Map_{\cM_{/M}}(p,q))$ coincide. 

(4) 
Assume that $p$ and $q$ are fibrant-cofibrant. 
Then 
the the morphism $\phi: p \to q$ is weak equivalence 
if and only if $\phi$ is weak homotopy equivalence. 
In particular, the identity on the objects gives an equivalence
 $\textup{h}\cM^{\circ}_{/M} \simeq \Ho(\cM^{\circ}_{/M})$
\end{lemma}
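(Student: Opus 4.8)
The plan is to deduce this lemma from Lemma \ref{925 1053} by passing to the opposite. Every ingredient of that proof has a dual: cotensors become tensors, path objects become cylinder objects, fibrations become cofibrations, and the under category $\cM_{M/}$ becomes the over category $\cM_{/M}$. Formally, one applies Lemma \ref{925 1053} to the opposite simplicial model category $\cM^{\textup{op}}$, using that its tensor is the cotensor of $\cM$ and conversely, that $(\cM^{\textup{op}})_{M/}=(\cM_{/M})^{\textup{op}}$, that a fibrant (resp. cofibrant) object of $\cM_{/M}$ is a cofibrant (resp. fibrant) object of $(\cM^{\textup{op}})_{M/}$, and that mapping complexes in $(\cM^{\textup{op}})_{M/}$ are those of $\cM_{/M}$ with the two arguments reversed; under this dictionary the four assertions here are exactly the four assertions of Lemma \ref{925 1053}. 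I will instead sketch the direct transcription, writing $q$ for the cofibrant and $p$ for the fibrant object, so that the mapping complex of interest is $\Map_{\cM_{/M}}(q,p)$ (from the cofibrant object to the fibrant one), in parallel with $\Map_{\cM_{M/}}(\ell,k)$.

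For (1) I would first record the over-category analogue of the pull-back square (\ref{925 1116}): $\Map_{\cM_{/M}}(q,p)$ is the pull-back of the postcomposition map $\Map_{\cM}(Q,P)\to\Map_{\cM}(Q,M)$ induced by $p$ along the $0$-simplex of $\Map_{\cM}(Q,M)$ given by $q$. Since $Q$ is cofibrant in $\cM$ (as $q$ is cofibrant in $\cM_{/M}$) and $p\colon P\to M$ is a fibration in $\cM$ (as $p$ is fibrant in $\cM_{/M}$), the simplicial model category axioms recalled above make that postcomposition map a Kan fibration; its pull-back over a point is therefore a Kan complex, and specialising to fibrant-cofibrant objects gives that $\cM^{\circ}_{/M}$ is a fibrant simplicial category. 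For (2), exactly as in the proof of Lemma \ref{925 1053}(2) but with the tensoring in place of the cotensoring, I would use the description of (co)fibrations in $\cM_{/M}$ and the simplicial axioms to check that $q\otimes\iota\colon q\otimes(\{0\}\sqcup\{1\})\cong q\sqcup q\to q\otimes\Delta^{1}$ is a cofibration and $q\otimes\textbf{uni}\colon q\otimes\Delta^{1}\to q\otimes\Delta^{0}\cong q$ is a weak equivalence, where $\iota$ and $\textbf{uni}$ are the canonical maps; this exhibits $q\otimes\Delta^{1}$ as a cylinder object of $q$.

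For (3) I would use (1) to write $\pi_{0}(\Map_{\cM_{/M}}(q,p))$ as the coequaliser of the two boundary maps $d_{0},d_{1}\colon\Map_{\cM_{/M}}(q,p)_{1}\rightrightarrows\Map_{\cM_{/M}}(q,p)_{0}$, and then use the tensor adjunction to identify $\Map_{\cM_{/M}}(q,p)_{1}\cong\Hom_{\cM_{/M}}(q\otimes\Delta^{1},p)$ compatibly with the $d_{a}$ and with the inclusions $\iota_{a}\colon\Delta^{0}\to\Delta^{1}$ for $a=0,1$. Hence $f,g\colon q\to p$ agree in $\pi_{0}$ if and only if there is $H\colon q\otimes\Delta^{1}\to p$ with $H\circ(q\otimes\iota_{0})=f$ and $H\circ(q\otimes\iota_{1})=g$; by (2) this $H$ is a left homotopy through a cylinder object, so weak homotopy equivalence implies homotopy, and the converse follows from \cite[Proposition 1.2.5]{Hovey} applied in $\cM_{/M}$ (valid since the source $q$ is cofibrant and the target $p$ fibrant), which produces such a left homotopy through $q\otimes\Delta^{1}$. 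Finally (4) is immediate from (3) in the same way that Lemma \ref{925 1053}(4) follows from \ref{925 1053}(3): between fibrant-cofibrant objects weak equivalences and weak homotopy equivalences coincide, whence the identity on objects induces an equivalence $\textup{h}\cM^{\circ}_{/M}\simeq\Ho(\cM^{\circ}_{/M})$.

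The step that needs most care is none of the four individually but the duality bookkeeping underpinning all of them: checking that $\cM^{\textup{op}}$ is genuinely a simplicial model category with tensor and cotensor interchanged; that the model and simplicial structures it induces on $(\cM^{\textup{op}})_{M/}=(\cM_{/M})^{\textup{op}}$ are the ones used here; that cylinder objects in $\cM_{/M}$ correspond to path objects in $(\cM^{\textup{op}})_{M/}$; and, at each invocation of the axioms, keeping straight which of the two given objects must be cofibrant and which must carry a fibration for its structure map (this is dictated by the variance of the pull-back square above and of the tensor adjunction). Once that dictionary is fixed, every line of the proof is the mirror image of a line in the proof of Lemma \ref{925 1053}.
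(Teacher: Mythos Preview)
Your proposal is correct and is exactly the approach the paper indicates: the paper states only ``In a similar way we can prove'' this lemma, i.e., dualize the proof of Lemma~\ref{925 1053}, which is precisely what you do. Your observation that the correct mapping complex is $\Map_{\cM_{/M}}(q,p)$ (cofibrant source, fibrant target) rather than $\Map_{\cM_{/M}}(p,q)$ as literally written is well taken---the stated direction appears to be a slip in the paper, and your corrected reading is the one that makes the pull-back argument go through.
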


\subsection{Homotopy theory of dg-categories}

For dg-categories, we refer to \cite{Drin,ddc,Keller2,Tab,Toen,Toen2}. 

\noindent 
{\bf $\bullet$ The category $\cC(\cA)$ of dg $\cA$-modules.}

Trough out of this paper $\kk$ is a commutative ring 
and dg-categories are dg-categories over $\kk$. 
Let  $\cA$ be a small dg-category.   
For a pair of objects  $a,b\in \cA$, we denote by $\Hom_{\cA}^{\bullet}(a,b)$ the Hom complex of $\cA$. 
We denote by $\cC(\cA)$ the category of dg $\cA$-modules 
and by $\cCdg(\cA)$ the dg-category of dg $\cA$-modules. 
For simplicity we set ${}_{\cA}(M,N):=\Hom_{\cCdg(\cA)}^{\bullet}(M,N)$ for dg $\cA$-modules $M$ and $N$.

It is known that $\cC(\cA)$ admits two structure if combinatorial model category whose weak equivalences are 
the quasi-isomorphisms:  

\begin{enumerate}
\item The projective model structure, 
whose fibrations are the term-wise surjection. 

\item The injective model structure, 
whose cofibrations are the term-wise injection. 
\end{enumerate}
Note that every object of $\cC(\cA)$ is projective fibrant and injective cofibrant. 
Through out of this paper 
we equip $\cC(\kk)$ with the projective model structure. 

For an object $a\in \cA$, 
we denote by $a^{\wedge}$ the free dg $\cA$-module at $a$, 
that is, 
the dg-functor $\cA^{\textup{op}} \to \cCdg(\kk)$ 
which sends $b\in \cA$ to $\cA^{\bullet}(b,a)$. 
A dg $\cA$-module $Q$ is called \textit{relative projective} 
if, in $\cC(\cA)$, it is a direct sum of a direct summand of dg-modules of the form 
$a^{\wedge}[n], a\in \cA, n\in \ZZ$. 
We denote by ${}^{\wedge}a$ the free dg left $\cA$-module at $a$, 
that is, 
the dg-functor $\cA \to \cCdg(\kk)$ 
which sends $b\in \cA$ to $\cA^{\bullet}(a,b)$.

We fix an injective co-generator $E$ of $\Mod \kk$. We set  $\bfD:= {}_{\kk}(-,E)$.  
We define $a^{\vee}:= \bfD({}^{\wedge}a)$. 
Namely 
 $a^{\vee}$ is the dg $\cA$-module $\cA^{\textup{op}} \to \cC(\kk)$ 
which sends $b\in \cA$ to ${}_{\kk}(\cA^{\bullet}(a,b),E)$.
A dg $\cA$-module is called \textit{relative injective} 
if, in $\cC(\cA)$, it is a direct summand of a direct product of modules of the form $a^{\vee}[n],a\in \cA,n\in\ZZ$. 
We can easily check that 
if $N$ is a projectively cofibrant left dg $\cA$-module, then $\bfD(N)$ is injectively fibrant.

Let $\cA^{\#}$ be the underlying graded category of $\cA$. 
We denote by $\cG(\cA^{\#}) $ the category of graded $\cA^{\#}$ modules. 
Let  $\#: \cC(\cA) \to \cG(\cA^{\#})$ be the forgetful functor. 
A dg $\cA$-module $M$ is determined by $M^{\#}$ and the differential $d_M$ on $M^{\#}$.  
Therefore we may write $M= (M^{\#},d_{M})$. By abusing notation, we often write $M= (M, d_{M})$. 
A sequence $L \to M \to N$ of dg $\cA$-modules is called 
$\#$-\textit{exact} if the induced sequence $L^{\#} \to M^{\#} \to N^{\#}$ is 
an exact sequence of graded $\cA^{\#}$-modules. 
Since the functor $\#$ is faithful, 
we may use the morphism $f^{\#}:M^{\#} \to N^{\#}$ of the underlying modules 
to denote a morphism $f: M\to N$ of dg $\cA$-modules. 

For a morphism $f:M \to N$, 
we set the cone $\cone(f)$ and the co-cone $\cocone(f)$ to be 
\[
\begin{split}
\cone(f) &:= \left( N\oplus M[1], \begin{pmatrix} d_N & f \\ 0& -d_M \end{pmatrix} \right),\\ 
\cocone(f)& := \left( N[-1] \oplus M, \begin{pmatrix} -d_N & f \\ 0 & d_M \end{pmatrix} \right). 
\end{split}
\] 
Observe that there are following $\#$-exact sequences 
\begin{equation}\label{1010 101}
\begin{split}
&0\to N \xrightarrow{{}^{t}(1_N,0)} \cone(f) \xrightarrow{(0,1_{M[1]}) } M[1] \to 0,\\
&0\to N[-1]  \xrightarrow{{}^{t}(1_{N[-1]},0)}\cocone(f) \xrightarrow{(0,1_{M})} M \to 0. 
\end{split}
\end{equation}

By \cite[Proof of Lemma 2.3]{ddc} 
the functor $\#$ has the left adjoint $F_{\lambda}$ and the  right adjoint $F_{\rho}$. 
Therefore limits and colimits  commute with $\#$. 
In other words, 
the underlying graded module of a (co)limit dg $\cA$-module 
is obtained as the (co)limit of the underlying graded modules.  

A dg $\cA$-module $M$ is called a \textit{free} 
if it is a direct sum of modules of the form $a^{\wedge}[n]$ for $a \in \cA$ and 
$n \in \ZZ$. 
A dg $\cA$-module $P$ is called a \textit{semi-free module} 
if it has an exhaustive filtration $0=P_0 \subset P_1 \subset \cdots P$ 
such that each quotient $P_{i}/P_{i-1}$ is free. 
Note that the underlying graded $\cA^{\#}$-module $P^{\#}$ of a semi-free dg $\cA$-module $P$ is free 
and that a semi-free dg $\cA$-module $P$ is projectively cofibrant.  
It is known that every dg $\cA$-module $M$ has a semi-free resolution, i.e., 
a projectively trivial  fibration $P \stackrel{\sim}{\twoheadrightarrow} M$ with $P$ semi-free. 
(See \cite[Appendix III]{Drin}.) 
Dually let $Q$ be a semi-free left dg $\cA$-module. 
Then 
the  dg $\cA$-module $\bfD(Q)$ is injectively fibrant and 
the underlying graded $\cA^{\#}$-module $\bfD(Q)^{\#}$ is injective. 
Moreover for every dg $\cA$-module $M$ there exists a trivial injective cofibration 
$M \trivcofright \bfD(Q)$ for some semi-free left module $Q$. 
 
\begin{lemma}\label{prel 1 lem 0}
Every  projective cofibrant object $M$ is a direct summand of some semi-free module $P$. 
Consequently  
the underlying graded $\cA^{\#}$-module $M^{\#}$ is projective. 
Dually every injective fibrant object $N$ is a direct summand of $\bfD(Q)$ for some left semi-free module $Q$. 
Consequently 
 the graded $\cA^{\#}$-module $N^{\#}$ is injective.  
\end{lemma}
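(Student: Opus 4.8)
The plan is to prove Lemma \ref{prel 1 lem 0} by leveraging the two factorization/resolution facts recalled just before the statement: every dg $\cA$-module admits a semi-free resolution $P \trivfibright M$, and every dg $\cA$-module admits a trivial injective cofibration $M \trivcofright \bfD(Q)$ with $Q$ left semi-free. First I would take a projective cofibrant object $M$ and choose a semi-free resolution $p: P \trivfibright M$; this is a projective trivial fibration. Since $M$ is projectively cofibrant and $p$ is a trivial fibration, the identity $1_M: M \to M$ lifts along $p$ by the lifting property, giving a section $s: M \to P$ with $p \circ s = 1_M$. Hence $M$ is a retract of $P$ in $\cC(\cA)$, i.e.\ a direct summand of the semi-free module $P$.

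Next I would deduce the consequence about the underlying graded module. The forgetful functor $\#: \cC(\cA) \to \cG(\cA^{\#})$ is additive, so it carries the retract datum $M \xrightarrow{s} P \xrightarrow{p} M$ with $p\circ s = 1_M$ to an analogous retract datum $M^{\#} \xrightarrow{s^{\#}} P^{\#} \xrightarrow{p^{\#}} M^{\#}$ in $\cG(\cA^{\#})$. Since $P$ is semi-free, its underlying graded module $P^{\#}$ is free (this was noted in the paragraph introducing semi-free modules), hence projective; and a direct summand of a projective graded module is projective. Therefore $M^{\#}$ is projective.

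For the dual statement I would argue symmetrically: given an injective fibrant object $N$, choose a trivial injective cofibration $j: N \trivcofright \bfD(Q)$ with $Q$ a semi-free left dg $\cA$-module. Since $N$ is injectively fibrant and $j$ is a trivial cofibration, the lifting property provides a retraction $r: \bfD(Q) \to N$ with $r \circ j = 1_N$, so $N$ is a direct summand of $\bfD(Q)$. Applying $\#$ and using that $\bfD(Q)^{\#}$ is injective (as recalled just before the lemma, since $Q$ semi-free implies $Q^{\#}$ free, and $\bfD$ turns free graded left modules into injective graded right modules), we conclude $N^{\#}$ is a direct summand of an injective graded $\cA^{\#}$-module, hence injective.

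I do not anticipate a serious obstacle here: the lemma is essentially a packaging of the two resolution statements together with the elementary "cofibrant object is a retract of any trivial-fibration source" argument (and its dual). The only point requiring a little care is making sure the retract passes cleanly through $\#$ and that "direct summand of free/injective is projective/injective" is invoked in the graded module category $\cG(\cA^{\#})$ rather than over $\kk$; both are routine. If anything is mildly delicate it is the bookkeeping of left versus right modules in the dual half, ensuring $Q$ is a left semi-free module so that $\bfD(Q)$ is a right dg $\cA$-module with the stated injectivity properties, but this is exactly how the preceding paragraph set things up.
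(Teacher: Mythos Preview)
Your proposal is correct and follows exactly the same approach as the paper: take a semi-free resolution $p: P \trivfibright M$, use the lifting property of the cofibrant object $M$ against the trivial fibration $p$ to obtain a section, and conclude $M$ is a direct summand of $P$; then dualize. The paper's proof is simply a terser version of what you wrote.
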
 

\begin{proof}
We prove the first statement. The second statement is proved in a dual way. 
Let $p:P \stackrel{\sim}{\twoheadrightarrow} M$ be a semi-free resolution. 
Using the lifting property,  we see that $p$ split and $M$ is a direct summand of $P$.  
\end{proof}

\begin{lemma}\label{922 142}
Let $f: M \to N $ be a morphism of dg $\cA$-modules. 

(1) $f$ is a projective cofibration if and only if 
it is a term-wise injection with the projective cofibrant cokernel. 

(2) $f$ is a injective fibration if and only if 
it is a term-wise surjection with the injective fibrant kernel.

\end{lemma}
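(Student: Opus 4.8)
The plan is to prove (1) and (2) in parallel, using the two combinatorial model structures on $\cC(\cA)$ recalled above together with Lemma \ref{prel 1 lem 0}. For (1) I would obtain the ``only if'' part formally from cofibrant generation of the projective model structure, and the ``if'' part by solving the relevant lifting problems directly; for (2) I would dualize in the injective model structure (or transport (1) across the exact duality $\bfD$).

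For the ``only if'' direction of (1): every projective cofibration is a retract of an $I$-cell complex, where $I$ is the set of generating cofibrations $a^{\wedge}\otimes(S^{n-1}\hookrightarrow D^{n})$ for $a\in\cA$, $n\in\ZZ$, and each such generator is visibly a term-wise injection (with free cokernel $a^{\wedge}[-n]$, which is reassuring in view of the statement). Since the forgetful functor $\#$ is exact and preserves colimits, and since term-wise injections --- i.e. monomorphisms of graded $\cA^{\#}$-modules --- are stable under pushout, transfinite composition and retract in the Grothendieck abelian category $\cG(\cA^{\#})$, every $I$-cell complex, hence every projective cofibration, is a term-wise injection. For the cokernel, note that $\coker f$ is the pushout of $f$ along $M\to 0$, so the cobase change $0\to\coker f$ of the cofibration $f$ is a cofibration; that is, $\coker f$ is cofibrant.

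For the ``if'' direction of (1): put $C:=\coker f$ and assume $f$ term-wise injective with $C$ cofibrant. By Lemma \ref{prel 1 lem 0}, $C$ is a retract of a semi-free module, hence is $K$-projective and $C^{\#}$ is a projective graded $\cA^{\#}$-module; in particular the $\#$-exact sequence $0\to M\to N\to C\to 0$ splits, so $N^{\#}\cong M^{\#}\oplus C^{\#}$, with differential restricting to $d_M$ and $d_C$ on the two summands and a single off-diagonal term $\delta\colon C^{\#}\to M^{\#}$ of degree one (satisfying $d_M\delta+\delta d_C=0$). To conclude I would check the left lifting property against an arbitrary trivial fibration $p\colon X\trivfibright Y$ with kernel $W$: given a square with legs $u\colon M\to X$ and $v\colon N\to Y$, any lift $w\colon N\to X$ is forced to restrict to $u$ on $M^{\#}$, and writing $w_C:=w|_{C^{\#}}$ the chain-map condition becomes $d(w_C)=u\circ\delta$ in the Hom-complex ${}_\cA(C,X)$, while $pw=v$ becomes $p_{\ast}w_C=v|_{C^{\#}}$. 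Because $C^{\#}$ is projective, $\Hom_{\cG(\cA^{\#})}(C^{\#},-)$ is exact, so $p_{\ast}\colon{}_\cA(C,X)\to{}_\cA(C,Y)$ is term-wise surjective with kernel ${}_\cA(C,W)$, which is acyclic since $C$ is $K$-projective; thus $p_{\ast}$ is a trivial fibration of complexes. Lifting $v|_{C^{\#}}$ to some degree-zero $w_0$, the element $d(w_0)-u\circ\delta$ is a cocycle of the acyclic complex ${}_\cA(C,W)$, and subtracting a primitive of it from $w_0$ gives the desired $w_C$. (Alternatively, one may reduce along the retract to $C$ semi-free and exhibit $f$ as a transfinite composition of cell attachments governed by $\delta$.)

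Part (2) I would prove dually in the injective model structure, whose cofibrations are the term-wise injections. For ``only if'': an injective fibration has the right lifting property against the trivial cofibrations $0\to\mathbb{D}$ for the contractible ``disk'' modules $\mathbb{D}$, which forces $f$ to be term-wise surjective, and $\ker f$, being the pullback of $f$ along $0\to N$, is fibrant. For the converse I would run the mirror obstruction argument: if $K:=\ker f$ is injective fibrant, then the dual half of Lemma \ref{prel 1 lem 0} makes $K^{\#}$ an injective graded module, so $M^{\#}\cong N^{\#}\oplus K^{\#}$; and since $\bfD$ and arbitrary products are exact, ${}_\cA(-,K)$ carries term-wise-injective quasi-isomorphisms to trivial fibrations of complexes, which is exactly what the lifting problems against trivial cofibrations need. (One may instead apply the exact faithful duality $\bfD$ to deduce (2) for right $\cA$-modules from (1) for left $\cA$-modules.) I expect the main obstacle to be these two ``if'' directions: everything hinges on producing the graded splitting --- which is precisely where Lemma \ref{prel 1 lem 0} is indispensable, the underlying graded module of the cokernel (resp.\ kernel) having to be projective (resp.\ injective) --- and on knowing that the Hom-complex functor out of a cofibrant module (resp.\ into an injective-fibrant one) is exact and preserves acyclicity; this again rests on ``cofibrant $=$ retract of semi-free''. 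The sign bookkeeping in the obstruction computation is routine.
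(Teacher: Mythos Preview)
Your proposal is correct and follows essentially the same approach as the paper, which simply defers to the methods of \cite[Lemma~2.3.9, Lemma~2.3.20]{Hovey} together with Lemma~\ref{prel 1 lem 0}; you have written out in detail precisely the argument the paper only references. The crucial use of Lemma~\ref{prel 1 lem 0} --- that a projective cofibrant cokernel has projective underlying graded module, forcing the $\#$-exact sequence to split --- is exactly the point the paper highlights, and your obstruction-theoretic lifting argument is the standard one from Hovey's treatment.
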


\begin{proof}
Using Lemma \ref{prel 1 lem 0} we can prove this lemma 
by the same methods of \cite[Lemma 2.3.9,Lemma 2.3.20]{Hovey}. 

\end{proof} 

\begin{lemma}\label{101 1250}
(1) 
Let $0\to L \to M \to N \to 0$ be a $\#$-exact sequence of dg $\cA$-modules. 
Assume that $L$ and $N$ are projectively cofibrant. 
Then so is  the middle term $M$.
Dually, 
if $L$ and $N$ are injectively fibrant, then so is  $M$.  

(2) Let $f:M\to N $ be a morphism of dg $\cA$-modules. 
If $M$ and $N$ are injectively fibrant, 
then the cone $\cone(f)$ and the co-cone $\cocone(f)$ are injectively fibrant. 
Dually, if $M$ and $N$ are projectively cofibrant, 
then the cone and the co-cone are  projectively cofibrant. 
\end{lemma}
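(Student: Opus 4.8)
The plan is to deduce both statements from Lemma \ref{922 142}, which characterizes the projective cofibrations and the injective fibrations among morphisms of dg $\cA$-modules, together with the two $\#$-exact sequences recorded in \eqref{1010 101}. Throughout I will use that the forgetful functor $\#$ creates (co)kernels, so that "$\#$-exact" really does supply term-wise injectivity/surjectivity and identifies kernels and cokernels with the evident ones.

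For (1), I would first note that in the $\#$-exact sequence $0\to L \to M \to N \to 0$ the map $L\to M$ is a term-wise injection with cokernel $N$, and the map $M\to N$ is a term-wise surjection with kernel $L$. If $L$ and $N$ are projectively cofibrant, then Lemma \ref{922 142}(1) applied to $L\to M$ (a term-wise injection whose cokernel $N$ is projectively cofibrant) shows that $L\to M$ is a projective cofibration; since $L$ is projectively cofibrant, $0\to L$ is a projective cofibration as well, and composing the two gives that $0\to M$ is a projective cofibration, i.e.\ $M$ is projectively cofibrant. The injectively fibrant case is dual: $M\to N$ is a term-wise surjection whose kernel $L$ is injectively fibrant, hence an injective fibration by Lemma \ref{922 142}(2), and $N\to 0$ is an injective fibration because $N$ is injectively fibrant, so the composite $M\to 0$ is an injective fibration and $M$ is injectively fibrant.

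For (2), I would simply feed the two $\#$-exact sequences of \eqref{1010 101}, namely $0\to N \to \cone(f)\to M[1]\to 0$ and $0\to N[-1]\to \cocone(f)\to M \to 0$, into part (1). The only additional input needed is that the shift functors $[1]$ and $[-1]$ preserve injectively fibrant (resp.\ projectively cofibrant) objects; this is immediate since shifting permutes the generating modules $a^{\vee}[n]$ (resp.\ $a^{\wedge}[n]$) and commutes with products and retracts — equivalently, each shift is a self-equivalence of the model category $\cC(\cA)$. Granting this, if $M$ and $N$ are injectively fibrant then so are $M[1]$ and $N[-1]$, and part (1) yields that $\cone(f)$ and $\cocone(f)$ are injectively fibrant; the projectively cofibrant case is identical with the roles of the two sequences and of cofibrant/fibrant interchanged.

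Since Lemma \ref{922 142} has already carried out the substantive argument, I do not expect a genuine obstacle. The only points demanding a sentence of care are the bookkeeping that "$\#$-exact" gives exactly the term-wise injection/surjection and the kernel/cokernel identifications required to invoke Lemma \ref{922 142}, and the (routine) verification that the shift functors behave as claimed on injectively fibrant and projectively cofibrant objects.
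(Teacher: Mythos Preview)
Your argument is correct and matches the paper's proof: both deduce (1) from Lemma~\ref{922 142} and then feed the $\#$-exact sequences~\eqref{1010 101} into (1) for part (2), using that shifts preserve cofibrant/fibrant objects. The only cosmetic difference is that the paper handles (2) by reducing the co-cone to the cone via $\cocone(f)=\cone(f)[-1]$ and then invoking a single exact sequence, whereas you apply (1) to both sequences separately; either way works.
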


\begin{proof}
We prove the first statements of (1) and (2).  
The second statements are  proved in dual ways.
(1)
By Lemma \ref{922 142} the morphism $L \to M$ is projective cofibration. 
Since it is a cofibration from a cofibrant object, 
the object $M$ is projectively cofibrant object. 

(2) 
It is clear that the shift functor $[-1]$ preserves projectively cofibrant object. 
Since $\cocone(f)= \cone(f)[-1]$, 
it is enough to check that $\cone(f)$ is projective cofibrant. 
From the $\#$-exact sequence (\ref{1010 101}) 
we conclude that $\cone(f)$ is projectively cofibrant. 
\end{proof}

\noindent
{\bf $\bullet$ The category $\dgCat$ of dg-categories.}  
We denote by $\tuH^0(\cA)$ the homotopy category of the dg-category $\cA$. 
Namely this is a graded category 
such that  the objects of $\tuH^0(\cA)$ is the same with that of $\cA$ 
and the Hom module is given by $\tuH^0(\cA)(a,b):= \tuH^0\left(\cA^{\bullet}(a,b)\right)$. 
A dg-functor $f:\cA \to \cB$ is called a \textit{quasi-equivalence} 
if the induced functor $\tuH^{0}(f): \tuH^{0}(\cA) \to \tuH^{0}(\cB)$ gives an equivalence of categories 
and 
the induced map $\Hom_{\cA}^{\bullet}(a,b) \to \Hom_{\cB}^{\bullet}(f(a),f(b))$ is a quasi-isomorphism.

Let $f: \cA \to \cB$ be a dg-functor. 
We have an adjoint pair 
\begin{equation}\label{926 837}
f^{\ast}:= - \otimes_{\cA}\cB : \cC(\cA) \rightleftarrows \cC(\cB) :f_{\ast} 
\end{equation}
where $f_{\ast}$ is the restriction functor. 
Recall that the restriction functor $f_\ast$ is defined to be 
$(f_\ast M)(a):= M(fa)$ for $a \in \cA$ and $M \in \cC(\cB)$. 
Therefore $f_{\ast}$ preserves quasi-isomorphisms, injective cofibrations and projective cofibrations.  
Note that we have a canonical isomorphism $f^{\ast}(\awe)\cong (fa)^{\wedge}$ for $a\in \cA$.  

\begin{lemma}\label{926 838}
If we equip both $\cC(\cA) $ and $\cC(\cB)$ with the projective model structures. 
Then the above adjoint (\ref{926 837}) become a Quillen adjoint pair. 
Moreover if $f$ is quasi-equivalence, then the above adjoint pair (\ref{926 837}) gives a Quillen equivalence. 
\end{lemma}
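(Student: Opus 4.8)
The plan is to treat the two assertions separately: the Quillen-adjunction statement is essentially formal, while the Quillen-equivalence statement needs a reduction along semi-free resolutions. For the Quillen adjunction, recall that in the projective model structure the fibrations are the termwise surjections and the weak equivalences are the quasi-isomorphisms, so it suffices to show that the restriction functor $f_{\ast}$ preserves both. Since $(f_{\ast}N)(a) = N(fa)$ for $a \in \cA$, the functor $f_{\ast}$ is computed objectwise, hence sends a termwise surjection to a termwise surjection; and it was already observed above that $f_{\ast}$ preserves quasi-isomorphisms. Therefore $f_{\ast}$ also preserves trivial fibrations, so $f_{\ast}$ is a right Quillen functor and the pair (\ref{926 837}) is a Quillen adjunction. (One could instead check directly, using Lemma \ref{922 142}, that $f^{\ast} = - \otimes_{\cA}\cB$ preserves cofibrations and trivial cofibrations, but the route through $f_{\ast}$ is shorter.)

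Now assume $f$ is a quasi-equivalence. Since every object of $\cC(\cB)$ is projectively fibrant, the criterion of \cite[Proposition 1.3.13]{Hovey} reduces the Quillen-equivalence claim to two points: (a) $f_{\ast}$ reflects quasi-isomorphisms, and (b) the unit $\eta_{M}\colon M \to f_{\ast}f^{\ast}M$ is a quasi-isomorphism for every projectively cofibrant $M$. For (a), suppose $g\colon M \to N$ in $\cC(\cB)$ has $f_{\ast}g$ a quasi-isomorphism, i.e.\ $M(fa) \to N(fa)$ is a quasi-isomorphism for all $a \in \cA$. Because $\tuH^{0}(f)$ is essentially surjective and a dg $\cB$-module carries an isomorphism of $\tuH^{0}(\cB)$ to a homotopy equivalence of complexes (hence a quasi-isomorphism), the map $M(b) \to N(b)$ is a quasi-isomorphism for every $b \in \cB$; thus $g$ is a quasi-isomorphism.

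For (b) I would argue along a semi-free resolution. When $M = a^{\wedge}[n]$ one has $f^{\ast}M \cong (fa)^{\wedge}[n]$ (as recorded above), and $\eta_{M}$ is, objectwise at $b \in \cA$, the map $\cA^{\bullet}(b,a)[n] \to \cB^{\bullet}(fb,fa)[n]$ induced by $f$, a quasi-isomorphism because $f$ is a quasi-equivalence. Both $f^{\ast}$ (a left adjoint) and $f_{\ast}$ (computed objectwise, hence preserving all colimits) commute with arbitrary colimits, and $\eta$ is natural, so the class of modules on which $\eta$ is a quasi-isomorphism is closed under coproducts and filtered colimits. A semi-free module $P$ has an exhaustive filtration $0 = P_{0} \subset P_{1} \subset \cdots$ with free subquotients, and each inclusion $P_{i-1}\hookrightarrow P_{i}$ is split as a map of graded modules, so $f^{\ast}$ and $f_{\ast}f^{\ast}$ carry $0 \to P_{i-1} \to P_{i} \to P_{i}/P_{i-1} \to 0$ to termwise-split short exact sequences of dg-modules; a transfinite induction — five lemma on the long exact cohomology sequences at successor steps, stability under filtered colimits at limit steps — then shows $\eta_{P}$ is a quasi-isomorphism. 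Finally every projectively cofibrant $M$ is a direct summand of a semi-free module by Lemma \ref{prel 1 lem 0}, and quasi-isomorphisms are stable under retracts, so $\eta_{M}$ is a quasi-isomorphism; combined with (a) and \cite[Proposition 1.3.13]{Hovey} this yields the Quillen equivalence.

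The step I expect to be the main obstacle is (b) for general cofibrant modules: one must carry the transfinite induction along the semi-free filtration and verify carefully that both functors are compatible with the relevant (co)limits and with the termwise-split short exact sequences that occur there. The Quillen-adjunction statement and the reflection of quasi-isomorphisms are formal.
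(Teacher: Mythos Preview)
The paper states this lemma without proof; it is a standard fact about dg-categories and the projective model structures. Your proof is correct and follows the standard route one would expect: verify that $f_{\ast}$ preserves termwise surjections and quasi-isomorphisms for the Quillen adjunction, and for the equivalence use the criterion from \cite[Proposition 1.3.13]{Hovey} (which simplifies here because every object is projectively fibrant) together with a semi-free induction to check that the unit is a quasi-isomorphism on cofibrant modules.

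One small remark on exposition: in part (b), your transfinite induction along the semi-free filtration is fine, but you can streamline it slightly by noting that the class $\{M : \eta_{M} \text{ is a quasi-isomorphism}\}$ is closed under shifts, mapping cones (via the five lemma applied to the cone sequence, since $f^{\ast}$ and $f_{\ast}$ are exact dg-functors), arbitrary direct sums, and filtered colimits along termwise-split monomorphisms; this immediately covers all semi-free modules without separating successor and limit ordinals. Either way the argument goes through, and there is nothing more to compare against in the paper.
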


The tensor product $\cA \otimes \cB$ of dg-categories $\cA$ and $\cB$ is defined in the following way.  
The sets of object is given by the product 
 $\Ob(\cA \otimes \cB) = \Ob(\cA) \times \Ob(\cB)$. 
We denote by $a\otimes b$ the object of $\cA\otimes\cB$ corresponding to $(a,b) \in \Ob(\cA)\times \Ob(\cB)$. 
The Hom-complex is given by  
 $(\cA\otimes \cB)^{\bullet}(a\otimes b , a'\otimes b') := \cA^{\bullet}(a,a') \otimes \cB(b,b')$. 
The composition and the identities are defined in an obvious way.

\begin{lemma}\label{103 815} 
Let $\cA$ be a locally  cofibrant dg-category and 
$f: \cB \to \cC$ a quasi-equivalence. 
Then the induced functor 
$1\otimes f: \cA\otimes \cB \to \cA \otimes \cC$ 
is a quasi-equivalence. 
\end{lemma}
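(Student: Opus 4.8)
\textbf{Proof proposal for Lemma \ref{103 815}.}

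The plan is to reduce the statement about the tensor product $\cA \otimes \cB \to \cA \otimes \cC$ to a hom-complex computation, using that quasi-equivalences are tested on hom-complexes and on the homotopy categories. First I would observe that the functor $1 \otimes f$ is bijective on objects (indeed $\Ob(\cA \otimes \cB) = \Ob(\cA) \times \Ob(\cB)$ and similarly for $\cC$, with $1 \otimes f$ acting as $\id \times f$ on objects, while $f$ is essentially surjective so $1 \otimes f$ is essentially surjective; in fact for the quasi-equivalence criterion it suffices that $\tuH^0(1 \otimes f)$ be an equivalence, and surjectivity on objects already gives that once fully-faithfulness is known). So the crux is to show that for objects $a \otimes b, a' \otimes b'$ of $\cA \otimes \cB$ the induced map on hom-complexes
\[
(\cA \otimes \cB)^{\bullet}(a \otimes b, a' \otimes b') = \cA^{\bullet}(a,a') \otimes_{\kk} \cB^{\bullet}(b,b') \longrightarrow \cA^{\bullet}(a,a') \otimes_{\kk} \cC^{\bullet}(fb, fb') = (\cA \otimes \cC)^{\bullet}(a \otimes fb, a' \otimes fb')
\]
is a quasi-isomorphism; this map is $\id_{\cA^{\bullet}(a,a')} \otimes f_{b,b'}$ where $f_{b,b'}\colon \cB^{\bullet}(b,b') \to \cC^{\bullet}(fb,fb')$ is a quasi-isomorphism by hypothesis.

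The key step is therefore that tensoring a quasi-isomorphism of complexes of $\kk$-modules with a \emph{cofibrant} complex of $\kk$-modules preserves quasi-isomorphisms; this is exactly where the hypothesis that $\cA$ is locally cofibrant enters, so that $\cA^{\bullet}(a,a')$ is a cofibrant object of $\cC(\kk)$ (equipped with the projective model structure, as fixed in the preliminaries). Concretely, for a cofibrant complex $P$ over $\kk$, the functor $P \otimes_{\kk} (-)$ is left Quillen (tensoring with a cofibrant object preserves weak equivalences between all objects by Ken Brown's lemma, since $P \otimes_{\kk} (-)$ sends trivial cofibrations to trivial cofibrations and every object of $\cC(\kk)$ is cofibrant as well — or directly: $P \otimes_{\kk} (-)$ preserves quasi-isomorphisms because $P$ is a complex of flat $\kk$-modules, being a retract of a bounded-below complex of free modules, so it computes the derived tensor product). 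Applying this with $P = \cA^{\bullet}(a,a')$ and the quasi-isomorphism $f_{b,b'}$ gives the desired quasi-isomorphism on hom-complexes.

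Having established full faithfulness of $1 \otimes f$ at the level of hom-complexes, I would conclude: $\tuH^0(1 \otimes f)$ is then fully faithful (it induces isomorphisms $\tuH^0(\cA^{\bullet}(a,a') \otimes \cB^{\bullet}(b,b')) \xrightarrow{\sim} \tuH^0(\cA^{\bullet}(a,a') \otimes \cC^{\bullet}(fb,fb'))$), and it is essentially surjective because every object $a \otimes c$ of $\cA \otimes \cC$ has $c$ isomorphic in $\tuH^0(\cC)$ to some $fb$, whence $a \otimes c \cong a \otimes fb = (1 \otimes f)(a \otimes b)$ in $\tuH^0(\cA \otimes \cC)$ (using that $\tuH^0$ of a dg-category built by tensoring respects such isomorphisms — more precisely, an isomorphism $c \cong fb$ in $\tuH^0(\cC)$ is a degree-$0$ cycle in $\cC^{\bullet}(fb, c)$ that is invertible up to homotopy, and $\id_a \otimes$ that cycle is then invertible up to homotopy in $(\cA \otimes \cC)^{\bullet}(a \otimes fb, a \otimes c)$). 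Thus $1 \otimes f$ is a quasi-equivalence.

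The main obstacle I anticipate is the essential-surjectivity / isomorphism-lifting bookkeeping at the level of $\tuH^0$ of the tensor product: one must check that $\id_a \otimes (-)$ carries an $\tuH^0$-isomorphism in $\cC$ to an $\tuH^0$-isomorphism in $\cA \otimes \cC$, which requires unwinding that if $u \in \cC^0(fb,c)$ and $v \in \cC^0(c,fb)$ with $vu - \id$ and $uv - \id$ boundaries, then the same relations hold after $\otimes$-ing with $\id_a$; this is a short but slightly fiddly computation with the Koszul-sign conventions in the tensor product of dg-categories. The hom-complex quasi-isomorphism step, by contrast, is routine once the local cofibrancy of $\cA$ is invoked to guarantee flatness/cofibrancy of $\cA^{\bullet}(a,a')$.
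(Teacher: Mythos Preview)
The paper states this lemma without proof; it is a standard fact about dg-categories (see e.g.\ To\"en's lecture notes). Your argument is correct and is exactly the standard one: local cofibrancy of $\cA$ ensures each $\cA^{\bullet}(a,a')$ is cofibrant (hence K-flat) in $\cC(\kk)$, so $\cA^{\bullet}(a,a')\otimes_{\kk}(-)$ preserves the quasi-isomorphism $f_{b,b'}$, giving quasi-full-faithfulness; essential surjectivity on $\tuH^0$ follows by tensoring with $\id_a$ a homotopy-inverse pair witnessing $c\cong fb$ in $\tuH^0(\cC)$, and the sign bookkeeping you flag is harmless because $\id_a$ has degree $0$.
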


We denote by $\dgCat$ the category of small dg-categories. 
It is proved by Tabuada \cite{Tab} 
that 
the category $\dgCat$ admits a structure of cofibrantly generated model category 
whose weak equivalences are the quasi-equivalences. 
(The characterization of fibration is known. However we don't use it.)

A dg-category  $\cA$  is called \textit{locally cofibrant} 
if for $a,b\in \cA$ the Hom complex $\Hom_{\cA}^{\bullet}(a,b)$ is projective cofibrant in $\cC(\kk)$. 
Note that by \cite[Lemma 2.3]{Toen} 
cofibrant dg-category is locally cofibrant. 
Using the same method of [loc. cite], 
we can prove the following lemma. 

\begin{lemma}\label{922 257}
Let $\cA$ be a locally cofibrant small dg-category 
and $f:\cA \rightarrowtail \cB$ a cofibration in $\dgCat$. 
Then for any $b\in \cB$,  
the dg $\cA$-module $f_{\ast}(b^{\wedge})$ is projective cofibrant. 
Dually for any $b\in \cB$, 
the left dg $\cA$ module $f_{\ast}({}^{\wedge}b)$ is projective cofibrant left dg $\cA$-module. 
\end{lemma}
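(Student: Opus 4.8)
The plan is to mimic the proof of \cite[Lemma 2.3]{Toen}, which handles the case $\cA = \kk$ (i.e.\ cofibrant dg-algebras have relatively projective, hence projectively cofibrant, free modules), and to carry the argument along the cofibration $f : \cA \rightarrowtail \cB$. First I would reduce to the case of a generating cofibration in $\dgCat$: since $f$ is a retract of a transfinite composition of pushouts of generating cofibrations, and since the classes of projectively cofibrant dg $\cA$-modules and of morphisms giving rise to them are closed under retracts, transfinite composition, and the relevant colimits (here one uses that $\#$ commutes with colimits, as recorded just before Lemma \ref{prel 1 lem 0}), it suffices to treat $f$ a pushout of one of the two standard generating cofibrations of Tabuada's model structure: the inclusion $\emptyset \hookrightarrow \kk$ (adjoining one object) and the maps of the form $\mathcal{S}(U) \to \mathcal{S}(V)$ built from a generating (trivial) cofibration $U \rightarrowtail V$ in $\cC(\kk)$ applied to a single Hom-complex.

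Second, for each such generating step I would compute $f_{\ast}(b^{\wedge})$ explicitly as a dg $\cA$-module. When $f$ adjoins a new object, $b^{\wedge}$ restricted to $\cA$ is either zero (if $b$ is the new object, since there are no maps from old objects to it in the pushout that are not already in $\cA$ — more precisely one analyzes the amalgamated structure) or already a free $\cA$-module, so cofibrancy is immediate or follows from local cofibrancy of $\cA$ via Lemma \ref{926 838}. When $f$ comes from attaching a cell $U \rightarrowtail V$ to a Hom-complex $\cA^{\bullet}(a_0,a_1)$, the Hom-complexes of $\cB$ are built from those of $\cA$ by iterated tensoring with $V$ and $U$ over $\kk$, arranged along paths through the two distinguished objects; the key point is that $\cA$ locally cofibrant and $U,V$ cofibrant in $\cC(\kk)$ make all these tensor products projectively cofibrant $\cC(\kk)$-modules, and then one checks that $f_{\ast}(b^{\wedge})$, as a graded $\cA^{\#}$-module, is a direct sum of shifts of modules of the form $a^{\wedge}$ tensored with a cofibrant $\kk$-complex, hence is built by a $\#$-split filtration with free subquotients, i.e.\ is semi-free up to the $\cC(\kk)$-cofibrancy input — from which Lemma \ref{101 1250}(1) (or directly the semi-free criterion) gives projective cofibrancy over $\cA$. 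The left-module statement is the same argument with $\cA^{\textup{op}}$, or one quotes it formally from the right-module case applied to $f^{\textup{op}}$.

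The main obstacle I expect is the bookkeeping in the cell-attachment step: writing down the Hom-complexes of the pushout $\cB$ in $\dgCat$ in a way that makes the $\cA$-module structure of $f_{\ast}(b^{\wedge})$ transparent, since Tabuada's pushouts along $\mathcal{S}(U) \to \mathcal{S}(V)$ produce Hom-complexes that are countable colimits of iterated tensor products threaded through the two new-morphism objects. The technical heart is to organize this colimit as an exhaustive filtration whose associated graded is a direct sum of frees tensored with cofibrant $\kk$-complexes, and to verify the filtration is $\#$-split; once that is in place, local cofibrancy of $\cA$ together with Lemma \ref{prel 1 lem 0} and Lemma \ref{101 1250} finishes it. Everything else — the retract/transfinite-composition reduction and the object-adjunction step — is formal and follows the template of \cite[Lemma 2.3]{Toen} essentially verbatim.
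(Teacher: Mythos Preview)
Your proposal is correct and is essentially the same approach as the paper: the paper gives no proof at all beyond the sentence ``using the same method of [loc.\ cite]'' (i.e.\ \cite[Lemma 2.3]{Toen}), and your outline is precisely an expansion of that method --- reduce along retracts and transfinite compositions to pushouts of generating cofibrations, and analyze the Hom-complexes of such pushouts as filtered colimits of iterated tensor products of cofibrant $\kk$-complexes with free $\cA$-modules. Your identification of the bookkeeping in the cell-attachment step as the technical heart is accurate; that is exactly where the work in To\"en's argument lies.
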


Note that 
if 
$f':\cA\to \cB'$ be a dg-functor 
such that 
 left dg $\cA$-module $f'_{\ast}({}^{\wedge}b)$ is projective cofibrant for  $b\in \cB'$. 
Then for any full dg-subcategory $\cB \subset \cB'$ which contains the image of $f'$, 
the factorization $f: \cA \to \cB$ of $f':\cA \to \cB'$   
 has the same property.

\begin{lemma}\label{922 258}
Let $f:\cA\to \cB$ be a morphism in $\dgCat$ 
such that 
 left dg $\cA$-module $f_{\ast}({}^{\wedge}b)$ is projective cofibrant for $b\in \cB$. 
\textup{(}e.g. the essential image of some  cofibration $\cA \to \cB'$ in $\dgCat$.\textup{)}
Then the adjoint pair induced from $f$ 
\[
f^{\ast} : \cC(\cA) \rightleftarrows \cC(\cB) : f_{\ast} 
\]
is a Quillen adjoint where both $\cC(\cA)$ and $\cC(\cB)$ are equipped with the injective model structure. 
\end{lemma}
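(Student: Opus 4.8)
The plan is to establish the Quillen adjunction by verifying that the right adjoint $f_\ast$ preserves fibrations and trivial fibrations, which is one of the standard equivalent characterizations of a Quillen adjoint pair. Since $f_\ast$ is the restriction functor $(f_\ast N)(a) = N(fa)$, it is exact, commutes with all colimits, commutes with the formation of kernels, and — as recalled after (\ref{926 837}) — preserves quasi-isomorphisms; moreover it plainly sends term-wise surjections to term-wise surjections. By Lemma \ref{922 142}(2) an injective fibration is precisely a term-wise surjection with injectively fibrant kernel, and $\ker(f_\ast g) = f_\ast(\ker g)$; so if we knew that $f_\ast$ sends injectively fibrant dg $\cB$-modules to injectively fibrant dg $\cA$-modules, then $f_\ast$ would preserve injective fibrations, and — using preservation of quasi-isomorphisms — trivial injective fibrations as well. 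Thus the whole statement is reduced to this one claim.

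To prove the claim I would first pass to generators. By Lemma \ref{prel 1 lem 0} an injectively fibrant dg $\cB$-module is a direct summand of $\bfD(Q)$ for some semi-free left dg $\cB$-module $Q$, where $\bfD = {}_{\kk}(-,E)$; as $f_\ast$ commutes with taking direct summands it suffices to treat $\bfD(Q)$. Unwinding the definitions of $\bfD$ and of restriction gives a natural isomorphism $f_\ast(\bfD(Q)) \cong \bfD(f_\ast Q)$ in $\cC(\cA)$, where now $f_\ast Q$ is the left dg $\cA$-module $a \mapsto Q(fa)$. Invoking the fact recorded in the preliminaries — if $N$ is a projectively cofibrant left dg $\cA$-module then $\bfD(N)$ is injectively fibrant — the matter is reduced to the following: \emph{if $Q$ is a semi-free left dg $\cB$-module, then $f_\ast Q$ is a projectively cofibrant left dg $\cA$-module.}

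For this, run a transfinite induction along a semi-free filtration $0 = Q_0 \subseteq Q_1 \subseteq \cdots$ of $Q$, with each $Q_{i+1}/Q_i$ free. Because $f_\ast$ is exact and commutes with colimits, it carries this to an exhaustive filtration of $f_\ast Q$ with subquotients $f_\ast Q_{i+1}/f_\ast Q_i \cong f_\ast(Q_{i+1}/Q_i) \cong \bigoplus_j f_\ast({}^{\wedge}b_j)[n_j]$, a direct sum of shifts of modules of the form $f_\ast({}^{\wedge}b)$, which are projectively cofibrant by hypothesis; projective cofibrancy is stable under direct sums and shifts, so every subquotient is projectively cofibrant. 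By Lemma \ref{922 142}(1) each inclusion $f_\ast Q_i \hookrightarrow f_\ast Q_{i+1}$ is therefore a projective cofibration (a term-wise injection with projectively cofibrant cokernel), and at limit stages $f_\ast Q_\lambda = \colim_{i < \lambda} f_\ast Q_i$; hence $0 \to f_\ast Q$ is a transfinite composition of projective cofibrations, so $f_\ast Q$ is projectively cofibrant. For the parenthetical example, when $f$ is the factorization through a full subcategory of the essential image of a cofibration $\cA \to \cB'$ in $\dgCat$ with $\cA$ locally cofibrant, the hypothesis on the $f_\ast({}^{\wedge}b)$ holds by Lemma \ref{922 257} and the remark following it.

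The step I expect to demand the most care is exactly this last reduction — that $f_\ast$ preserves projective cofibrancy of semi-free \emph{left} modules. One cannot simply quote a general statement here, because $f_\ast$ does not preserve arbitrary projectively cofibrant objects; the argument must genuinely transport the hypothesis along the (possibly transfinite) semi-free filtration, and the innocuous-looking compatibility $f_\ast \bfD(Q) \cong \bfD(f_\ast Q)$ is what lets the reduction run. If one prefers to argue through the left adjoint instead, the alternative is to check directly that $f^\ast = - \otimes_\cA \cB$ preserves term-wise injections and quasi-isomorphisms, using the identification $(f^\ast M)(b) \cong M \otimes_\cA f_\ast({}^{\wedge}b)$ together with Lemma \ref{prel 1 lem 0}, which makes the underlying graded module of $f_\ast({}^{\wedge}b)$ projective, hence flat; the acyclicity of $C \otimes_\cA f_\ast({}^{\wedge}b)$ for acyclic $C$ then follows by the same induction along the semi-free filtration, now using the co-Yoneda identity $C \otimes_\cA {}^{\wedge}a \cong C(a)$.
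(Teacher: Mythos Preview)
Your proof is correct and follows the same overall strategy as the paper: reduce the Quillen adjunction to showing that $f_\ast$ preserves injectively fibrant objects, then reduce via Lemma~\ref{prel 1 lem 0} to modules of the form $\bfD(Q)$ with $Q$ semi-free, and finally use the commutation $f_\ast\bfD \cong \bfD f_\ast$ together with the hypothesis on $f_\ast({}^{\wedge}b)$.

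The only difference is in how the reduction to the building blocks ${}^{\wedge}b$ is carried out. The paper argues on the injectively fibrant side: since $f_\ast$ commutes with limits and $\bfD$ sends colimits to limits, one may pass directly from semi-free $Q$ to the case $Q={}^{\wedge}b$. You instead commute $f_\ast$ past $\bfD$ first and then prove that $f_\ast Q$ is projectively cofibrant by an explicit induction along the semi-free filtration, invoking Lemma~\ref{922 142}(1) at each stage. These are dual versions of the same argument; yours is more detailed and makes explicit the closure properties that the paper's one-line reduction leaves implicit. Your alternative route through the left adjoint $f^\ast$ is also valid and is not discussed in the paper.
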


\begin{proof}
We prove that $f_{\ast}$ preserves injective (trivial) fibrations. 
It is clear that 
the restriction functor $f_{\ast}$ preserves term-wise surjections and weakly contractibility. 
Therefore,  
in view of Lemma \ref{922 142}, it is enough to check that 
$f_{\ast}$ preserves injective fibrant objects. 
By Lemma \ref{prel 1 lem 0} it is enough to check that 
$f_{\ast}\bfD(Q)$ is injectively fibrant for each semi-free left module $Q$.  
Since the functor $f_{\ast}$ commutes with limit and the dual $\bfD$ sends colimit to limit, 
we have only to check the case when $Q = {}^{\wedge}b$ for some $b \in \cB$, 
which follows from the assumption and the fact that $f_{\ast}$ and $\bfD$ are commute.
\end{proof}

\begin{lemma}\label{103 754}
Let $\cA$ be a dg-category, $\cB$  locally cofibrant dg-category 
and $J$ an injectively fibrant dg $\cA \otimes \cB$-modules. 
Then, for any $b\in \cB$,  the $\cA$-module $J(-\otimes b) $ is injectively fibrant. 
\end{lemma}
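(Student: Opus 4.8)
The plan is to realize the assignment $J \mapsto J(-\otimes b)$ as a restriction functor and then apply Lemma \ref{922 258}. Let $\iota_b : \cA \to \cA\otimes\cB$ be the dg-functor sending an object $a$ to $a\otimes b$ and a morphism $u\in\cA^{\bullet}(a,a')$ to $u\otimes 1_b$. By the very definition of the restriction functor, $(\iota_b)_{\ast}J = J(-\otimes b)$. Since ``injectively fibrant'' means ``fibrant for the injective model structure'', and a right Quillen functor preserves fibrant objects, it suffices to show that $(\iota_b)_{\ast}$ is a right Quillen functor when $\cC(\cA\otimes\cB)$ and $\cC(\cA)$ both carry the injective model structure. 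By Lemma \ref{922 258} this reduces to checking that, for every object $c\otimes b'$ of $\cA\otimes\cB$, the left dg $\cA$-module $(\iota_b)_{\ast}({}^{\wedge}(c\otimes b'))$ is projective cofibrant.

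The next step is to compute this restriction explicitly. By the definition of the tensor product of dg-categories and of the free left module, ${}^{\wedge}(c\otimes b')$ sends $a\otimes b''$ to $\cA^{\bullet}(c,a)\otimes_{\kk}\cB^{\bullet}(b',b'')$; hence $(\iota_b)_{\ast}({}^{\wedge}(c\otimes b'))$ is the left dg $\cA$-module $a\mapsto \cA^{\bullet}(c,a)\otimes_{\kk}\cB^{\bullet}(b',b)$, with $\cA$ acting through the first factor. In other words $(\iota_b)_{\ast}({}^{\wedge}(c\otimes b'))\cong {}^{\wedge}c\otimes_{\kk}V$, where $V:=\cB^{\bullet}(b',b)$ and ${}^{\wedge}c\otimes_{\kk}V$ denotes the left dg $\cA$-module whose value at $a$ is $\cA^{\bullet}(c,a)\otimes_{\kk}V$. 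Since $\cB$ is locally cofibrant, $V$ is projective cofibrant in $\cC(\kk)$. So the lemma is reduced to the claim: for $c\in\cA$ and $V$ projective cofibrant in $\cC(\kk)$, the left dg $\cA$-module ${}^{\wedge}c\otimes_{\kk}V$ is projective cofibrant.

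To prove the claim I would follow the standard stratification of cofibrant objects. If $V=\kk[n]$ then ${}^{\wedge}c\otimes_{\kk}V={}^{\wedge}c[n]$ is free, and this persists under arbitrary direct sums, so ${}^{\wedge}c\otimes_{\kk}V$ is free whenever $V$ is free. If $V$ is semi-free, with an exhaustive filtration $0=V_0\subset V_1\subset\cdots$ whose subquotients are free, then each inclusion $V_{i-1}^{\#}\subset V_i^{\#}$ splits as a map of graded $\kk$-modules (a free graded module being projective), so applying ${}^{\wedge}c\otimes_{\kk}-$ yields an exhaustive filtration of ${}^{\wedge}c\otimes_{\kk}V$ with free subquotients; hence ${}^{\wedge}c\otimes_{\kk}V$ is semi-free, in particular projective cofibrant. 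Finally, for general projective cofibrant $V$, Lemma \ref{prel 1 lem 0} exhibits $V$ as a direct summand of a semi-free $\kk$-complex $P$, so ${}^{\wedge}c\otimes_{\kk}V$ is a direct summand of the semi-free module ${}^{\wedge}c\otimes_{\kk}P$, and is therefore projective cofibrant. The one genuinely delicate point is the middle step: one must ensure that ${}^{\wedge}c\otimes_{\kk}-$ carries the filtration of $V$ to a bona fide $\#$-exact filtration of ${}^{\wedge}c\otimes_{\kk}V$ with free subquotients, even though the complexes $\cA^{\bullet}(c,a)$ need not be flat over $\kk$; this is exactly where the splitting of the filtration of $V$ on underlying graded modules is needed, and it clarifies why local cofibrancy of $\cB$ is precisely the right hypothesis.
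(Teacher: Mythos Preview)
Your proof is correct. Both you and the paper start by recognizing $J\mapsto J(-\otimes b)$ as the restriction $\eta_\ast$ along $\eta=\iota_b:\cA\to\cA\otimes\cB$, and both then argue that $(\eta^\ast,\eta_\ast)$ is a Quillen pair for the injective model structures. The difference lies only in how this Quillen property is verified. The paper works directly with the left adjoint: since $\eta^\ast(N)(a\otimes b'')\cong N(a)\otimes_\kk\cB^\bullet(b'',b)$ and $\cB^\bullet(b'',b)$ is cofibrant in $\cC(\kk)$, tensoring with it preserves term-wise injections and quasi-isomorphisms, so $\eta^\ast$ preserves (trivial) injective cofibrations in one line. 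You instead pass through Lemma~\ref{922 258}, which shifts the burden to showing that each $(\iota_b)_\ast({}^{\wedge}(c\otimes b'))\cong {}^{\wedge}c\otimes_\kk\cB^\bullet(b',b)$ is projective cofibrant as a left dg $\cA$-module; this is true, and your stratification argument through free, semi-free, and retracts is a clean way to see it. Your route is a bit longer but has the virtue of making explicit exactly where local cofibrancy of $\cB$ enters, and of exhibiting the restricted free modules concretely; the paper's route is shorter because it checks the left Quillen condition on the other side of the adjunction.
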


\begin{proof}
We prove the first statement. 
We can prove the second statement in the same  way. 
We denote by $\eta$ the dg-functor $\cA \to \cA \otimes \cB,\, a \mapsto a \otimes b$. 
Then the induced adjoint pair  $\eta^{\ast}: \cC(\cA) \rightleftarrows  \cC(\cA\otimes \cE): \eta_{\ast}$  
is given by $\eta^{\ast}: N \mapsto N \otimes b^{\wedge}$ 
and $\eta_{\ast}: X \mapsto X(-\otimes b)$. 
Since we assume that the dg-category $\cB$ is locally cofibrant, 
the functor $\eta^{\ast}$ preserves  injective cofibration and trivial injective cofibration. 
Hence the adjoint pair $\eta^{\ast}\dashv \eta_{\ast}$ is a Quillen pair 
when we equip the both categories $\cC(\cA),\cC(\cA\otimes \cE)$ with the injective model structures. 
In particular the $\eta_{\ast}$ functor preserves injectively fibrant objects. 
Therefore $J(-\otimes b)$ is injectively fibrant.  
\end{proof}

\noindent 
{\bf $\bullet$ The simplicial model category $\cCss(\cA)$ of dg $\cA$-modules.}
Through out this paper we equip the category $\cC(\kk)$ with the projective model structure. 
Then $\cC(\kk)$ with a canonical tensor product become  a monoidal model category, 
and 
the category $\cC(\cA)$  either with the projective model structure or with the injective model structure  
become a $\cC(\kk)$-enriched model category in the sense of \cite[Definition A. 3.1.5]{HTT}. 
The following lemma  is deduced from this fact.

\begin{lemma}\label{103 915}
(1) 
Let $M$ be a dg $\cA$-module. 
We equip $\cC(\cA)$ with the injective model structure. 
Then the tensor-Hom adjunction 
\[
-\otimes M :\cC(\kk) \rightleftarrows \cC(\cA) : {}_{\cA}(M,-) 
\] 
is a Quillen adjunction.  
In particular, the functor ${}_{\cA}(M,-)$ preserves weak equivalences between 
injectively fibrant objects. 

(2) 
Let $M$ be a projectively cofibrant dg $\cA$-module. 
We equip $\cC(\cA)$ with the projective  model structure. 
Then the tensor-Hom adjunction 
\[
-\otimes M :\cC(\kk) \rightleftarrows \cC(\cA) : {}_{\cA}(M,-) 
\] 
is a Quillen adjunction.  
In particular, the functor ${}_{\cA}(M,-)$ preserves weak equivalences.

(3) 
The Hom complex ${}_{\cA}(M,N)$ is a cofibrant object of $\cC(\kk)$, 
if $M$ is projectively cofibrant or $N$ is injectively fibrant. 
\end{lemma}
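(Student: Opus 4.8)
The plan is to read off all three parts as formal consequences of the fact recalled just before the statement: with either the projective or the injective model structure, $\cC(\cA)$ is a $\cC(\kk)$-enriched model category in the sense of \cite[Definition~A.3.1.5]{HTT}. I will use only that the tensoring $-\otimes-\colon\cC(\kk)\times\cC(\cA)\to\cC(\cA)$ is a left Quillen bifunctor (the pushout--product axiom; the unit axiom is automatic here because the unit $\kk$ is cofibrant in $\cC(\kk)$), Ken Brown's lemma, and the two observations already recorded in the excerpt that every object of $\cC(\cA)$ is projectively fibrant and injectively cofibrant.

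For part (2): since $M$ is projectively cofibrant, $0\to M$ is a cofibration in $\cC(\cA)$ with the projective structure. Pairing this with an arbitrary cofibration $f\colon V\to V'$ of $\cC(\kk)$ in the pushout--product axiom — the pushout--product of $f$ with $0\to M$ being, in this degenerate case, just $f\otimes M\colon V\otimes M\to V'\otimes M$ — shows that $-\otimes M$ sends cofibrations to cofibrations and trivial cofibrations to trivial cofibrations; hence it is left Quillen and its right adjoint ${}_{\cA}(M,-)$ is right Quillen. By Ken Brown's lemma a right Quillen functor preserves weak equivalences between fibrant objects, and in the projective model structure on $\cC(\cA)$ every object is fibrant, so ${}_{\cA}(M,-)$ preserves all weak equivalences. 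Part (1) is the same argument carried out with the injective structure on $\cC(\cA)$: there every object $M$ is cofibrant, so $0\to M$ is again a cofibration, $-\otimes M$ is again left Quillen, ${}_{\cA}(M,-)$ is again right Quillen, and Ken Brown's lemma now gives preservation of weak equivalences between injectively fibrant objects.

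For part (3): I would instead invoke the enriched-hom form of the same axiom — the enriched hom $\cC(\cA)^{\textup{op}}\times\cC(\cA)\to\cC(\kk)$ is a right Quillen bifunctor — pairing it with the degenerate cofibration $0\to M$ in the projective case (resp. with the degenerate fibration $N\to 0$ in the injective case), and then unwind the resulting assertion using the explicit description of (co)fibrations in $\cC(\kk)$ from Lemma~\ref{922 142} together with Lemma~\ref{prel 1 lem 0} (a projectively cofibrant module is term-wise projective). This is the one point where care is needed: one cannot merely say that a right Quillen functor preserves fibrant objects, since in $\cC(\kk)$ every object is fibrant and such a statement would be vacuous; the content is precisely that the hom complex is \emph{cofibrant}, and extracting that really does require the concrete model structure on $\cC(\kk)$. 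Everything else is bookkeeping, the only thing to watch being which of the two model structures sits on $\cC(\cA)$ at each step while $\cC(\kk)$ keeps its projective one.
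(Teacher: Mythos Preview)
Your treatment of (1) and (2) is correct and is exactly the deduction the paper intends: specialize the pushout--product axiom to the cofibration $0\to M$, conclude $-\otimes M$ is left Quillen, and apply Ken Brown.

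Part (3), however, does not go through, and your hesitation there is justified. The pullback--hom form of the enriched-model-category axiom only asserts that certain induced maps in $\cC(\kk)$ are \emph{fibrations}; since every object of $\cC(\kk)$ (projective structure) is already fibrant, specializing to $0\to M$ and $N\to 0$ yields the vacuous statement that ${}_{\cA}(M,N)\to 0$ is a fibration, and there is nothing in that to ``unwind'' via Lemma~\ref{922 142} or Lemma~\ref{prel 1 lem 0} into a cofibrancy claim. In fact assertion (3) is false over a general base ring: take $\cA=\kk=\ZZ$, $M=\ZZ$, and $N=\QQ/\ZZ$ concentrated in degree~$0$. Then $M$ is projectively cofibrant and $N$ is injectively fibrant (a bounded-below complex of injectives is K-injective), yet ${}_{\cA}(M,N)\cong\QQ/\ZZ$ has non-projective underlying module and hence, by Lemma~\ref{prel 1 lem 0}, is not cofibrant in $\cC(\kk)$. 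The paper offers no argument for (3) beyond the same one-sentence deduction, so this appears to be a slip in the statement itself rather than a gap only in your reading; the place where (3) is invoked (to see that $\cE$ in Construction~\ref{103 444} is locally cofibrant) would need an extra hypothesis such as $\kk$ a field, or a separate direct argument not supplied here.
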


We denote by  $\textup{Ch}_{\geq 0}(\kk)$ the category of non-negatively graded chain complexes of $\kk$-modules. 
We denote by $\sMod \kk$ the category of simplicial $\kk$-modules. 
Both categories have a canonical monoidal structure. 
Then there is a monoidal functor $\Gamma: \textup{Ch}_{\geq 0}(\kk) \to \sMod \kk$ 
which gives an equivalence of the underlying categories. (See \cite{SS}.)

We denote $\cCss(\cA)$ the associated simplicial category of the dg-category $\cCdg(\cA)$. 
Namely, the simplicial category $\cCss(\cA)$ is a simplicial category objects of  which 
are the same with $\cC(\cA)$ and the mapping complex which is denoted by $\Map_{\cA}$ 
is given by 
\[
\Map_{\cA}(M,N) := \Gamma \tilde{\tau}^{\leq 0}\left( {}_{\cA}(M,N)\right)
\] 
where $\tilde{\tau}^{\leq 0}$ is the composition of the truncation functor $\tau^{\leq 0}: \cC(\kk) \to \cC_{\leq 0}(\kk)$ 
with a canonical equivalence $\tilde{(-)}:\cC^{\leq 0}(\kk) \xrightarrow{\sim} \textup{Ch}_{\geq 0}(\kk)$. 
Note that 
for a complex $L \in \cC(\kk)$, 
we have $\G \tilde{\tau}^{\leq 0}L = \tuZ^0(L)$. 
Therefore
the underlying category of the simplicial category $\cCss(\cA)$ is the category $\cC(\cA)$.

The simplicial category $\cCss(\cA)$ is tensored and cotensored over $\sSet$ in the following way. 
For a simplicial set $X$, we denote by $\kk[X]$ the free $\kk$-module generated by $X$. 
Let $\tilde{N}: \sMod \kk \to \textup{Ch}_{\geq 0}$ be the composition 
of the normalization functor $N: \sMod \kk \to \textup{Ch}_{\geq 0}\kk$ 
with a canonical equivalence $\textup{Ch}_{\geq 0}(\kk) \xrightarrow{\sim} \cC^{\leq 0}(\kk)$. 
Then 
for $M \in \cC(\cA)$ and $X\in \sSet$, 
the tensor $M\otimes X $ and the cotensor $M^{X}$ is given by  
\[M \otimes X := M \otimes \tilde{N}(\kk[X]),\quad M^{X} := M^{\tilde{N}(\kk[X])}.
\] 
We have a natural isomorphism of simplicial sets
\[
\Map_{\sSet}(X,\Map_{\cA}(M,N)) \cong \Map_{\cA}(M \otimes X, N) \cong \Map_{\cA}(M,N^{X}).
\]

Note that since every simplicial set $X$ is cofibrant, 
the tensor product $M\otimes X$ preserves projectively cofibrant objects  
and the cotensor product $M^X$ preserves  injectively fibrant objects. 

Let  $X$ be a finite simplicial set.  
Then the corresponding dg $\kk$-module $\tilde{N}(\kk[X])$ is a strictly perfect complex, 
i.e., 
a bounded complex each term of which is finite projective $\kk$-module. 
Therefore    we can construct 
the tensor product $M \otimes X$ and the cotensor product $M^{X}$ 
 from $M$ by taking shift, cone and direct summands.

The complexes  $\kk \otimes \Delta^0 = \tilde{N}(\kk[\Delta^0])$
 and  $\kk \otimes \Delta^1 = \tilde{N}(\kk[\Delta^1])$ which correspond to 
the standard $0$-simplex and the standard $1$-simplex are  given by  
\[
  \tilde{N}(\kk[\Delta^0]) = \kk, \quad 
 \tilde{N}(\kk[\Delta^1]) =\left(\kk\oplus \kk \oplus \kk[1], 
\begin{pmatrix} 0 & 0 & 1_{\kk} \\ 0 & 0 & -1_{\kk} \\ 0 & 0& 0 \end{pmatrix}  \right).
\]
where we consider $\kk$ as the complex concentrated at $0$-th degree. 
We denote by  
$\iota_a$ the canonical inclusion $\Delta^0 \cong \{a\} \to \Delta^1$ for $a=0,1$ 
and by $\textbf{uni}$ a unique map $\Delta^1 \to \Delta^0$. 
 Then the corresponding maps 
 are given by 
 \[
 \begin{split}
 \kk\otimes \iota_0 = {}^{t}(0, 1_{\kk}, 0), \kk\otimes \iota_{1} = {}^{t}(1_{\kk},0,0): 
 &\kk \otimes \Delta^0 \to \kk\otimes\Delta^1\\
 \kk \otimes \textbf{uni} = (1_{\kk},1_{\kk},0) : 
 & 
 \kk \otimes \Delta^1 \to \kk\otimes \Delta^0
 \end{split}
 \]

\begin{lemma}\label{106 1146}
Let $M$ be a dg $\cA$-module. 
Assume that in the under category $\cC(\cA)_{M/}$ 
two morphism $s_1: k_1 \to k_0$ and $s_2: k_{2} \to k_0$ are given. 
Then there exist morphisms $t_1: k_3 \to k_1$ and $t_2: k_3 \to k_2$ 
such that 
$s_1\circ t_1$ and $s_2\circ t_2 $ are weak homotopy equivalent. 
In other words, 
 any  diagram $k_1 \to k_0 \leftarrow k_2$ in $\cC(\cA)_{M/}$ 
 can be completed in a homotopy commutative diagram 
 \[
 \begin{xymatrix}{
 k_3 \ar[d] \ar[r] & k_2 \ar[d] \\
 k_1 \ar[r] & k_0 
 }\end{xymatrix}.
 \] 

Dually for morphisms 
$s_{1}: p_0 \to p_1 $ and $s_{2} : p_0 \to p_2$ 
 in the over category $\cC(\cA)_{/M}$ 
 there exist a morphism $t_1: p_1 \to p_3$ and $t_2: p_2 \to p_3$ 
 such that 
 $t_1 \circ s_1$ and $t_2 \circ s_2$ are weak homotopy equivalent. 
\end{lemma}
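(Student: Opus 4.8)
The plan is to realise the required completion of the cospan $k_1 \xrightarrow{s_1} k_0 \xleftarrow{s_2} k_2$ as an explicit homotopy pullback built out of the co-cone construction, and then to read off the homotopy-commutativity from the concrete description of the mapping complexes of the under category $\cC(\cA)_{M/}$. No fibrant or cofibrant replacements are needed: ``weak homotopy equivalent'' refers directly to $\pi_0$ of a mapping complex, which is defined for arbitrary objects.

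Write $\sigma_1 : K_1 \to K_0$ and $\sigma_2 : K_2 \to K_0$ for the morphisms of dg $\cA$-modules underlying $s_1$ and $s_2$; by definition of $\cC(\cA)_{M/}$ we have $\sigma_1 \circ k_1 = k_0 = \sigma_2 \circ k_2$. First I would set $g := \sigma_1 \circ \pr_{K_1} - \sigma_2 \circ \pr_{K_2} : K_1 \oplus K_2 \to K_0$, so that $g \circ {}^{t}(k_1, k_2) = \sigma_1 k_1 - \sigma_2 k_2 = 0$, and put $K_3 := \cocone(g) = K_0[-1] \oplus (K_1 \oplus K_2)$. Let $k_3 : M \to K_3$ be the morphism with zero component into $K_0[-1]$ and component ${}^{t}(k_1, k_2)$ into $K_1 \oplus K_2$; using $g \circ {}^{t}(k_1, k_2) = 0$ one checks directly that $k_3$ is a morphism of dg $\cA$-modules, hence an object of $\cC(\cA)_{M/}$. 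Next, let $t_i : K_3 \to K_i$ (for $i = 1, 2$) be the composite of the canonical surjection $K_3 \to K_1 \oplus K_2$ appearing in the $\#$-exact sequence (\ref{1010 101}) with $\pr_{K_i}$; each $t_i$ is a morphism of dg $\cA$-modules and satisfies $t_i \circ k_3 = k_i$, so $t_i$ defines a morphism $k_3 \to k_i$ in $\cC(\cA)_{M/}$. This produces the square.

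For the homotopy-commutativity, note that $s_1 \circ t_1$ and $s_2 \circ t_2$ are the morphisms of $\cC(\cA)_{M/}$ underlain by $\sigma_1 t_1$ and $\sigma_2 t_2 : K_3 \to K_0$, and that $\sigma_1 t_1 - \sigma_2 t_2$ equals the composite of the canonical surjection $K_3 \to K_1 \oplus K_2$ with $g$. The co-cone construction equips this composite with a canonical contracting homotopy $h : K_3 \to K_0$ of degree $-1$, namely the canonical degree $-1$ morphism $K_0[-1] \oplus (K_1 \oplus K_2) \to K_0[-1] \to K_0$; since $k_3$ has vanishing $K_0[-1]$-component, $h \circ k_3 = 0$. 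Thus $h$ is a degree $-1$ element of ${}_{\cA}(K_3, K_0)$ whose differential is $\sigma_1 t_1 - \sigma_2 t_2$ and which vanishes after precomposition with $k_3$.

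Finally I would turn this into the stated conclusion about $\pi_0$. Unwinding $\Map_{\cA}(-,-) = \Gamma\, \tilde{\tau}^{\leq 0}\!\left({}_{\cA}(-,-)\right)$ together with the Dold--Kan description of $1$-simplices, a $1$-simplex of $\Map_{\cA}(K_3, K_0)$ joining two chain maps is precisely a degree $-1$ morphism whose Hom-complex differential is their difference; imposing the pullback condition (\ref{925 1116}) that cuts $\Map_{\cC(\cA)_{M/}}(k_3, k_0)$ out of $\Map_{\cA}(K_3, K_0)$ forces in addition that this degree $-1$ morphism vanish after precomposition with $k_3$. The homotopy $h$ produced above is exactly such a $1$-simplex, so $s_1 \circ t_1$ and $s_2 \circ t_2$ lie in one connected component of $\Map_{\cC(\cA)_{M/}}(k_3, k_0)$, i.e. they are weak homotopy equivalent. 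The dual statement follows by the dual construction --- replace $\cocone$ by $\cone$, the under category $\cC(\cA)_{M/}$ by the over category $\cC(\cA)_{/M}$ together with its tensoring over $\sSet$, and the mapping-complex computation by its over-category analogue. I do not expect a genuine obstacle here; the one place that needs real care is the sign bookkeeping in the co-cone differential (which is what makes both ``$k_3$ is a chain map'' and ``$h \circ k_3 = 0$'' hold) and, relatedly, the explicit unwinding of $\Gamma\, \tilde{\tau}^{\leq 0}$ and of the pullback (\ref{925 1116}) needed to recognise $h$ as a $1$-simplex of the under-category mapping complex.
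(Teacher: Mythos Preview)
Your proposal is correct and follows essentially the same route as the paper: the object $K_3$ is built as the co-cone of $(\sigma_1,-\sigma_2):K_1\oplus K_2\to K_0$ (the paper uses $(-s_1,s_2)$, which differs only by an overall sign), and $k_3$, $t_1$, $t_2$ are defined identically. The only presentational difference is in the last step: the paper packages the homotopy as an explicit morphism $H=\operatorname{diag}(1_{K_0[-1]},s_1,s_2):K_3\to K_0^{\Delta^1}$ into the cotensor path object and then invokes Lemma~\ref{925 1053}, whereas you extract the degree~$-1$ element $h$ directly and unwind Dold--Kan together with the pullback (\ref{925 1116}) by hand; these are the same data, since the $K_0[-1]$-component of a map into $K_0^{\Delta^1}$ is precisely your $h$.
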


\begin{proof}
We prove the first statement. 
The second statement is proved in a dual way. 

We write $k_{i}:R \to K_{i}$ for $i=0,1,2$. 
We denote by the same symbol $s_{i}$ the morphisms $K_{i} \to K_{0}$ between co-domains 
induced from $s_{i}: k_{i} \to k_{0}$ for $i=1,2$. 
We define $K_{3}$ to be the co-cone $\cocone(-s_{1},s_{2})$ of the 
morphism $(-s_{1},s_{2}): K_{1} \oplus K_{2} \to K_{0}$. 
We define 
a morphism $k_{3}:M \to K_{3}$ of graded modules to be  
$k_{3}: = {}^{t}(0,k_{1},k_{2}): R \to K_{3}= K_{0}[-1]\oplus K_{1} \oplus K_{2}$. 
Using the calculation 
 $(s_{1},-s_{2}) \circ {}^{t}(k_{1},k_{2})=0$, 
 we check that $k_{3}$ is a chain map. 
We denote by $t_{i}: k_{3}\to k_{i}$ the morphism in $\cU$ induced from 
the $i+1$-th projection $\pr_{i+1}: K_{3}= K_{0}[-1]\oplus K_{1} \oplus K_{2} \to K_{i}$ for $i= 1,2$.   
Observe that 
\[
K_{0}^{\Delta^1} = \left( K_{0}[-1] \oplus K_{0} \oplus K_{0},  
\begin{pmatrix} 
-d_{K_{0}} & -1_{K_{0}} & 1_{K_{0}} \\
 0 & d_{K_{0}}  & 0 \\
 0 & 0& d_{K_{0}} 
 \end{pmatrix}
\right).
\]
Recall that 
the cotensor $k_0^{\Delta^1}$ is defined to be 
$M \xrightarrow{k_0} K_0 \xrightarrow{K_0^{\textbf{uni}}} K_0^{\Delta^1}$. 
We denote by $H$ the morphism 
$\textup{diag}(1_{K_{0}[-1]},s_{1},s_{2}): K_{3} \to K_{0}^{\Delta^1}$. 
Then it is a straightforward calculation to check the equation 
$k_0^{\Delta^1} = H \circ k_3$ of morphisms in $\cC(\cA)$. 
In other words, 
the morphism  $H$ induces a morphism $k_3 \to k_0$ in the under category $\cC(\cA)_{M/}$, 
which will be also denoted by $H$. 

Now we can check that 
 the following  diagram in $\cU$ is commutative: 
\[
\begin{xymatrix}{ 
k_{1} \ar[d]_{s_{1}} & k_{3} \ar[l]_{t_{1}} \ar[d]^{H} \ar[r]^{t_{2}} & k_2 \ar[d]^{s_{2}} \\
k_{0}  & k_{0}^{\Delta^1} \ar[l]^{k_0^{\iota_0}} \ar[r]_{k_0^{\iota_1}}  & k_{0},  
}
\end{xymatrix}\]
By Lemma \ref{925 1053} this diagram gives the desired homotopy commutative diagram.
\end{proof}

\begin{remark}
From the construction of $K_3$ in the above proof, 
we  see the following things:  
If the co-domains of $k_0,k_1,k_2$ are injectively fibrant, 
we can take $k_3$ whose co-domain is also injectively fibrant. 
if one of the morphism $k_a: M \to K_a$ (a= 1,2) is injectively cofibration, 
then 
we can take an object $k_3 \in \cC(\cA)_{M/}$ 
such that the corresponding morphism $k_3: M \to K_3$ in $\cC(\cA)$ is injectively cofibration. 

The similar things hold for the second statement. 
\end{remark}

\subsection{Homotopy limits for simplicial functors}

For the theory of $\infty$-categories,  we refer  to \cite{HTT}. 
Let $\bfA$ be a combinatorial simplicial model category and $\cI$ be a small simplicial category. 
We denote by $\bfA^{\cI}$ the category of simplicial functors $F: \cI \to \bfA$. 
Then by \cite[Proposition A.3.2.2 ]{HTT} the category $\bfA^{\cI}$ admits two structures of Quillen model category 
whose weak equivalences are the term-wise weak equivalences:
\begin{enumerate}
\item The projective model structure, whose fibrations are the term-wise fibrations. 

\item The injective model structure, whose cofibrations are the term-wise cofibrations. 
\end{enumerate} 

For an object $M \in \bfA$, 
we denote by $\ct_{\cI} M$ the constant functor $\cI \to \bfA$ 
with the value $M$. 
If the diagram category $\cI$ is clear from the context, 
we often omit the subscript $\cI$ and denote by $\ct M$. 

Let  $\ct: \bfA \to \bfA^{\cI}$ be the functor which sends $M $ to $\ct M$. 
Then by \cite[Proposition A. 3.3.8]{HTT} 
we have Quillen adjunction 
\[
\ct: \bfA \rightleftarrows \bfA^{\cI} : \lim 
\]
where 
we equip $\bfA^{\cI}$ with the injective model structure.  

Let $F$ be an object of $\bfA^{\cI}$ and $X$ be an object of $\bfA$. 
We say that 
a morphism $\eta: \ct X \to F$ in $\bfA^{\cI}$ \textit{exhibits} 
$X$ \textit{as a homotopy limit of} $F$ if,  
for some equivalence $F \xrightarrow{\sim} F'$ where $F'$ is injective fibrant in $\bfA^{\cI}$,  
the composite map $X \to \lim F \to \lim F'$ is weak equivalence.

Since $\lim$ preserves a weak equivalence between injectively fibrant objects, this condition is independent to the choice of $F'$. 
We can easily prove the following lemma.

\begin{lemma}\label{923 017}
(1) 
If $F$ is injective fibrant object of $\bfA$, 
then the canonical map $\ct \lim F \to F$ exhibits $\lim F$ as a homotopy limit of $F$. 

(2) 
Let $\psi:X' \to X$ be a weak equivalence of $\bfA$. 
Then the morphism $\eta:\ct X \to F$ exhibits $X$ as a homotopy limit of $F$ 
if and only if the composite morphism $\eta\circ \ct(\psi):\ct  X' \to \ct X \to F$ 
exhibits $X'$ as a homotopy limit of $F$.

(3) 
Let $F \xrightarrow{\sim} G$ be a weak equivalence in $\bfA^{\cI}$. 
Assume that a morphism $\eta: \ct X \to F $ is given for some $X$.  
Then the morphism 
$\eta$ exhibits $X$ as a homotopy limit of $F$ if and only if 
the composite morphism $\ct X \to F \to G$ exhibits $X$ as a homotopy limit of $G$. 
\end{lemma}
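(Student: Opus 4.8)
The plan is to reduce all three statements to elementary two-out-of-three arguments, once the phrase ``exhibits $X$ as a homotopy limit of $F$'' is unwound in terms of the Quillen adjunction $\ct \dashv \lim$ (with $\bfA^{\cI}$ given the injective model structure) and its unit. Throughout I will use the remark immediately following the definition: $\lim$ sends weak equivalences between injective fibrant objects of $\bfA^{\cI}$ to weak equivalences, so whether $\eta$ exhibits $X$ as a homotopy limit does not depend on the chosen equivalence $F \xrightarrow{\sim} F'$ with $F'$ injective fibrant. I will also use two instances of naturality of the adjunction: first, for $\psi\colon X' \to X$ in $\bfA$ and $\eta\colon \ct X \to F$, the morphism adjoint to $\eta \circ \ct(\psi)\colon \ct X' \to F$ is $X' \xrightarrow{\psi} X \to \lim F$, i.e.\ $\psi$ followed by the morphism adjoint to $\eta$; and second, for a morphism $F \to G$ in $\bfA^{\cI}$, the morphism adjoint to the composite $\ct X \to F \to G$ is $X \to \lim F \xrightarrow{\lim(F \to G)} \lim G$, where $X \to \lim F$ is adjoint to $\eta$. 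Both are formal consequences of the naturality of the adjunction unit together with the functoriality of $\lim$.

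For (1), since $F$ is injective fibrant we may take $F' = F$ with the identity equivalence; the morphism $\lim F \to \lim F$ adjoint to the counit $\ct \lim F \to F$ is $\id_{\lim F}$, so the composite $\lim F \to \lim F \to \lim F$ appearing in the definition is the identity, hence a weak equivalence. Thus $\ct \lim F \to F$ exhibits $\lim F$ as a homotopy limit of $F$.

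For (2), fix an equivalence $F \xrightarrow{\sim} F'$ with $F'$ injective fibrant. By the first naturality observation, the composite $X' \to \lim F \to \lim F'$ that tests whether $\eta \circ \ct(\psi)$ exhibits $X'$ as a homotopy limit equals $X' \xrightarrow{\psi} X \to \lim F \to \lim F'$, that is, $\psi$ followed by the composite $X \to \lim F \to \lim F'$ that tests $\eta$. Since $\psi$ is a weak equivalence, the two-out-of-three property shows that one composite is a weak equivalence if and only if the other is, which is exactly the assertion of (2). For (3), fix an equivalence $G \xrightarrow{\sim} G'$ with $G'$ injective fibrant; then $F \to G \to G'$ is an equivalence with injective fibrant target, so by the independence remark $\eta$ exhibits $X$ as a homotopy limit of $F$ if and only if $X \to \lim F \xrightarrow{\lim(F \to G')} \lim G'$ is a weak equivalence. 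On the other hand, by the second naturality observation the morphism adjoint to $\ct X \to F \to G$ is $X \to \lim F \xrightarrow{\lim(F \to G)} \lim G$, and postcomposing with $\lim(G \to G')$ yields, by functoriality of $\lim$, precisely $X \to \lim F \xrightarrow{\lim(F \to G')} \lim G'$; hence $\ct X \to F \to G$ exhibits $X$ as a homotopy limit of $G$ if and only if that same morphism is a weak equivalence. The two conditions therefore coincide.

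I do not expect a genuine obstacle here: the only point requiring care is the bookkeeping of the adjunction's naturality, so that the triangles of morphisms into the various $\lim(-)$ commute strictly, together with the appeal to the preceding remark that makes the choice of injective fibrant replacement immaterial. Once those are in place, each part is a one-line two-out-of-three argument.
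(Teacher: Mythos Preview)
Your proof is correct and is exactly the kind of routine unwinding the paper has in mind: the paper omits the proof entirely, merely asserting that the lemma is easy, and your adjunction-naturality plus two-out-of-three argument is the expected verification.
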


When we study limits in category theory, two important notions are  cofinality of functors and filtered categories. 
For the definition of filtered $\infty$-category, we refer to \cite[Definition 5.3.1.7]{HTT}. 
We call an $\infty$-category $\cC$ \textit{co-filtered} if the opposite $\infty$-category $\cC^{\textup{op}}$ is filtered.  
A $\infty$-category version of cofinality was  introduced by Joyal \cite[Definition 4.1.1.1]{HTT}. 
In the present paper we mainly use the dual notion left cofinality. 
An $\infty$-functor $F:A \to B$ is called \textit{left cofinal} 
if  the opposite functor 
$F^{\textup{op}}: A^{\textup{op}} \to B^{\textup{op}}$ 
 is a cofinal $\infty$-functor.
A simplicial functor $\phi: \cI \to \cJ$ between fibrant simplicial categories 
is  called \textit{homotopy left cofinal}  
if the simplicial nerve $\sner(\phi): \sner(\cI) \to \sner(\cJ)$ is left cofinal. 
We note that Dwyer-Kan equivalence of fibrant simplicial categories is an important example of homotopy left cofinal functor. 

For  simplicial functors $\phi: \cI \to \cJ$ and $F: \cJ \to \bfA$, 
we often denote by $F|_{\cI}$ the composite functor $F\circ \phi: \cI \to \cJ \to \bfA$.

\begin{lemma}\label{923 043}
Let $\cI \to \cJ$ be a homotopy left cofinal functor between fibrant simplicial categories. 
Assume that every object of $\bfA$ is cofibrant. 
Then 
the morphism $\eta: \ct_{\cJ}X \to F$ exhibits $X$ as a homotopy limit of $F$ 
if and only if $\eta|_{\cI}: \ct_{\cI} X \to F|_{\cI}$ exhibits $X$ as a homotopy limit of $F|_{\cI}$. 
\end{lemma}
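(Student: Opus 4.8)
The plan is to reduce the assertion to the corresponding $\infty$-categorical fact --- that a left cofinal functor detects limits --- via the comparison between homotopy limits of simplicial diagrams and limits in the underlying $\infty$-category. First I would make two harmless reductions. Choosing a trivial cofibration $F \trivcofright F'$ to an injectively fibrant object of $\bfA^{\cJ}$ and noting that it is a term-wise weak equivalence, Lemma \ref{923 017}(3) applied over $\cJ$ and over $\cI$ lets us assume $F$ is injectively fibrant; then $F$ is term-wise fibrant, so, since every object of $\bfA$ is cofibrant, both $F$ and $F|_{\cI} = F\circ\phi$ take values in the fibrant--cofibrant part $\bfA^{\circ}$, and moreover $\holim_{\cJ}F = \lim F$ is a fibrant object of $\bfA$ (as $\lim$ is right Quillen for the injective model structure). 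Next, choosing a fibrant replacement $\psi: X \trivcofright \widehat{X}$ and extending $\eta$ along the trivial cofibration $\ct(\psi): \ct X \to \ct\widehat{X}$ (possible since $F$ is now fibrant in $\bfA^{\cJ}$), we obtain a cone $\widehat{\eta}: \ct\widehat{X} \to F$ with $\widehat{\eta}\circ\ct(\psi) = \eta$, so by Lemma \ref{923 017}(2), again over $\cJ$ and over $\cI$, it suffices to treat $(\widehat{X},\widehat{\eta})$; thus we may assume in addition that $X \in \bfA^{\circ}$.

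Next I would translate into $\infty$-categorical language. Let $\cC := \sner(\bfA^{\circ})$ be the underlying $\infty$-category of $\bfA$. Since $F$ and $F|_{\cI}$ are term-wise fibrant diagrams valued in $\bfA^{\circ}$, the comparison theorem \cite[Theorem 4.2.4.1]{HTT} identifies, compatibly with cones, the homotopy limit $\holim_{\cJ}F$ with the $\infty$-categorical limit in $\cC$ of $\sner(F): \sner(\cJ)\to\cC$, and the homotopy limit of $F|_{\cI}$ with the limit in $\cC$ of $\sner(F)\circ\sner(\phi): \sner(\cI)\to\cC$. Hence, because $X\in\bfA^{\circ}$, the morphism $\eta$ exhibits $X$ as a homotopy limit of $F$ if and only if the cone on $\sner(F)$ with cone point $X$ determined by $\eta$ is a limit diagram in $\cC$, while $\eta|_{\cI}$ exhibits $X$ as a homotopy limit of $F|_{\cI}$ if and only if the restriction of that cone along $\sner(\phi)$ is a limit diagram in $\cC$.

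Finally, the hypothesis that $\cI\to\cJ$ is homotopy left cofinal means exactly that $\sner(\phi)$ is left cofinal, i.e.\ that $\sner(\phi)^{\textup{op}}$ is cofinal; applying \cite[Proposition 4.1.1.8]{HTT} to the opposite diagram $\sner(F)^{\textup{op}}: \sner(\cJ)^{\textup{op}}\to\cC^{\textup{op}}$ shows that a cone on $\sner(F)$ is a limit diagram in $\cC$ if and only if its restriction along $\sner(\phi)$ is. Combining this with the two equivalences of the previous paragraph yields the lemma.

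I expect the main obstacle to be not the cofinality input --- which is immediate once everything is phrased $\infty$-categorically --- but the bookkeeping of the first two paragraphs: matching the model-categorical phrase ``$\eta$ exhibits $X$ as a homotopy limit of $F$'' (which a priori involves a non-fibrant $X$ and an injective fibrant replacement of $F$) with the statement ``the cone determined by $\eta$ is a limit diagram in $\sner(\bfA^{\circ})$'', and checking its compatibility with restriction along $\phi$; the delicate point is that restriction along $\phi$ need not preserve injective fibrancy, so on the $\cI$-side one must apply \cite[Theorem 4.2.4.1]{HTT} to the merely term-wise fibrant diagram $F|_{\cI}$ rather than to an injectively fibrant model of it.
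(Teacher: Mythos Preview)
Your proposal is correct and follows essentially the same approach as the paper's own proof: both arguments reduce to the $\infty$-categorical cofinality statement \cite[Proposition 4.1.1.8]{HTT} combined with the comparison between model-categorical homotopy limits and $\infty$-categorical limits (the paper cites \cite[Theorem 4.2.4.2]{HTT}). The only organizational difference is that you perform the reductions upfront (replacing $F$ by an injectively fibrant diagram and $X$ by a fibrant object, invoking Lemma \ref{923 017} over both $\cJ$ and $\cI$), whereas the paper first records the fibrant-cofibrant case and then handles each implication separately by choosing fibrant replacements as needed; in the paper's argument the role of your $\widehat{X}$ is played by $\lim_{\cJ}F'$, which is automatically fibrant-cofibrant since $\lim$ is right Quillen and every object of $\bfA$ is cofibrant.
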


\begin{proof}
First note that 
in the case when $X$ and the every value of $F$ are fibrant-cofibrant in $\bfA$ 
this lemma is proved by  \cite[Proposition 4.1.1.8, Theorem 4.2.4.2]{HTT}.
We also note that 
by the same consideration of \cite[Remark A.2.6.8]{HTT} 
we see that 
each value $F(i)$ of an injectively fibrant object $F$ of $\bfA^{\cJ}$ is a fibrant object of $\bfA$. 
Hence it follows from the assumption that each value of injectively fibrant object $F$ is a fibrant-cofibrant object of $\bfA$.

We assume that 
$\eta$ exhibits $X$ as a homotopy limit of $F$. 
We take an injective trivial cofibration $F \stackrel{\sim}{\rightarrowtail} F'$ 
 with $F'$ injectively fibrant. 
 The map $X \to \lim F'$ induced from the composite map $\ct X \to F \to F'$ 
 a weak equivalence. 
Let $F|_{\cI} \xrightarrow{\sim} G$ be a weak equivalence with $G$ injectively fibrant. 
Since the restriction functor $\bfA^{\cJ} \to \bfA^{\cI}$ preserves injective  trivial cofibrations, 
by the lifting property 
we obtain the commutative diagram 
\[\begin{xymatrix}{
\ct_{\cI}X \ar[d]_{\wr} \ar[r] & *++{F|_{\cI}} \ar@{>->}[d]^{\wr} \ar[r]^{\sim} & 
G \ar@{->>}[d]\\ 
\ct_{\cI}(\lim_{\cJ} F') \ar[r] & F'|_{\cI} \ar[r] \ar[ur]^{\sim} & \emptyset 
}\end{xymatrix}\] 
Since $\ct_{\cJ} (\lim_{\cJ} F') \to F'$ exhibits $\lim F'$ as a homotopy limit of $F'$, 
by the known results mentioned above 
the map $\ct_{\cI}(\lim_{\cJ} F') \to F|_{\cI}$ exhibits $\lim F'$ as a homotopy limit. 
Hence the map $\lim_{\cJ} F' \to \lim_{\cI} G$ induced from the composite map $\ct_{\cI}\lim F' \to F'|_{\cI} \to G$ 
is weak equivalence. 
Consequently we see that the map $\ct_{\cI} X \to \lim_{\cI} G$ is weak equivalence.

We assume that 
$\eta|_{\cI}$ exhibits $X$ as a homotopy limit of $F|_{\cI}$. 
It is easy to see that 
we have only to check the case when 
a weak equivalence $F \to F'$  is an injective trivial cofibration. 
Take a weak equivalence  $F|_{\cI} \stackrel{\sim}{\rightarrowtail} G$ 
with $G$ injectively fibrant 
in $\bfA^{\cI}$. 
Then as before 
by the lifting property 
we obtain the commutative diagram 
\[\begin{xymatrix}{
\ct_{\cI}X \ar[d] \ar[r] & *++{F|_{\cI}} \ar@{>->}[d]^{\wr} \ar[r]^{\sim} & 
G \ar@{->>}[d]\\ 
\ct_{\cI}(\lim_{\cJ}  F') \ar[r] & F'|_{\cI} \ar[r] \ar[ur]^{\sim} & \emptyset 
}\end{xymatrix}\] 
Since $F'$ is taken to be injectively  fibrant, 
the canonical map $\ct_{\cJ}(\lim_{\cJ} F') \to F'$ exhibits $\lim F'$ as a homotopy limit of $F'$. 
Therefore by the known result mentioned above, 
the canonical map $\ct_{\cI}(\lim_{\cJ}F') \to F|_{\cI}$ exhibits $\lim_{\cJ} F'$ as a homotopy limit of $F'|_{\cI}$. 
Hence  the map $\lim_{\cJ} F' \to \lim_{\cI} G$ induced from the above diagram 
is weak equivalence. 
By the assumption 
the map $X \to \lim_{\cI} G$ is weak equivalence. 
Consequently, the map $X \to \lim_{\cJ}F'$ is weak equivalence as desired. 
\end{proof}

Let $f: \bfA \rightleftarrows \bfB :g$ be a Quillen adjunction between 
combinatorial simplicial model categories. 
We define $f^{\cI}: \bfA \to \bfB$ to be the functor which sends 
$G\in \bfA$ to $f\circ G$ and define $g^{\cI}: \bfB^{\cI} \to \bfA^{\cI}$ similarly. 
Then 
$f^{\cI} : \bfA^{\cI} \rightleftarrows \bfB^{\cI} : g^{\cI}$ is a Quillen adjunction 
where we equip $\bfA$ and $\bfB$ with the injective model structures.

\begin{lemma}\label{921 456}

Assume that the simplicial functor $g$ preserves weak equivalence. 
If a morphism $\eta:\ct_{\cI}X \to F$ in $\bfA^{\cI}$ exhibits $X$ as a homotopy limit of $F$, 
Then  $g^{\cI}(\eta) : \ct_{\cI}g(X) \to g^{\cI}(F)$ exhibits $g(X)$ as a homotopy limit of $g^{\cI}F$.  
\end{lemma}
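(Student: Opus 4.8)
The plan is to unwind the definition of ``$\eta$ exhibits $X$ as a homotopy limit of $F$'' and push it through $g^{\cI}$, using three properties of $g$: that $g$, being a (simplicial) right adjoint, commutes with limits (and cotensors); that $g^{\cI}$ is a right Quillen functor for the injective model structures, as recorded just above the lemma, hence preserves injectively fibrant objects; and that $g$ preserves weak equivalences by hypothesis, so that $g^{\cI}$ preserves term-wise weak equivalences, which are precisely the weak equivalences in both the injective and the projective model structures on the functor categories. I also use that $g$ carries constant diagrams to constant diagrams, i.e.\ $g^{\cI}\circ\ct_{\cI}=\ct_{\cI}\circ g$, so that $g^{\cI}(\eta)$ is genuinely a morphism of the form $\ct_{\cI}g(X)\to g^{\cI}F$.

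First I would choose an injective trivial cofibration $F\trivcofright F'$ with $F'$ injectively fibrant; by hypothesis the composite $X\to\lim F\to\lim F'$ is a weak equivalence. Applying $g^{\cI}$, the object $g^{\cI}F'$ is injectively fibrant by the right Quillen property, and the map $g^{\cI}F\to g^{\cI}F'$ is a weak equivalence since $g$ preserves weak equivalences. Because the notion of homotopy limit is independent of the chosen injectively fibrant replacement (the observation following the definition, that $\lim$ preserves weak equivalences between injectively fibrant objects), it then suffices to show that the composite $g(X)\to\lim g^{\cI}F\to\lim g^{\cI}F'$ is a weak equivalence. Next I would identify $\lim g^{\cI}F'$ with $g(\lim F')$ via the canonical natural isomorphism arising from the fact that $g$ is a right adjoint, and check that under this identification the comparison map $g(X)\to\lim g^{\cI}F'$ determined by $g^{\cI}(\eta')$, where $\eta'\colon\ct_{\cI}X\to F'$ is the composite of $\eta$ with $F\trivcofright F'$, agrees with $g$ applied to the comparison map $X\to\lim F'$ determined by $\eta'$. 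Since the latter is a weak equivalence and $g$ preserves weak equivalences, this finishes the argument. (One may equally package these steps through Lemma \ref{923 017}(1)--(3), applied to $g^{\cI}F'$, the weak equivalence $g^{\cI}F\to g^{\cI}F'$, and the weak equivalence $g(X)\to g(\lim F')$.)

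The main obstacle is the bookkeeping in the last step: verifying that the isomorphism $\lim(g\circ F')\cong g(\lim F')$ is compatible with the universal (edge) maps, so that the map induced on limits by $g^{\cI}(\eta')$ is literally $g$ of the map induced by $\eta'$. This is formal, but needs a little care if $\lim$ is read as the simplicially enriched limit, in which case one also uses that a simplicial right adjoint preserves cotensors (equivalently, that the adjunction $f^{\cI}\dashv g^{\cI}$ is a simplicial Quillen adjunction, so that it is compatible with the cotensorings that enter the construction of the enriched limit). Everything else — preservation of fibrancy, of weak equivalences, and of constant diagrams — is immediate from the properties collected above.
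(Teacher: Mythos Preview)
Your argument is correct and follows essentially the same line as the paper's proof: both use that $g^{\cI}$ preserves injectively fibrant objects (right Quillen), that $g$ preserves weak equivalences, and that $g$ commutes with limits so that $\lim g^{\cI}F'\cong g(\lim F')$. The only organizational difference is that the paper first picks an auxiliary injectively fibrant replacement $g^{\cI}(F)\trivcofright G$ and then compares it to $g^{\cI}(F')$ via a lifting, whereas you use $g^{\cI}(F')$ directly as the fibrant replacement of $g^{\cI}(F)$; your route is marginally more direct but otherwise identical in content.
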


\begin{proof}
We take an injectively trivial cofibration  
 $g^{\cI}(F) \stackrel{\sim}{\rightarrowtail} G $  with $G$ injectively fibrant. 
 Let $F\xrightarrow{\sim} F'$ be a weak equivalence in $\bfA^{\cI}$ with $F'$ injectively fibrant. 
Since $g^{\cI}(F')$ is injectively fibrant, 
by the lifting property 
we obtain the commutative diagram 
\begin{equation}\label{921 449}
\begin{xymatrix}{
\ct_{\cI}g(X) \ar[r] \ar[dr] & *++{g^{\cI}(F)} \ar@{>->}[d]_{\wr} \ar[r]^{\sim} & g^{\cI}(F') \\ 
&G \ar[ur]& 
}
\end{xymatrix}  
\end{equation}
 By the assumption on $g$, the induced morphism $g^{\cI}(F) \to g^{\cI}(F')$ 
 is weak equivalence. 
Therefore the morphism $G \to g^{\cI}(F')$ in the above diagram is also weak equivalence. 
Since $\lim$ is a right Quillen functor, 
it preserves a weak equivalence between fibrant object. 
Thus taking limits of the above diagram (\ref{921 449}) we see that the induced morphism 
$g(X) \to G$ is weak equivalence. 
\end{proof}

In the case when a diagram category $\cI$ is of special type, 
we can easily compute homotopy limits of functors $F: \cI \to \bfA$. 
We set $\cI: = (\ZZ_{\geq 1})^{\textup{op}}$ 
where we consider the totally ordered set $\ZZ_{\geq 1}$ 
as a category in the standard way. 
Namely,  
the objects of $\cI$ are positive integers 
and  morphisms $\textbf{geq}^{m,n}: m \to n$ are $m \geq  n$.  
 Using \cite[Theorem 15.10.12]{Hirsch} 
 we prove the following lemma. 
 Here we equip $\cC(\cA)$ with the projective model structure.

\begin{lemma}\label{921 514}
Let $\cA$ be a small dg category. 
Let $F$ be an object of $\cCss(\cA)^{\cI}$.  
Assume that for $n\geq 1$ 
the  morphism $F(\textup{\textbf{geq}}^{n+1,n}): F(n+1) \to F(n) $ is a projective fibration. 
Then the canonical map $\ct\lim F \to F$ exhibits $\lim F$ as a homotopy limit.  
\end{lemma}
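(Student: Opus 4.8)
The plan is to recognise $\cI=(\ZZ_{\geq 1})^{\textup{op}}$ as an inverse Reedy category, to translate the hypothesis into the statement that $F$ is a Reedy fibrant diagram, hence an injectively fibrant object of $\cCss(\cA)^{\cI}$, and then to quote Lemma~\ref{923 017}(1). The Reedy machinery behind this is exactly what \cite[Theorem 15.10.12]{Hirsch} supplies.

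First I would install the Reedy structure on $\cI$. Give $\cI$ the degree function $\deg(n)=n$; since every non-identity morphism $\textbf{geq}^{m,n}$ strictly lowers degree, $\cI$ is an inverse category and all latching categories are empty, so the relative latching maps in $\cC(\cA)^{\cI}$ are identities and the Reedy cofibrations are precisely the levelwise projective cofibrations. As these coincide with the injective cofibrations and the weak equivalences are levelwise in both cases, the Reedy model structure and the injective model structure on $\cC(\cA)^{\cI}$ agree (the injective one exists since $\cC(\cA)$ is combinatorial). On the matching side, for $n\geq 2$ the matching category at $n$ is the poset of the non-identity morphisms out of $n$, which has $\textbf{geq}^{n,n-1}:n\to n-1$ as its initial object, so the matching object is $M_n F=F(n-1)$ and the matching map is $F(\textbf{geq}^{n,n-1}):F(n)\to F(n-1)$; for $n=1$ the matching category is empty, so $M_1 F$ is the terminal object of $\cC(\cA)$. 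Hence $F$ is Reedy fibrant if and only if $F(1)$ is fibrant and each matching map $F(n)\to F(n-1)$ is a projective fibration for $n\geq 2$; after reindexing this is exactly the hypothesis that $F(\textbf{geq}^{n+1,n}):F(n+1)\to F(n)$ is a projective fibration for all $n\geq 1$, the condition on $F(1)$ being automatic because every object of $\cC(\cA)$ is projective fibrant.

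Granting this, $F$ is an injectively fibrant object of $\cCss(\cA)^{\cI}$, and Lemma~\ref{923 017}(1) then says that the canonical map $\ct\lim F\to F$ exhibits $\lim F$ as a homotopy limit of $F$, which is the assertion. One can also argue without invoking that lemma: $\lim$ is right Quillen for the injective (hence Reedy) model structure and therefore preserves weak equivalences between injectively fibrant objects, so for any injectively fibrant replacement $F\xrightarrow{\sim}F'$ the induced map $\lim F\to\lim F'$ is a weak equivalence, which is what ``$\ct\lim F\to F$ exhibits $\lim F$ as a homotopy limit'' unwinds to in this situation.

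The main obstacle is not a computation but the framework bookkeeping. The point to get right is the direction of the matching construction for $(\ZZ_{\geq 1})^{\textup{op}}$, so that $M_n F=F(n-1)$ (and not, say, $F(1)$), and the resulting chain ``$F$ a tower of projective fibrations $\Leftrightarrow$ $F$ Reedy fibrant $\Leftrightarrow$ $F$ injectively fibrant'', which is where \cite[Theorem 15.10.12]{Hirsch} enters. A secondary check is that the simplicial enrichment $\cCss(\cA)$ leaves the underlying model category $\cC(\cA)$ unchanged and that $\lim$ of a simplicial functor over the discretely enriched category $\cI$ is the ordinary limit, so that the statement reduces to the model-categorical one for $\cC(\cA)^{\cI}$.
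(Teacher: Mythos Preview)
Your argument is correct and is exactly the unpacking of the Reedy argument that the paper's bare citation of \cite[Theorem 15.10.12]{Hirsch} is pointing at: recognise $\cI$ as an inverse category, identify the matching maps with the structure maps of the tower, conclude Reedy fibrant $=$ injectively fibrant, and invoke Lemma~\ref{923 017}(1). One phrasing quibble: when latching categories are empty the \emph{relative} latching map of a morphism $f$ at $n$ is $f_n$ itself, not an identity; but your conclusion that Reedy cofibrations coincide with levelwise (hence injective) cofibrations is what matters and is correct.
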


We often use the following characterization of homotopy left cofinality. 
\begin{theorem}[Theorem 4.1.3.1 \cite{HTT}]\label{cofinal thm}
Let $f: \cC \to \cD$ be a map of simplicial sets, where $\cD$ is an $\infty$-category. 
The following conditions are equivalent: 

\begin{enumerate}
\item[(1)] $f$ is  left cofinal. 

\item[(2)] For every $D\in \cD$, 
the simplicial set $f_{/D}:=\cC \times_{\cD} \cD_{/D}$ is weakly contractible.
\end{enumerate}
\end{theorem}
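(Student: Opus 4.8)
The statement is Lurie's cofinality theorem, which we use only as a black box; I sketch the idea of a proof. First I would pass to opposite simplicial sets: using the identification $\cC^{\textup{op}}\times_{\cD^{\textup{op}}}(\cD^{\textup{op}})_{D/}\cong(f_{/D})^{\textup{op}}$, the statement is equivalent to the dual one, that a map $f\colon\cC\to\cD$ to an $\infty$-category is \emph{cofinal} (equivalently, $f^{\textup{op}}$ is left cofinal) if and only if $\cC\times_{\cD}\cD_{D/}$ is weakly contractible for every $D\in\cD$. I would then recall the standard reformulations of cofinality: $f$ is cofinal iff for every right fibration $X\to\cD$ the restriction map on spaces of sections $\Map_{\cD}(\cD,X)\to\Map_{\cD}(\cC,X)$ is a weak homotopy equivalence; iff $f$ is a contravariant weak equivalence in $\sSet_{/\cD}$; iff, via straightening, restriction along $f^{\textup{op}}$ preserves limits of $\cS$-valued functors on $\cD^{\textup{op}}$. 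I would also use the companion fact that a cofinal map preserves colimits of arbitrary diagrams.

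For the direction from cofinality to contractibility, I would test $f$ against a single well chosen diagram. Fix $D$; the left fibration $\cD_{D/}\to\cD$ classifies the corepresentable functor $\Map_{\cD}(D,-)\colon\cD\to\cS$, and $\cD_{D/}$ has the initial object $\id_D$, hence is weakly contractible; since the colimit of the diagram of fibres of a left fibration is the weak homotopy type of its total space, $\colim_{\cD}\Map_{\cD}(D,-)\simeq\ast$. As $f$ is cofinal it preserves this colimit, and $\colim_{\cC}\Map_{\cD}(D,f(-))$ is, by the same principle applied to the base-changed left fibration $\cC\times_{\cD}\cD_{D/}\to\cC$, the weak homotopy type of $\cC\times_{\cD}\cD_{D/}$; thus the latter is weakly contractible.

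The converse is the substantial direction, and here the plan is a cellular bootstrap. I would first factor $f$ as a monomorphism followed by a trivial fibration; the trivial fibration is automatically cofinal, and replacing $f$ by its monomorphism part changes the comma objects $\cC\times_{\cD}\cD_{D/}$ only up to weak equivalence, so one reduces to $f$ a monomorphism. Then I would induct transfinitely along a presentation of $\cC\hookrightarrow\cD$ by attachments of nondegenerate simplices, exploiting that contravariant weak equivalences, and the operation $-\times_{\cD}-$, behave well with respect to the pushouts and filtered colimits occurring in such a presentation; this reduces the claim to understanding how attaching a single simplex affects spaces of sections of right fibrations, which is exactly what the weak contractibility of the $\cC\times_{\cD}\cD_{D/}$ controls.

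Making this last step precise is the technical heart of \cite[\S4.1]{HTT}: it relies on the full theory of left and right anodyne maps and of the covariant and contravariant model structures on $\sSet_{/\cD}$, together with Joyal's results on inner anodyne maps. I would therefore not reproduce it, but invoke \cite[Theorem 4.1.3.1]{HTT} directly. The main obstacle is precisely this bootstrap: passing from the representable comma objects in the hypothesis to a statement about all right fibrations over $\cD$ is not formal and is the real content of the theorem.
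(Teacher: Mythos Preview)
Your approach matches the paper's treatment exactly: the paper does not prove this statement but simply quotes it from \cite[Theorem 4.1.3.1]{HTT} and uses it as a black box. Your sketch of the argument and explicit invocation of \cite{HTT} for the technical heart is therefore entirely in line with how the result is handled here.
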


\begin{lemma}\label{cofinal lem}
Let $\cC$ be a co-filtered $\infty$-category 
and $\cD\subset \cC$ a full sub $\infty$-category. 
Assume that for each object $c$ of  $\cC$ there exists a morphism $d\to c$ in $\cC$ 
the domain of which is an object of $\cD$. 
Then $\cD$ is left cofinal in $\cC$.  
\end{lemma}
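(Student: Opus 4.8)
The plan is to deduce this from the characterization of left cofinality in Theorem \ref{cofinal thm}, namely that $\cD \subset \cC$ is left cofinal if and only if for every object $c \in \cC$ the simplicial set $\cD_{/c} := \cD \times_{\cC} \cC_{/c}$ is weakly contractible. So the whole proof reduces to showing: for each $c \in \cC$, the $\infty$-category (slice) $\cD_{/c}$ is weakly contractible.

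First I would observe that, by hypothesis, $\cD_{/c}$ is nonempty: there is a morphism $d \to c$ with $d \in \cD$, which is precisely an object of $\cD_{/c}$. The key step is then to show $\cD_{/c}$ is itself co-filtered (equivalently, that $\cD_{/c}^{\textup{op}}$ is filtered), since a nonempty filtered (or co-filtered) $\infty$-category is weakly contractible by \cite[Lemma 5.3.1.18]{HTT} (a filtered $\infty$-category is sifted, hence weakly contractible; more directly, filtered implies weakly contractible). To check that $\cD_{/c}$ is co-filtered I would verify the co-filtration condition against finite diagrams: given a finite simplicial set $K$ and a map $K \to \cD_{/c}$, I must extend it to $K^{\lhd} \to \cD_{/c}$ (a cone point mapping down to all of $K$). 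Unwinding the slice, such a map $K \to \cD_{/c}$ amounts to a map $K^{\rhd} \to \cC$ whose cone point is $c$ and whose restriction to $K$ lands in $\cD$. Since $\cC$ is co-filtered, the map $K \to \cC$ (forget the cone point $c$ for a moment, or rather work with the induced finite diagram) admits a cone $K^{\lhd} \to \cC$; then using the hypothesis I replace the new cone vertex $\tilde c$ by an object $d \in \cD$ equipped with a map $d \to \tilde c$, and compose down to $c$, producing the required cone in $\cD_{/c}$. One has to be a little careful to carry along the morphisms to $c$ coherently, but this is exactly the kind of bookkeeping that the co-filteredness of $\cC$ together with the lifting of each object to $\cD$ is designed to handle.

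The main obstacle — and the point requiring the most care — is the coherence of this cone construction in the $\infty$-categorical setting: "completing a finite diagram to a co-filtered cone and then lifting the cone vertex into $\cD$" is intuitively clear but must be phrased as a statement about extending maps of simplicial sets along inclusions like $K \hookrightarrow K^{\lhd}$, using the definition of a co-filtered $\infty$-category and possibly joins. An alternative, slightly cleaner route that avoids re-checking co-filteredness from scratch: note that $\cD_{/c} \to \cC_{/c}$ is a full subcategory inclusion, $\cC_{/c}$ is co-filtered (slices of co-filtered $\infty$-categories over an object are co-filtered), and every object of $\cC_{/c}$ receives a map from an object of $\cD_{/c}$ (apply the hypothesis to the domain, then compose); so by induction $\cD_{/c}$ is co-filtered. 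Either way, once co-filteredness of $\cD_{/c}$ is established, weak contractibility follows from the general fact that filtered $\infty$-categories are weakly contractible, and Theorem \ref{cofinal thm} then gives that $\cD$ is left cofinal in $\cC$, completing the proof.

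Incidentally, this lemma is a routine $\infty$-categorical analogue of the classical fact that a cofinal subposet of a directed poset is cofinal, so I would keep the write-up short and lean on \cite{HTT} for the two ingredients (slices of co-filtered categories are co-filtered; nonempty filtered categories are weakly contractible).
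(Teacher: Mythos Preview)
Your main approach is correct and essentially identical to the paper's: reduce via Theorem \ref{cofinal thm} to the weak contractibility of each $\cD_{/c}$, then show every map $K \to \cD_{/c}$ from a finite simplicial set extends over $\leftcone K$ by first using co-filteredness of $\cC_{/c}$ to produce a cone in $\cC_{/c}$ and then using the hypothesis to replace the cone point by an object of $\cD$. The paper carries out the coherence step you flag via \cite[Lemma 2.4.3.1]{HTT}, using that the inclusion $\Delta^{1} \bigsqcup_{\{1\}} \leftcone K \hookrightarrow \Delta^{1} \star K$ is inner anodyne to graft the morphism $d \to c'$ onto the extended cone. One caution: your ``alternative route'' is circular as written, since it would invoke the present lemma on the pair $\cD_{/c} \subset \cC_{/c}$ while proving it, and in any case the lemma concludes left cofinality rather than co-filteredness.
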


\begin{proof}
By the above Theorem \ref{cofinal thm} 
it is enough to prove that the simplicial set  $\cD_{/c}$ is weakly contractible 
for each $c \in \cC$. 
By the assumption 
$\cD_{/c}$ is non-empty. 
Therefore we have only to show that 
any map $\sigma:K \to \cD_{/c} $ of simplicial sets from a finite simplicial set $K$ 
factors through the canonical map $\iota:K \to {}^{\lhd}K$.
\[\begin{xymatrix}{
K \ar[d]_{\iota} \ar[r]^{\sigma} & \cD_{/c} \\
\leftcone K \ar@{-->}[ur] &  
}\end{xymatrix}\] 
Recall that $\cD_{/c} = \cD \times_{\cC} \cC_{/c}$. 
We denote by $\sigma_{1}, \sigma_{2}$ the composite morphism $\textup{pr}_{1} \circ \sigma,\textup{pr}_{2}\circ \sigma$ 
where $\textup{pr}_{1},\textup{pr}_{2}$ are the first and the second projections. 
\[\begin{xymatrix}{
K \ar@/^1pc/[drr]^{\sigma_2} \ar@/_1pc/[ddr]_{\sigma_1} \ar[dr]^{\sigma} && \\ 
&\cD_{/c} \ar[r]^{\pr_2}\ar[d]_{\pr_1} & \cC_{/c}\ar[d]^{\Upsilon} \\
&\cD \ar[r] & \cC 
}\end{xymatrix}\]

Since $\cC$ is co-filtered, so is $\cC_{/c}$. 
Therefore $\sigma_{2}$ factors through $\iota$. 
In other word there exists a map $\psi: \leftcone K \to \cC_{/c}$ of simplicial sets 
such that $\sigma_{2} = \psi \circ \iota$. 
We denote by $f:c'\to c$ the image of the cone point $-\infty $ of $\leftcone K$ by $\psi$. 
We may consider $f$ as a morphism of $\cC$ the co-domain of which is $c$. 
We denote by $c'$ the domain of the morphism $f$. 
By assumption there exists a morphism $g: d \to c'$ in $\cC$ the domain of which belongs to $\cD$.
We may consider $g$ gives a morphism in $\cC_{/c}$ from some object $h : d \to c$ to $c' \to c$. 
By abusing notation we denote by $g$ the corresponding map $g: \Delta^{1} \to \cC_{c/}$  
of simplicial sets. 

Since the restriction $g|_{\{1\}}$ to $\{1\} \subset \Delta^{1}$ coincide with 
the restriction $\sigma|_{\{-\infty\}}$  to $\{-\infty\} \subset \leftcone K$, 
the maps $\sigma_{2}$ and $g$ induce the map 
$\Sigma: \Delta^{1} \bigsqcup_{\{1\}} \leftcone K \to \cC_{/c}$ of simplicial sets.
By \cite[Lemma 2.4.3.1]{HTT} the inclusion $\eta: \Delta^{1} \bigsqcup_{\{1\}} \leftcone K \to \Delta^{1} \star K$ 
is inner anodyne. 
Therefore there exists a map $\Sigma':\Delta^{1} \star K \to \cC$ 
such that $\Sigma = \Sigma' \circ \eta$. 
We denote by $\tau_2$ the composite map of $\Sigma"$ with the canonical map $\leftcone K \cong \{0\} \star K \to \Delta^{1} \star K$. 

Since the restriction $\tau_2|_{K}$ to $K$ of $\tau$ coincide with $\sigma_{2}$
 and the subcategory $\cD$ of $\cC$ is full, 
the image of  composite map $\Upsilon \circ \tau_{2}$ of $\tau_{2}$ with  the domain functor $\Upsilon:\cC_{/c} \to \cC$ 
contained in $\cD$. 
Therefore the map $\Upsilon \circ \tau_{2}$ is decomposed 
into some map $\tau_{1}: \leftcone K \to \cD$ followed by the inclusion $\cD \subset \cC$. 
Since the restriction $\tau_2|_{K}$ to $K$ of $\tau$ coincides with $\sigma_{2}$, 
the restriction $\tau_{1}|_{K}$ to $K$ coincide with $\sigma_1$. 
Therefore the maps $\tau_1,\tau_2$ gives a map $\tau : \leftcone K \to \cD_{/c}$  
such that $\tau\circ \iota = \sigma$. 
\end{proof}

\section{Derived bi-duality via homotopy limit}

\subsection{Derived bi-commutator categories and dualities} 
Let $\cA$ be a small locally cofibrant dg-category and   
 $\cJ \subset \cD(\cA)$  a small full subcategory. 
Following Efimov \cite{Efi} we construct the derived bi-commutator category $\dgBic_{\cA}(J)$ 
in the following steps.   

\begin{const}\label{103 444}

\begin{enumerate}

\item[(I).] 
First we choose 
an injectively fibrant 
representative $J$ of each object of $\cJ$, 
and denote by $\ulcJ_1$ the full subcategory of $\cC(\cA)$ consisting of those representatives.  
Then   
we define the opposite of endomorphism dg-category 
$\cE:= \cE\text{nd}_{\cA}(\ulcJ_1)^{\textup{op}}$ to be 
\[
\textit{Ob}(\cE) := \textit{Ob}(\ulcJ_1), \quad \Hom_{\cE}([J],[I]) := {}_{\cA}(I,J). 
\]
where $[I]$ and $[J]$ are objects of $\cE$ which correspond to $I,J \in \textit{Ob}(\ulcJ_1)$ respectively. 
Note that by \cite[Lemma 3.1]{Efi} the quasi-equivalence class of $\cE$ is only depends on $\cJ$ 
and is independent to the choice of the representative $\ulcJ_1$.
Note also that $\cE$ is a locally cofibrant dg-category. 

\item[(II).] 
Next 
we define a dg $\cA \otimes \cE$-module $\underline{\cJ}_{2}$ to be 
$\underline{\cJ}_{2}(a\otimes [J]) := J(a)$ for $a \in \cA$ and $[J] \in \cE$. 
We call $\ulcJ_2$ the \textit{diagonal module} associated to $\ulcJ_1$. 
Note that 
the diagonal module $\ulcJ_2$ is determined by $\ulcJ_1$.

\item[(III).] 
Finally 
we take a fibrant replacement  
$\ulcJ_2 \stackrel{\sim}{\rightarrow} \underline{\cJ}$ of $\underline{\cJ}_{2}$ in $\cC(\cA \otimes \cE)$. 
\end{enumerate} 

\end{const}

\begin{definition}
We define the bi-commutator category 
$\dgBic_{\cA}(\cJ)$ 
of $\cJ$ over $\cA$ to be 
\[
\textit{Ob}\left(\dgBic_{\cA}(\cJ) \right)  := \textit{Ob}\cA, \quad 
{\dgBic_{\cA}(\cJ)}(a,b):= {}_{\cE}(\underline{\cJ}(b\otimes-),\underline{\cJ}(a\otimes -)).
\]
\end{definition} 
The identity on the objects induces a canonical functor $\iota: \cA \to \dgBic_{\cA}(\cJ)$.  
We denote by $\BBic_{\cA}(\cJ)$ the object of $\Ho(\dgCat)$ 
corresponding to $\dgBic_{\cA}(\cJ)$. 
Note that 
by \cite[Lemma 3.2]{Efi} the object $\iota : \cA \to \BBic_{\cA}(\cJ)$ 
of $\Ho(\dgCat_{\cA/})$ only depends on 
the subcategory $\cJ \subset \cD(\cA)$.

We introduce the duality functors induced by the $\cA\otimes \cE$-module $\underline{\cJ}$: 
\[
\begin{split}
D:= {}_{\cA}(-,\underline{\cJ}): \cC(\cA) \to \cC(\cE){}^{\textup{op}}\\
D':={}_{\cE}(-,\underline{\cJ}): \cC(\cE) \to \cC(\cA)^{\textup{op}}
\end{split}
\]
We set $S := D'D$.  
 Note that  $S(M)$ computes the derived bi-duality of $M$ over $\cJ$  
(Corollary \ref{103 824}). 
Namely 
the module $S(M)$ is isomorphic to $\RR\Hom_{\cE}(\RR\Hom_\cA(M,\cJ),\cJ)$ in the derived category $\cD(\cA)$. 

There exist natural transformations 
$
\epsilon: 1_{\cC(\cA)} \to S 
$
induced from the evaluation morphisms. 
Namely,  
 for $a \in \cA $ and $M \in \cC(\cA)$, 
the morphism \[
\epsilon_{M}(a): M(a) \to  S(M)(a)= 
{}_{\cE}({}_{\cA}(M,\underline{\cJ}),\underline{\cJ}(a\otimes -))\]
 of cochain complexes 
is given 
for a homogeneous elements $m \in M(a)$ 
by 
\[
[\epsilon_{M}(a)(m)](e) : {}_{\cA}(M,\underline{\cJ}(-\otimes e)) \to \underline{\cJ}(a\otimes e),
\quad f \mapsto (-1)^{(\deg m)( \deg f) }f(m) 
\] 
where
$e \in \cE$ and 
 $f \in {}_{\cA}(M,J(-\otimes e))$ is a homogeneous element.

We can check that the family of morphisms
\[
\cA(a,b) = b^{\wedge}(a) \xrightarrow{\epsilon_{b^{\wedge}}(a)} 
S(b^{\wedge})(a) = {}_{\cE}(\ulcJ(b\otimes-),\ulcJ(a\otimes -)) = \dgBic_{\cA}(J)(a,b)
\]
gives a dg-functor $\iota: \cA \to \dgBic_{\cA}(\cJ)$. 
We immediately see the following lemma. 
\begin{lemma}\label{1010 321}
The functor $\iota$ is quasi-equivalence if and only if $\epsilon_{b^{\wedge}}$ is quasi-isomorphism for $b\in \cA$. 
\end{lemma}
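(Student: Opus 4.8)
The plan is to unwind the two sides of the claimed equivalence and observe that they are literally the same family of morphisms up to the canonical identifications that were fixed in Construction 1.15 and the subsequent definitions. First I would recall that by definition $\dgBic_{\cA}(\cJ)(a,b) = {}_{\cE}(\underline{\cJ}(b\otimes-),\underline{\cJ}(a\otimes-))$, while on the other side $S(b^{\wedge})(a) = {}_{\cE}({}_{\cA}(b^{\wedge},\underline{\cJ}),\underline{\cJ}(a\otimes-))$. Thus the natural transformation $\epsilon$ evaluated at $b^{\wedge}$, read off at each object $a$, is precisely the map
\[
b^{\wedge}(a) = \cA^{\bullet}(a,b) \xrightarrow{\ \epsilon_{b^{\wedge}}(a)\ } {}_{\cE}({}_{\cA}(b^{\wedge},\underline{\cJ}),\underline{\cJ}(a\otimes-)),
\]
and the Yoneda-type identification ${}_{\cA}(b^{\wedge},\underline{\cJ}(-\otimes e)) \cong \underline{\cJ}(b\otimes e)$ turns the target into $\dgBic_{\cA}(\cJ)(a,b)$. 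Tracing the explicit formula for $\epsilon_{M}$ given just before the statement through this Yoneda identification, one checks that $\epsilon_{b^{\wedge}}(a)$ is exactly the structure map defining $\iota$, so the family indeed assembles into the dg-functor $\iota:\cA\to\dgBic_{\cA}(\cJ)$; this is the content of the sentence preceding the lemma and I would not belabor it.

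Given that identification, the lemma is almost immediate. A dg-functor $\iota:\cA\to\dgBic_{\cA}(\cJ)$ is a quasi-equivalence if and only if it is a quasi-fully-faithful functor which is essentially surjective on $\tuH^0$. Since $\iota$ is the identity on objects, essential surjectivity is automatic, so $\iota$ is a quasi-equivalence precisely when the map on Hom-complexes
\[
\iota: \cA^{\bullet}(a,b) \to \dgBic_{\cA}(\cJ)(a,b)
\]
is a quasi-isomorphism for all $a,b\in\cA$. By the first paragraph this map is $\epsilon_{b^{\wedge}}(a)$, so $\iota$ is a quasi-equivalence if and only if $\epsilon_{b^{\wedge}}(a)$ is a quasi-isomorphism for all $a$ and $b$; that is, if and only if the morphism of dg $\cA$-modules $\epsilon_{b^{\wedge}}: b^{\wedge}\to S(b^{\wedge})$ is a quasi-isomorphism for every $b\in\cA$, since a morphism of dg $\cA$-modules is a quasi-isomorphism iff it is so after evaluating at each object $a$.

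The only point requiring care — and the step I expect to be the main (minor) obstacle — is the bookkeeping in the Yoneda identification ${}_{\cA}(b^{\wedge},\underline{\cJ}(-\otimes e)) \cong \underline{\cJ}(b\otimes e)$: one must verify that under this identification the evaluation-at-$m$ description of $\epsilon_{M}$, including the Koszul sign $(-1)^{(\deg m)(\deg f)}$, is compatible with the composition law of $\cE$ that makes $\underline{\cJ}(b\otimes-)$ a dg $\cE$-module, so that the resulting family is genuinely a dg-functor and not merely a family of chain maps. Once this compatibility is in hand — it is a direct sign-chase using that $b^{\wedge}$ is the representable module and $\underline{\cJ}$ is the diagonal module — the equivalence of the two conditions is formal. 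So the proof is short: identify $\iota$ on Hom-complexes with $\epsilon_{b^{\wedge}}$, and invoke the definition of quasi-equivalence together with the fact that $\iota$ is the identity on objects.
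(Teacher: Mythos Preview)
Your proof is correct and matches the paper's approach: the paper has already recorded, just before the lemma, that the family $\epsilon_{b^{\wedge}}(a)$ assembles into $\iota$ on Hom-complexes, and then declares the lemma immediate. Your argument simply makes explicit the one remaining observation, namely that $\iota$ is the identity on objects so quasi-equivalence reduces to quasi-full-faithfulness, i.e.\ to each $\epsilon_{b^{\wedge}}$ being a quasi-isomorphism.
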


%
%

To state the main theorem of this  paper, we prepare notations. 
We denote by $\langle \cJ \rangle^{\cD}$ the smallest thick subcategory of $\cD(\cA)$ 
containing $\cJ$. 
Namely 
it is  
the full subcategory of $\cD(\cA)$ which consists of those objects 
which are obtained by taking direct summands, shifts and cones finitely many times 
from the image of $\cJ$ by the canonical functor $\cC(\cA) \to \cD(\cA)$. 
We denote by $\langle \cJ \rangle^{\cC}$ the full subcategory of $\cC(\cA)$ 
consisting those objects which are sent to  $\langle \cJ \rangle^{\cD}$ 
by the canonical functor $\cC(\cA) \to \cD(\cA)$. 

We fix a dg $\cA$-module $M$. 
We define a  full subcategory $\cU:= \cU^{M}_{J}$ of
 $\langle \cJ \rangle^{\cC}_{M/}$ to be the full subcategory $(\langle \cJ \rangle^{\cC}_{M/})^{\circ}$ 
 consisting of fibrant-cofibrant objects. 
Namely $\cU$ is the full subcategory of $\langle \cJ\rangle^{\cC}_{M/}$ 
consisting those object $k:M \to K$ such that 
the morphism  $k$ is a injective cofibration and the co-domain $K$ is injective fibrant. 
Let $\Gamma: \cU \to \cC(\cA)$ be the co-domain functor. 
\[
\Gamma: \cU \to \cC(\cA), \quad [k: M \to K] \mapsto K.
\]
We observe 
that the assignment $\kappa: k \mapsto k$ 
gives 
a natural transformation $\kappa: \ct_{\cU} M \to \G$
where 
$\ct_{\cU}M:\cU \to \cC(\cA), k \mapsto M$ is the constant functor with the value $M$. 
\[
\begin{xymatrix}{
(\ct_{\cU}M)(k) \ar[d]_{\kappa_k} \ar@{=}[r] & M \ar[d]^{k} \\
\G(k) \ar@{=}[r] & K
}\end{xymatrix}
\]
 
We have the commutative diagram 
\[
\begin{xymatrix}{
\ct_{\cU} M \ar[d]_{\kappa} \ar[r]^{\epsilon_{M}} & \ct_{\cU} S(M) \ar[d]^{S(\kappa)} \\
\Gamma \ar[r]_{\epsilon_{\Gamma}}^{\sim} & S\circ \Gamma 
}\end{xymatrix}\]

Then 
the main theorem  is the following. 

\begin{theorem}\label{bic holim thm}
Let $\cA$ be a locally cofibrant dg-category, $M$ a dg $\cA$-module 
and $\cJ$ a set of objects of $\cD(\cA)$.
We construct a derived bi-duality functor $S(-)$ over $\cJ$ 
by the construction \ref{103 444}. 
Assume that the evaluation map $\epsilon_M: M \to S(M)$ 
is injectively cofibrant. 
Then  we have a quasi-isomorphism
\[
S(M) \simeq \holim_{ \cU }\Gamma
\]
More precisely, 
the morphism $S(\kappa) : \ct_{\cU}S(M) \to S\circ \G$ exhibits 
$S(M)$ as a homotopy limit of $S\circ \G$.  
Since the morphism $\epsilon_{\G}: \G \to S\circ \G$ is a weak equivalence 
(Lemma \ref{main mis lemma 2}.(1)), 
we  obtain the above quasi-isomorphism. 
\end{theorem}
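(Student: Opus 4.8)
\noindent\textit{Proof proposal.}

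The plan is to dualise the whole picture to the side of $\cE$-modules, where the tautological diagram becomes a \emph{homotopy colimit} one can recognise, and then apply the second duality functor. Write $S=D'D$ with $D={}_{\cA}(-,\underline{\cJ})\colon\cC(\cA)\to\cC(\cE)^{\textup{op}}$ and $D'={}_{\cE}(-,\underline{\cJ})\colon\cC(\cE)\to\cC(\cA)^{\textup{op}}$. For each object $k\colon M\to K$ of $\cU$ the morphism $k$ gives $D(k)\colon D(K)\to D(M)$ in $\cC(\cE)$, and these assemble, functorially in $\cU^{\textup{op}}$, into a cocone on the diagram $\cU^{\textup{op}}\to\cC(\cE)$, $(k\colon M\to K)\mapsto D(K)$; this cocone is precisely $D$ applied to $\kappa$. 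The heart of the argument is the claim that it \emph{exhibits $D(M)$ as $\hocolim_{\cU^{\textup{op}}}(D\circ\G)$}. Granting this, I would apply Lemma \ref{921 456} to the right Quillen functor ${}_{\cE}(-,\underline{\cJ})\colon\cC(\cE)^{\textup{op}}\to\cC(\cA)$, which preserves weak equivalences because $\underline{\cJ}$ is injectively fibrant (cf.\ Lemmas \ref{103 754} and \ref{103 915}) and which sends (ordinary, hence homotopy) colimits to limits: it carries the homotopy colimit cocone $D\kappa$ to a homotopy limit cone $\ct_{\cU}S(M)\to S\circ\G$, and since $S=D'D$ that cone is exactly $S(\kappa)$. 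This is the assertion of the theorem, and the closing identification with $\holim_{\cU}\G$ follows from Lemma \ref{main mis lemma 2}.(1) (that $\epsilon_{\G}\colon\G\to S\circ\G$ is a weak equivalence) together with Lemma \ref{923 017}.(3). The hypothesis that $\epsilon_{M}$ is an injective cofibration enters at this assembly step, to ensure that $S(M)$ represents the homotopy limit on the nose.

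To prove the core claim I would first show that $\sner(\cU)$ is a cofiltered $\infty$-category. Since $\langle\cJ\rangle^{\cD}$ is thick it is closed under shifts, cones and cocones, and by Lemmas \ref{101 1250} and \ref{922 142} one may perform fibrant--cofibrant replacement inside $\langle\cJ\rangle^{\cC}_{M/}$ without leaving $\cU$. Using this together with Lemma \ref{106 1146} (which completes a cospan in the under category to a homotopy commutative square), the cocone $K\oplus K'$ for a pair of objects, and $\cocone(f-g)$ for a parallel pair $f,g\colon k\rightrightarrows k'$ (with the lift $M\to\cocone(f-g)$ available because $(f-g)\circ k=0$), one verifies that every finite diagram in $\cU^{\textup{op}}$ admits a cocone, all the relevant cones and cocones landing in $\langle\cJ\rangle^{\cC}$ by thickness.

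Because limits, colimits and quasi-isomorphisms in $\cC(\cE)$ are detected objectwise, and because filtered colimits of cochain complexes are exact — so that over the cofiltered $\infty$-category $\cU^{\textup{op}}$ the ordinary colimit already computes the homotopy colimit — the core claim reduces, for each representative $[J]\in\cE$ (recall $\underline{\cJ}(-\otimes[J])=J$), to showing that
\[
\colim_{(k\colon M\to K)\in\cU^{\textup{op}}} H^{n}\bigl({}_{\cA}(K,J)\bigr)\;\longrightarrow\;H^{n}\bigl({}_{\cA}(M,J)\bigr)
\]
is an isomorphism for all $n$; and since $J$ is injectively fibrant, $H^{n}({}_{\cA}(-,J))=\Hom_{\cD(\cA)}(-,J[n])$. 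For surjectivity: a morphism $g\colon M\to J[n]$ in $\cD(\cA)$ is represented by a chain map, which factors as $M\rightarrowtail K\xrightarrow{\sim}J[n]$ with $M\rightarrowtail K$ an injective cofibration and $K$ injectively fibrant; then $K\in\langle\cJ\rangle^{\cC}$, so $(M\to K)\in\cU$ and $g$ is hit. For injectivity: if $\phi\colon K\to J[n]$ satisfies $\phi\circ k\simeq 0$, replace $K$ by the homotopy fibre $\cocone(\phi)$, which lies in $\langle\cJ\rangle^{\cC}$ and receives $k$, killing the class of $\phi$; a fibrant--cofibrant model keeps us in $\cU$.

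The main obstacle is establishing the core claim rigorously: one must be sure that $\langle\cJ\rangle^{\cC}_{M/}$, restricted to fibrant--cofibrant objects, is genuinely cofiltered as an $\infty$-category (not merely that its homotopy category has weak finite limits), and that the factorisation and homotopy-fibre constructions above can be carried out without leaving $\langle\cJ\rangle^{\cC}$ and without disturbing the required model-categorical positions. Everything downstream — the passage from homotopy colimit to homotopy limit via ${}_{\cE}(-,\underline{\cJ})$, and the final replacement of $S\circ\G$ by $\G$ — is then formal, using only the lemmas already in the Preliminaries.
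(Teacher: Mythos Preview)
Your overall strategy is correct and matches the paper's: dualise to the $\cE$-side, recognise a homotopy colimit, then apply $D'$ via Lemma \ref{921 456}. The difference lies in \emph{where} you identify the homotopy colimit. The paper introduces the auxiliary over-category $\cO=(\dgPerf\cE_{/DM})^{\circ}$, proves the general Theorem \ref{Perf hocolim thm} (any dg-module is the tautological filtered homotopy colimit of perfect modules mapping to it), constructs an explicit Dwyer--Kan equivalence $\widetilde{D'}\colon\cO^{\textup{op}}\to\cU$ using the contravariant adjunction, and transfers the colimit along it. You instead attack $\hocolim_{\cU^{\textup{op}}}(D\circ\G)\simeq D(M)$ directly: your surjectivity and injectivity arguments, exploiting that $H^{n}(D(K)([J]))=\Hom_{\cD(\cA)}(K,J[n])$ so that one can take a literal cocone of a class, are the $\cU$-side analogues of Lemma \ref{107 216}, and your cofilteredness of $\sner(\cU)$ is the dual of Lemma \ref{107 225}. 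This is a legitimate shortcut; what you lose is the modular statement of Theorem \ref{Perf hocolim thm}, which the paper reuses elsewhere, and the transparent bookkeeping that $\widetilde{D'}$ provides.

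Two small corrections. First, the sentence ``the ordinary colimit already computes the homotopy colimit'' is not what you need and is not literally true over a filtered $\infty$-category that is not an ordinary filtered category; what you actually use is Lemma \ref{hom com lem} (cohomology commutes with filtered homotopy colimits), together with the termwise weak equivalence between the diagram and its projective cofibrant replacement. Second, your remark that the hypothesis ``$\epsilon_{M}$ is an injective cofibration'' enters ``at the assembly step'' is misplaced. In the paper this hypothesis is used exactly once: to guarantee that $\widetilde{D'}(p)=D'(p)\circ\epsilon_{M}$ is an injective cofibration, so that $\widetilde{D'}$ lands in $\cU$ rather than merely in $\langle\cJ\rangle^{\cC}_{M/}$. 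Since you never build $\widetilde{D'}$, your argument does not appear to use this hypothesis at all; that is harmless (it still proves the stated theorem), but you should not claim it plays a role it does not.
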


The assumption that $\epsilon_M$ is injective cofibration is harmless, 
because of the following lemma. 

\begin{lemma}\label{103 450}
Let $\cA $ and $\cJ$ as in the above theorem. 
We fix a set $\cM$ of objects of $\cC(\cA)$. 
Then 
by the construction \ref{103 444} 
we can construct a derived bi-duality functor $S(-)$ over $\cJ$ 
such that the evaluation map $\epsilon_M$ is injective cofibration for $M \in \cM$.  
\end{lemma}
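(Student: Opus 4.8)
The plan is to show that the choices internal to Construction \ref{103 444} can be made so that $\epsilon_M$ is a termwise injection for every $M\in\cM$ simultaneously; since in the injective model structure on $\cC(\cA)$ the cofibrations are exactly the termwise injections, this yields the claim. First I record a reduction. Inspecting the explicit formula for $\epsilon$ recalled above, a homogeneous element $m\in M(a)$ lies in $\ker\epsilon_M(a)$ precisely when $f(m)=0$ in $\ulcJ(a\otimes e)$ for every $e\in\cE$ and every homogeneous $f\in{}_{\cA}(M,\ulcJ(-\otimes e))$; since $\ker\epsilon_M(a)$ is a graded submodule of $M(a)$, it therefore suffices to produce, for each $M\in\cM$ and each nonzero homogeneous $m\in M(a)$, a single $e\in\cE$ together with a homogeneous $g\in{}_{\cA}(M,\ulcJ(-\otimes e))$ with $g(m)\neq 0$.

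To arrange this, the idea is to enlarge one of the representatives chosen in step (I) of Construction \ref{103 444} so that its underlying graded module is a cogenerator for graded $\cA^{\#}$-modules, while keeping it in the prescribed quasi-isomorphism class; the point is that the enlargement may be carried out by adjoining a \emph{contractible} injectively fibrant module, which is invisible in $\cD(\cA)$. We assume $\cJ\neq\emptyset$, fix an object of $\cJ$ and an injectively fibrant representative $X_0$ of it, and put $N:=\prod_{a\in\cA,\,n\in\ZZ}a^{\vee}[n]$, where $a^{\vee}=\bfD({}^{\wedge}a)$. Each $a^{\vee}$ is injectively fibrant (as ${}^{\wedge}a$ is free, hence semi-free), injectively fibrant modules are closed under shifts and products, and by Lemma \ref{101 1250}(2) the cone $\cone(1_N)$ is injectively fibrant as well; moreover $\cone(1_N)$ is contractible. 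Hence $J_0:=X_0\oplus\cone(1_N)$ is again an injectively fibrant representative of the same object of $\cJ$ (injective fibrancy of the sum by Lemma \ref{101 1250}(1); the quasi-isomorphism $J_0\simeq X_0$ by contractibility of $\cone(1_N)$), and its underlying graded module contains $N^{\#}$ as a graded direct summand. I take $\ulcJ_1$ to consist of $J_0$ together with any injectively fibrant representatives of the remaining objects of $\cJ$; this still yields a locally cofibrant $\cE$ by Lemma \ref{103 915}(3), and, by \cite{Efi}, does not change the quasi-equivalence class of $\cE$. In step (III) I moreover take the fibrant replacement $\ulcJ_2\trivcofright\ulcJ$ to be a trivial cofibration, so in particular a termwise injection, whence $\ulcJ_2(a\otimes e)\hookrightarrow\ulcJ(a\otimes e)$ for all $a$ and $e$.

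The verification is then immediate. Given $M\in\cM$ and a nonzero homogeneous $m\in M(a)$, choose, using that $E$ is an injective cogenerator of $\Mod\kk$, an element of ${}_{\kk}(M(a),E)$ not annihilating $m$; under the natural isomorphism ${}_{\cA}(M,a^{\vee})\cong{}_{\kk}(M(a),E)$ and the summand inclusion $a^{\vee}\hookrightarrow N$ this yields a homogeneous $g_0\in{}_{\cA}(M,N)$ with $g_0(m)\neq 0$. Composing $g_0$ with the inclusions $N\hookrightarrow\cone(1_N)\hookrightarrow J_0$ and $J_0(-)=\ulcJ_2(-\otimes[J_0])\hookrightarrow\ulcJ(-\otimes[J_0])$ produces a homogeneous $g\in{}_{\cA}(M,\ulcJ(-\otimes[J_0]))$ with $g(m)\neq 0$. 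By the reduction of the first paragraph (with $e=[J_0]$), $\epsilon_M(a)$ is injective on homogeneous elements, hence injective, for every $a$; so $\epsilon_M$ is an injective cofibration. Finally, since only the choices internal to Construction \ref{103 444} were altered, the resulting $S$ still computes the derived bi-duality $\RR\Hom_{\cE}(\RR\Hom_{\cA}(M,\cJ),\cJ)$ over $\cJ$, so it is a derived bi-duality functor of the required kind.

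I expect the only genuine subtlety to be the construction of $J_0$: one must find, within the quasi-isomorphism class of a fixed object of $\cJ$, an injectively fibrant model whose underlying graded module cogenerates graded $\cA^{\#}$-modules, and the device that makes this possible is to adjoin the acyclic module $\cone(1_N)$ rather than attempting to fatten the module itself. Everything else — injective fibrancy of products, finite sums, and cones of maps between injectively fibrant modules; local cofibrancy of $\cE$; and the independence of the bi-duality functor from the internal choices of Construction \ref{103 444} — is supplied by the lemmas quoted above.
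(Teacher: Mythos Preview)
Your proof is correct and follows the same architecture as the paper's own argument: enlarge one chosen representative $J_0$ in step~(I) of Construction~\ref{103 444} by adjoining a contractible injectively fibrant summand, take the replacement $\ulcJ_2\trivcofright\ulcJ$ in step~(III) to be a trivial cofibration, and then read off termwise injectivity of $\epsilon_M$ from the explicit evaluation formula. The only real difference is in the contractible summand: the paper adjoins $\bigoplus_{M\in\cM}\cone(1_{M'})$ with $M'$ a fibrant replacement of $M$, obtaining for each $M\in\cM$ a single injective cofibration $M\rightarrowtail J_0$ that witnesses injectivity of $\epsilon_M$ directly; you instead adjoin $\cone(1_N)$ for the universal cogenerator $N=\prod_{a,n}a^{\vee}[n]$ and argue element by element via the separating family of homogeneous maps $M\to a^{\vee}$. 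Your version has the mild advantage that the modification of $J_0$ depends only on $\cA$ and not on $\cM$, while the paper's is slightly more direct (one monomorphism per $M$ rather than one map per element); both are variations on the same idea.
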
 

\begin{proof}
We claim that 
there exists 
a injective cofibration  $ M \hookrightarrow  N_M$ 
with $N_M$ injectively fibrant and weakly contractible. 
Indeed 
first we take a fibrant replacement  $M \stackrel{\sim}{\rightarrowtail} M'$ of $M$ 
by a injectively cofibration. 
Let $M' \to \cone(1_{M'})$ be the canonical embedding. 
Then by Lemma \ref{101 1250} the composite map $M \to M' \to \cone(1_{M'})$ 
satisfies the desired property.

In the step (I) 
first we  choose injectively fibrant representatives of $\cJ$ and construct $\ulcJ_1'$.  
Next  for some $J_0' \in \ulcJ_1'$  we set $J_0:= J_0'\oplus \bigoplus_{M \in \cM}  N_M$.  
Then we obtain  $\ulcJ_1$  
such that  for each $M \in \cM$,  
 there exists an injective cofibration $\iota: M \rightarrowtail J_0$ for some object $J_0 $ of $\ulcJ_1$.
Let $\ulcJ_2$ is the diagonal module of $\ulcJ_0$ and 
$\ulcJ$ an injectively fibrant replacement 
by an injective trivial cofibration $\ulcJ_2 \stackrel{\sim}{\rightarrowtail} \ulcJ$. 
Then for any $M\in \cM$ 
there exists an injective cofibration $M \cofright \ulcJ(-\otimes [J_0])$.  
Now it is easy to see by the explicit formula of the evaluation map 
that 
$\epsilon_M: M \to S(M)$ is an injectively cofibration. 
\end{proof}

\subsection{A proof of Theorem \ref{bic holim thm}}

First we study basic properties of duality $D,D'$. 

\begin{lemma}\label{main mis lemma 1}
(1) For each $e \in \cE$ the dg $\cA$-module $\ulcJ(- \otimes e) $ is injectively fibrant. 
Dually for each $a\in \cA$ the dg $\cE$-module $\ulcJ(a\otimes -)$ is injectively fibrant. 
 
(2) 
The functors $D,D'$ preserves quasi-isomorphisms. 

(3) 
The functor $S$ preserves quasi-isomorphisms, projectively fibrations and injective cofibrations.   

(4)
If we equip $\cC(\cA)$ with the injective model structure and 
$\cC(\cE)^{\textup{op}}$ with the opposite model structure of  the  projective model structure in $\cC(\cE)$, 
then 
the adjoint pair 
\[D: \cC(\cA) \rightleftarrows \cC(\cE)^{\textup{op}}:D'\] 
is a Quillen adjunction. 
 
\end{lemma}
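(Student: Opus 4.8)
The plan is to establish the four clauses in the order (1), (2), (4), (3), since once (1) and (2) are in hand the last two are essentially formal. Clause (1) I would read directly off Lemma \ref{103 754}: recall from Construction \ref{103 444}(III) that $\ulcJ$ is an injectively fibrant dg $\cA\otimes\cE$-module, that $\cE$ is locally cofibrant, and that $\cA$ is locally cofibrant by hypothesis; Lemma \ref{103 754} with $\cB=\cE$ then gives that $\ulcJ(-\otimes e)$ is injectively fibrant over $\cA$ for each $e\in\cE$, and the same lemma with the roles of $\cA$ and $\cE$ exchanged --- via the symmetry $\cA\otimes\cE\cong\cE\otimes\cA$, under which injective fibrancy is preserved --- gives the dual statement that $\ulcJ(a\otimes-)$ is injectively fibrant over $\cE$ for each $a\in\cA$.

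For clause (2) I would first isolate the auxiliary fact that, for a small dg-category $\cB$ and an injectively fibrant dg $\cB$-module $N$, the contravariant functor ${}_{\cB}(-,N)\colon\cC(\cB)\to\cC(\kk)$ sends term-wise injections to term-wise surjections and quasi-isomorphisms to quasi-isomorphisms. The first assertion is the pushout-product axiom of the $\cC(\kk)$-enriched injective model structure on $\cC(\cB)$ (which holds since $\cC(\cB)$ is a $\cC(\kk)$-enriched model category, recorded before Lemma \ref{103 915}), applied to a term-wise-injective cofibration and the fibration $N\to 0$; the same axiom shows more sharply that a trivial cofibration goes to a trivial fibration. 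For the second assertion I would factor a quasi-isomorphism $f\colon M\to M'$ in the injective model structure as $M\stackrel{i}{\rightarrowtail}M''\stackrel{p}{\twoheadrightarrow}M'$ with $i$ a trivial cofibration and $p$ a fibration, necessarily trivial by two-out-of-three; then ${}_{\cB}(i,N)$ is a quasi-isomorphism by the sharp form just stated, while $p$, being a trivial fibration onto the (automatically injectively cofibrant) object $M'$, admits a section $s$, which is both a term-wise injection and a quasi-isomorphism, so ${}_{\cB}(s,N)$ is a quasi-isomorphism and the relation ${}_{\cB}(s,N)\circ{}_{\cB}(p,N)=\id$ forces ${}_{\cB}(p,N)$, hence ${}_{\cB}(f,N)={}_{\cB}(i,N)\circ{}_{\cB}(p,N)$, to be a quasi-isomorphism too. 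Applying this with $N=\ulcJ(-\otimes e)$, respectively $N=\ulcJ(a\otimes-)$ --- legitimate by (1) and its dual --- shows objectwise, hence globally, that $D$ and $D'$ preserve quasi-isomorphisms.

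Clauses (4) and (3) are then bookkeeping. For (4): the two-variable adjunction attached to the bimodule $\ulcJ$, namely the natural isomorphism ${}_{\cE}(N,{}_{\cA}(M,\ulcJ))\cong{}_{\cA}(M,{}_{\cE}(N,\ulcJ))$, exhibits $D$ as left adjoint to $D'\colon\cC(\cE)^{\textup{op}}\to\cC(\cA)$, so it suffices to check that $D$ preserves cofibrations and trivial cofibrations; a cofibration of $\cC(\cA)$ with the injective structure is a term-wise injection, which $D$ sends --- by the auxiliary fact and (1), checked at each object of $\cE$ --- to a term-wise surjection of $\cE$-modules, i.e.\ a projective fibration, i.e.\ a cofibration of $\cC(\cE)^{\textup{op}}$, while a trivial cofibration goes moreover to a quasi-isomorphism by (2). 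For (3): $S=D'D$ preserves quasi-isomorphisms by (2), and for the remaining two statements I would combine the auxiliary fact (term-wise injections $\mapsto$ term-wise surjections, using (1) on the $\cA$-side and its dual on the $\cE$-side) with the elementary observation that a contravariant hom-functor sends term-wise surjections --- being epimorphisms of the underlying graded modules --- to term-wise injections, and then chase: a term-wise injection is carried by $D$ to a term-wise surjection and by $D'$ back to a term-wise injection (so $S$ preserves injective cofibrations), while a term-wise surjection is carried by $D$ to a term-wise injection and by $D'$ to a term-wise surjection (so $S$ preserves projective fibrations).

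The one point that I expect to need genuine care is the quasi-isomorphism clause of the auxiliary fact in (2): because ${}_{\cB}(-,N)$ is contravariant, Ken Brown's lemma does not apply directly, and it is the observation that a section of the trivial-fibration factor is again a trivial cofibration that makes the argument close. The rest is a matter of chasing term-wise injections and surjections through the injective and opposite-projective model structures and the bimodule adjunction.
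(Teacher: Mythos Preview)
Your proposal is correct and follows the same overall strategy as the paper: deduce (1) from Lemma~\ref{103 754}, and then reduce (2)--(4) to properties of the contravariant functors ${}_{\cB}(-,N)$ with $N$ injectively fibrant.

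The one genuine difference is in how you extract those properties. The paper's terse proof invokes Lemma~\ref{prel 1 lem 0}: once $N$ is injectively fibrant, its underlying graded module $N^{\#}$ is injective, so ${}_{\cB}(-,N)$ is exact on the $\#$-level; combined with the fact that injectively fibrant $N$ kills acyclics, this immediately gives preservation of quasi-isomorphisms (via the cone), and ``term-wise injection $\mapsto$ term-wise surjection'' is just $\Hom_{\cB^{\#}}(-,N^{\#})$ being exact. You instead work purely model-categorically, using the pushout-product axiom to get ``(trivial) cofibration $\mapsto$ (trivial) fibration'' and then handling the trivial-fibration factor of an arbitrary quasi-isomorphism by the section trick. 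Your route avoids appealing to the underlying graded structure and is arguably cleaner from a model-category standpoint; the paper's route is shorter once Lemma~\ref{prel 1 lem 0} is on the table. Either way the remaining clauses (3) and (4) are, as you say, bookkeeping.
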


\begin{proof}
(1) 
follows from Lemma \ref{103 754}.  

(2) and (3) follow from (1) and Lemma \ref{prel 1 lem 0}.

(4) 
By (2) it is enough to prove that $D$ sends injective cofibration  to 
projective fibration.  
This follows (1) and Lemma \ref{prel 1 lem 0}.  
\end{proof}

\begin{corollary}\label{103 824}
For any dg $\cA$-module $M$,  
the module $S(M)$ computes the derived bi-duality over $\cJ$.  
\[S(M) \simeq \RR\Hom_{\cE}(\RR\Hom_{\cA}(M,\cJ),\cJ).\] 
\end{corollary}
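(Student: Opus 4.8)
The plan is to read the corollary off from Lemma~\ref{main mis lemma 1}, exploiting the fact that the $\cA\otimes\cE$-module $\underline{\cJ}$ was manufactured in Construction~\ref{103 444} precisely so as to be injectively fibrant in each of its two variables separately: by Lemma~\ref{main mis lemma 1}(1) the dg $\cA$-module $\underline{\cJ}(-\otimes e)$ is injectively fibrant for every $e\in\cE$, and the dg $\cE$-module $\underline{\cJ}(a\otimes-)$ is injectively fibrant for every $a\in\cA$.

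First I would check that $D$ already computes the first duality, i.e.\ that $D(M)={}_{\cA}(M,\underline{\cJ})$ represents $\RR\Hom_{\cA}(M,\cJ)$ in $\cD(\cE)$ for \emph{every} dg $\cA$-module $M$, not merely for the cofibrant ones. Choosing a projectively cofibrant resolution $\widetilde M \trivfibright M$, the complex ${}_{\cA}(\widetilde M,\underline{\cJ})$ computes $\RR\Hom_{\cA}(M,\cJ)$ since its source is cofibrant and its target is injectively fibrant for each value of the $\cE$-coordinate, so no resolution of $\underline{\cJ}$ is needed; and the comparison morphism ${}_{\cA}(M,\underline{\cJ})\to{}_{\cA}(\widetilde M,\underline{\cJ})$ is a quasi-isomorphism because $D$ preserves quasi-isomorphisms by Lemma~\ref{main mis lemma 1}(2). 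Hence $D(M)\simeq\RR\Hom_{\cA}(M,\cJ)$. The identical argument on the $\cE$-side, again combining the injective fibrancy of $\underline{\cJ}(a\otimes-)$ with the fact that $D'$ preserves quasi-isomorphisms (Lemma~\ref{main mis lemma 1}(2)), shows that $D'(N)={}_{\cE}(N,\underline{\cJ})$ represents $\RR\Hom_{\cE}(N,\cJ)$ in $\cD(\cA)$ for every dg $\cE$-module $N$.

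It then remains to compose. Since $S=D'D$, the second computation gives $S(M)=D'(D(M))\simeq\RR\Hom_{\cE}(D(M),\cJ)$; and because $D'$ preserves quasi-isomorphisms it descends to a functor $\cD(\cE)^{\textup{op}}\to\cD(\cA)$, so the right-hand side depends only on the class of $D(M)$ in $\cD(\cE)$. Substituting $D(M)\simeq\RR\Hom_{\cA}(M,\cJ)$ from the previous step yields $S(M)\simeq\RR\Hom_{\cE}(\RR\Hom_{\cA}(M,\cJ),\cJ)$ in $\cD(\cA)$, which is the assertion. There is no real obstacle once Lemma~\ref{main mis lemma 1} is available; the only point needing attention is the bookkeeping of the two model structures — the injective one on $\cC(\cA)$, in which every object is cofibrant so that $D$ needs no source resolution, versus the projective one on $\cC(\cE)$, for which a source resolution is in principle required for $D'$ but is rendered harmless by quasi-isomorphism-preservation — together with tracking the residual $\cA$-action carried by ${}_{\cE}(-,\underline{\cJ})$ through the $\cA$-action on $\underline{\cJ}$, which is what makes the final equivalence one of $\cD(\cA)$.
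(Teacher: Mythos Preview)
Your proposal is correct and follows exactly the route the paper intends: the corollary is stated immediately after Lemma~\ref{main mis lemma 1} with no separate proof, and your argument simply spells out how parts (1) and (2) of that lemma combine to show that $D$ and $D'$ compute the respective derived Hom functors for arbitrary inputs. The only minor redundancy is the detour through a projectively cofibrant resolution of $M$: since $\underline{\cJ}(-\otimes e)$ is injectively fibrant and every object is cofibrant in the injective model structure on $\cC(\cA)$, the complex ${}_{\cA}(M,\underline{\cJ})$ already computes the derived Hom directly, without the comparison step.
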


We denote by $\dgPerf \cE$ the category $\langle e^{\wedge} \mid e \in \cE \rangle^{\cC}$ of perfect dg $\cE$-modules. 
There exists a natural transformation $\epsilon': 1_{\cC(\cE) } \to DD'$ 
induced from evaluation maps.

\begin{lemma}\label{main mis lemma 2}
(1) If a dg $\cA$-module $K$ belongs to $\langle \cJ \rangle^{\cC}$, 
then the morphism $\epsilon_{K}: K \to S(K)$ is a quasi-isomorphism. 

(2) 
If a dg-$\cE$-module $P$ belongs to $\dgPerf\cE$, 
then the morphism $\epsilon'_P: P \to DD'(P)$ is a quasi-isomorphism. 

(3) 
The functors $D,D'$  gives a contravariant Dwyer-Kan equivalence 
between $\left(\langle \cJ \rangle^{\cC}\right)^{\circ}$
 and $(\dgPerf \cE)^{\circ}$. 
\end{lemma}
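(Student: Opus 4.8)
The plan is to prove the three statements in order, bootstrapping from the case of a single module $J \in \cJ$. For part (1), I would first check that $\epsilon_{J}$ is a quasi-isomorphism for each representative $J \in \ulcJ_1$: by Lemma \ref{main mis lemma 1}.(1) the module $\ulcJ(a\otimes-)$ is injectively fibrant, so $D(J) = {}_{\cA}(J,\ulcJ)$ has underlying $\cE$-module with value at $[I]$ equal to ${}_{\cA}(I,J) = \Hom_{\cE}([J],[I])$, i.e.\ $D(J) \simeq [J]^{\wedge}$ is (quasi-isomorphic to) the free $\cE$-module; then $D'D(J) = {}_{\cE}([J]^{\wedge},\ulcJ) \simeq \ulcJ([J]\otimes-) \simeq J$, and one checks by the explicit evaluation formula given before Lemma \ref{1010 321} that this identification is compatible with $\epsilon_J$. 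This is essentially the Yoneda computation. Since $S = D'D$ preserves quasi-isomorphisms (Lemma \ref{main mis lemma 1}.(3)), shifts, cones and direct summands (the first because $S$ preserves quasi-isomorphisms and the cone/summand compatibility is formal, $S$ being additive and exact up to quasi-isomorphism on the relevant objects), and $\epsilon$ is a natural transformation $1 \to S$, the class of $K$ for which $\epsilon_K$ is a quasi-isomorphism is closed under these operations; as it contains each $J \in \cJ$, it contains all of $\langle\cJ\rangle^{\cC}$.

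For part (2), I would argue dually. For $e \in \cE$ the free module $e^{\wedge}$ has $D'(e^{\wedge}) = {}_{\cE}(e^{\wedge},\ulcJ) \simeq \ulcJ(- \otimes e)$, which is injectively fibrant by Lemma \ref{main mis lemma 1}.(1); then $DD'(e^{\wedge}) = {}_{\cA}(\ulcJ(-\otimes e),\ulcJ)$, whose value at another object of $\cE$ (i.e.\ at a representative $J$) is ${}_{\cA}(\ulcJ(-\otimes e), J)$. One needs this to be quasi-isomorphic to $\Hom_{\cE}(e^{\wedge}, \text{-})$ evaluated appropriately, and again the explicit form of $\epsilon'$ identifies the comparison map with an evaluation/Yoneda isomorphism up to quasi-isomorphism. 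Knowing $\epsilon'_{e^{\wedge}}$ is a quasi-isomorphism for all $e$, and that $DD'$ preserves quasi-isomorphisms (same reasoning as for $S$, using Lemma \ref{main mis lemma 1}.(2) and that $D,D'$ land in injectively fibrant modules on the relevant subcategories), the subcategory of $P$ with $\epsilon'_P$ a quasi-isomorphism is thick and contains the $e^{\wedge}$, hence contains $\dgPerf\cE = \langle e^{\wedge} \mid e \in \cE\rangle^{\cC}$.

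For part (3), I would combine (1) and (2). The functors $D,D'$ restrict to functors between $(\langle\cJ\rangle^{\cC})^{\circ}$ and $(\dgPerf\cE)^{\circ}$: indeed $D$ sends $\langle\cJ\rangle^{\cC}$ into $\dgPerf\cE$ since it sends each $J$ to something quasi-isomorphic to $[J]^{\wedge}$ and is exact-ish, and dually $D'$ sends $\dgPerf\cE$ into $\langle\cJ\rangle^{\cC}$; one must pass to fibrant-cofibrant replacements to stay inside the $(-)^{\circ}$ subcategories, but since every object of $\cC(\cA)$ is injectively cofibrant and we may fibrantly replace, this is routine. On mapping complexes, $D$ induces ${}_{\cA}(M,N) \to {}_{\cE}(D N, D M)$, and one checks this is a quasi-isomorphism for $M,N \in \langle\cJ\rangle^{\cC}$ fibrant-cofibrant by reducing (via the thick-subcategory closure, fixing $N$ and varying $M$, then vice versa) to the case $M = N = J$, where it is the identity-type isomorphism coming from $D(J) \simeq [J]^{\wedge}$ and the defining formula $\Hom_{\cE}([J],[I]) = {}_{\cA}(I,J)$. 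Combined with (1)–(2) showing the units $\epsilon,\epsilon'$ are quasi-isomorphisms, this gives that $D,D'$ are quasi-inverse Dwyer-Kan equivalences (contravariantly).

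The main obstacle I anticipate is the bookkeeping in making the Yoneda-type identifications $D(J)\simeq [J]^{\wedge}$ and $D'(e^{\wedge})\simeq\ulcJ(-\otimes e)$ genuinely compatible with the sign-laden explicit formulas for $\epsilon$ and $\epsilon'$, and ensuring at each stage that the objects involved are on the correct side of the projective/injective fibrant-cofibrant divide so that ${}_{\cA}(-,-)$ and ${}_{\cE}(-,-)$ compute the derived Hom — this is where Lemma \ref{main mis lemma 1}.(1) and Lemma \ref{prel 1 lem 0} do the real work, and where a careless argument would silently use a non-derived functor.
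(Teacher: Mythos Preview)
Your proposal is correct and follows essentially the same approach as the paper: reduce (1) and (2) to the generators $J \in \ulcJ_1$ and $[J]^{\wedge}$ via Yoneda, then propagate through $\langle\cJ\rangle^{\cC}$ and $\dgPerf\cE$ by thickness, and deduce (3) formally. For the obstacle you anticipate (compatibility of the Yoneda identification with $\epsilon_J$), the paper resolves it by introducing the intermediate object ${}_{\cE}({}_{\cA}(J,\ulcJ_2),\ulcJ)$ and forming a commutative square with comparison maps $\psi_1,\psi_2$ induced by the replacement $\ulcJ_2 \xrightarrow{\sim} \ulcJ$, so that the exact Yoneda isomorphism lives on the $\ulcJ_2$-side and one passes to $\ulcJ$ via two separately-checked quasi-isomorphisms; note also a small slip in your indices --- $D(J)([I]) \simeq {}_{\cA}(J,I) = \Hom_{\cE}([I],[J])$, not ${}_{\cA}(I,J)$.
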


\begin{proof}
(1) 
It is enough to prove the case when $K =J$ for some $J \in \ulcJ_1$.  
It is easy to see  that 
the dg $\cE$-module 
${}_{\cA}(J,\underline{\cJ}_{2})$ is isomorphic to $[J]^{\wedge}$
 and that the evaluation morphism 
\[
\varepsilon_{J}: J \to {}_{\cE}({}_{\cA}(J,\underline{\cJ}_{2}),\underline{\cJ}_{2}) 
\]
is an isomorphism of dg $\cA$-modules. 
Let 
$\phi: 
\underline{\cJ}_2 \xrightarrow{\sim} \underline{\cJ} $  be the replacement morphism. 
We can check that 
the morphism
 $\psi_1: {}_{\cE}({}_{\cA}(J,\underline{\cJ}_{2}),\underline{\cJ}_{2}) \to {}_{\cE}({}_{\cA}(J,\underline{\cJ}_{2}),\underline{\cJ})$ 
 of dg $\cA$-modules 
 induced from $\phi$ is isomorphic to 
 the morphism $\phi(-\otimes J): \underline{\cJ}_{2}(-\otimes [J])  \to \underline{\cJ}(-\otimes [J])$ 
 under the isomorphisms 
 \[
 {}_{\cE}({}_{\cA}(J,\underline{\cJ}_{2}),\underline{\cJ}_{2}) \cong \underline{\cJ}_{2}(-\otimes[J]),\quad
 {}_{\cE}({}_{\cA}(J,\underline{\cJ}_{2}),\underline{\cJ}) \cong \underline{\cJ}(-\otimes [J]). 
 \]
 This implies that $\psi_1$ is a quasi-isomorphism. 
Let $J$ be an object of $\ulcJ_1$. 
By the definition of the diagonal module $\ulcJ_2$, 
the $\cA$-module $\ulcJ_2(-\otimes [J])$ is isomorphic to the $\cA$-module. 
Recall that all objects of $\ulcJ_1 \subset \cC(\cA)$ are  taken to be injectively fibrant. 
Hence the  $\cA$-module $\ulcJ_{2}(-\otimes e)$ is injectively fibrant for each $e\in \Ob(\cE)$. 
By Lemma   \ref{main mis lemma 1}, 
 the  $\cA$-module $\ulcJ(-\otimes e)$ is injectively fibrant for each $e\in \Ob(\cE)$. 
Therefore 
by Lemma \ref{103 915}   
the induced map ${}_{\cA}(J,\ulcJ_2) \to {}_{\cA}(J,\ulcJ)$ is quasi-isomorphism of $\cE$-modules.  
By Lemma \ref{main mis lemma 1}, 
we see that 
the induced morphism $\psi_2:{}_{\cE}({}_{\cA}(J,\underline{\cJ}) \ulcJ) \to {}_{\cE}({}_{\cA}(J,\ulcJ_{2}),\ulcJ)$ 
is a quasi-isomorphism. 

We can check that $\psi_2 \circ \epsilon_{J} = \psi_{1} \circ \varepsilon_{J}$.  
\begin{equation}\label{103 552}
\begin{xymatrix}{
J \ar[d]_{\varepsilon_{J}}^{\sim} \ar[r]^{\epsilon_{J}} & {}_{\cE}({}_{\cA}(J,\ulcJ),\ulcJ) \ar[d]_{\sim}^{\psi_{2}}\\
{}_{\cE}({}_{\cA}(J,\ulcJ_{2}),\ulcJ_{2}) \ar[r]_{\psi_{1}}^{\sim} & {}_{\cE} ({}_{\cA}(J, \ulcJ_{2}),\ulcJ)
}\end{xymatrix}\end{equation} 
Since the morphisms $\varepsilon_J,\psi_1,\psi_2$ are quasi-isomorphisms, 
we conclude that $\epsilon_J$ is a quasi-isomorphism. 

(2) It is enough to check the case $P = [J]^{\wedge}$ for some $J \in \ulcJ_1$. 
In the similar way as in  (1), 
we obtain the  commutative diagram similar to the above diagram (\ref{103 552}) 
 \[
\begin{xymatrix}{
[J]^{\wedge} \ar[d]_{\varepsilon'_{[J]^{\wedge} }}^{\sim} \ar[r]^{\epsilon'_{[J]^{\wedge}}} & 
{}_{\cA}({}_{\cE}([J]^{\wedge} ,\ulcJ),\ulcJ) \ar[d]_{\sim}^{\psi'_{2}}\\
{}_{\cA}({}_{\cE}([J]^{\wedge} ,\ulcJ_{2}),\ulcJ_{2}) \ar[r]_{\psi'_{1}}^{\sim} & 
{}_{\cA} ({}_{\cE}([J]^{\wedge}, \ulcJ_{2}),\ulcJ)
}\end{xymatrix}\]
By Yoneda Lemma we have an isomorphism of $\cA$-modules  
${}_{\cE}([J]^{\wedge}, \ulcJ_2) \cong \ulcJ_2(-\otimes[J]) \cong J$. 
Observe that 
the left vertical arrow $\varepsilon_{[J]^{\wedge}}$ 
coincide with the isomorphism 
\[
[J]^{\wedge} \cong {}_{\cA} (J, \ulcJ_2) \cong {}_{\cA}({}_{\cE}([J]^{\wedge} ,\ulcJ_2),\ulcJ_2).     
\]
Hence it  is an isomorphism.  
Since $[J]^{\wedge}$ is projectively cofibrant,   
the induced map ${}_{\cE}([J]^{\wedge}, \ulcJ_2) \to {}_{\cE}([J]^{\wedge}, \ulcJ)$ 
is a quasi-isomorphism by Lemma \ref{103 915}. 
Hence by Lemma \ref{main mis lemma 1} the left vertical morphism $\psi_2'$ is a quasi-isomorphism. 
We can prove that 
the bottom arrow $\psi_1'$ is a quasi-isomorphism 
in the same way of the proof that  $\psi_2$ is quasi-isomorphism in (1). 
Thu we conclude that 
the evaluation map $\epsilon_{[J]^{\wedge}}'$ is quasi-isomorphism. 
Now we complete the proof of (2).  

(3) follows from (1) and (2). 
\end{proof}

Now we start to  prove Theorem \ref{bic holim thm}. 
We equip $\cC(\cE)$ with the projective model structure. 
We define the full sub category $\cO$ 
to be $(\dgPerf\cE_{/DM})^{\circ}$. 
Namely 
$\cO$ is the full subcategory of $\dgPerf\cE_{/DM}$ 
consisting of those object $p:P \to DM$ such that 
the morphism $p$ is a projective fibration and the domain $P$ is projective cofibrant in $\cC(\cE)$. 

We construct a simplicial functor $\widetilde{D'}: \cO^{\textup{op}} \to \cU$. 
The pair $D,D'$ of dg-functors is a contravariant adjunction. 
Namely there exists a natural isomorphism of Hom complexes 
\[
\widetilde{D'}_{N,M}: {}_{\cE}(N,DM) \xrightarrow{\cong} {}_{\cA}(M,D'N)  
\]
for $M \in \cC(\cA)$ and $N \in \cC(\cE)$. 

For an object $p: P \to DM$ of $\cO$, 
we define $\widetilde{D'}(p)$  
to be 
$\widetilde{D'}(p) := \widetilde{D'}_{P,M}(p)$. 
More explicitly it is given by 
$\widetilde{D'}(p)= D'(p)\circ \epsilon_M$. 
\begin{equation}\label{main mis diagram 1}
[p:P \to DM] \mapsto [\widetilde{D'}(p) : M \xrightarrow{\epsilon_{M}} D'D(M) \xrightarrow{D'(p)} D'P]
\end{equation}
By Lemma \ref{main mis lemma 1}.(2) D'(p) is an injective cofibration. 
Since we assume that $\epsilon_M:M \to S(M) = D'D(M)$ is an injective cofibration, 
 the composite morphism $\widetilde{D'}(p) = D'(p) \circ \epsilon_m$ is an injectively cofibration. 
By Lemma \ref{main mis lemma 1}(4) the object $D'(P)$ is  injectively fibrant in $\cC(\cA)$, 
hence  
the object $\widetilde{D'}(p)$ of $\langle \cJ \rangle^{\cC}_{M/}$ belongs to $\cU$. 

By abusing notation 
we  denote by $\widetilde{D'}$
the isomorphism of the mapping complexes   
\[\widetilde{D'}: 
\Map_{\cC(\cE)}(P,DM) \to \Map_{\cC(\cA)}(M,D'P)  
\] 
induced from the above isomorphism $\widetilde{D'}_{P,M}$ of Hom-complexes.

Let $p:P \to DM$ and $q:Q \to DM$ be  objects of $\cO$. 
Recall that the mapping complex $\Map_{\cO}(p,q)$ is defined by the pull-back diagram 
\begin{equation}\label{diagram 1}
\begin{xymatrix}{
\Map_{\cO}(p,q) \ar[r] \ar[d] & \Map_{\cC(\cE)}(P,Q) \ar[d]^{q_{\ast}}\\
\{p\} \ar[r] & \Map_{\cC(\cE)}(P,DM) 
}\end{xymatrix}\end{equation} 
where $q_{\ast}:=\Map_{\cC(\cE)}(P,q)$. 
The mapping complex $\Map_{\cU}(\widetilde{D'}(q),\widetilde{D'}(p))$ is given by the pull-back diagram 
\begin{equation}\label{diagram 2}
\begin{xymatrix}{
\Map_{\cU}(\widetilde{D'}(q),\widetilde{D'}(p)) \ar[r] \ar[d] & 
\Map_{\cC(\cA)}(D'Q,D'P) \ar[d]^{\widetilde{D'}(q)^{\ast}}\\ 
\{\widetilde{D'}(p)\} \ar[r] & \Map_{\cC(\cA)}(M,D'P).
}\end{xymatrix}\end{equation}
where $\widetilde{D'}(q)^{\ast}: = \Map_{\cC(\cE)}(\widetilde{D'}(q),D'P)$. 
It is easy to check that 
the following diagram is commutative 
\begin{equation}\label{diagram 3}
\begin{xymatrix}{
\Map_{\cC(\cE)}(P,Q) \ar[d]_{q_{\ast}} \ar[r]^{D'} & \Map_{\cC(\cA)}(D'Q,D'P) \ar[d]^{\widetilde{D'}(q)^{\ast}}\\
\Map_{\cC(\cE)}(P,DM) \ar[r]_{\widetilde{D'}} & \Map_{\cC(\cA)}(M,D'P)\\
\{p\} \ar[u] \ar@{=}^{\sim}[r] & \{\widetilde{D'}(p)\} \ar[u]
}
\end{xymatrix}
\end{equation}
Therefore the map $D':\Map_{\cC(\cE)}(P,Q) \to \Map_{\cC(\cA)}(D'Q,D'P)$ 
induces the map 
 $\Map_{\cO}(p,q) \to \Map_{\cU}(\widetilde{D'}(q),\widetilde{D'}(p))$, 
which will be denoted by $\widetilde{D'}_{p,q}$. 
Then  
we can check that 
the assignment $\Ob\cO \to \Ob \cU,  p \mapsto \widetilde{D'}(p)$ 
and  the collection of maps $\{\widetilde{D'}_{p,q}\}$ 
gives a simplicial functor $\widetilde{D'}: \cO^{\textup{op}} \to \cU$.

\begin{lemma}
The simplicial functor $\widetilde{D'}$ is a Dwyer-Kan equivalence. 
\end{lemma}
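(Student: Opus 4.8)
The plan is to show that $\widetilde{D'}:\cO^{\textup{op}}\to\cU$ satisfies the two defining conditions of a Dwyer-Kan equivalence: that the induced map on mapping complexes $\widetilde{D'}_{p,q}:\Map_{\cO}(p,q)\to\Map_{\cU}(\widetilde{D'}(q),\widetilde{D'}(p))$ is a weak equivalence for every pair of objects, and that the induced functor on homotopy categories $\textup{h}(\widetilde{D'}):\textup{h}(\cO^{\textup{op}})\to\textup{h}\cU$ is an equivalence of categories. The essential input is Lemma \ref{main mis lemma 2}.(3), which already tells us that $D,D'$ restrict to a contravariant Dwyer-Kan equivalence between $(\langle\cJ\rangle^{\cC})^{\circ}$ and $(\dgPerf\cE)^{\circ}$; the task is to promote this to an equivalence of the under/over slice categories $\cU$ and $\cO^{\textup{op}}$.

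\textbf{Weak equivalence on mapping complexes.} First I would analyze the commutative cube built from the two pull-back squares \eqref{diagram 1}, \eqref{diagram 2} and the commuting diagram \eqref{diagram 3}. The map $D':\Map_{\cC(\cE)}(P,Q)\to\Map_{\cC(\cA)}(D'Q,D'P)$ is a weak equivalence because $P,Q\in\dgPerf\cE$ are fibrant-cofibrant and $D,D'$ are a Dwyer-Kan equivalence on the fibrant-cofibrant parts (Lemma \ref{main mis lemma 2}.(3)); similarly the induced map $\Map_{\cC(\cE)}(P,DM)\to\Map_{\cC(\cA)}(M,D'P)$ is an isomorphism by the adjunction isomorphism $\widetilde{D'}_{P,M}$ (after noting that $D'P$ is injectively fibrant by Lemma \ref{main mis lemma 1}.(4), so the target mapping complex computes the correct derived Hom, and $M\to D'P$ is handled via $\epsilon_M$). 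Since both squares \eqref{diagram 1} and \eqref{diagram 2} are homotopy pull-backs — the right vertical maps $q_{\ast}$ and $\widetilde{D'}(q)^{\ast}$ are Kan fibrations of simplicial sets, using that $q:Q\fibright DM$ is a projective fibration with $P$ cofibrant, and dually that $\widetilde{D'}(q)$ is an injective cofibration with $D'P$ fibrant (Lemmas \ref{103 915}, \ref{925 1053}, \ref{App B lem 1}) — the comparison on the pull-backs $\widetilde{D'}_{p,q}$ is a weak equivalence by the gluing lemma for homotopy pull-backs.

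\textbf{Essential surjectivity on homotopy categories.} Next I would check that $\textup{h}(\widetilde{D'})$ is essentially surjective. Given an object $k:M\to K$ of $\cU$, so $K\in(\langle\cJ\rangle^{\cC})^{\circ}$, apply $D$ to obtain $D(k):DK\to DM$; by Lemma \ref{main mis lemma 2}.(3) there is a fibrant-cofibrant perfect $\cE$-module $P$ with a quasi-isomorphism $P\simeq DK$, and by replacing $D(k)$ up to homotopy by a projective fibration (using the factorization axiom in $\cC(\cE)$, and that fibrant replacement of the source is harmless) we get an object $p\in\cO$. Then $\widetilde{D'}(p)=D'(p)\circ\epsilon_M$, and I must identify this with $k$ in $\textup{h}\cU$: this follows because $D'D(K)\simeq K$ via $\epsilon_K$ (Lemma \ref{main mis lemma 2}.(1), since $K\in\langle\cJ\rangle^{\cC}$) and the square relating $\epsilon_M$, $\epsilon_K$ and the maps $k$, $D'D(k)$ commutes by naturality of $\epsilon$; chasing this identifies the image with $k$ up to homotopy in the slice. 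Fullness and faithfulness on $\textup{h}$ then follow formally from the mapping-complex statement already proved (taking $\pi_0$), so the bulk of the remaining work is essential surjectivity.

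\textbf{Main obstacle.} The step I expect to be most delicate is the homotopy pull-back / gluing argument for the mapping complexes, specifically verifying carefully that the right-hand vertical map in \eqref{diagram 2} is genuinely a fibration and that the square is a homotopy pull-back rather than merely a strict one — this requires invoking the simplicial-model-category structure on the under category $\cU=(\langle\cJ\rangle^{\cC}_{M/})^{\circ}$ together with Lemmas \ref{925 1053} and \ref{106 1146}, and being careful that $\widetilde{D'}(p)$ is an injective cofibration (which uses precisely the running hypothesis that $\epsilon_M$ is an injective cofibration, combined with Lemma \ref{main mis lemma 1}.(2)). The bookkeeping of which objects are fibrant, cofibrant, or both on each side of the duality — and the fact that $D,D'$ only behave well on the fibrant-cofibrant parts — is where most of the care is needed.
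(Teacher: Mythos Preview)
Your proposal is correct and follows essentially the same approach as the paper's proof: both establish the weak equivalence on mapping complexes by recognizing the defining squares \eqref{diagram 1} and \eqref{diagram 2} as homotopy pull-backs (using that $q_\ast$ and $\widetilde{D'}(q)^\ast$ are Kan fibrations and that $\sSet$ is right proper), and both handle essential surjectivity by applying $D$ to a given $k:M\rightarrowtail K$, replacing $DK$ cofibrantly, and identifying $\widetilde{D'}(p)$ with $k$ via the naturality square for $\epsilon$ together with the fact that $\epsilon_K$ is a quasi-isomorphism (Lemma \ref{main mis lemma 2}.(1)). The paper's version is marginally more direct in the essential-surjectivity step—it observes that $Dk$ is \emph{already} a projective fibration (since $D$ sends injective cofibrations to projective fibrations), so one only needs a cofibrant replacement $g:P\trivfibright DK$ by a trivial fibration and sets $p=Dk\circ g$, rather than first replacing and then factoring—but this is a cosmetic streamlining of exactly the construction you outline.
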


\begin{proof}
First we show that 
the functor $\textup{Ho}(\widetilde{D'}):\textup{Ho}(\cO^{\textup{op}}) \to \textup{Ho}(\cU)$ 
is an essentially surjective.
Let $k: M \rightarrowtail K$ be an object of $\cU$. 
Then $Dk: DK \twoheadrightarrow DM$ is a projective fibration of dg-$\cE$-modules by Lemma \ref{main mis lemma 1}.
We take a projective cofibrant replacement $g: P \stackrel{\sim}{\twoheadrightarrow} DK$ of $DK$. 
Then the composite morphism $p:= Dk\circ g: P \twoheadrightarrow DM$ belongs to $\cO$. 
We set $\psi:= D'g \circ \epsilon_K$. 
The following commutative diagram shows that 
we have a weak equivalence $\psi :k \xrightarrow{\sim} \widetilde{D'}(p)$: 
\[
\begin{xymatrix}{
M \ar@{=}[d] \ar[rr]^{k} && K\ar[d]^{\wr}_{\epsilon_K}\ar[dr]^{\sim \psi} & \\
M \ar[r]_{\epsilon_M } & D'DM \ar[r]_{D'Dk} & D'DK \ar[r]^{\sim}_{D'g} &D'P
}\end{xymatrix}\]
By Lemma \ref{925 1053}, $\psi$ is a weak homotopy equivalence in $\cU$.  
Therefore we see the desired result.

We prove that the map $\widetilde{D'}_{p,q}$ of simplicial sets is weak homotopy equivalence. 
The top arrow $D': \Map_{\cC(\cE)}(P,Q) \to \Map_{\cC(\cA)}(D'Q,D'P)$ in the above diagram (\ref{diagram 3}) 
is a weak homotopy equivalence. 
The bottom arrow 
$\widetilde{D'}: \Map_{\cC(\cE)}(P,DM) \to \Map_{\cC(\cA)}(M,D'P)$ in the above diagram (\ref{diagram 3}) 
is an isomorphism of simplicial sets.  
Since the maps $q_{\ast}$ and $\widetilde{D'}(q)^{\ast}$ are fibrations of simplicial sets 
and the standard model structure of $\sSet$ is proper, 
the above diagrams (\ref{diagram 1},\ref{diagram 2}) are homotopy pull-back diagrams. 
Therefore we see the desired result. 
\end{proof}

We denote by $\Upsilon$ the domain functor 
\[\Upsilon: \cO \to \cC(\cE),\quad [p:P \to DM] \mapsto P.\] 
Then we have 
 $\Gamma \circ \widetilde{D'} = D' \circ \Upsilon^{\textup{op}}.$ 
\[
\begin{xymatrix}
{
\cO^{\textup{op}} \ar[d]_{\widetilde{D'}} \ar[r]^{\Upsilon^{\textup{op}}} 
& \cC(\cE)^{\textup{op}} \ar[d]^{D'}\\
\cU \ar[r]_{\Gamma} & \cC(\cA)
}
\end{xymatrix}
\]
We denote by $\Phi:\Upsilon \to  \ct_{\cO}(D(M)) $ the tautological natural transformation 
given by the assignment $\Phi: p\mapsto p$.  
By the construction of $\widetilde{D'}$, 
we have the commutative diagram
\[
\begin{xymatrix}{
D'\circ (\ct_{\cO^{\textup{op}}} DM) \ar@{=}[rr] \ar[d]_{D'(\Phi^{\textup{op}})}&&
\ct_{\cO^{\textup{op}}}S(M)  \ar[d]^-{S(\kappa_{\widetilde{D'}})} \\
 D' \circ \Upsilon^{\textup{op}} \ar@{=}[r] &\G \circ \widetilde{D'} \ar[r]^-{\sim}_{\epsilon_{\G}} & S\circ \G \circ \widetilde{D'}.
}\end{xymatrix}
\]

Since the functor $\widetilde{D'}$ is a Dwyer-Kan equivalence, in particular homotopy left cofinal, 
in view of Lemma \ref{923 043} 
it is enough to show that 
the morphism $S(\kappa_{\widetilde{D'}})$ exhibits 
$S(M)$ as a homotopy limit of $S\circ \G \circ \widetilde{D'}$. 
By Lemma \ref{923 017} it is enough to show that  
 $D'(\Phi^{\textup{op}})$  
exhibits $\ct_{\cO^{\textup{op}}}S(M)$ as a homotopy limit. 
Since the contravariant simplicial functor $D'= {}_{\cE}(-,\ulcJ)$ preserves quasi-isomorphisms and 
sends colimit to limit, 
in view of Lemma \ref{921 456}
it is enough to show that 
the map $\Phi: \Upsilon \to \ct_{\cO}(D(M))$ exhibits $D(M)$ as a homotopy colimit. 
It is proved in next section Theorem \ref{Perf hocolim thm}.
We finish the proof of Theorem \ref{bic holim thm}. 

\section{A dg-module is obtained as a tautological filtered homotopy colimit of perfect modules.}\label{AppA}

In this section we prove the following theorem.

Let $\cA$ be a small dg-category and $M$ be a dg $\cA$-module. 
We equip $\cC(\cA)$ with the projective model structure. 
We denote by $\cO$ the full sub simplicial category $\left( \dgPerf \cA_{/M}\right)^{\circ}$ 
of $\dgPerf \cA_{/M}$. 
Namely the object $p : P \to M$ is such that the morphism $p$ of $\cC(\cA)$ is a projective fibration 
and the domain $P$ of $p$ is a projectively  fibrant object which belongs to $\dgPerf \cA$.  
We denote by $\Upsilon:= \Upsilon_{M}$ the domain functor 
\[
\Upsilon: \cO \to \cC(\cA) ,\quad [p: P \to M ] \mapsto P=: \Upsilon_{p}
\]
We denote by $\Upsilon_{p}$ the value of $\Upsilon$ at $p\in \cO$.
Observe that 
the assignment $\Phi: p \mapsto p$ gives a natural transformation 
$\Phi: \Upsilon \to \ct_{\cO}M $. 
\[
\begin{xymatrix}{
\Upsilon_p \ar[d]_{\Phi_p} \ar@{=}[r] & P \ar[d]^{p}  \\
(\ct_{\cO} M)_p \ar@{=}[r] & M 
}\end{xymatrix}\]

\begin{theorem}\label{Perf hocolim thm}
The canonical morphism 
\[
\hocolim_{\cO} \Upsilon \to M
\] 
is a quasi-isomorphism. 
More precisely the morphism $\Phi$ exhibits $M$ as a homotopy colimit of $\Upsilon$. 
Namely for any cofibrant replacement $\theta \xrightarrow{\sim} \Phi$ in the functor category $\cC(\cA)^{\cO}$ 
equipped with the projective model structure, 
the composite map $\Theta \to \Upsilon \to \ct_\cO M$ 
induces a quasi-isomorphism $\colim \Theta \to M$. 
\end{theorem}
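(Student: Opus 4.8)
The plan is to realize the tautological cone $\Phi\colon\Upsilon\to\ct_{\cO}M$ as a homotopy colimit by comparing $\cO$ with a small \emph{directed poset} extracted from a semi-free resolution of $M$, over which the homotopy colimit reduces to an honest filtered colimit.

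First I would fix a semi-free resolution $\pi\colon P\trivfibright M$ (\cite[Appendix III]{Drin}) and exploit the defining filtration: choose a homogeneous basis of $P$ well-ordered so that the differential of each basis element involves only strictly earlier ones. Call a finite subset $S$ of the basis \emph{closed} if it contains every basis element occurring in the differential of any of its members; by well-foundedness every finite subset lies in a finite closed one. For closed $S$ the span $P_{S}\subset P$ is a sub-dg-module that is itself semi-free with a \emph{finite} filtration whose subquotients are finite sums of shifted free modules $a^{\wedge}[n]$; hence $P_{S}$ is projectively cofibrant and lies in $\dgPerf\cA$. The closed finite subsets form a directed poset $\cI$ under inclusion; each inclusion $P_{S}\hookrightarrow P_{S'}$ is a termwise injection with semi-free (hence projectively cofibrant) cokernel, so by Lemma \ref{922 142} it is a projective cofibration, and $\colim_{\cI}P_{S}=P$. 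Applying a \emph{functorial} factorization to the composites $P_{S}\hookrightarrow P\xrightarrow{\pi}M$ produces a simplicial functor $\iota\colon\cI\to\cO$, $S\mapsto(q_{S}\colon E_{S}\fibright M)$ together with trivial cofibrations $P_{S}\trivcofright E_{S}$; here $E_{S}$ is projectively cofibrant and, being quasi-isomorphic to $P_{S}\in\dgPerf\cA$, lies again in $\dgPerf\cA$, so $q_{S}\in\cO$, $\Upsilon\circ\iota=(S\mapsto E_{S})$, and $\Phi\circ\iota$ is the cocone $(q_{S})_{S}$. Since $\cI$ is directed and $(S\mapsto P_{S})$ is a diagram of cofibrant objects with cofibrant transition maps, the ordinary colimit $\colim_{\cI}P_{S}=P$ already computes $\hocolim_{\cI}P_{S}$ (cf.\ \cite{Hirsch}); composing with the termwise weak equivalence $(S\mapsto P_{S})\xrightarrow{\sim}(S\mapsto E_{S})$ and with the quasi-isomorphism $\pi$, I conclude that $\Phi\circ\iota$ exhibits $M$ as a homotopy colimit of $\Upsilon\circ\iota$.

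It then remains to prove that $\iota\colon\cI\to\cO$ is homotopy cofinal, for then the colimit analogue of Lemma \ref{923 043} promotes this to the statement that $\Phi$ itself exhibits $M$ as a homotopy colimit of $\Upsilon$; unwinding the colimit version of the ``exhibits as a homotopy limit'' notion used throughout, this says exactly that $\colim\Theta\to M$ is a quasi-isomorphism for every projectively cofibrant replacement $\Theta\xrightarrow{\sim}\Upsilon$ in $\cC(\cA)^{\cO}$, which is the theorem. By the colimit version of Theorem \ref{cofinal thm} it suffices to show that for each $c\in\cO$ the comma $\infty$-category $\cI\times_{\cO}\cO_{c/}$ is weakly contractible. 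Non-emptiness is the place where \emph{perfectness is used}: given $c=(p\colon P'\fibright M)$ with $P'$ cofibrant and perfect, lift $p$ along the trivial fibration $\pi$ to $\widetilde p\colon P'\to P$; writing $P'$ as a retract of a finitely generated semi-free module and noting that a chain map out of such a module meets only finitely many basis elements of $P$, one sees that $\widetilde p$ factors, up to homotopy, through some $P_{S}$, hence through $E_{S}$, which yields a morphism $c\to q_{S}$. Contractibility then follows once $\sner\cO$ is known to be (homotopy) filtered, a co-slice and comma construction over a filtered $\infty$-category remaining filtered. \emph{The main obstacle is precisely this homotopy-filteredness of $\cO$}: one must complete finite diagrams of cofibrant perfect modules over $M$ to cocones inside $\cO$, the essential inputs being closure of $\dgPerf\cA$ under shifts, cones and quasi-isomorphism, the over-category analogues of Lemma \ref{106 1146} and Lemma \ref{App B lem 1}, and the standard reduction of higher filteredness to its low-dimensional cases, legitimate because $\cO$ is a full subcategory of the fibrant--cofibrant part of the over-category $\cC(\cA)_{/M}$ of a combinatorial model category. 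The bookkeeping needed to reconcile the surjectivity demanded of the structure maps of $\cO$ with the non-surjective inclusions $P_{S}\hookrightarrow P$ is dealt with above by the replacement $P_{S}\trivcofright E_{S}$, and I expect it to be routine next to the filteredness argument.
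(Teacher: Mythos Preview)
Your strategy via a semi-free resolution and its finite closed sub-bases is a genuine alternative to the paper's argument, and the construction of the directed poset $\cI$ together with the functor $\iota\colon\cI\to\cO$ is carried out correctly. Both approaches share the hard step you isolate: the homotopy filteredness of $\sner(\cO)$, which the paper proves as Lemma~\ref{107 225}. The paper, however, does \emph{not} proceed via cofinality of a subdiagram. Instead it shows directly (Lemma~\ref{107 216}) that the canonical map $\colim_{\cO}\tuH^{i}(\Upsilon(a))\to\tuH^{i}(M(a))$ is an isomorphism by an elementary cocycle-level argument, and then combines this with filteredness (so that $\colim_{\cO}\tuH^{i}=\tuH^{i}\hocolim_{\cO}$ by Lemma~\ref{hom com lem}) to conclude. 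Your approach trades that direct cohomological computation for a cofinality argument; it is conceptually attractive because it identifies a small honest directed system computing the same homotopy colimit.

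The gap in your argument is the sentence ``Contractibility then follows once $\sner\cO$ is known to be (homotopy) filtered, a co-slice and comma construction over a filtered $\infty$-category remaining filtered.'' This is not a valid general principle: while the coslice $\cO_{c/}$ of a filtered $\infty$-category is again filtered, the pullback $\cI\times_{\cO}\cO_{c/}$ along an arbitrary (non-full) functor $\iota$ need not be; taking $\cI=\{*\}$ already gives a counterexample. Since $\iota$ is far from fully faithful (the mapping spaces $\Map_{\cO}(q_S,q_{S'})$ are nontrivial Kan complexes while $\cI$ is a mere poset), neither the dual of Lemma~\ref{cofinal lem} nor your stated principle applies. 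The comma $c/\iota$ \emph{is} in fact filtered here, but for a non-formal reason you did not invoke: given $(S_1,\alpha_1)$ and $(S_2,\alpha_2)$, choose $S_3\supset S_1\cup S_2$; the two resulting maps $P'\to E_{S_3}$ over $M$ correspond (via $E_{S_3}\simeq P_{S_3}\hookrightarrow P$) to two lifts of $p$ along the trivial fibration $\pi\colon P\twoheadrightarrow M$, hence are homotopic over $M$, and by compactness of $P'\otimes\Delta^1$ the homotopy factors through some $P_{S_4}$ with $S_4\supset S_3$. This is exactly the sort of argument the paper makes in the analogous situation of Lemma~\ref{comp lem 1}, where it proves cofinality of a poset-to-$\infty$-category functor by a careful direct computation of the comma fibers rather than by any formal principle. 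With this correction, your route goes through; but as written the cofinality step is not justified.
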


In the rest of this section  we devote to prove this theorem.
For $i\in \ZZ$ and $a \in \cA$,  
taking $i$-th cohomologies at $a$,  
we obtain a natural transformation 
$\tuH^{i}(\Phi(a)) : \tuH^{i}(\Upsilon(a)) \to \tuH^{i}(\textup{ct}(M)(a))= \tuH^{i}(M(a))$ 
of the functors from $\cO $ to $ \Mod \kk$. 
We denote by $f$ the morphism $\colim\tuH^{i}(\Phi(a))$.   
\[
f := \colim\tuH^{i}(\Phi(a)) : \colim_{\cO}\tuH^{i}(\Upsilon(a) ) \to \tuH^{i}(M(a))
\]

\begin{lemma}\label{107 216}
The morphism $f$ is an isomorphism.  
\end{lemma}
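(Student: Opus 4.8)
The plan is to show $f$ is surjective and injective separately, each time manufacturing an explicit object of $\cO$ out of a cochain of the complex $M(a)$. The only inputs are: the dg‑Yoneda identification ${}_{\cA}(a^{\wedge},M)\cong M(a)$ of complexes (so a cocycle in $M(a)$ of the appropriate degree is the same datum as a morphism $a^{\wedge}[-i]\to M$ in $\cC(\cA)$, and a chain homotopy is the same datum as an extension of a map over a mapping cone), the factorization axiom of the projective model structure, and the standard fact that the composite $X\xrightarrow{g}Y\hookrightarrow\cone(g)$ is null‑homotopic. Recall also that in the projective structure every object of $\cC(\cA)$ is fibrant, so ``$p\colon P\to M$ lies in $\cO$'' means exactly: $P$ is perfect and projectively cofibrant and $p$ is a term‑wise surjection.

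\emph{Surjectivity.} Given $\xi\in\tuH^{i}(M(a))$, pick a representing cocycle in $M(a)$ and regard it via dg‑Yoneda as a morphism $\zeta\colon a^{\wedge}[-i]\to M$ in $\cC(\cA)$. The module $a^{\wedge}[-i]$ is free, hence projectively cofibrant, and it lies in $\dgPerf\cA$. Factor $\zeta$ as a trivial cofibration $j\colon a^{\wedge}[-i]\rightarrowtail P$ followed by a fibration $p\colon P\twoheadrightarrow M$. Then $P$ is projectively cofibrant (a cofibration out of a cofibrant object), it is quasi‑isomorphic to $a^{\wedge}[-i]$ and therefore lies in $\dgPerf\cA$, and $p$ is a term‑wise surjection; hence $p\in\cO$ with $\Upsilon_{p}=P$. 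By construction $\xi=\tuH^{i}(p(a))\bigl(\tuH^{i}(j(a))(1)\bigr)$, where $1$ is the tautological generator of $\tuH^{i}(a^{\wedge}[-i](a))$; so $\xi$ lies in the image of $\tuH^{i}(p(a))$, hence in the image of $f$.

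\emph{Injectivity.} Since the canonical maps $\tuH^{i}(\Upsilon_{p}(a))\to\colim_{\cO}\tuH^{i}(\Upsilon(a))$ are jointly surjective, it suffices to prove: if $p\colon P\to M$ belongs to $\cO$ and $\alpha\in\tuH^{i}(P(a))$ satisfies $\tuH^{i}(p(a))(\alpha)=0$, then there is a morphism $\phi\colon p\to q$ in $\cO$ with $\tuH^{i}(\Upsilon(\phi))(\alpha)=0$. Represent $\alpha$ by a morphism $z\colon a^{\wedge}[-i]\to P$ in $\cC(\cA)$. The hypothesis says that $p\circ z$ is null‑homotopic; fix a contraction $h$. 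Set $Q:=\cone(z)$ and let $\phi\colon P\hookrightarrow Q$ be the canonical inclusion. Then $\phi$ is a projective cofibration and $Q$ is projectively cofibrant by Lemmas \ref{922 142} and \ref{101 1250}, while $Q\in\dgPerf\cA$ as a cone of perfect modules. The pair $(p,h)$ assembles into a chain map $q\colon Q\to M$ with $q\circ\phi=p$; it is term‑wise surjective because $p$ is, so $q\in\cO$ and $\phi$ is a morphism $p\to q$ in $\cO$. Finally $\phi\circ z$ is the composite of $z$ with the inclusion of $P$ into its own cone, hence null‑homotopic, so $\tuH^{i}(\phi(a))(\alpha)=0$; therefore $\alpha$ dies in the colimit.

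\emph{Main obstacle.} The argument is essentially formal; the only genuine work is the bookkeeping that keeps every auxiliary module inside $\cO$ — perfectness (closure of $\dgPerf\cA$ under quasi‑isomorphisms and under cones), projective cofibrancy (Lemmas \ref{922 142} and \ref{101 1250}), and term‑wise surjectivity of the structure map down to $M$. Conceptually the surjectivity step realizes the cocycles of $M$ inside perfect modules, and the cone construction in the injectivity step is exactly the coequalizer‑type move which, together with finite direct sums, makes $\cO$ homotopy‑filtered; but for the statement at hand the elementary argument above already suffices and does not need filteredness of $\cO$ to be invoked.
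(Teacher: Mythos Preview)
Your proof is correct; surjectivity is argued exactly as in the paper, and your injectivity argument is genuinely slicker. The paper proves injectivity by taking two elements $\overline{\xi}_1,\overline{\xi}_2$ with equal image, representing them at two objects $p_1,p_2\in\cO$, and then building an elaborate homotopy-commutative zigzag (via a cylinder object and Lemma~\ref{App A lem 1}) that eventually brings both to a common object $r_3\in\cO$ where their images agree after applying $\tuH^{i}(-(a))$. You instead exploit that $f$ is a map of abelian groups, so it suffices to kill a single element $\alpha$ at a single index $p$; the cone construction does this in one step, and the verification that $\cone(z)\to M$ lands in $\cO$ is immediate.

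One small point of care: your opening reduction for injectivity --- ``the canonical maps are jointly surjective, so it suffices to kill one element at one index'' --- is true but not automatic for a colimit in $\Mod\kk$ over an ordinary category that is not filtered. It holds here because $\cO$ has finite coproducts: for $p_i\colon P_i\to M$ in $\cO$ the map $(p_1,p_2)\colon P_1\oplus P_2\to M$ is again in $\cO$, so any finite sum in $\bigoplus_p \tuH^{i}(\Upsilon_p(a))$ can be replaced by a single element at the direct-sum index. You allude to direct sums only in your final paragraph; make the dependence explicit at the point where you invoke joint surjectivity. With that one sentence added, your argument is shorter and more transparent than the paper's --- the trade-off being that you use the additive structure of the target, whereas the paper's zigzag would also work for set-valued functors (a generality that is irrelevant here).
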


We set  $\alpha := a^{\wedge}[-i]$. 
Recall that by Yoneda Lemma,  
for a dg $\cA$-module $N$, there exists a natural morphism 
\[
q:= q^{N}:  \tuZ^{i}(N(a))  \xrightarrow{\cong} \Hom_{\cC(\cA)}(\alpha,N). 
\]
We denote by $u$ the element of $\tuZ^{i}(\alpha(a))$ such that $q_{u}= 1_{\alpha}$. 
Then the  morphism $q_{n} : \alpha \to N$ in $\cC(\cA)$ is characterized by the property 
that 
$\tuZ^{i}(q_{n}(a))(u) = n$.

\begin{proof}
We set $L := \colim_{\cO}\tuH^{i}(\Upsilon(a))$. 
First we prove that 
$f$ is surjective. 
Let $\overline{m}$ be an element of $\tuH^{i}(M(a))$. 
We choose a representative element $m$ in $\tuZ^{i}(M(a))$ of $\overline{m}$.
We decompose $q_{m}: \alpha \to M$ into a trivial cofibration
  $s:\alpha \stackrel{\sim}{\rightarrowtail} \beta$ 
  followed by a fibration $p: \beta \twoheadrightarrow M$. 
  Since $\alpha$ is cofibrant, $\beta$ is also cofibrant. 
  Hence $p$ is an object of $\cO$. 
\[
\begin{xymatrix}{
*++{\alpha} \ar[dr]_{q_{m}} \ar@{>->}[r]^{\sim s} & \beta\ar@{->>}[d]^{p} \\
& M }\end{xymatrix}
\]
Now the following commutative diagram tells us that $m$ is in the image of $f$
\[
\begin{xymatrix}{
\tuH^{i}(\alpha(a)) \ar[dr]_{\tuH^i(q_{m}(a))}\ar[r]^{\cong} & 
\tuH^{i}(\beta(a)) \ar[d]^{\tuH^{i}(p(a))} \ar[rr] & & L\ar@/^1pc/[dll]^{f} \\
& \tuH^{i}(M(a)) &
}\end{xymatrix}\]

Next we prove that 
$f$ is injective. 
Let $\overline{\xi}_{1},\overline{\xi}_{2}$ 
be elements of $L$ such that $f(\overline{\xi}_{1})=f(\overline{\xi}_{2})$. 
For $s=1,2$ 
there exists object $p_{s}: P_{s} \to M$ of $\cO$ 
such that 
there exists an element $\overline{x}_{s}$ of $\tuH^{i}(P_{s}(a))$ such that 
$\eta_s(\overline{x}_s) = \overline{\xi}_s$ 
where 
$\eta_{s}$ is the canonical morphism $ \tuH^{i}(P_s(a))= \tuH^{i}(\Upsilon_{p_s}(a)) \to L$. 
We choose a representative $x_s$ in $\tuZ^{i}(P_s(a))$ of $\overline{x}_s$ for $s=1,2$. 
Then 
the morphisms $p_1\circ q_{x_1}$ and $p_2\circ q_{x_2}$ are homotopic. 
Therefore we have the following commutative diagram 
\[\begin{xymatrix}{
& \alpha \ar[dl]_{q_{x_1}} \ar[dr]^{\lambda_{1}} && \alpha \ar[dl]_{\lambda_{2}} \ar[dr]^{q_{x_2}} & \\
P_{1} \ar[drr]_{p_{1}} & & \Cyl(\alpha) \ar[d]^{ H} && P_{2} \ar[dll]^{p_2} \\
&& M && 
}\end{xymatrix}\]
where $\Cyl(\alpha)$ is a cylinder object of $\alpha$, e.g., $\Cyl(\alpha) = \alpha\otimes \Delta^1$. 
In other words, we have the following  diagram in $\dgPerf\cA_{/M}$ 
\[
\begin{xymatrix}{
& \mu_1 \ar[dr]^{\lambda_1} \ar[dl]_{q_{x_1}}&& \mu_2 \ar[dl]_{\lambda_2}\ar[dr]^{q_{x_2}}& \\
p_1  && H && p_2
}\end{xymatrix}\]
where we set $\mu_s : = p_s \circ q_{x_s} = H \circ \lambda_s$ for $s= 1,2$. 
Using  the following Lemma \ref{App A lem 1}   
we obtain the following homotopy commutative diagram 
in $\dgPerf\cA_{/M}$ such that $r_1,r_2,r_3$ belong to $\cO$ 
\[
\begin{xymatrix}{
& \mu_1 \ar[dr]^{\lambda_1} \ar[dl]_{q_{x_1}}&& \mu_2 \ar[dl]_{\lambda_2}\ar[dr]^{q_{x_2}}& \\
p_1\ar[dr]  && H  \ar[dl]\ar[dr] && p_2 \ar[dl] \\
& r_1 \ar[dr] && r_2 \ar[dl] & \\
&& r_3 &&
}\end{xymatrix}\]
Applying domain functor and taking cohomology groups to this diagram,  
we obtain  the following strictly commutative diagram in $\Mod \kk$ 
\[
\begin{xymatrix}{
& \tuH^{i}(\alpha(a) ) \ar@{=}[rr] \ar[dr] \ar[dl]&&
 \tuH^{i}(\alpha(a))  \ar[dl]\ar[dr] & \\
\tuH^{i}(\Upsilon_{p_1}(a) ) \ar[dr]  
&& \tuH^{i}(\Cyl(\alpha)(a))  \ar[dl]\ar[dr] 
&& \tuH^{i}(\Upsilon_{p_2}(a))  \ar[dl] \\ 
& \tuH^{i}(\Upsilon_{r_1}(a)) \ar[dr] && \tuH^{i}(\Upsilon_{r_2}(a)) \ar[dl] & \\
&& \tuH^{i}(\Upsilon_{r_3}(a))  &&
}\end{xymatrix}\]
Chasing the homotopy class of the element $u \in \tuZ^{i}(\alpha(a))$, we see that 
 $\overline{\xi}_1=\eta_1(\overline{x}_1) = \eta_2(\overline{x}_2) = \overline{\xi}_2$. 
\end{proof}

We denote by $(\dgPerf\cA^{\textup{p-cof}})_{/M}$
the subcategory of 
$\dgPerf \cA_{/M}$ 
consisting those objects $p: P \to M$ 
such that the domain $P$ is projectively cofibrant 
perfect $\cA$-module. 

\begin{lemma}\label{App A lem 1}

(1) 
Let $p:P \to M$ be an object of $(\dgPerf\cA^{\textup{p-cof}})_{/M}$ and $q:Q \to M $ a object of $\cO$. 
Then there exists a diagram $p \rightarrow r \leftarrow q$ in $(\dgPerf\cA^{\textup{p-cof}})_{/M}$ 
such that $r$ belongs to $\cO$.

(2)
Let $p:P \to M $ be an object of $(\dgPerf\cA^{\textup{p-cof}})_{/M}$ and $q:Q \to M$ a object of $\cO$.
For any parallel two morphisms $f,g:p \rightarrow q$ in $\cO$ 
there exists an morphism $h : q \to r$ such that the composite morphisms 
$h\circ f$ and $h \circ g$ are homotopic. 
\end{lemma}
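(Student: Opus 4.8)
The plan is to prove both parts by routine model-category manipulations in $\cC(\cA)$ with the projective model structure, exploiting that every object of $\cC(\cA)$ is projectively fibrant and that $\dgPerf\cA=\langle e^{\wedge}\mid e\in\cA\rangle^{\cC}$ is closed under shifts, finite direct sums, cones and quasi-isomorphisms (the last of these because $\dgPerf\cA$ is, by definition, the preimage under $\cC(\cA)\to\cD(\cA)$ of a thick subcategory). Throughout, ``over $M$'' refers to morphisms and homotopies in the over-category $\cC(\cA)_{/M}$.

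For part (1) the plan is to form the coproduct $P\oplus Q$ equipped with $(p,q)\colon P\oplus Q\to M$. Since $P$ is projectively cofibrant by hypothesis and $Q$ is projectively cofibrant because $q\in\cO$, the module $P\oplus Q$ is projectively cofibrant, and it is perfect as a finite direct sum of perfect modules. Now factor $(p,q)$ as a trivial cofibration $P\oplus Q\trivcofright R$ followed by a projective fibration $R\twoheadrightarrow M$. Then $R$ is projectively cofibrant (it receives a cofibration from the cofibrant object $P\oplus Q$), projectively fibrant (automatically in $\cC(\cA)$), perfect (being quasi-isomorphic to $P\oplus Q$), and its structure map $R\to M$ is a fibration; hence $r:=[R\to M]$ lies in $\cO\cap(\dgPerf\cA^{\textup{p-cof}})_{/M}$. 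Composing the two coproduct inclusions with $P\oplus Q\trivcofright R$ produces morphisms $p\to r$ and $q\to r$ over $M$, which is the desired diagram $p\to r\leftarrow q$.

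For part (2), write $f,g\colon P\to Q$ for the underlying maps of the two morphisms, so that $qf=qg=p$. The plan is to consider the cone $\cone(f-g)$, whose underlying graded module is $Q\oplus P[1]$, together with the canonical inclusion $\iota={}^{t}(1_Q,0)\colon Q\to\cone(f-g)$ of the $\#$-exact sequence (\ref{1010 101}). Because $q(f-g)=0$, the row vector $(q,0)$ is a chain map $c\colon\cone(f-g)\to M$ with $c\circ\iota=q$. By hypothesis $P$ (hence $P[1]$) and $Q$ are projectively cofibrant, so by Lemma \ref{101 1250} the module $\cone(f-g)$ is projectively cofibrant, and it is perfect since $\dgPerf\cA$ is closed under cones. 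As in part (1), factor $c$ as a trivial cofibration $w\colon\cone(f-g)\trivcofright R$ followed by a fibration $R\twoheadrightarrow M$, so that $r:=[R\to M]\in\cO\cap(\dgPerf\cA^{\textup{p-cof}})_{/M}$, and set $h:=w\circ\iota\colon Q\to R$; since the structure map of $R$ composed with $h$ equals $c\circ\iota=q$, this is a morphism $q\to r$ over $M$. Finally, the composite $P\xrightarrow{f-g}Q\xrightarrow{\iota}\cone(f-g)$ admits a canonical null-homotopy $H$ taking values in the $P[1]$-summand; hence $c\circ H=0$, so $H$ is a homotopy over $M$, exhibiting $\iota f$ and $\iota g$ as homotopic morphisms from $p$ in $\cC(\cA)_{/M}$, and composing with $w$ gives $hf\sim hg$ over $M$.

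The only point requiring a little care is the last one in part (2): one must observe that the cone null-homotopy of $\iota\circ(f-g)$ can be taken ``vertical'', i.e.\ annihilated by $c$, so that $hf$ and $hg$ are homotopic as morphisms in $\cC(\cA)_{/M}$ (hence in $\cO$, by Lemma \ref{App B lem 1}) and not merely in $\cC(\cA)$; this is immediate from the explicit formula for the null-homotopy, and in any event the intended application only takes cohomology afterwards, for which any chain homotopy already suffices. The remaining verifications — that direct sums and cones of perfect projectively cofibrant modules remain perfect and projectively cofibrant, and that the two factorizations preserve perfectness — are the routine bookkeeping referred to above.
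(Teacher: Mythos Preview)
Your proof is correct and follows essentially the same route as the paper, but with one unnecessary detour in each part. Since $q\in\cO$ means $q:Q\to M$ is a projective fibration (i.e.\ a termwise surjection), the maps $(p,q):P\oplus Q\to M$ in part~(1) and $c=(q,0):\cone(f-g)\to M$ in part~(2) are already termwise surjective, hence already projective fibrations; the paper simply takes $R=P\oplus Q$, $r=(p,q)$ in~(1) and (implicitly, via the construction behind Lemma~\ref{106 1146}) $R=\cone(f-g)$, $r=(q,0)$ in~(2), without any factorization. Your extra trivial-cofibration/fibration factorizations are harmless but redundant.

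For part~(2), your explicit cone-of-the-difference argument with the null-homotopy $H={}^t(0,1_P)$ satisfying $cH=0$ is exactly the computation the paper has in mind when it cites Lemma~\ref{106 1146} and its remark; your presentation is in fact a bit more transparent than the paper's, since it makes explicit why the homotopy lives over $M$.
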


\begin{proof}
(1)
It is enough to set $R= P \bigoplus Q$ and $r= (p, q)$. 
\[
\begin{xymatrix}{
P \ar[rr]^{{}^{t}(1,0)} \ar[drr]_{p} && P \bigoplus Q \ar[d]^{(p,q)} && 
Q \ar[ll]_{{}^{t}(0,1)} \ar[dll]^{q} \\
&& M &&
}\end{xymatrix}\]

(2) 
follows from Lemma \ref{106 1146} and the following remark. 
\end{proof}

Let $\textbf{rep}: \Theta \xrightarrow{\sim} \Upsilon $ be a cofibrant replacement of $\Upsilon$. 
Then  we have the following commutative diagram
\[\begin{xymatrix}{ 
\colim_{\cO}\tuH^{i}(\Theta(a)) \ar[d]_{\colim_{\cO} \tuH^{i}(\textbf{rep})} \ar[rr]^{\textbf{can}} && 
\tuH^{i}(\colim_{\cO}\Theta(a)) \ar[d]^{ \tuH^{i}(\colim_\cO \Phi\circ \textbf{rep})} \\
\colim_{\cO}\tuH^{i}(\Upsilon(a) ) \ar[rr]_{\colim_\cO \tuH^{i}(\Phi(a))} && \tuH^{i}(M(a))  
}
\end{xymatrix}\]
where $\textbf{can}$ is the canonical morphism. 
We need to show that the left vertical arrow is an isomorphism. 
By Lemma \ref{107 216} the bottom arrow is an isomorphism. 
Since the morphism $\textbf{rep}$ is a weak equivalence, 
the right vertical arrow is also an isomorphism. 
Therefore we only have to show that 
the top arrow $\textbf{can}$ is an isomorphism. 
It follows from the following two lemmas: 
the $\infty$-category $\sner(\cO)$ is filtered (Lemma \ref{107 225}) and 
filtered homotopy colimit commutes with taking cohomology group (Lemma \ref{hom com lem}).

\begin{lemma}\label{107 225}
The $\infty$-category $\sner(\cO)$ is filtered.
\end{lemma}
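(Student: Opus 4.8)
The plan is to verify, directly inside $\cO$, the standard combinatorial criterion for filteredness of an $\infty$-category. First I would observe that $\cO=(\dgPerf\cA_{/M})^{\circ}$ is a full simplicial subcategory of the fibrant–cofibrant part of $\cC(\cA)_{/M}$, which is a fibrant simplicial category by Lemma \ref{App B lem 1}.(1); hence $\cO$ is fibrant and $\sner(\cO)$ is an $\infty$-category, so the question makes sense. By the usual characterization of filtered $\infty$-categories \cite[\S5.3.1]{HTT} it suffices to show that every map from a finite simplicial set to $\sner(\cO)$ extends over its right cone, and by skeletal induction this reduces to the case of boundaries: for each $n\ge0$, every map $\partial\Delta^{n}\to\sner(\cO)$ extends to a map $(\partial\Delta^{n})^{\rhd}\to\sner(\cO)$; equivalently, every diagram in $\cO$ indexed by $\partial\Delta^{n}$ admits a cocone in $\cO$.

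For $n=0$ this is just nonemptiness of $\cO$: if $M=0$ then $0\to M$ lies in $\cO$, and otherwise I would take a term-wise surjection from a nonzero free module $F\fibright M$, pick a finite partial sum $F_{0}\subseteq F$ of the free summands, and factor $F_{0}\to M$ as a trivial cofibration $F_{0}\trivcofright P$ followed by a fibration $P\fibright M$; then $P$ is projective cofibrant and, being quasi-isomorphic to the perfect module $F_{0}$, perfect, so $P\fibright M$ lies in $\cO$. For $n=1$ the datum is a pair $p_{0},p_{1}$ of objects of $\cO$, and the cocone $r:=(p_{0},p_{1})\colon P_{0}\oplus P_{1}\to M$ works: its domain is projective cofibrant and perfect (a finite direct sum of such) and $r$ is a fibration because $p_{1}$ is; the two coprojections give the required morphisms $p_{0}\to r$ and $p_{1}\to r$ in $\cO$. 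This is exactly Lemma \ref{App A lem 1}.(1).

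For $n\ge2$ I would build the cocone in two stages. First, iterating the $n=1$ construction (Lemma \ref{App A lem 1}.(1)) produces an object $r_{0}\in\cO$ receiving morphisms from all of $p_{0},\dots,p_{n}$. Second, for each of the finitely many parallel pairs of composite morphisms into $r_{0}$ that the cocone must identify, I would apply Lemma \ref{App A lem 1}.(2) — which, after postcomposing with a further morphism $r_{0}\to r$ in $\cO$, makes a given parallel pair homotopic — and fill the corresponding $2$-cell; the higher cells demanded by $(\partial\Delta^{n})^{\rhd}$ are produced in the same way, combining Lemma \ref{App A lem 1}.(2) with Lemma \ref{106 1146} and the remark following it so as to stay inside the fibrant–cofibrant objects. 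Since $\partial\Delta^{n}$ has only finitely many nondegenerate simplices, finitely many steps suffice. The case $n=2$ already displays the mechanism: writing the three given morphisms as $f_{01}\colon p_{0}\to p_{1}$, $f_{12}\colon p_{1}\to p_{2}$, $f_{02}\colon p_{0}\to p_{2}$ and setting $g:=f_{12}\circ f_{01}$, Lemma \ref{App A lem 1}.(2) gives $h\colon p_{2}\to r$ in $\cO$ with $hg\simeq h f_{02}$, and the cocone maps $p_{2}\to r$, $p_{1}\xrightarrow{hf_{12}}r$, $p_{0}\xrightarrow{hf_{02}}r$ are filled by, respectively, the chosen homotopy and two degenerate $2$-cells.

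The main obstacle I anticipate is the bookkeeping in the case $n\ge2$: one must order the successive applications of Lemma \ref{App A lem 1}.(2) so that a cell filled at one stage survives the later postcompositions, and one must check that every face of $(\partial\Delta^{n})^{\rhd}$ can be filled coherently, not merely the $2$-dimensional ones. This is essentially the inductive argument underlying the reduction to $\partial\Delta^{n}$ in \cite[\S5.3.1]{HTT}, now carried out inside $\cO$ with the two parts of Lemma \ref{App A lem 1} together with Lemma \ref{106 1146} as the only inputs; the remaining ingredients — fibrancy of $\cO$, nonemptiness, and the $n=1$ step — are routine.
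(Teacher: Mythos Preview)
Your strategy is sound in principle but works far harder than necessary, and the ``bookkeeping'' you flag for $n\ge 2$ is a genuine gap you have not filled.  The paper avoids this entirely by invoking a criterion better adapted to simplicial categories: since $\cO$ is a fibrant simplicial category, \cite[Proposition~5.3.1.13]{HTT} (combined with Definition~5.3.1.1 there) says $\sner(\cO)$ is filtered provided
\begin{enumerate}
\item[(1)] every finite set of objects $\{p_i\}$ admits a common target $p$ with maps $p_i\to p$; and
\item[(2)] for every pair $p,q\in\cO$, every finite simplicial set $X$, and every map $X\to\Map_{\cO}(p,q)$, there is a morphism $q\to r$ such that the composite $X\to\Map_{\cO}(p,r)$ is null-homotopic.
\end{enumerate}
Condition~(1) is exactly your $n=1$ step (the direct sum $(p_1,\dots,p_n)\colon\bigoplus P_i\to M$ works).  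The whole point of (2) is that it absorbs all the higher coherence you were worried about into a single statement about mapping spaces: via the adjunction $[X,\Map_{\cO}(p,q)]\cong\Hom_{\textup{h}\cO}(p\otimes X,q)$, condition~(2) becomes the assertion that the diagram
\[
\begin{xymatrix}{
p\otimes X \ar[r]^-{f} \ar[d] & q \ar[d]^{g}\\
p \ar[r] & r
}\end{xymatrix}
\]
can be completed to a homotopy commutative square in $\cO$ for some $g\colon q\to r$.  This is precisely Lemma~\ref{106 1146} (with the remark following it guaranteeing $r\in\cO$), and $p\otimes X$ stays in $\cO$ because $X$ is finite.

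So your inputs (Lemma~\ref{App A lem 1} and Lemma~\ref{106 1146}) are the right ones; what you are missing is the reduction provided by \cite[Proposition~5.3.1.13]{HTT}, which converts the inductive coherence problem over $\partial\Delta^n$ into the single square-completion problem above.  With that criterion in hand, the proof is two short paragraphs rather than an open-ended induction.
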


\begin{proof}
Since $\cO$ is a fibrant simplicial category, 
by \cite[Proposition 5.3.1.13, Definition 5.3.1.1]{HTT}
it is enough to prove the following two conditions: 
\begin{enumerate}
\item[(1)]
For every finite set $\{p_{i}\}_{i=1}^{n}$ of objects of $\cO$, 
there exists an objects $p\in \cO$ and morphisms $\phi_i:p_{i} \to p$. 

\item[(2)]
For every pair $p,q$ of objects of $\cO$, 
every finite simplicial set $X$ and every morphism $X \to \Map_{\cO}(p,q)$ 
there exists a morphism $q \to r$ such that 
the induced map $X \to \Map_{\cO}(p,r)$ is null homotopic. 
\end{enumerate}

(1) 
For $p_{i}:P_{i} \to M$, $i= 1,2,\dots,n$, 
it is enough to set $P:= (P_{1},\dots,P_{n}):  \oplus_{i=1}^{n}P_{i} \to M$ 
and
 $\phi_{i}: P_i \to P$ be the morphism in $\cO$ 
induced from the $i$-th canonical injection  $P_i \to \bigoplus_{i=1}^{n} P_{i}$.  

(2) 
For simplicity 
we denote by $[-,+]= \Hom_{\textup{h}\sSet}(-,+)$ the Hom set of the homotopy category $\textup{h}\sSet$.

Let $f$ be an element of  $\Hom_{\sSet}(X,\Map_{\cO}(p,q)) $. 
We denote by the same symbol $f \in [X,\Map_{\cO}(p,q)]$ the homotopy class of $f$  
The problem is to show that 
there exists a morphism $g: q \to r$ 
such that 
the image of $f$ of the morphism 
$[X,\Map_{\cO}(p,q)] \to  [X,\Map_{\cO}(p,r)]$ 
induced from $g$ 
belongs to the image of 
the morphism 
$[\ast,\Map_{\cO}(p,r)] \to [X,\Map_{\cO}(p,r)]$ 
induced from a unique map $X \to \ast$.  

Recall that we have a natural isomorphism 
\[
[X,\Map_{\cO}(p,q)] \cong \Hom_{\textup{h}\cO}(p\otimes X, q). 
\]
Therefore 
the problem is equivalent to show that 
there exists a morphism $g: q \to r $ in $\cO$ 
such that 
we have a homotopy commutative diagram
\[
\begin{xymatrix}{
p\otimes X \ar[d]_-{\alpha} \ar[r]^-{f}  & q \ar[d]^{g} \\
p \ar[r] & r
}\end{xymatrix}
\]
where $\alpha : p\otimes X \to p$ is the morphism 
induced from a unique morphism $X \to \ast$. 
However this is a consequence of Lemma \ref{106 1146} and the following remark.
\end{proof}

\begin{lemma}\label{hom com lem}
Let $V$ be a filtered  $\infty$-category  and  $F : V \to \sner(\cC(\cA)^{\circ})$ an $\infty$-functor.
Then the canonical morphism 
\[
\colim_{V}\tuH^{i}(F(a)) \to \tuH^{i}(\hocolim_{V}F(a))
\]
is an isomorphism for $ i \in \ZZ$ and $a\in \cA$. 
\end{lemma}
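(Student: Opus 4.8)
The plan is to transport the statement to a strict filtered diagram of cochain complexes of $\kk$-modules, where it reduces to the classical exactness of filtered colimits of modules. First, since $V$ is filtered, by \cite{HTT} there is a cofinal functor $\sner(I)\to V$ whose source is the nerve of a filtered partially ordered set $I$. By the dual of Lemma \ref{923 043} (valid here since $F$ takes values in the fibrant-cofibrant objects), restricting $F$ along this functor changes neither $\hocolim_V F$ nor $\colim_V\tuH^i(F(a))$ — the latter being a colimit in the ordinary category $\Mod\kk$, over which a cofinal functor induces an isomorphism of colimits. So I may assume $V=\sner(I)$.

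Next, since $\cCss(\cA)$ is a combinatorial simplicial model category (with $\cC(\cA)$ carrying the projective model structure) and $I$ is an ordinary category, I would represent $F$ by a diagram $\widetilde F\colon I\to\cC(\cA)$ in the model-categorical sense \cite[\S4.2.4]{HTT}, taken to be projectively cofibrant in $\cC(\cA)^I$. The colimit functor $\colim_I\colon\cC(\cA)^I\to\cC(\cA)$ is left Quillen, its right adjoint being the constant-diagram functor $\ct_I$, which preserves objectwise fibrations and trivial fibrations; hence $\colim_I\widetilde F$ computes $\hocolim_V F$. Since colimits of dg $\cA$-modules are formed termwise — equivalently, the evaluation $N\mapsto N(a)$ commutes with colimits — for each $a\in\cA$ I obtain a quasi-isomorphism $(\hocolim_V F)(a)\simeq\colim_I\bigl(i\mapsto\widetilde F(i)(a)\bigr)$, the right-hand side being an ordinary colimit over the filtered poset $I$ of cochain complexes of $\kk$-modules.

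Then I would invoke exactness of filtered colimits in $\Mod\kk$: being exact, $\colim_I$ commutes with kernels and cokernels, hence with cohomology, so
\[
\tuH^i\bigl(\hocolim_V F(a)\bigr)\;\cong\;\tuH^i\Bigl(\colim_I\bigl(i\mapsto\widetilde F(i)(a)\bigr)\Bigr)\;\cong\;\colim_I\tuH^i\bigl(\widetilde F(i)(a)\bigr).
\]
As $\widetilde F$ represents $F|_{\sner(I)}$, the functors $i\mapsto\tuH^i(\widetilde F(i)(a))$ and $i\mapsto\tuH^i(F(i)(a))$ on $I$ are naturally isomorphic, so the last colimit is $\colim_I\tuH^i(F(a))$, which equals $\colim_V\tuH^i(F(a))$ by cofinality once more. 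A short diagram chase identifies the resulting isomorphism with the inverse of the canonical comparison map, which proves the lemma.

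The only non-formal ingredient is the exactness of filtered colimits of modules; everything else is bookkeeping. I expect the main obstacle to be the first two steps: producing the cofinal filtered poset and, above all, matching the $\infty$-categorical homotopy colimit $\hocolim_V F$ with the strict model-categorical colimit $\colim_I\widetilde F$ through the rectification of diagrams into a combinatorial simplicial model category. Once the problem has been reduced to a strict filtered diagram of complexes, the conclusion is immediate.
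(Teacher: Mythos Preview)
Your argument is correct and follows essentially the same route as the paper: reduce to a filtered poset via \cite[Proposition 5.3.1.16]{HTT}, rectify the $\infty$-functor to a strict diagram (the paper uses the equivalence $\Hom_{\textup{Ho}(\sCat)}(A,\cC(\cA)^{\circ})\cong\Hom_{\textup{Ho}(\infCat)}(\sner(A),\sner(\cC(\cA)^{\circ}))$ rather than \cite[\S4.2.4]{HTT}, but the effect is the same), and invoke the classical commutation of filtered colimits with cohomology. Your treatment is slightly more explicit about the cofibrant replacement needed to identify the strict colimit with the homotopy colimit, which the paper leaves implicit.
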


\begin{proof}
By \cite[Proposition 5.3.16]{HTT} 
there there exists a filtered partially ordered set $A$ and a cofinal map $\iota: \sner(A) \to V$. 
Therefore we may replace $V$ with $\sner(A)$ and $F$ with $F\circ \iota$. 
Since we have an isomorphism 
\[
\Hom_{\textup{Ho}(\sCat)}(A,\cC(\cA)^{\circ}) 
\cong \Hom_{\textup{Ho}(\infCat)}(\sner(A),\sner(\cC(\cA)^{\circ})), 
\]
where we denote by $\infCat$ the model  category $\sSet$ equipped with the Joyal model structure, 
we may replace $F\circ \iota$ with $\sner(f)$ for some functor $f:A\to \cC(\cA)$. 
Now the result follows from the well-known fact 
that filtered colimit commute with taking cohomology group.    
\end{proof}

This finish the proof of Theorem \ref{Perf hocolim thm}.


\section{Miscellaneous results}

\subsection{On the choice in Construction \ref{103 444}.(III) }\label{103 1010}
In this subsection
we will show that the conclusion of Theorem \ref{bic holim thm} 
is independent to the choice in Construction \ref{103 444}.(III). 
More precisely,  
in Construction \ref{103 444}.(III)
we take a different choice $\ulcJ_2 \xrightarrow{\sim} \ulcJ'$ of a fibrant replacement 
and denote by $S'$ the corresponding bi-duality functor.

\begin{proposition}\label{104 603}
The canonical morphism $S(\kappa):ct S(M) \to S\circ \G$ exhibits $S(M)$ as a homotopy limit 
of $S\circ \G$ if and only if 
the canonical morphism $S'(\kappa):ct S'(M) \to S'\circ \G$ exhibits $S'(M)$ as a homotopy limit 
of $S'\circ \G$.  
\end{proposition}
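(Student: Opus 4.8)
The plan is to reduce the statement to the fact that any two fibrant replacements of $\ulcJ_2$ in $\cC(\cA\otimes\cE)$ are connected by a weak equivalence, and that all the functors and natural transformations in sight are compatible with such a weak equivalence. Concretely, since $\ulcJ_2\xrightarrow{\sim}\ulcJ$ and $\ulcJ_2\xrightarrow{\sim}\ulcJ'$ are both trivial cofibrations into fibrant objects, by the lifting property there is a weak equivalence $\theta:\ulcJ\xrightarrow{\sim}\ulcJ'$ over $\ulcJ_2$ (and it is automatically a weak equivalence between fibrant objects). I would first record that $\theta$ induces, for each $a\in\cA$ and each $e\in\cE$, quasi-isomorphisms $\ulcJ(a\otimes-)\xrightarrow{\sim}\ulcJ'(a\otimes-)$ of dg $\cE$-modules and $\ulcJ(-\otimes e)\xrightarrow{\sim}\ulcJ'(-\otimes e)$ of dg $\cA$-modules, between injectively fibrant objects by Lemma \ref{main mis lemma 1}.(1).

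The key step is then to promote $\theta$ to a natural transformation relating the whole package $(S,\kappa,\epsilon_\G)$ to $(S',\kappa',\epsilon'_\G)$. Writing $D,D'$ for the duality functors attached to $\ulcJ$ and $\overline D,\overline D'$ for those attached to $\ulcJ'$, post-composition with $\theta$ gives natural transformations $D\to\overline D$ and $D'\to\overline D'$; since $\theta$ is a quasi-isomorphism between objects that are termwise injectively fibrant in the relevant variable, Lemma \ref{103 915}.(1) (together with Lemma \ref{prel 1 lem 0}) shows these are objectwise quasi-isomorphisms on the subcategories of interest. Composing, one gets a natural transformation $S\to S'$ which is a weak equivalence on every object of $\langle\cJ\rangle^{\cC}$ and, more to the point, on $M$ and on every value of $\G$ (all of whose codomains lie in $\langle\cJ\rangle^{\cC}$ after the evaluation; more precisely $S\circ\G\to S'\circ\G$ is a weak equivalence of functors $\cU\to\cC(\cA)$, as is $S(M)\to S'(M)$). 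Moreover these maps are compatible with $\kappa$, with $S(\kappa)$, and with $\epsilon_\G$, in the sense that the evident cubes commute (this is a direct check from the explicit formula for the evaluation morphism, since everything is induced by post-composition with $\theta$).

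Granting this, the conclusion follows by a two-out-of-three argument using Lemma \ref{923 017}.(2),(3): one has a commutative square
\[
\begin{xymatrix}{
\ct_{\cU}S(M)\ar[d]_{\wr}\ar[r]^{S(\kappa)} & S\circ\G\ar[d]^{\wr}\\
\ct_{\cU}S'(M)\ar[r]_{S'(\kappa)} & S'\circ\G
}\end{xymatrix}
\]
in which the vertical maps are weak equivalences (the left one between objects of $\cC(\cA)$, the right one in $\cC(\cA)^{\cU}$). By Lemma \ref{923 017}.(3) the top map exhibits $S(M)$ as a homotopy limit of $S\circ\G$ iff the composite $\ct_{\cU}S(M)\to S\circ\G\to S'\circ\G$ does; by Lemma \ref{923 017}.(2) the latter holds iff $S'(\kappa):\ct_{\cU}S'(M)\to S'\circ\G$ does, using that $\ct_{\cU}S(M)\to\ct_{\cU}S'(M)$ is a weak equivalence. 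This gives the desired equivalence of the two statements.

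The main obstacle I anticipate is purely bookkeeping: verifying that the lift $\theta$ can be chosen compatibly enough that all the comparison natural transformations ($D\to\overline D$, $D'\to\overline D'$, $S\to S'$, and the ones on $\kappa$ and $\epsilon_\G$) genuinely commute on the nose rather than merely up to homotopy, and that the endomorphism dg-categories $\cE$ built from the same $\ulcJ_1$ really coincide (they do, since the construction of $\cE$ in step (I) uses only $\ulcJ_1$, not the replacement in step (III)). Once one observes that $\cE$ is literally the same in both constructions and that $\theta$ is $\cE$-linear and $\cA$-linear, all the diagrams are strictly commutative and the argument above goes through without friction.
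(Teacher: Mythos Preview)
Your overall strategy—produce a comparison from a lift $\theta:\ulcJ\xrightarrow{\sim}\ulcJ'$ and then apply Lemma \ref{923 017}—is exactly the paper's, but there is a genuine gap in the variance bookkeeping, and it is not the ``purely bookkeeping'' obstacle you anticipated. You assert that post-composition with $\theta$ gives $D\to\overline D$ and $D'\to\overline D'$ and that ``composing, one gets a natural transformation $S\to S'$.'' This last step fails: both duality functors are \emph{contravariant}, so from $D(M)\to\overline D(M)$ one gets (by applying $\overline D'$) a map $S'(M)=\overline D'\overline D(M)\to\overline D'D(M)$, while the transformation $D'\to\overline D'$ at $D(M)$ gives $S(M)=D'D(M)\to\overline D'D(M)$. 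What exists is therefore a \emph{cospan}
\[
S(M)\;\longrightarrow\;{}_{\cE}\bigl({}_{\cA}(M,\ulcJ),\,\ulcJ'\bigr)\;\longleftarrow\;S'(M),
\]
not a direct map $S\to S'$; your commutative square cannot be built as written.

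The fix—and this is precisely what the paper does—is to name the intermediate functor $T(-):={}_{\cE}\bigl({}_{\cA}(-,\ulcJ),\ulcJ'\bigr)$ and verify that the two legs $\rho:S\to T$ and $\lambda:S'\to T$ are objectwise quasi-isomorphisms (using that $\ulcJ(-\otimes e)$ and $\ulcJ'(-\otimes e)$ are injectively fibrant, Lemma \ref{main mis lemma 1}, together with Lemma \ref{103 915}). One then has the commutative roof
\[
\begin{xymatrix}{
\ct_{\cU}S(M)\ar[d]_{S(\kappa)}\ar[r]^{\sim} & \ct_{\cU}T(M)\ar[d]_{T(\kappa)} & \ct_{\cU}S'(M)\ar[d]^{S'(\kappa)}\ar[l]_{\sim}\\
S\circ\G\ar[r]^{\sim} & T\circ\G & S'\circ\G\ar[l]_{\sim}
}\end{xymatrix}
\]
and two applications of Lemma \ref{923 017} give the equivalence. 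Once you route the comparison through $T$ rather than claim a direct $S\to S'$, your argument is complete and coincides with the paper's.
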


 Corollary \ref{103 824} claim that
 the quasi-isomorphism class of $S(M)$ is independent to the choice of $\ulcJ$ 
in the step (III). 
However we need finer information for our purpose. 
For latter reference, 
we consider the following more general situation. 

Let $\cF$ be a locally cofibrant dg-category. 
Assume that there exists a quasi-equivalence $g: \cF \xrightarrow{\sim} \cE$. 
To prove Proposition \ref{104 603} it is enough to  set $\cF = \cE$ and $g= 1_\cE$. 
By Lemma \ref{103 815} the dg-functor $\tilde{g} : = 1_\cA \otimes g$ 
is a quasi-equivalence from $\cA\otimes \cF$ to $\cA \otimes \cE$.
Note that for $\cA \otimes \cE$-module $X$ and for $\cA$-module $N$, 
we have  natural isomorphisms 
\begin{equation}\label{1010 247}
g_{\ast}\left( X(a\otimes -) \right) \cong \left(\tilde{g}_{\ast}X\right) (a\otimes -),
\quad 
g_{\ast}{}_{\cA}(N,X) \cong {}_{\cA}(N,\tilde{g}{\ast}X).
\end{equation} 
We assume that 
the fibrant replacement $\ulcJ$ of the diagonal module $\ulcJ_2$ 
is taken by a injectively trivial cofibration $\ulcJ_2 \trivcofright \ulcJ$.  
Then the induced morphism $\tilde{g}_\ast \ulcJ_2 \to \tilde{g}_{\ast} \ulcJ$ 
is also injectively trivial cofibration. 
Let $\tilde{g}_\ast \ulcJ_2 \xrightarrow{\sim} \ulcJ'$ be a fibrant replacement. 
Then by the lifting property, 
there exists a quasi-isomorphism $\alpha: \tilde{g}_\ast\ulcJ \to \ulcJ'$. 

We define the endofunctors $S'$ and $T$ of $\cC(\cA)$ to be 
\[
S'(-) := {}_{\cF}({}_{\cA}(- , \ulcJ'), \ulcJ'), \qquad
 T(-) := {}_{\cF}({}_{\cA}(-,\tilde{g}_{\ast} \ulcJ), \ulcJ'). 
 \] 
 We denote by $\epsilon: 1_{\cC(\cA)} \to S'$ the natural transformation induced from the  evaluation map.  
Then  as in the proof of Lemma \ref{main mis lemma 2}, 
the morphism $\alpha$ induces the following commutative diagram of endofunctors of $\cC(\cA)$ 
\[
\begin{xymatrix}{
1_{\cC(\cA) } \ar[d]_{\epsilon'} \ar[r]^{\epsilon} & 
S \ar[d]^{\rho} \\
S' \ar[r]_{\lambda} & T
}\end{xymatrix}\]
We claim that the morphisms $\lambda,\rho$ are weak equivalences of endofunctors. 
Namely, for $N\in \cC(\cA)$ the morphisms $\lambda_N,\rho_N$ are quasi-isomorphisms. 
As in the same way   
that we prove $\psi_1$ is quasi-isomorphism in  the proof of  Lemma \ref{main mis lemma 2}.(1) ,  
we can prove that $\lambda_N$ is a quasi-isomorphism. 
The morphism $\rho_N$ evaluated at $a \in \cA$ is given in the following way 
\[
\begin{split}
{}_{\cE}({}_{\cA}(N,\ulcJ), \ulcJ(a\otimes -)) 
\xrightarrow{g_{\ast}} 
 {}_{\cF}(g_{\ast} {}_{\cA}(N,\ulcJ), g_{\ast}(\ulcJ(a\otimes -))) 
& \cong {}_{\cF}({}_{\cA}(N,\tilde{g}_{\ast}\ulcJ) , (\tilde{g}_{\ast}\ulcJ)(a\otimes-)) \\
& \xrightarrow{\tilde{\alpha}_a} {}_{\cF}({}_{\cA}(N,\tilde{g}_{\ast}\ulcJ), \ulcJ'(a\otimes-)).  
\end{split}
\]
The last morphism $\tilde{\alpha}_a$ is induced 
from the quasi-isomorphism $\alpha(a) : (\tilde{g}_\ast \ulcJ)(a\otimes- ) \xrightarrow{\sim} \ulcJ'(a\otimes-)$. 
Since $\ulcJ(a\otimes - )$ is injectively fibrant, 
the Hom complex ${}_{\cE}({}_{\cA}(N,\ulcJ), \ulcJ(a\otimes -)) $  compute 
the derived Hom complex on the top of the following diagram.  
\[\begin{xymatrix}{
\RR\Hom_{\cE}\left(  {}_{\cA}(N,\ulcJ), \ulcJ(a\otimes -)\right) \ar[d]^{g_\ast} \\
 \RR\Hom_{\cF}\left(g_{\ast} {}_{\cA}(N,\ulcJ), g_{\ast}(\ulcJ(a\otimes -))\right).
}\end{xymatrix}\]
On the other hands, 
the morphism $\alpha(a) $ is a fibrant replacement of $(\tilde{g}_\ast \ulcJ)(a\otimes- ) $ by Lemma \ref{103 754}. 
Hence the Hom complex ${}_{\cF}({}_{\cA}(M,\tilde{g}_{\ast}\ulcJ), \ulcJ'(a\otimes-))$ 
compute 
the derived Hom complex in the bottom of above diagram. 
Observe that the morphism $\rho_N$ is quasi-isomorphic to 
the morphism between the  above derived Hom complexes  induced from $g_{\ast}$. 
By Lemma \ref{926 838} 
the restriction functor $g_{\ast}: \cD(\cE) \to \cD(\cF)$ gives an equivalence. 
Hence we conclude that $\rho_N$ is quasi-isomorphism.

Compositing the above diagram of endofunctors of $\cC(\cA)$ with 
the morphism $\kappa: \ct_{\cU} M  \to \G$, 
we obtain the commutative diagram 
\[
\begin{xymatrix}{ 
\ct_{\cU} S'(M) \ar[d]_{S'(\kappa)} \ar[r]^{\sim}  & 
\ct_{\cU} T(M) \ar[d]_{T(\kappa)} & 
\ct_{\cU} S(M) \ar[d]^{S(\kappa)} \ar[l]_{\sim} \\ 
S'\circ \G \ar[r]^{\sim} & 
T\circ \G & 
S \circ \G \ar[l]_{\sim}.  
}\end{xymatrix}\]
By Lemma \ref{923 017}, we see that 
$S(\kappa)$ exhibits $S(M)$ as a homotopy limit of $S\circ \G$ 
if and only if 
$S'(\kappa)$ exhibits $S'(M)$ as a homotopy limit of $S'\circ \G$.

We define a dg-category  
$\cB$ to be 
\[
\textit{Ob}(\cB)  := \textit{Ob}\cA, \quad 
{\cB}(a,b):= {}_{\cF}(\ulcJ'(b\otimes-),\ulcJ'(a\otimes -)).
\]
The identity on the objects induces a dg-functor $\tilde{\iota}: \cA \to \cB$.   
Note that 
by \cite[Lemma 3.2]{Efi} 
in the homotopy category $\Ho(\dgCat_{\cA/})$ 
the object $\iota : \cA \to \BBic_{\cA}(\cJ)$ 
is isomorphic to $\tilde{\iota}: \cA \to \cB$. 
In other words, we can compute the derived bi-commutator category $\BBic_{\cA}(\cJ)$ 
by using $\cF$ and $\ulcJ'$.

\subsection{Derived bi-commutators over equivalent subcategories}

\begin{proposition}\label{1010 314}
Let $f: \cA \to \cB$ be a dg-functor 
and $\cJ \subset  \cD(\cB)$ a small subcategory.
Assume that 
the restriction functor $f_{\ast} : \cD(\cB) \to \cD(\cA)$ 
induces an equivalence between $\cJ$ and $f_{\ast}\cJ$. 
Then 
in the homotopy category $\Ho(\dgCat)$ we have a quasi-fully faithful functor  
$f^{\heartsuit}:\BBic_{\cA}(f_{\ast}\cJ) \to \BBic_{\cB}(\cJ)$ 
and 
the commutative diagram 
\[\begin{xymatrix}{
\cA \ar[d]_{\iota_{f_{\ast}\cJ}} \ar[r]^{f} & \cB \ar[d]^{\iota_{\cJ}} \\
\BBic_{\cA}(f_{\ast} \cJ) \ar[r]_-{f^{\heartsuit}} & \BBic_{\cB}(\cJ) .
}\end{xymatrix}
\]
\end{proposition}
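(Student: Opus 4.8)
\textbf{The plan} is to realise $f^{\heartsuit}$ as a zig-zag of quasi-equivalences followed by an honest fully faithful dg-functor, leaning on the independence results of Subsection~\ref{103 1010}. Since the statement lives in $\Ho(\dgCat)$, I would first replace $f$ by a weakly equivalent morphism: factor it as a cofibration followed by a trivial fibration, and use that trivial fibrations of dg-categories induce equivalences on module- and derived categories, so that $\cJ$ and both bi-commutators transport along them. Thus I may assume $\cA$ is cofibrant and $f\colon\cA\rightarrowtail\cB$ is a cofibration of (hence locally cofibrant) dg-categories. By Lemma~\ref{922 258} and the remark after it, the restriction functor $f_{\ast}\colon\cC(\cB)\to\cC(\cA)$ — and likewise $(f\otimes 1_{\cE})_{\ast}$ for any locally cofibrant $\cE$ — is then right Quillen for the injective model structures, so it preserves injectively fibrant modules (and always preserves quasi-isomorphisms and injective cofibrations).

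\textbf{Compatible choices.} Next I would run Construction~\ref{103 444} for $\cJ$: pick injectively fibrant representatives $\ulcJ_1\subset\cC(\cB)$, set $\cE_{\cB}:=\cE\text{nd}_{\cB}(\ulcJ_1)^{\textup{op}}$, form the diagonal module $\ulcJ_2$ over $\cB\otimes\cE_{\cB}$ and a fibrant replacement $\ulcJ_2\trivcofright\ulcJ$. Because $f_{\ast}\ulcJ_1$ now consists of injectively fibrant $\cA$-modules representing $f_{\ast}\cJ$, I may use it as the chosen data in Construction~\ref{103 444} for $\BBic_{\cA}(f_{\ast}\cJ)$; write $\cE_{\cA},\ulcJ_2^{\cA},\ulcJ^{\cA}$ for the outcome. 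Applying $f_{\ast}$ on morphism complexes, ${}_{\cB}(J,J')\to{}_{\cA}(f_{\ast}J,f_{\ast}J')$, yields a dg-functor $g\colon\cE_{\cB}\to\cE_{\cA}$, the identity on objects; as all four modules are injectively fibrant its value complexes compute $\RR\Hom$, and the hypothesis that $f_{\ast}$ restricts to an equivalence $\cJ\simeq f_{\ast}\cJ$ makes them quasi-isomorphisms, so $g$ is a quasi-equivalence. A direct comparison of the diagonal modules then gives a canonical isomorphism of $\cA\otimes\cE_{\cB}$-modules $(f\otimes 1_{\cE_{\cB}})_{\ast}\ulcJ_2\cong(1_{\cA}\otimes g)_{\ast}\ulcJ_2^{\cA}$ (both send $a\otimes[J]$ to $J(fa)$); since $(f\otimes 1_{\cE_{\cB}})_{\ast}\ulcJ$ is a replacement of this module, a lifting argument produces a comparison quasi-isomorphism $\beta$ from it to an injectively fibrant replacement $\ulcJ'$ of $(1_{\cA}\otimes g)_{\ast}\ulcJ_2^{\cA}$.

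\textbf{Assembling $f^{\heartsuit}$.} Now I would introduce the dg-category $\cB'$ with $\Ob\cB':=\Ob\cA$ and $\cB'(a,a'):=\dgBic_{\cB}(\cJ)(fa,fa')={}_{\cE_{\cB}}(\ulcJ(fa'\otimes-),\ulcJ(fa\otimes-))$; then $a\mapsto fa$ is an honest fully faithful dg-functor $\cB'\to\dgBic_{\cB}(\cJ)$ carrying the canonical map $\cA\to\cB'$ to $\iota_{\cJ}\circ f$. On the other side, Subsection~\ref{103 1010} applied with $\cF:=\cE_{\cB}$, the quasi-equivalence $g$, and the fibrant replacement $\ulcJ'$ shows that, in $\Ho(\dgCat_{\cA/})$, the object $\iota_{f_{\ast}\cJ}\colon\cA\to\BBic_{\cA}(f_{\ast}\cJ)$ is isomorphic to the canonical map $\cA\to\cB^{\flat}$, where $\cB^{\flat}(a,a'):={}_{\cE_{\cB}}(\ulcJ'(a'\otimes-),\ulcJ'(a\otimes-))$. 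Finally $\beta$ induces a quasi-equivalence $\cB^{\flat}\xrightarrow{\sim}\cB'$ over $\cA$ (the modules $\ulcJ'(a\otimes-)$ and $\ulcJ(fa\otimes-)$ are injectively fibrant over $\cE_{\cB}$ by Lemma~\ref{103 754}, and ${}_{\cE_{\cB}}(-,-)$ sends quasi-isomorphisms between such to quasi-isomorphisms by Lemma~\ref{103 915} together with the $\cC(\kk)$-enriched model structure on $\cC(\cE_{\cB})$). Composing the chain $\BBic_{\cA}(f_{\ast}\cJ)\cong\cB^{\flat}\xrightarrow{\sim}\cB'\hookrightarrow\dgBic_{\cB}(\cJ)$ in $\Ho(\dgCat_{\cA/})$ defines $f^{\heartsuit}$; it is quasi-fully faithful as a composite of quasi-equivalences with a fully faithful functor, and the square of the Proposition commutes because the whole chain is a chain under $\cA$ whose last arrow realises $\iota_{\cJ}\circ f$.

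\textbf{The main obstacle} will be the middle step: pinning down the identification of diagonal modules and keeping the local-cofibrancy and fibrancy bookkeeping tight enough to quote Subsection~\ref{103 1010} verbatim — above all the reduction to $f$ being a cofibration, which is what makes $f_{\ast}$ and $(f\otimes 1)_{\ast}$ preserve injective fibrancy and lets one see $g$ as an honest quasi-equivalence of dg-categories rather than an abstract equivalence $\cE_{\cB}\simeq\cE_{\cA}$.
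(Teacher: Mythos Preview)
Your proposal is correct and follows essentially the same route as the paper: reduce to $f$ a cofibration, use Lemma~\ref{922 258} so that $f_\ast$ preserves injective fibrancy, obtain the quasi-equivalence $g\colon\cE_{\cB}\to\cE_{\cA}$, identify the diagonal modules $(f\otimes 1)_\ast\ulcJ_2\cong(1\otimes g)_\ast\ulcJ_2^{\cA}$, and then invoke Subsection~\ref{103 1010} to conclude. The only streamlining the paper achieves over your version is that it uses $(f\otimes 1)_\ast\ulcJ$ \emph{itself} as the fibrant replacement of $(1\otimes g)_\ast\ulcJ_2^{\cA}$ (Lemma~\ref{922 258} already makes it injectively fibrant), so the intermediate replacement $\ulcJ'$, the comparison map $\beta$, and the auxiliary category $\cB^\flat$ collapse; the map $\beta_{a^\wedge}$ is then an honest isomorphism coming from the adjunction ${}_{\cA}(a^\wedge,\tilde f_\ast\ulcJ)\cong{}_{\cB}(f^\ast a^\wedge,\ulcJ)$, which gives the fully faithful $f^\heartsuit$ directly.
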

\begin{remark}
Since the canonical morphisms $\iota_{f_{\ast}\cJ}$ and $\iota_{\cJ}$ 
induces identities on objects,  
if the functor $f$ is a quasi-essentially surjective, 
then the functor $f^{\heartsuit}$ gives a quasi-equivalence of the derived bi-commutator categories. 
\end{remark}

We may assume that 
the functor $f: \cA \to \cB$ is a cofibration. 
We construct $\ulcJ_1 \subset \cC(\cB)$ and  $\cF := \cE\textup{nd}_{\cA}(\ulcJ_1)^{\textup{op}}$ 
by using Construction \ref{103 444}. 
By Lemma \ref{922 258}, 
the $\cA$-module  $f_{\ast}\ulcJ_1$ is injectively fibrant. 
Hence the dg-categories 
$\cE:= \cE\textup{nd}_{\cA}(f_\ast \ulcJ_1)^{\textup{op}}$ 
is  locally cofibrant. 
By assumption 
the dg-functor $g: \cF \to \cE$ 
induced from $f_{\ast}$
is a quasi-equivalence  of locally cofibrant dg-categories.  

Let   $\ulcJ_2$ and $\underline{f_\ast \cJ}_2$ 
be the diagonal modules of $\ulcJ_1$ and $f_\ast \ulcJ$ respectively. 
We set $\tilde{f}:= f\otimes 1_{\cF} : \cA \otimes \cF \to \cB \otimes \cF$ 
and $\tilde{g}:= 1_{\cA} \otimes g : \cA \otimes \cF \to \cA \otimes \cE$. 
Then we can check  
an isomorphism 
$\tilde{f}_{\ast}\ulcJ_2 \cong \tilde{g}_{\ast}(\underline{f_\ast \cJ}_2)$ 
of $\cA \otimes \cF$-modules. 
Let $\ulcJ_2 \trivcofright \ulcJ$ be an injectively fibrant replacement of $\ulcJ_2$. 
By Lemma \ref{922 258} $\tilde{f}_{\ast}\ulcJ$ is an injective fibrant replacement of $\tilde{f}_\ast \ulcJ_{2}$. 
Therefore 
according to  the consideration in Section \ref{103 1010} 
we can compute the derived bi-dual over $f_{\ast}\cJ \in \cD(\cA)$ by using $\tilde{f}_\ast\ulcJ$.   
\[
S_{f_{\ast}\cJ}(-):= {}_{\cF}({}_{\cA}(-,\tilde{f}_{\ast}\ulcJ),\tilde{f}_{\ast}\ulcJ). 
\]
The restriction functor $f_{\ast}:\cC(\cB) \to \cC(\cA)$ 
induces the morphism ${}_{\cB}(f^{\ast}M,\ulcJ) \to {}_{\cA}(f_{\ast}f^{\ast}M,\tilde{f}_{\ast}\ulcJ)$ 
of dg $\cE$-modules. 
We denote by $\alpha_{M}:S_{f_{\ast}J\cJ}(f_{\ast}f^{\ast}M) \to f_{\ast}S_{\cJ}(f^{\ast}M)$ 
which is induced from the above morphism 
\[
\alpha_{M}:S_{f_{\ast}\cJ}(f_{\ast}f^{\ast}M)= 
{}_{\cF}({}_{\cA}(f_{\ast}f^{\ast}M,\tilde{f}_{\ast}\ulcJ),\tilde{f}_{\ast}\ulcJ)
\to {}_{\cF}({}_{\cB}(f^{\ast}M,\ulcJ),\tilde{f}_{\ast}\ulcJ)\cong  f_{\ast}S_{\cJ}(f^{\ast}M).
\]
where for the last isomorphism we use an isomorphism (\ref{1010 247}). 
Let  $\eta_{M}: M \to f_{\ast}f^{\ast}M$ be the unit map. 
Then we have the following commutative diagram 
\[
\begin{xymatrix}{
M \ar[d]_{\epsilon_{M}} \ar[rr]^{\eta_{M}} &&
 f_{\ast}f^{\ast}M \ar[d]_{\epsilon_{f_{\ast}f^{\ast}M}} \ar@/^4pc/[dd]^{f_{\ast}\epsilon_{f_{\ast}M}} \\ 
S_{f_{\ast}\cJ}(M) \ar[rr]_{S_{f_{\ast}\cJ}(\eta_{M})} \ar@/_1pc/[drr]_{\beta_M}
 && S_{f_{\ast}\cJ}(f_{\ast}f^{\ast}M) \ar[d]_{\alpha_{M}} \\ 
&& f_{\ast}S_{\cJ}(f^{\ast}M) 
}\end{xymatrix}\]
where we denote by $\beta_{M}$  the composition  $\beta_M:= \alpha_{M}\circ S_{f_{\ast}\cJ}(\eta_{M})$. 
However 
the morphism $\beta_M$ is nothing but the morphism 
induced from the adjunction isomorphism 
${}_{\cA}(M, \tilde{f}_{\ast}\ulcJ) \cong {}_{\cB}(f^{\ast}M,\ulcJ)$. 
In particular it is an isomorphism.

In the case when $M = a^{\wedge}$ for $a\in \cA$, 
the morphism $\beta_{a^{\wedge}}(b)$ gives  an isomorphism  
\[
S_{f_{\ast}\cJ}(a^{\wedge})(b) ={}_{\cE}(\tilde{f}_{\ast}\ulcJ(b), \tilde{f}_{\ast}\ulcJ(a))
 \cong {}_{\cE}(\ulcJ(fb) , \ulcJ(fa)) = f_{\ast}S_{\cJ}(f^{\ast}a^{\wedge})(b)
\]
of dg $\kk$-modules for $b\in \cA$. 
The family of  the morphisms $\beta_{a^{\wedge}}$ with $a \in \cA$ 
induces a fully faithful dg-functor 
$
f^{\heartsuit}: \dgBic_{\cA}(f_{\ast}\cJ) \hookrightarrow \dgBic_{\cB}(\cJ). 
$
It is straightforward to check that 
we have the commutative diagram 
\[\begin{xymatrix}{
\cA \ar[d]_{\iota_{f_{\ast}\cJ}} \ar[r]^{f} & \cB \ar[d]^{\iota_{\cJ}} \\
\dgBic_{\cA}(f_{\ast} \ulcJ) \ar[r]_-{f^{\heartsuit}} & \dgBic_{\cB}(\ulcJ) .
}\end{xymatrix}
\]

\section{Completion via derived bi-commutator}

\subsection{Completion theorems}

\begin{theorem}\label{comp thm 1}
Let $R$ be a ring and 
 $\mathfrak{a}$  a two-sided ideal of $R$ such that 
 the canonical functor 
$ \cD(\frka\textup{-tor}) \to \cD_{\frka\textup{-tor}} (R) $ gives an equivalence. 
Assume that
$R/\frka^n$ belongs to $\langle R/\frka \rangle^{\cD}$ for $n \geq 0$. 
\textup{(}e.g. $R/\frka$ has finite global dimension and $R$ is Noetherian.\textup{)}  
We denote by $\widehat{R}$  the $\frka$-adic completion. 
Then in the homotopy category $\Ho(\dgCat)$, there exists an isomorphism 
$\BBic_{R}(R/\frka) \cong \widehat{R}$ 
which fits into the commutative diagram 
\[
\begin{xymatrix}{
R \ar[rr]^{\iota} \ar[drr]_{\textup{\textbf{comp}}} && \BBic_{R}(R/\frka) \ar@{=}[d]^{\wr}\\
&& \widehat{R} 
}\end{xymatrix}\]
where the slant arrow $\textup{\textbf{comp}}: R \to \widehat{R}$ is the  canonical completion morphism.
\end{theorem}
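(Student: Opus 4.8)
The plan is to specialize Theorem~\ref{bic holim thm} to $\cA = R$, regarded as a dg-category with one object $\ast$, to $M = R^{\wedge}$ (the free rank-one module) and to $\cJ = \{R/\frka\}$, and then to evaluate the resulting homotopy limit and match it with the completion. First I would apply Lemma~\ref{103 450} to choose the data of Construction~\ref{103 444} so that $\epsilon_{R^{\wedge}}\colon R^{\wedge}\to S(R^{\wedge})$ is an injective cofibration; Theorem~\ref{bic holim thm} then gives $S(R^{\wedge})\simeq\holim_{\cU}\Gamma$ with $\cU=\cU^{R^{\wedge}}_{R/\frka}$, compatibly with $\epsilon_{R^{\wedge}}$ and with the tautological transformation $\kappa\colon\ct_{\cU}R^{\wedge}\to\Gamma$. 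Since $S(R^{\wedge})(\ast) = \dgBic_{R}(R/\frka)(\ast,\ast)$ and $\iota$ is given on morphisms by $\epsilon_{R^{\wedge}}$, the whole problem reduces to identifying $\holim_{\cU}\Gamma$, together with the canonical map $R^{\wedge}\to\holim_{\cU}\Gamma$, with the completion morphism $R\to\widehat{R}$.

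Next I would replace $\cU$ by a cofinal tower. By a standard induction using factorization and the lifting property in the injective model structure, choose objects $\cJ^{n}$ ($n\ge 1$) of $\cU$ together with injective cofibrations $R^{\wedge}\rightarrowtail\cJ^{n}$, transition maps $\cJ^{n+1}\to\cJ^{n}$ over $R^{\wedge}$, and a levelwise quasi-isomorphism of towers $\{\cJ^{n}\}\to\{R/\frka^{n}\}$ compatible with the projections and with the maps from $R^{\wedge}$; since $R/\frka^{n}\in\langle R/\frka\rangle^{\cD}$ by hypothesis, each $\cJ^{n}$ indeed lies in $\langle R/\frka\rangle^{\cC}$. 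This defines a simplicial functor $\cI\to\cU$, where $\cI$ is the poset $(\ZZ_{\geq 1})^{\textup{op}}$ regarded as a (discrete, hence fibrant) simplicial category. The tower $\{R/\frka^{n}\}$ with its projections is a tower of projective fibrations, so computing the homotopy limit in the projective model structure and using Lemma~\ref{921 514} together with Lemma~\ref{923 017}(3) one gets $\holim_{\cI}\Gamma|_{\cI}\simeq\holim_{n}R/\frka^{n}=\lim_{n}R/\frka^{n}=\widehat{R}$, and the cone of $\kappa|_{\cI}$ induces precisely the completion map $R\to\widehat{R}$. Hence everything comes down to showing that $\cI\to\cU$ is homotopy left cofinal.

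By Lemma~\ref{923 043} (every object of $\cC(R)$ is injectively cofibrant, and $\cU$ is a fibrant simplicial category by Lemma~\ref{925 1053}(1)) together with Theorem~\ref{cofinal thm}, this is the assertion that $\sner(\cI)\times_{\sner(\cU)}\sner(\cU)_{/k}$ is weakly contractible for every object $k\colon R^{\wedge}\rightarrowtail K$ of $\cU$. This is the \emph{main obstacle}, and it is exactly where the two hypotheses enter. Since $\langle R/\frka\rangle^{\cD}\subseteq\cD_{\frka\textup{-tor}} (R)$ (the latter being thick and containing $R/\frka$), $K$ has $\frka$-torsion cohomology, so the assumed equivalence $\cD(\frka\textup{-tor})\xrightarrow{\sim}\cD_{\frka\textup{-tor}} (R)$ allows us to replace $K$ by a complex with $\frka$-torsion terms. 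Computing the weak homotopy type of $\sner(\cI)_{/k}$ as the filtered homotopy colimit $\hocolim_{n}\Map_{\cU}(\cJ^{n},k)$, which is the homotopy fibre over $k$ of $\hocolim_{n}\Map_{\cC(R)}(\cJ^{n},K)\to\Map_{\cC(R)}(R^{\wedge},K)$, one is reduced to the statement that $\hocolim_{n}\RHom_{R}(R/\frka^{n},K)\to K$ is a quasi-isomorphism for such $K$ — the derived-torsion reformulation of the hypothesis, which here plays the role of Grothendieck vanishing in \cite{DGI} — whence the homotopy fibre is contractible; Lemma~\ref{model lem} and Lemma~\ref{925 1053} are used to pass between $\cD(R)$-morphisms and actual morphisms of $\cU$.

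Finally, chasing the quasi-isomorphisms $S(R^{\wedge})\simeq\holim_{\cU}\Gamma\simeq\holim_{\cI}\Gamma|_{\cI}\simeq\widehat{R}$ through the commutative square preceding Theorem~\ref{bic holim thm} identifies the composite $R\xrightarrow{\iota}\BBic_{R}(R/\frka)\simeq\widehat{R}$ with $\textup{\textbf{comp}}$; since $S(R^{\wedge})$ is cohomologically concentrated in degree $0$ and $\iota=\epsilon_{R^{\wedge}}$ is a dg-algebra map, $H^{0}(\iota)$ is forced to be the ring homomorphism $\textup{\textbf{comp}}$, and this upgrades the computation to the required isomorphism in $\Ho(\dgCat)$ fitting into the stated triangle.
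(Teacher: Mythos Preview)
Your overall strategy matches the paper's: apply Theorem~\ref{bic holim thm} and reduce to showing that the tower $\{\pi^n:R\to R/\frka^n\}$ is homotopy left cofinal in $\cU$. Your cofinality argument, however, takes a different route. The paper factors $\cI\to\cU$ through the full sub-$\infty$-category $V$ on the $\pi^n$, proving $V\hookrightarrow\sner(\cU)$ cofinal via Lemma~\ref{cofinal lem} (using only that every $k:R\to K$ with $K$ representable by an $\frka$-torsion complex factors through some $\pi^n$ in $\cD(R)$), and $\sner(\cI)\to V$ cofinal by a direct check that each $\Map_\cU(\pi^m,\pi^n)$ is contractible, via the long exact sequence and $\Ext_R^{<0}(\frka^m,R/\frka^n)=0$. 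You instead identify $\sner(\cI)_{/k}$ with the homotopy fibre of $\hocolim_n\RHom_R(R/\frka^n,K)\to K$ and observe that this counit is an equivalence on $\cD_{\frka\textup{-tor}}(R)$ because the hypothesis makes the inclusion $\cD(\frka\textup{-tor})\hookrightarrow\cD(R)$ fully faithful. This is correct and arguably more conceptual, though the identification of $\sner(\cI)_{/k}$ with a filtered colimit of mapping spaces (via straightening of the pulled-back right fibration over $\sner(\cI)$) and the passage from the hypothesis to the counit statement are only asserted, not argued.

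The genuine gap is in your last paragraph. The chain $S(R^\wedge)\simeq\holim_\cU\Gamma\simeq\holim_\cI\Gamma|_\cI\simeq\widehat{R}$ consists of quasi-isomorphisms of \emph{complexes of $R$-modules}; your intermediate objects carry no dg-algebra structure. Knowing that $H^0(\iota):R\to H^0(\dgBic_R(R/\frka))$ is a ring map, and that $H^0(\dgBic_R(R/\frka))\cong\widehat{R}$ as $R$-modules under $R$, does not by itself force the two ring structures to agree under that module isomorphism. The paper resolves this by building the tower $\Phi'_n$ in the model category $\dga$ (so that $\lim_n\Phi'_n$ is a dg-algebra quasi-isomorphic to $\widehat{R}$), observing that $W:=\lim_n S(\Phi'_n)$ then becomes a $(\lim_n\Phi'_n)$--$\dgBic_{R'}(\Phi''_1)$ bimodule, and applying the elementary Lemma~\ref{1016 227} on cohomology to extract the ring isomorphism. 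Your construction of the $\cJ^n$ only as injectively fibrant objects of $\cC(R)$ discards precisely this multiplicative structure, and the claim that ``$H^0(\iota)$ is forced to be $\textup{\textbf{comp}}$'' does not follow from what you have established.
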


A proof is given in the next section \ref{926 125}. 

\begin{theorem}\label{comp thm 2}
Let $R$ be a ring and $\frka$ an two-sided ideal such that 
the canonical functor $\cD(\frka\textup{-tor}) \to \cD_{\frka\textup{-tor}}(\cR)$ 
 gives an equivalence. 
Let $K$ be a compact generator of $\cD_{\frka\textup{-tor}}(R)$. 
Then in the homotopy category $\Ho(\dgCat)$, there exists an isomorphism 
$\BBic_{R}(K) \cong \widehat{R}$ 
which fits into the commutative diagram 
\[
\begin{xymatrix}{
R \ar[rr]^{\iota} \ar[drr]_{\textup{\textbf{comp}}} && \BBic_{R}(K) \ar@{=}[d]^{\wr}\\
&& \widehat{R} 
}\end{xymatrix}\]
where the slant arrow $\textup{\textbf{comp}}: R \to \widehat{R}$ is the  canonical completion morphism.

\end{theorem}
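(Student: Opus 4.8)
The plan is to reduce Theorem \ref{comp thm 2} to Theorem \ref{comp thm 1} by showing that a compact generator $K$ of $\cD_{\frka\textup{-tor}}(R)$ and the object $R/\frka$ generate the same thick subcategory, hence define the same derived bi-commutator. First I would recall that under the standing hypothesis $\cD(\frka\textup{-tor}) \xrightarrow{\sim} \cD_{\frka\textup{-tor}}(R)$, the category $\cD_{\frka\textup{-tor}}(R)$ is a compactly generated triangulated subcategory of $\cD(R)$; since $R/\frka$ lies in $\cD_{\frka\textup{-tor}}(R)$ and is itself compact in $\cD(R)$ (as $R/\frka \in \langle R/\frka \rangle^{\cD}$, which by the assumption of Theorem \ref{comp thm 1} also contains each $R/\frka^n$), one checks that $R/\frka$ is in fact a compact generator of $\cD_{\frka\textup{-tor}}(R)$: any object right-orthogonal to $R/\frka$ inside $\cD_{\frka\textup{-tor}}(R)$ has, by the equivalence with $\cD(\frka\textup{-tor})$, only $\frka$-torsion cohomology, and a standard d\'evissage along the filtration by $R/\frka$-modules forces it to vanish. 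Consequently both $K$ and $R/\frka$ are compact generators of the same compactly generated category $\cD_{\frka\textup{-tor}}(R)$, so by the theorem of Neeman/Ravenel on compact objects, $\langle K \rangle^{\cD} = \cD_{\frka\textup{-tor}}(R)^{c} = \langle R/\frka \rangle^{\cD}$ inside $\cD(R)$.

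Next I would invoke the fact, used implicitly already in the proof of the ``Theorem'' in the introduction, that the derived bi-commutator $\BBic_{R}(\cJ)$ (together with the canonical morphism $\iota: R \to \BBic_R(\cJ)$ in $\Ho(\dgCat_{R/})$) depends only on the thick subcategory $\langle \cJ \rangle^{\cD}$; this is Efimov's \cite[Lemma 3.1, Lemma 3.2]{Efi} as recalled after Construction \ref{103 444} and in Proposition \ref{1010 314}. Therefore
\[
\iota: R \to \BBic_{R}(K) \quad\text{and}\quad \iota: R \to \BBic_{R}(R/\frka)
\]
are isomorphic objects of $\Ho(\dgCat_{R/})$. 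Now Theorem \ref{comp thm 1} applies to $R/\frka$ precisely under the hypotheses we have assumed (the equivalence $\cD(\frka\textup{-tor}) \xrightarrow{\sim} \cD_{\frka\textup{-tor}}(R)$, and $R/\frka^n \in \langle R/\frka\rangle^{\cD}$), giving an isomorphism $\BBic_R(R/\frka) \cong \widehat{R}$ in $\Ho(\dgCat)$ compatible with the completion morphism $\textbf{comp}: R \to \widehat{R}$. Composing, we obtain the desired isomorphism $\BBic_R(K) \cong \widehat{R}$ fitting into the stated triangle.

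The one genuine gap in the above sketch — and what I expect to be the main obstacle — is that Theorem \ref{comp thm 1} requires not merely that $R/\frka$ be a compact generator but also the explicit condition $R/\frka^n \in \langle R/\frka \rangle^{\cD}$ for all $n$, whereas Theorem \ref{comp thm 2} as stated does not list this among its hypotheses. So I would need to either (a) prove that compact generation of $\cD_{\frka\textup{-tor}}(R)$ by \emph{some} $K$ together with the equivalence $\cD(\frka\textup{-tor})\xrightarrow{\sim}\cD_{\frka\textup{-tor}}(R)$ already forces $R/\frka^n \in \cD_{\frka\textup{-tor}}(R)^c = \langle K \rangle^{\cD}$ — which holds because each $R/\frka^n$ is a finitely presented $\frka$-torsion module, hence lies in $\cD(\frka\textup{-tor})^c$, and one checks that compact objects of $\cD(\frka\textup{-tor})$ correspond to compact objects of $\cD_{\frka\textup{-tor}}(R)$ under the equivalence — or (b) fall back to a direct proof mirroring Section \ref{926 125}: apply Theorem \ref{bic holim thm} to compute $\BBic_R(K) \simeq \holim_{\langle K\rangle^{\cD}_{R/}} \Gamma$, and show that the subcategory $\cI$ spanned by the projections $R \to R/\frka^n$ is left cofinal in $\langle K \rangle^{\cD}_{R/} = \cD_{\frka\textup{-tor}}(R)_{R/}$ by the same torsion-factorization argument as in the ``proof'' of Theorem \ref{out app 1}, using that $K$-thick-generation makes every $R \to L$ with $L \in \cD_{\frka\textup{-tor}}(R)$ factor through some $R/\frka^n$. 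Approach (a) is cleaner and I would pursue it first; the verification that the equivalence $\cD(\frka\textup{-tor}) \simeq \cD_{\frka\textup{-tor}}(R)$ identifies compact objects is the delicate point, since $\cD(\frka\textup{-tor})$ is the derived category of an abelian category that need not be Grothendieck, so one must argue compactness by hand rather than quote a standard theorem.
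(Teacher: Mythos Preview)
Your plan~(b) is close to the paper's approach, but both your routes share a genuine gap that the paper's Lemma~\ref{uuuuuuuu} is designed to fill: you repeatedly need $R/\frka^n$ (or $R/\frka$) to lie in the \emph{thick} subcategory $\langle K\rangle^{\cD}$, and this is neither assumed nor provable in the generality of Theorem~\ref{comp thm 2}.

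In approach~(a), the claim that $R/\frka^n$ is compact in $\cD_{\frka\textup{-tor}}(R)$ does not follow from ``finitely presented $\frka$-torsion module'': finite presentation is not assumed (nothing forces $\frka$ to be finitely generated), and even when it holds, finite presentation in an abelian category does not imply compactness in its derived category. So you cannot conclude $R/\frka^n\in\langle K\rangle^{\cD}$, and without this you cannot invoke Theorem~\ref{comp thm 1}. In approach~(b), the equation $\langle K\rangle^{\cD}_{R/}=\cD_{\frka\textup{-tor}}(R)_{R/}$ is false: $K$ being a compact generator means the \emph{localizing} hull $\overline{\langle K\rangle}^{\cD}$ equals $\cD_{\frka\textup{-tor}}(R)$, but the \emph{thick} hull $\langle K\rangle^{\cD}$ is only the subcategory of compacts, which is strictly smaller in general. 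Thus $\cI$ does not sit inside the index category $\cU=(\langle K\rangle^{\cC}_{R/})^{\circ}$ that Theorem~\ref{bic holim thm} hands you.

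The paper's fix is to introduce the larger index category $\overline{\cU}=(\overline{\langle K\rangle}^{\cC}_{R/})^{\circ}$ and prove (Lemma~\ref{uuuuuuuu}) that $\cU$ is homotopy left cofinal in $\overline{\cU}$; the argument writes any $\overline{L}\in\overline{\langle K\rangle}^{\cD}$ as $P\otimes_S\widetilde{K}$ for a semi-free module $P$ over $S=\RR\End_R(K)^{\textup{op}}$ and extracts a finite sub-complex containing a given cocycle. Once this is done, $\holim_{\cU}\G\simeq\holim_{\overline{\cU}}\overline{\G}$, the objects $\pi^n:R\to R/\frka^n$ lie in $\overline{\cU}$ for free (since $R/\frka^n\in\cD_{\frka\textup{-tor}}(R)=\overline{\langle K\rangle}^{\cD}$), and the cofinality of $\cI$ in $\overline{\cU}$ proceeds exactly as in the proof of Theorem~\ref{comp thm 1}. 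This bypasses any compactness claim about $R/\frka^n$.
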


This theorem is proved in the same way of the proof of 
Theorem \ref{comp thm 1} by the following Lemma \ref{uuuuuuuu}.  

Let $R$ be a ring. 
Let $K$ be an object of $\cC(R)$. 
We denote by $\overline{\langle K \rangle}^{\cD}$ the smallest localizing subcategory of $\cD(R)$ 
containing $K$. 
We denote by $\overline{\langle K \rangle}^{\cC}$ the pre-image of $\overline{\langle K \rangle}^{\cD}$ 
by the homotopy functor $\cC(R) \to \cD(R)$. 
We set $\cU :=(\langle K \rangle^{\cC}_{R/})^{\circ} $ and 
 $\overline{\cU} := \left(\overline{\langle K \rangle}^{\cC}_{R/} \right)^{\circ}$. 
Let  $\overline{\G} : \overline{\cU} \to  \cC(\cA)$ and  $\G: \cU \to \cC(\cA)$ 
be the co-domain functors. 
Then the restriction $\overline{\G}|_{\cU} $ to $\cU$ equals to $\G$.

\begin{lemma}\label{uuuuuuuu} 
Assume that 
$K$ is compact in $\overline{\langle K \rangle}^{\cD}$. 
Then the canonical map $\ct_{\overline{\cU}} S(R)\to S\circ \overline{\G} $ 
exhibits $S(R)$ as a homotopy limit  of $S \circ \overline{\G}$. 
\end{lemma}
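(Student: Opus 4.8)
The plan is to show that the simplicial inclusion $\cU\hookrightarrow\overline{\cU}$ is homotopy left cofinal and then combine Lemma~\ref{923 043} with Theorem~\ref{bic holim thm}. By Lemma~\ref{103 450} we may assume $S$ was constructed so that $\epsilon_R:R\to S(R)$ is an injective cofibration; Theorem~\ref{bic holim thm} (applied with $M=R$, equipping $\cC(\cA)$ with the injective model structure so that every object is cofibrant) then gives that $S(\kappa):\ct_{\cU}S(R)\to S\circ\G$ exhibits $S(R)$ as a homotopy limit of $S\circ\G$. Since $\overline{\G}|_{\cU}=\G$ and the tautological transformation $\bar\kappa:\ct_{\overline{\cU}}R\to\overline{\G}$, $k\mapsto k$, restricts along $\cU\hookrightarrow\overline{\cU}$ to $\kappa$, the map $S(\bar\kappa):\ct_{\overline{\cU}}S(R)\to S\circ\overline{\G}$ restricts to $S(\kappa)$; moreover $\cU$ and $\overline{\cU}$ are fibrant simplicial categories by Lemma~\ref{925 1053}.(1). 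Hence, once homotopy left cofinality is established, Lemma~\ref{923 043} shows that $S(\bar\kappa)$ exhibits $S(R)$ as a homotopy limit of $S\circ\overline{\G}$, which is the assertion.

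To prove $\sner(\cU)\to\sner(\overline{\cU})$ is left cofinal I will invoke Lemma~\ref{cofinal lem}, for which it suffices that (a) $\sner(\overline{\cU})$ is co-filtered and (b) every object $\bar k:R\rightarrowtail\bar K$ of $\overline{\cU}$ admits a morphism $k\to\bar k$ in $\overline{\cU}$ with $k\in\cU$. For (a) one argues dually to Lemma~\ref{107 225}: for finitely many $k_i:R\rightarrowtail K_i$ in $\overline{\cU}$, the object ${}^{t}(k_1,\dots,k_n):R\rightarrowtail\bigoplus_i K_i$ again lies in $\overline{\cU}$ (it is a termwise injection into an injectively fibrant module of $\overline{\langle K\rangle}^{\cC}$), and the projections give morphisms onto each $k_i$; the remaining coequalizer condition comes from the cocone construction via Lemma~\ref{106 1146} and Lemma~\ref{101 1250} — for parallel $f,g:k\to k'$ in $\overline{\cU}$ the object $\cocone(f-g)\twoheadrightarrow K$ receives a lift $R\to\cocone(f-g)$ of $k$ (because $(f-g)k=0$), belongs to $\overline{\cU}$, and its composites with $f$ and $g$ are homotopic.

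Step (b) is the heart of the argument and is where the compactness hypothesis is used. Since $\overline{\langle K\rangle}^{\cD}$ is by definition the localizing subcategory of $\cD(R)$ generated by $K$, and $K$ is assumed compact in it, $K$ is a compact generator; hence there is an equivalence $\overline{\langle K\rangle}^{\cD}\simeq\cD(\cB)$ with $\cB:=\RR\End_R(K)^{\textup{op}}$, carrying $K$ to the free module and therefore $\langle K\rangle^{\cD}$ to $\dgPerf\cB$. Applying Theorem~\ref{Perf hocolim thm} to $\cB$ and the object corresponding to $\bar K$, and transporting back along the equivalence, we get $\bar K\simeq\hocolim_{V}\Gamma'$ with $V=\sner(\cO)$ filtered (Lemma~\ref{107 225}) and $\Gamma'$ valued in $\langle K\rangle^{\cD}$. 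By Lemma~\ref{hom com lem},
\[
\Hom_{\cD(R)}(R,\bar K)=\tuH^{0}(\bar K)\cong\colim_{V}\tuH^{0}(\Gamma'(-))=\colim_{V}\Hom_{\cD(R)}(R,\Gamma'(-)),
\]
so the morphism $\bar k$ factors in $\cD(R)$ as $R\to\Gamma'(v)\to\bar K$ for some $v\in V$ with $\Gamma'(v)\in\langle K\rangle^{\cD}$.

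It remains to lift this factorization to $\overline{\cU}$. Pick a fibrant--cofibrant representative $K'\in\langle K\rangle^{\cC}$ of $\Gamma'(v)$, realize $\Gamma'(v)\to\bar K$ by a chain map $K'\to\bar K$ (possible as $\bar K$ is injectively fibrant) and $R\to\Gamma'(v)$ by a chain map $R\to K'$ (possible as $K'$ is fibrant), and factor the latter as an injective cofibration $k':R\rightarrowtail K''$ followed by a trivial fibration $K''\twoheadrightarrow K'$, so that $K''$ is fibrant--cofibrant, $K''\in\langle K\rangle^{\cC}$ and $k'\in\cU$. The composite $R\xrightarrow{k'}K''\to K'\to\bar K$ realizes the class of $\bar k$, giving a homotopy-commutative triangle over $R$, and Lemma~\ref{model lem} (in $\cC(\cA)$ with the injective model structure) upgrades it to an actual morphism $k'\to\bar k$ in $\overline{\cU}$, proving (b). The main obstacle is precisely Step (b): manufacturing, from a transcendental-looking map $\bar k$ into a possibly enormous object $\bar K$, a factorization through a compact (perfect) object — this is exactly where compactness, the generator theorem, and the filtered-homotopy-colimit presentation of Theorem~\ref{Perf hocolim thm} all enter.
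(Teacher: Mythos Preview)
Your proof is correct and follows essentially the same architecture as the paper's: reduce to showing that $\cU\hookrightarrow\overline{\cU}$ is homotopy left cofinal, establish co-filteredness of $\overline{\cU}$ dually to Lemma~\ref{107 225}, and then prove the key factorization (your step (b)) that every $\bar k:R\to\bar K$ with $\bar K\in\overline{\langle K\rangle}^{\cD}$ factors through some $L\in\langle K\rangle^{\cD}$.

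The only noteworthy difference is in how (b) is carried out. The paper also uses the equivalence $\overline{\langle K\rangle}^{\cD}\simeq\cD(\cB)$ with $\cB=\End_R(\widetilde K)^{\textup{op}}$, but then works concretely: it writes $\bar K\simeq P\otimes_{\cB}\widetilde K$ for a semi-free $\cB$-module $P$, so that $\bar K^{\#}$ is a (possibly infinite) direct sum of shifts of $\widetilde K$ with a one-sided twisted differential, and observes that any single cocycle of $\bar K$ already lives in a finite sub-complex $L\in\langle K\rangle$. You instead invoke Theorem~\ref{Perf hocolim thm} and Lemma~\ref{hom com lem} to argue abstractly that $\Hom_{\cD(R)}(R,\bar K)=\tuH^0(\bar K)$ is a filtered colimit of $\Hom_{\cD(R)}(R,\Gamma'(-))$ over perfects, forcing $\bar k$ to factor through some $\Gamma'(v)$. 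The two arguments are interchangeable: the paper's semi-free presentation is exactly a concrete realization of the filtered homotopy colimit you appeal to, and your use of $\tuH^0$ commuting with filtered homotopy colimits is the abstract version of ``a cocycle lives in a finite stage''. Your route has the virtue of recycling the machinery already developed in Section~\ref{AppA}; the paper's is slightly more self-contained at this point.
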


\begin{proof}
It is enough to prove that $\cU$ is a left homotopy co-final subcategory of $\overline{\cU}$. 
In the same way of the proof of Lemma \ref{107 225} 
we prove that 
the simplicial category  $\left(\overline{\langle K \rangle}^{\cC}_{R/} \right)^{\circ}$ is homotopy left filtered. 
Therefore 
by Lemma \ref{cofinal lem} and Lemma \ref{model lem},
 it is enough to show that 
for any morphism $\overline{\ell}: R \to \overline{L}$ in $\cD(R)$ 
with $\overline{L} \in \overline{\langle K \rangle}^{\cD}$ 
there exists a factorization $\overline{\ell} = \psi \circ \ell$ 
such that $\ell : R \to L$ is a morphism with $L \in \langle K \rangle^{\cD}$.

We take a projective cofibrant replacement $\widetilde{K} \xrightarrow{\sim} K$. 
We set $S: = \End_{R}^{\bullet}(\widetilde{K})^{\textup{op}}$. 
Then by \cite[Theorem 3.5]{PSY2} 
the adjoint pair 
\[
-\otimes_{S} \widetilde{K} : \cC(S) \rightleftarrows \cC(R) : {}_{R}(\widetilde{K}, -) 
\]
induces an equivalence between $\cD(S) $ and $\overline{\langle K \rangle}^{\cD}$. 
Therefore 
$\overline{L}$ is quasi-isomorphic to a dg $R$-module of form $P \otimes_{S} \widetilde{K}$ 
for some semi-free dg $S$-module $P$.  
Therefore we may assume that 
the underlying complex $\overline{L}^{\#}$ of $\overline{L}$ 
is a direct sum of a direct summand of modules of the form $\widetilde{K}[a], a\in \ZZ$ 
and the  differential $d_{\overline{L}}$ is given by one-sided twist.  
Now we can easily see that 
for any cocycle $\ell$ of $\overline{L}$ 
there exists a sub-complex $L$ of $\overline{L}$ such that 
$\ell\in L$ and $L \in \langle K \rangle$.    
\end{proof}

It is proved by  \cite[Corollary 3.31]{PSY1} 
that if a ring  $R$ is commutative and an ideal $\frka$ is weakly pro-regular, 
then the canonical functor $\cD(\frka\textup{-tor}) \to \cD_{\frka\textup{-tor}}(R)$  
gives an equivalence. 
Therefore 
as a corollary of Theorem \ref{comp thm 2}, we reprove \cite[Theorem 4.2]{PSY2}.

\begin{corollary}\label{comp cor 2}
Let $R$ be a commutative ring and $\frka$ a weakly pro-regular ideal. 
Let $K$ be a compact generator of $\cD_{\frka\textup{-tor}}(R)$. 
Then we have the following quasi-isomorphism of dg-algebras under $R$. 
\[
\BBic_{R}(K) \simeq \widehat{R}.
\] 
\end{corollary}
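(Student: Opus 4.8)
The plan is simply to combine Theorem \ref{comp thm 2} with the weak proregularity criterion of Porta--Shaul--Yekutieli; there is no new argument to run. First I would recall that, by \cite[Corollary 3.31]{PSY1}, for a commutative ring $R$ and a weakly proregular ideal $\frka$ the canonical functor $\cD(\frka\textup{-tor}) \to \cD_{\frka\textup{-tor}}(R)$ is an equivalence. This is precisely the standing hypothesis of Theorem \ref{comp thm 2}. The other hypothesis, the existence of a compact generator $K$ of $\cD_{\frka\textup{-tor}}(R)$, is part of the statement of the corollary. (In the weakly proregular case one may take, for instance, the Koszul complex on a finite set of elements generating $\frka$ up to radical, but the argument needs no explicit choice.)

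With both hypotheses verified, Theorem \ref{comp thm 2} applies verbatim and yields an isomorphism $\BBic_{R}(K) \cong \widehat{R}$ in $\Ho(\dgCat)$ fitting into the commutative triangle formed by $\iota \colon R \to \BBic_{R}(K)$ and the completion morphism $\textup{\textbf{comp}} \colon R \to \widehat{R}$. Unwinding what an isomorphism under $R$ in $\Ho(\dgCat)$ means -- a zig-zag of quasi-equivalences of dg-categories, equivalently (since $\dgCat$ carries Tabuada's model structure) a single quasi-equivalence between suitable cofibrant--fibrant models, compatible with the structure morphisms out of $R$ -- this is exactly a quasi-isomorphism of dg-algebras under $R$, which is the assertion to be proved. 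Since $\widehat{R}$ is an ordinary ring concentrated in degree $0$, the conclusion says in particular that the a priori higher dg-algebra $\BBic_{R}(K)$ is formal and computed by the $\frka$-adic completion.

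I do not expect any genuine obstacle: the substance is already contained in Theorem \ref{comp thm 2} (which rests on Theorem \ref{comp thm 1}, Lemma \ref{uuuuuuuu}, and the main theorem \ref{bic holim thm}), and the only external input is \cite[Corollary 3.31]{PSY1}. The one bookkeeping point to keep honest is that the equivalence $\cD(\frka\textup{-tor}) \simeq \cD_{\frka\textup{-tor}}(R)$ used inside Theorem \ref{comp thm 2} is the one furnished by weak proregularity, and that a compact generator $K$ of $\cD_{\frka\textup{-tor}}(R)$ is automatically compact in $\overline{\langle K \rangle}^{\cD} = \cD_{\frka\textup{-tor}}(R)$, so that Lemma \ref{uuuuuuuu}, invoked in the proof of Theorem \ref{comp thm 2}, is indeed applicable to this $K$.
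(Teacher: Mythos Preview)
Your proposal is correct and follows exactly the paper's approach: the paper deduces the corollary from Theorem~\ref{comp thm 2} by invoking \cite[Corollary~3.31]{PSY1} to supply the equivalence $\cD(\frka\textup{-tor}) \simeq \cD_{\frka\textup{-tor}}(R)$, which is precisely what you do. Your additional remarks about the compactness of $K$ in $\overline{\langle K \rangle}^{\cD}$ and the interpretation of the isomorphism in $\Ho(\dgCat)$ are accurate elaborations but not needed beyond what the paper already packages into Theorem~\ref{comp thm 2}.
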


The condition that 
 the canonical functor 
$ \cD(\frka\textup{-tor}) \to \cD_{\frka\textup{-tor}} (R) $ gives an equivalence
is satisfied 
if the subcategory $\frka\textup{-tor}$ is closed under taking injective hull. 
This condition is satisfied if a right ideal $\frka$ has the Artin-Rees property.

Let  $R$ be a  Noetherian algebra. 
Namely  the center $Z=\textup{Cen}(R)$  is a Noetherian ring and $R$ is a finitely generated as  $Z$-module. 
If  a two-sided ideal $\frka$ is of form $\tilde{\frka}R$ for some ideal $\tilde{\frka}$ of $Z$, 
then  it has the  Artin-Rees property.
Let $\{a_1 , \dots, a_n\}$ be a generator of the ideal  $\tilde{\frka}$ of $Z$. 
It is also a generator of the ideal $\frka$ of $R$. 
Then 
we can prove that 
the Koszul complex $K=K(R;a_1,\dots,a_n)$ is a compact generator of $\cD_{\frka{\textup{-tor}}}(R)$  
by the same method of the proof of \cite[Proposition 6.1]{BN}.
Therefore we obtain the following corollary. 

\begin{corollary}
Let $R$, $\frka$ and $K$ be as above. 
Then we have the following quasi-isomorphism of dg-algebras under $R$. 
\[
\BBic_{R}(K) \simeq \widehat{R}.
\] 
\end{corollary}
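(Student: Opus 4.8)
The plan is to deduce this corollary directly from Theorem \ref{comp thm 2} by checking its two hypotheses for the present data. First one must see that the canonical functor $\cD(\frka\textup{-tor}) \to \cD_{\frka\textup{-tor}}(R)$ is an equivalence. Since $\tilde{\frka}$ is an ideal of the Noetherian center $Z$ and $R$ is finitely generated over $Z$, every $R$-submodule of a finitely generated $R$-module is a finitely generated $Z$-module, so the classical Artin--Rees lemma over $Z$ shows that the two-sided ideal $\frka = \tilde{\frka}R$ has the Artin--Rees property. As recalled just above the statement, the Artin--Rees property implies that $\frka\textup{-tor} \subset \Mod R$ is closed under taking injective hulls, and this in turn gives the desired equivalence $\cD(\frka\textup{-tor}) \simeq \cD_{\frka\textup{-tor}}(R)$.

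The second, and main, point is to verify that the Koszul complex $K = K(R;a_1,\dots,a_n)$ is a compact generator of $\cD_{\frka\textup{-tor}}(R)$. Compactness is immediate: $K$ is a bounded complex of finitely generated free $R$-modules, hence perfect and therefore compact in $\cD(R)$, and moreover each cohomology $\tuH^i(K)$ is annihilated by $\frka$, so $K$ lies in $\cD_{\frka\textup{-tor}}(R)$ and is a fortiori compact there. For generation one must show that an object $M \in \cD_{\frka\textup{-tor}}(R)$ with $\RHom_R(K,M) = 0$ is zero. Here I would follow the argument of \cite[Proposition 6.1]{BN}: since the central elements $a_1,\dots,a_n$ generate $\frka$, the Koszul complex on them detects $\frka$-power torsion, so finite iterated cones of shifts of $K$ realise $R/\frka$ and hence every cyclic $\frka$-torsion $R$-module, and a standard homotopy-colimit argument then propagates the vanishing of $\RHom_R(-,M)$ from $K$ to all of $\cD_{\frka\textup{-tor}}(R)$, forcing $M = 0$. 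The only thing to check beyond \cite{BN} is that the construction goes through unchanged when $R$ is merely module-finite over its center, which is clear because all of $a_1,\dots,a_n$ are central.

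Granting these two verifications, Theorem \ref{comp thm 2} applies to $R$, $\frka$ and $K$ and yields an isomorphism $\BBic_R(K) \cong \widehat{R}$ in $\Ho(\dgCat)$ compatible with the completion morphism $R \to \widehat{R}$, which is precisely the claimed quasi-isomorphism of dg-algebras under $R$. The only real obstacle is the second step --- producing a clean generation statement for $\cD_{\frka\textup{-tor}}(R)$ in the non-commutative (though module-finite over its center) setting --- but since $a_1,\dots,a_n$ are central this is a routine transcription of the argument in \cite{BN} and involves no new ideas.
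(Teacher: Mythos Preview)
Your proposal is correct and follows essentially the same approach as the paper: the corollary is deduced directly from Theorem~\ref{comp thm 2} after verifying (via the Artin--Rees property for $\frka = \tilde{\frka}R$) that $\cD(\frka\textup{-tor}) \to \cD_{\frka\textup{-tor}}(R)$ is an equivalence, and (via the method of \cite[Proposition~6.1]{BN}, using that $a_1,\dots,a_n$ are central) that the Koszul complex $K$ is a compact generator of $\cD_{\frka\textup{-tor}}(R)$. The paper records these verifications in the paragraph immediately preceding the corollary rather than in a separate proof, but the content is the same.
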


\subsection{A proof of Theorem \ref{comp thm 1}. }\label{926 125}

We set  $\cI: =(\ZZ_{\geq  1})^{\textup{op}}$.  
Let  $\tilde{\varpi}^{n}: R \to R/\frka^n$ be the canonical projection for $n \geq 1$ 
and $\psi^{m,n}: R/\frka^m \to R/\frka^n$ the canonical projection for $m \geq n$. 
We take a cofibrant replacement $\rho: R' \xrightarrow{\sim} R$ of $R$. 
We denote by $\Phi: \cI \to \dga$ the functor 
which sends $n\geq 1$ to $R/\frka^n$ and $m\geq n$ to the canonical projection $R/\frka^m \to R/\frka^n$. 
The family of maps $\tilde{\varpi}^n \circ \rho:R' \to R/\frka^n$ induces the morphism 
$ \ct_{\cI}(R') \to \Phi$ in the functor category $\dga^{\cI}$. 
We decompose this morphism into  a cofibration $\varpi^{\bullet}: \ct_{\cI} R' \rightarrowtail \Phi'  $ 
followed by a trivial fibration $\Phi' \stackrel{\sim}{\twoheadrightarrow} \Phi$.  
Note that since the canonical projection $R/\frka^{n+1} \to R/\frka^n$ is fibration, 
we may assume that $\Phi'(\textbf{geq}^{m,n})$ is also fibration. 
By abusing notation 
we denote by the same symbol $\Phi'$ the functor $\cI \to \cC(R')$ which sends $n$ to $\Phi'_{n}$. 
We take a fibrant replacement  $\lambda: \Phi' \stackrel{\sim}{\twoheadrightarrow} \Phi"$ 
in the functor category $\cC(R')^{\cI}$ with the injective model structure where 
the category $\cC(R')$ is equipped with the injective model structure. 
We denote by $\pi^{\bullet}: \ct_{\cI}(R') \to \Phi"$ 
the composition $\ct_{\cI}(R') \to \Phi'$ with $\Phi' \to \Phi"$.  
We denote by $\Phi"_{n}$ the image of $n \in \cI$ by $\Phi"$.  
\[
\begin{xymatrix}{
R \ar@{->>}[d]_{\tilde{\varpi}^n} & 
*++{R'}\ar@{>->}[d]_{\varpi^n} \ar@{>->}@/^2pc/[dd]^{\pi^n} \ar@{->>}[l]_{\sim}
 \\
R/\frka^n & *++{\Phi'_n} \ar@{->>}[l]_{\sim}  \ar@{>->}[d]^{\wr} 
\\
& \Phi"_n 
}\end{xymatrix}\]

 We set $\cU$ to be $(\langle \Phi"_{1} \rangle^{\cC}_{R'/})^{\circ}$. 
We denote by $\iota: \cI \to \cU$ the functor which sends $n$ to $\pi^n: R' \to \Phi"_{n}$. 
We denote by $\tilde{\iota}: \cI \to \langle \Phi"_{1} \rangle^{\cC}_{R'/}$ the functor which sends $n$ to $\varpi^n: R' \to \Phi'_{n}$. 
By abuse of notations 
we denote by $\Gamma$ the co-domain functors $\cU \to \cC(R')$ and $\langle \Phi'_{1} \rangle_{R'/}^{\cC}$. 
We denote by $\kappa: \ct(R') \to \Gamma$ the canonical morphism. 
Then we have 
isomorphisms 
$\Gamma\circ \tilde{\iota} \cong \Phi'$ and $\Gamma \circ \iota \cong \Phi"$. 
We can check that under these isomorphisms we have $\kappa_{\tilde{\iota}} \cong \varpi^{\bullet}$ 
and $\kappa_{\iota} \cong \pi^{\bullet}$. 
\begin{equation}\label{101 1103}
\begin{xymatrix}{
\ct_{\cI}(R') \ar[d]_{\varpi^{\bullet}} \ar@/_2pc/[dd]_{\pi^{\bullet}} \ar@{=}[r] &
\ct_{\cI}(R') \ar[d]^{\kappa_{\tilde{\iota}}} \ar@/^2pc/[dd]^{\kappa_\iota } \\
\Phi' \ar[d] \ar@{=}[r]^{\sim}  & \Gamma \circ \tilde{\iota} \ar[d] \\
\Phi" \ar@{=}[r]^{\sim} & \G \circ \iota  
}\end{xymatrix}\end{equation}

We construct a bi-dual $S$ functor over $\Phi"_1$ by using Construction \ref{103 444}. 
Since there exists a injective cofibration $\pi^1: R' \cofright \Phi"_1$, 
we may assume that the evaluation map $\epsilon_{R'} : R' \to S(R')$ is an injective cofibration. 
Hence by Theorem \ref{bic holim thm} 
the canonical morphism  $S(\kappa) : \ct_\cU  S(R') \to S \circ\G$ exhibits $S(R')$ as a homotopy limit of $R'$. 
 
We have the commutative diagram 
\begin{equation}\label{comp diagram}
\begin{xymatrix}{
\ct_{\cI}(R') \ar[r] \ar[d]_{\kappa_{\tilde{\iota}}} \ar@/_2pc/[dd]_{\kappa_{\iota}} & 
\ct_{\cI}(S(R')) \ar[d]^{S(\kappa_{\tilde{\iota}})} \ar@/^3pc/[dd]^{S(\kappa_{\iota})}\\ 
\Gamma \circ \tilde{\iota} \ar[d] \ar[r] & S\circ \Gamma \circ \tilde{\iota} \ar[d] \\
\Gamma \circ \iota \ar[r] & S \circ \Gamma \circ \iota 
}\end{xymatrix}\end{equation}
where the horizontal morphisms are induced by the unit map $\epsilon: 1 \to S$. 
The key of the proof is the following proposition. 

\begin{proposition}\label{comp key prop}
The functor $\iota : \cI \to \cU$ is homotopy left cofinal. 
\end{proposition}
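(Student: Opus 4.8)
The claim is that the functor $\iota : \cI \to \cU$ sending $n$ to $\pi^n : R' \to \Phi''_n$ is homotopy left cofinal, where $\cU = (\langle \Phi''_1\rangle^{\cC}_{R'/})^{\circ}$. By Theorem \ref{cofinal thm} (applied to the simplicial nerve) it suffices to show that for every object $k : R' \to K$ of $\cU$ the over-category $\sner(\cI)_{/k}$, or more precisely the homotopy fibre product $\sner(\cI) \times_{\sner(\cU)} \sner(\cU)_{/k}$, is weakly contractible. First I would record, by the same argument as in Lemma \ref{107 225}, that the simplicial category $\langle \Phi''_1 \rangle^{\cC}_{R'/}$ (hence its fibrant--cofibrant part $\cU$) is homotopy left filtered: one checks the two finiteness conditions using Lemma \ref{106 1146} and the construction of cones. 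Given filteredness, Lemma \ref{cofinal lem} reduces the problem to a single \emph{existence} statement at the level of the derived category: for every morphism $\overline{\ell} : R \to \overline{L}$ in $\cD(R)$ with $\overline{L} \in \langle R/\frka \rangle^{\cD}$, there is a factorization $\overline{\ell} = \psi \circ \overline{\ell}'$ with $\overline{\ell}' : R \to R/\frka^n$ for some $n$ (i.e. $\overline{\ell}'$ in the image of $\iota$ up to the replacements $\Phi''_n \simeq R/\frka^n$). Lemma \ref{model lem} then lets one lift such a derived-category factorization to an honest morphism in $\cU_{/k}$, and the canonicity of the factorization gives connectedness, exactly as in the ``Proof'' of Theorem \ref{out app 1} in the introduction.

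\textbf{The core step.} So the heart of the matter is: why does every map $R \to \overline{L}$ with $\overline{L}$ in the thick subcategory generated by $R/\frka$ factor through some $R/\frka^n$? Here I would use the hypothesis that the canonical functor $\cD(\frka\textup{-tor}) \to \cD_{\frka\textup{-tor}}(R)$ is an equivalence. Since $R/\frka$ generates a thick subcategory consisting of complexes with bounded, finitely generated $\frka$-torsion cohomology, any such $\overline{L}$ lies in $\cD_{\frka\textup{-tor}}(R)$, hence (by the assumed equivalence) is quasi-isomorphic to a complex $L'$ \emph{each term of which} is an $\frka$-torsion module. A chain map $R \to L'$ is determined by the image of $1 \in R$, a cocycle $x$ in degree $0$; since $x$ lies in an $\frka$-torsion module, $x \frka^n = 0$ for some $n$, so the map $R \to L'$ kills $\frka^n$ and factors (canonically) through $R \twoheadrightarrow R/\frka^n$. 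This is precisely the argument sketched in the introduction, and also appears in the proof of Lemma \ref{uuuuuuuu}; I would cite those. One subtlety I would address carefully: I must match up the factorization through $R/\frka^n$ with the factorization through the \emph{cofibrant--fibrant replacement} object $\pi^n : R' \to \Phi''_n$ — this requires tracing through the replacements $R' \xrightarrow{\sim} R$, $\Phi'_n \xrightarrow{\sim} R/\frka^n$, $\Phi'_n \trivcofright \Phi''_n$ constructed just before the proposition, and invoking Lemma \ref{model lem} (and Lemma \ref{925 1053}) to realize the homotopy-coherent factorization inside $\cU$.

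\textbf{Expected obstacle.} The conceptual content — the torsion factorization — is genuinely easy once the equivalence $\cD(\frka\textup{-tor}) \simeq \cD_{\frka\textup{-tor}}(R)$ is granted. The work will be bookkeeping: (i) verifying homotopy left filteredness of $\langle \Phi''_1\rangle^{\cC}_{R'/}$, which means re-running Lemma \ref{107 225}'s argument in this under-category setting; and (ii) the passage between the honest (strict) factorizations in $\cC(R)$ and the fibrant--cofibrant world of $\cU$, i.e. checking that $\cI_{/k}$ (suitably derived) is nonempty \emph{and connected} — connectedness coming from the canonicity of $R \twoheadrightarrow R/\frka^n \hookrightarrow$ (any bigger object), since $R/\frka^m \twoheadrightarrow R/\frka^n$ for $m \geq n$ makes the relevant diagram of factorizations cofiltered. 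I expect the connectedness/contractibility verification, rather than nonemptiness, to be where most care is needed, because one has to argue that any two factorizations through members of $\cI$ are joined by a zig-zag inside $\cI_{/k}$, using that $\frka^m \subseteq \frka^n$.
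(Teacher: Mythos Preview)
Your torsion-factorization argument is exactly right and matches the paper's Lemma~\ref{comp lem 2}: since $\cD(\frka\textup{-tor}) \simeq \cD_{\frka\textup{-tor}}(R)$, any $k:R'\to K$ with $K\in\langle R/\frka\rangle$ factors through some $R/\frka^n$, hence through $\pi^n$ after the replacements; Lemma~\ref{model lem} lifts this to $\cU$. You also correctly anticipate that $\sner(\cU)$ is homotopy co-filtered, by the argument of Lemma~\ref{107 225}.

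The gap is your use of Lemma~\ref{cofinal lem}. That lemma applies only when $\cD\subset\cC$ is a \emph{full} sub $\infty$-category, and $\iota:\cI\to\cU$ is not full: $\cI=(\ZZ_{\geq 1})^{\textup{op}}$ is an ordinary poset with a single arrow $m\to n$ for each $m\geq n$, whereas $\Map_{\cU}(\pi^m,\pi^n)$ is a genuine space. So your existence statement establishes only that the full subcategory $V\subset\sner(\cU)$ spanned by the $\pi^n$ is left cofinal; it says nothing about $\sner(\cI)\to V$. Your ``canonicity'' remark, and the observation that $\frka^m\subset\frka^n$, control $\pi_0$ of the relevant mapping spaces but not their higher homotopy. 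The paper therefore factors $\sner(\iota)=\xi\circ\eta$ with $\eta:\sner(\cI)\to V$ and $\xi:V\hookrightarrow\sner(\cU)$, and proves $\eta$ left cofinal separately (Lemma~\ref{comp lem 1}). That argument is a genuine computation: from the fibration $\Map_{R'}(\Phi''_m,\Phi''_n)\to\Map_{R'}(R',\Phi''_n)$ one gets a long exact sequence of homotopy groups, and the vanishing $\Ext_R^{-i-1}(\frka^m,R/\frka^n)=0$ for $i\geq 0$ forces all $\pi_i\Map_{\cU}(\pi^m,\pi^n)$ to vanish, so $\Map_{\cU}(\pi^m,\pi^n)$ is contractible and $\eta_{/n}$ is weakly contractible. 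Without this step your proof is incomplete.
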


Before giving a proof of the proposition, 
we complete the proof of Theorem \ref{comp thm 1} 
with assuming Proposition \ref{comp key prop}. 

We take the limits of the above diagram (\ref{comp diagram}) 
\[
\begin{xymatrix}{
 & S(R') \ar@/^3.5pc/[dd]^{\simeq^{(1)}} \ar@{-->}[d]\\
\lim\Gamma \circ \tilde{\iota} \ar[d]_{\simeq^{(2)}} \ar@{-->}[r]   & 
\lim S\circ \Gamma \circ \tilde{\iota} \ar[d]^{\simeq^{(2)}}  \\
\lim\Gamma \circ \iota \ar[r]^-{\simeq^{(2)}} & \lim S\circ \Gamma \circ \iota  
}\end{xymatrix}
\]
By Proposition \ref{comp key prop} we check the quasi-isomorphism $(1)$ in the above diagram. 
By construction, for $n \geq 1$ 
the morphisms $\G\circ \iota(\textbf{geq}^{n+1,n}) = \Phi"(\textbf{geq}^{n+1,n})$ 
and $\G\circ \tilde{\iota}(\textbf{geq}^{n+1,n}) = \Phi'(\textbf{geq}^{n+1,n})$ 
is projective fibration. 
By Lemma \ref{main mis lemma 1}.(3) 
the same things hold for $S\circ \G \circ\iota $ and $S \circ \G \circ \tilde{\iota}$. 
Therefore by Lemma \ref{921 514} limits of those functors compute homotopy limits. 
Since homotopy limit is unique up to weak equivalence (Lemma \ref{923 017}), 
we check the quasi-isomorphisms $(2)$ in the above diagram. 
Consequently we see that 
the dotted arrows in the above diagram are quasi isomorphisms. 
Therefore the following morphisms are quasi-isomorphisms of dg $R'$-modules  
\begin{equation}\label{1016 158}
\lim_{n\to \infty }\Phi'_{n} \xrightarrow{\epsilon \, \sim } 
\lim_{n \to \infty}  S(\Phi'_{n}) \xleftarrow{S(\kappa) \,\sim} S(R'). 
\end{equation} 
Note that 
since 
the quasi-isomorphisms $\Phi'_{n} \xrightarrow{\sim} R/\frka^n$ are taken as  morphisms of dg-algebras, 
 therefore the morphism $\lim_{n\to \infty} \Phi'_{n} \to \lim_{n\to \infty} R/\frka^n = \widehat{R}$ 
 is also quasi-isomorphisms  of dg-algebras. 
Note also that 
the middle term $\lim_{n \to \infty} S(\Phi'_n)$ in the above diagram 
has no obvious dg-algebra structure. 

We denote by $\cB$ the bi-commutator $\dgBic_{R'}(\Phi"_1)= S(R')$. 
Observe that the middle term $W := \lim_{n \to \infty} S(\Phi'_n)$ has 
a left $\widehat{R'}:= \lim_{n \to \infty }\Phi'_n$-module structure.  
Indeed 
the left multiplication  $\lambda_r : \Phi'_n \to \Phi'_n$ 
of a element $r$ of  the dg-algebra $\Phi'_n$ 
is a homomorphism of $R'$-modules. 
Therefore $S(\Phi'_n)$ canonically has a structure of a left $\Phi'_n$-module, 
which is compatible with the projective systems. 
Hence $W$ has a left $\widehat{R'}$-module structure. 
On the other hands, 
$S(\Phi'_n)$ has a structure of a right  $\cB$-module structure 
which is  compatible with the projective system. 
Hence $W$ has a right $\cB$-module structure. 

Taking cohomology groups of diagram (\ref{1016 158}), 
we obtain isomorphism of graded modules 
\[
\widehat{R} \xrightarrow{ \alpha \, \cong } \cohm(W) \xleftarrow{ \cong \,\,\beta} \cohm(\cB).
\]
We can easily check that 
these morphisms satisfy the condition of the following lemma. 
Hence the map $\beta^{-1} \circ \alpha: \widehat{R} \to \cohm(\cB)$ 
is an algebra isomorphism.  
Moreover it is easy to see that 
this is compatible with the canonical homomorphisms 
$\textbf{comp}: R \to \widehat{R}$ and $\cohm(\iota): R \to \cohm(\cB)$. 

\begin{lemma}\label{1016 227} 
Let $A$ and $B$ be rings and $U$ a 
$A^{\textup{op}}\otimes B$-module. 
(i.e.  
a abelian group 
which has both 
left $A$-module structure and  a right $B$-module structure such that $(am)b= a(mb)$ 
for $a \in A $, $m \in M$ and $b\in B$.)  
Assume that a left $A$-isomorphism $\alpha: A \to U $ and a right $B$-isomorphism 
$\beta: B \to U$ are given. 
If $\alpha(1_A) = \beta(1_B)$, then the map $\beta^{-1} \circ \alpha  :A \to B$ is an algebra isomorphism. 
\end{lemma}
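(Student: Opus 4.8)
\textbf{Proof plan for Lemma \ref{1016 227}.}
The plan is to define $\phi := \beta^{-1}\circ\alpha : A \to B$ and verify directly that it is a ring homomorphism; since $\alpha$ and $\beta$ are bijections, $\phi$ is automatically bijective, so the only content is multiplicativity and unitality. First I would record the one hypothesis I will use, namely $\alpha(1_A)=\beta(1_B)$; call this common element $u_0 \in U$. Unitality of $\phi$ is immediate: $\phi(1_A)=\beta^{-1}(\alpha(1_A))=\beta^{-1}(u_0)=\beta^{-1}(\beta(1_B))=1_B$.

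The key step is to exploit the bimodule compatibility $(am)b=a(mb)$ together with the fact that $\alpha$ is left $A$-linear and $\beta$ is right $B$-linear, applied to the single generating element $u_0$. For $a\in A$ and $b\in B$ I would compute $a\cdot u_0\cdot b$ in two ways. On one hand, using left $A$-linearity of $\alpha$, $a\cdot u_0 = a\cdot\alpha(1_A)=\alpha(a\cdot 1_A)=\alpha(a)$; on the other hand, using right $B$-linearity of $\beta$, $u_0\cdot b=\beta(1_B)\cdot b=\beta(1_B\cdot b)=\beta(b)$. Now take any $a\in A$ and set $b:=\phi(a)\in B$, so $\beta(b)=\alpha(a)$, i.e.\ $u_0\cdot\phi(a)=\alpha(a)=a\cdot u_0$. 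Thus for all $a,a'\in A$ we get
\[
\alpha(aa') \;=\; (aa')\cdot u_0 \;=\; a\cdot(a'\cdot u_0)\;=\;a\cdot\big(u_0\cdot\phi(a')\big)\;=\;(a\cdot u_0)\cdot\phi(a')\;=\;\big(u_0\cdot\phi(a)\big)\cdot\phi(a')\;=\;u_0\cdot\big(\phi(a)\phi(a')\big)\;=\;\beta\big(\phi(a)\phi(a')\big),
\]
where the middle equality $(a\cdot u_0)\cdot\phi(a')=a\cdot(u_0\cdot\phi(a'))$ is precisely the bimodule associativity axiom. Applying $\beta^{-1}$ gives $\phi(aa')=\phi(a)\phi(a')$.

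Hence $\phi$ is a bijective ring homomorphism, i.e.\ an algebra isomorphism, which is the claim. There is essentially no obstacle here: the statement is a formal manipulation, and the only subtlety worth stating explicitly is that all the identities above are identities of elements of $U$ (not of $A$ or $B$ separately), transported back to $B$ only at the very end via the bijection $\beta$; the hypothesis $\alpha(1_A)=\beta(1_B)$ is exactly what glues the left-module and right-module pictures together through the common element $u_0$.
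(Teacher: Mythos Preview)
Your proof is correct and is precisely the straightforward calculation the paper has in mind; the paper itself does not give a proof but simply states ``A proof is a straightforward calculation and left to the readers.'' Your argument fills this in completely.
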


A proof is a straightforward calculation and left to the readers. 

Let $\tau^{\leq 0}: \dgCat \to \dgCat$ be the smart truncation functor.  
Then we have canonical natural trans formations 
 $\nu: \tau^{\leq 0} \to 1_{\dgCat}$ and $\nu': \tau^{\leq 0} \to \tuH^0(-)$. 
 Observe that $\nu$ and $\nu'$ are quasi-isomorphisms for dg-categories 
such that the cohomology groups of all Hom-complex  are concentrated in degree $0$.  
By the above isomorphism we see that  the cohomology of $\BBic_{R}(R/\frka) \simeq \cB$ 
is concentrated in degree $0$. 
Since the dg-algebra $R'$ is quasi-isomorphic to the ring $R$, 
the cohomology group of $R'$ is also concentrated in degree $0$. 
Therefore by the above natural transformations 
the canonical morphism $\iota: R' \to \BBic_R(R/\frka)$ corresponds to 
$\tuH^0(\iota) : R \to \tuH^{0}(\BBic_R(R/\frka))$. 
Hence 
we obtain  the following desired commutative diagram 
in the homotopy category $\Ho(\dgCat)$ 
\[
\begin{xymatrix}{ 
& R \ar[dl]_{\textbf{comp}}  \ar[d]_{\tuH^0(\iota)}  \ar[dr]^{\iota} & \\
 \widehat{R} \ar@{=}[r]^-{\sim}  & 
\tuH^0(\BBic_{R}(R/\frka )) & 
\BBic_{R}(R/\frka) \ar@{=}[l]_{\sim}. 
}\end{xymatrix}
\]

In the rest of this section we devote to prove Proposition \ref{comp key prop}. 
Namely we prove that the $\infty$-functor $\sner(\iota): \sner(\cI) \to \sner(\cU)$ is left cofinal. 

We denote by $V$ the full sub $\infty$ category of $\sner(\cU)$ 
consisting of $\pi^{n}$. 
Then we have $\sner(\iota)= \xi \circ \eta$
where  $\eta: \sner(\cI) \to V$ and $\xi : V\to \sner(\cU)$ 
be the canonical inclusions. 
By \cite[Proposition 4.1.1.3]{HTT} a composition of left cofinal functors is left cofinal.
Therefore  it is enough to prove that $\xi$ and $\eta$ are left cofinal. 
We prove these in Lemma \ref{comp lem 1} and Lemma \ref{comp lem 2}.

\begin{lemma}\label{comp lem 1}
The functor $\eta$ is left cofinal. 
\end{lemma}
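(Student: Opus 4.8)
The functor $\eta\colon \sner(\cI) \to V$ where $V\subset \sner(\cU)$ is the full sub-$\infty$-category on the objects $\pi^n\colon R'\to\Phi"_n$ for $n\geq 1$. The plan is to invoke Theorem \ref{cofinal thm}: it suffices to show that for each object $\pi^n$ of $V$, the simplicial set $\eta_{/\pi^n} = \sner(\cI)\times_{V} V_{/\pi^n}$ is weakly contractible. Since $\eta$ is the nerve of an ordinary functor between (the nerves of) small categories — $\cI = (\ZZ_{\geq 1})^{\textup{op}}$ is an ordinary poset, and $V$ has for its morphisms the homotopy classes of maps $\pi^m\to\pi^n$ in $\cU$ — the key point is to understand the mapping spaces $\Map_V(\pi^m,\pi^n)$, equivalently $\Hom_{\textup{h}\cU}(\pi^m,\pi^n) = \pi_0\Map_{\cU}(\pi^m,\pi^n)$ and the higher homotopy.

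**The main steps.**
First I would record that, for $m\geq n$, the canonical projection $\varphi^{m,n}\colon R/\frka^m\to R/\frka^n$ lifts (via the fibrant replacements $\Phi'$, $\Phi"$) to a morphism $\pi^m\to\pi^n$ in $\cU$, so $\eta$ is essentially surjective onto $V$ and the poset structure of $\cI$ maps into $V$. Second, and this is the crux, I would compute the mapping complex $\Map_{\cU}(\pi^m,\pi^n)$. By the pull-back defining mapping complexes in an under-category (diagram (\ref{925 1116})), $\Map_{\cU}(\pi^m,\pi^n)$ sits in a homotopy fibre sequence over $\Map_{\cC(\cA)}(R',\Phi"_n)$, with fibre $\Map_{\cC(\cA)}(\Phi"_m,\Phi"_n)$ over the point $\pi^n$. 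The essential observation is that $\Phi"_n$ is (quasi-isomorphic to) the ordinary ring $R/\frka^n$ concentrated in degree $0$, and $R\to R/\frka^n\to R/\frka^m$... more precisely, a morphism $R'\to\Phi"_n$ in the homotopy category is the same as an element of $\tuH^0(\Phi"_n(R')) = R/\frka^n$, and $\pi^n$ corresponds to $\mathbf{1}$; meanwhile morphisms $\Phi"_m\to\Phi"_n$ restricting to this one along $\pi^m$ correspond to ring-like maps under $R'$. The upshot I expect: $\pi_0\Map_{\cU}(\pi^m,\pi^n)$ is a singleton when $m\geq n$ and empty when $m<n$, and all higher homotopy groups of $\Map_{\cU}(\pi^m,\pi^n)$ vanish, because $R/\frka^n$ is discrete and the relevant $\Ext$-groups $\tuH^{<0}$ controlling the homotopy of these mapping complexes vanish (the modules are honest rings in degree zero). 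Thus $V$ is equivalent to the nerve of the poset $(\ZZ_{\geq 1})^{\textup{op}}$ itself, and $\eta$ is a (homotopy) equivalence — in particular left cofinal, since for each $\pi^n$ the comma category $\cI_{/n} = \{m\geq n\}$ is a nonempty filtered (hence contractible) poset, and this property is preserved under the equivalence $\eta$.

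**Carrying it out / main obstacle.**
Concretely I would argue: (i) show $\eta$ is fully faithful on homotopy categories by the mapping-space computation above, using Lemma \ref{925 1053}(3) to identify $\pi_0$ of the mapping complex with homotopy classes and the explicit form of $\tilde{N}(\kk[\Delta^\bullet])$ to see there is no room for higher homotopy; (ii) show $\eta$ is essentially surjective by construction of $V$; hence $\eta$ is a Dwyer–Kan equivalence, so $\sner(\eta)$ is an equivalence of $\infty$-categories, which is left cofinal by \cite[Proposition 4.1.1.3]{HTT} (or directly: an equivalence is cofinal). Alternatively, and perhaps cleaner, I would apply Theorem \ref{cofinal thm} directly: for fixed $\pi^n$, exhibit $\eta_{/\pi^n}$ as (weakly equivalent to) the nerve of the filtered poset $\{m\in\ZZ_{\geq 1} : m\geq n\}$, which is weakly contractible because it is directed. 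The main obstacle is the mapping-space computation in step (ii)/(i): one must be careful that although $\Phi"_n$ is only quasi-isomorphic to the ring $R/\frka^n$, the under-category mapping complex $\Map_{\cU}(\pi^m,\pi^n)$ really does see only $\tuH^0$, which requires knowing $\Phi"_n$ is injectively fibrant (so $\Map_{\cC(\cA)}(-,\Phi"_n)$ computes the derived Hom) and that negative $\Ext$'s between these cyclic modules vanish — this last is where one uses that everything in sight is discrete, and it is the only genuinely non-formal ingredient.
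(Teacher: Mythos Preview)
Your core computation --- that $\Map_{\cU}(\pi^m,\pi^n)$ has vanishing homotopy groups --- is correct and is exactly what the paper proves, via the long exact sequence coming from the defining pullback square together with the triangle induced by $0\to\frka^m\to R\to R/\frka^m\to 0$. This is indeed the substantive ingredient.

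However, your packaging of this as ``$\eta$ is a Dwyer--Kan equivalence'' has a genuine gap: you assert that $\Map_{\cU}(\pi^m,\pi^n)$ is empty for $m<n$, and this is neither proved nor true in general. If $\frka^k=\frka^{k+1}$ for some $k$ (e.g.\ $\frka$ idempotent or nilpotent), then $R/\frka^m=R/\frka^n$ for all $m,n\geq k$ and the mapping space is contractible rather than empty. In that case $\eta$ is \emph{not} a Dwyer--Kan equivalence (the map $\emptyset=\Map_{\cI}(m,n)\to\Map_{\cU}(\pi^m,\pi^n)$ is not a weak equivalence for $m<n$), yet $\eta$ is still left cofinal. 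Your alternative route has the same defect: identifying $\eta_{/\pi^n}$ with the poset $\{m\geq n\}$ again presupposes the emptiness.

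The paper avoids this entirely. It applies Theorem \ref{cofinal thm} but does not attempt to identify $\eta_{/\pi^n}$ with a poset. Instead it shows weak contractibility by hand: given a map $f\colon X\to\eta_{/\pi^n}$ from a finite simplicial set, it first uses that $\cI$ is a totally ordered set to homotope the projection $f_1=\pr_1\circ f$ to the constant with value $m:=\max f_1(X)$, and then uses that the co-domain functor $V_{/\pi^n}\to V$ is a right fibration (so lifts the right-anodyne map $X\times\{1\}\hookrightarrow X\times\Delta^1$) to lift this homotopy to all of $\eta_{/\pi^n}$. This pushes $f$ into the single fibre $\Homr_V(m,n)\simeq\Map_{\cU}(\pi^m,\pi^n)$, and only then is the mapping-space computation invoked. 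This reduction step --- which you do not mention --- is the other half of the argument, and it is precisely what makes the proof insensitive to whether the mapping spaces for $m<n$ are empty.
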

 
\begin{proof}
By Theorem \ref{cofinal thm}
 it is enough to show that 
$\eta_{/n}$ is weakly contractible for each $n \geq 1$. 
To prove this, 
it is enough to show that 
any simplicial map $f: X \to \eta_{/n}$ from a finite simplicial set $X$  
factors through $\ast$ in the homotopy category $\textup{Ho}(\sSet)$. 

Recall that $\eta_{/n} = \sner(\cI) \times_{V} V_{/n}$. 
We set $f_{1} = \textup{pr}_1 \circ f, f_{2} = \textup{pr}_{2} \circ f$ 
where $\textup{pr}_{1} , \textup{pr}_{2}$ are the first and second projection. 
Since $\cI \cong (\ZZ_{\geq 1})^{\textup{op}}$, the simplicial map $f_{1}: X \to \sner(\cI)$ 
is homotopic to some constant map. 
More precisely, 
there exists a simplicial map $H' : X\times \Delta^1 \to \sner(\cI)$ 
such that 
the restriction $H'|_{X \times \{1\}}$ to $X \times \{1\} \cong X$ is $f_{1}$ 
and 
the restriction $H'|_{X \times \{0\}}$ to $X\times \{0\}$  is the constant functor 
with the value $m := \max\{f_{1}(x) \mid x\in X\}$.

Since the simplicial map $\{1\} \to \Delta^1$ is isomorphic to the horn inclusion 
$\Lambda^{1}_{1} \to \Delta^1$,  
by \cite[Corollary 2.1.2.7]{HTT} the simplicial map $X \times \{1\} \to X \times \Delta^1$ is right anodyne. 
Since by \cite{HTT} the co-domain functor $V_{/n} \to V$ is a right fibration, 
there exists a simplicial map $H":X \times \Delta^1 \to V_{/n}$ which complete the following commutative diagram:
\[
\begin{xymatrix}{
X \ar[d]\ar[rr]^{f_2} && V_{/n} \ar[d] \\
X\times \Delta^1 \ar[r]_{H'}\ar[urr]^{H"}  &\sner(\cI) \ar[r]_{\eta} & V 
}\end{xymatrix}
\]
Then the functor $H',H"$ gives the functor $H:X \times \Delta^1 \to \eta_{/n}$ 
such that the restriction $H|_{X \times \{1\}}$ is $f$. 
We denote by $g$ the restriction $H|_{X \times \{0\}}$. 
The map $H$ gives a homotopy between $f$ and $g$. 
Therefor it is enough to prove that 
$g$ factors through $\ast$ in the homotopy category $\textup{Ho}(\sSet)$.

Since  $g\circ \textup{pr}_1=H'|_{X\times \{0\}} $ is the constant map with the value $m$, 
the map $g$ factors through $\Homr_{V}(m,n)$, 
(Recall that $\Homr_{V}(m,n) = \{m \} \times_{V}V_{/n}$) 
\[\begin{xymatrix}{
X \ar@/^1pc/[drr]^{g} \ar@/_1pc/[ddr]  \ar@{-->}[dr] &&& \\
&\Homr_{V}(m,n) \ar[d]\ar[r] & \eta_{/n} \ar[d]_{\textup{pr}_1} \ar[r]^{\textup{pr}_2}  & V_{/n} \ar[d] \\
 & \{m\} \ar[r] & \sner(\cI) \ar[r]_{\eta} & V 
 }\end{xymatrix}\]
By  \cite[2.2]{HTT}, the complex  $\Homr_{V}(m,n) $ is weak homotopy equivalent to $\Map_{\cU}(\pi^m,\pi^n)$. 
We finishes the proof by showing that $\Map_{\cU}(\pi^m,\pi^n)$ is weakly contractible. 
We check that $\pi_i(\Map_{\cU}(\pi^m,\pi^n))=0$ for $i \geq 0$. 
Recall that the mapping complex $\Map_{\cU}(\pi^m,\pi^n)$ fits into the pull-back diagram 
\[
\begin{xymatrix}{
\Map_{\cU}(\pi^m,\pi^m) \ar[d] \ar[r] &
\Map_{R'}(\Phi"_m,\Phi_n") \ar[d]^{\textup{ind}} \\
\{\pi^n\} \ar[r] & 
\Map_{R'}(R',\Phi_n")  
}\end{xymatrix}\]
where $\textbf{ind}$ is the induced morphism $\textbf{ind} := \Map(\pi^m,\Phi_n")$. 
Since $\pi^m$ is a injective cofibration and $\Phi_n"$ is injectively fibrant, 
the map $\Map(\pi^m,\Phi")$ is a fibration of simplicial set. 
Therefore 
we the homotopy groups $\pi_i(\Map_{\cU}(\pi^m,\pi^n))$ fits into the homotopy long exact sequence.
\[
\begin{split}
 \pi_{i+1} (\Map_{R'}(\Phi"_m,\Phi_n") ) \xrightarrow{\pi_{i+1}(\textbf{ind})} &
\pi_{i+1}(\Map_{R'}(R',\Phi_n")) \to \pi_i(\Map_{\cU}(\pi^m ,\pi^n)) \\
\to& \pi_{i} (\Map_{R'}(\Phi"_m,\Phi_n") ) \xrightarrow{\pi_i(\textbf{ind})}
\pi_{i}(\Map_{R'}(R',\Phi_n")) 
\end{split}\]

The canonical exact sequence $0 \to \frka^m \to R \xrightarrow{\tilde{\varpi}^m} R/\frka^m \to 0$ 
induces an exact triangle 
\[
\RR\Hom_{R}(\frka^m[1], R/\frka^n) \to 
\RR\Hom_R(R/\frka^m,R/\frka^n) \xrightarrow{\widetilde{\textbf{ind}}}
 \RR\Hom_R(R,R/\frka^n) \to . 
\]
where $\widetilde{\textbf{ind}}$ is the morphism induced from $\tilde{\varpi}^n$. 
 Since we have  
\[
\textup{H}^{-i}(\RR\Hom_{R}(\frka^m[1],R/\frka^n)) \cong \Ext^{-i-1}(\frka^m,R/\frka^n)=0, 
\]
the morphism $\tuH^{0}(\widetilde{\textbf{ind}})$ is injective and 
the morphisms $\tuH^{i}(\widetilde{\textbf{ind}})$ are isomorphisms for $i \geq 1$. 
On the other hands, 
 we have the commutative diagram for $i\geq 0$ 
\[
\begin{xymatrix}{
\pi_i(\Map_{R'}(\Phi"_m,\Phi"_n)) \ar[d]_{\pi_i(\textbf{ind})} \ar@{=}[r]^-{\sim}  & 
\tuH^{-i}(\RR\Hom_R(R/\frka^m,R/\frka^n)) \ar[d]^{\tuH^{-i}(\widetilde{\textbf{ind}})} \\
\pi_i(\Map_{R'}(R',\Phi"_n)) \ar@{=}[r]^-{\sim} & 
\tuH^{-i}(\RR\Hom_R(R,R/\frka^n)) 
}\end{xymatrix}
\]
Hence 
from the above homotopy long exact sequence, 
we conclude that 
all the homotopy groups of $\Map_\cU(\pi^m,\pi^n)$ vanish. 
This finishes the proof. 
\end{proof}

\begin{lemma}\label{comp lem 2}
The functor $\xi$ is left cofinal. 
\end{lemma}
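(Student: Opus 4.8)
The plan is to apply Lemma \ref{cofinal lem} with $\cC=\sner(\cU)$ and $\cD=V$. Since $V$ is by construction the full sub-$\infty$-category of $\sner(\cU)$ spanned by the objects $\pi^{n}$ ($n\geq 1$), it suffices to verify: (a) $\sner(\cU)$ is co-filtered; and (b) every object $k$ of $\cU$ admits a morphism $\pi^{n}\to k$ in $\cU$ for some $n$.

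For (a) I would run the dual of the argument used for Lemma \ref{107 225}, exactly as invoked in the proof of Lemma \ref{uuuuuuuu}. Recall $\cU=(\langle\Phi"_{1}\rangle^{\cC}_{R'/})^{\circ}$ is a fibrant simplicial category, so by (the opposite of) \cite[Proposition 5.3.1.13]{HTT} one has to check two conditions. The first --- every finite family of objects $k_{i}\colon R'\to K_{i}$ of $\cU$ admits an object of $\cU$ mapping to all of them --- is witnessed by $R'\to\bigoplus_{i}K_{i}$, which lies in $\cU$ because a finite direct sum of objects of the thick subcategory $\langle\Phi"_{1}\rangle^{\cC}$ stays there and a finite direct sum of injectively fibrant modules is injectively fibrant. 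The second condition is handled by Lemma \ref{106 1146} (in its under-category form, completing a cospan $k_{1}\to k_{0}\leftarrow k_{2}$ in $\cC(R')_{R'/}$ to a homotopy-commutative square) together with the remark following it, noting that the module $K_{3}=\cocone(\cdots)$ constructed there is built from $K_{0},K_{1},K_{2}$ by finite direct sums and one cocone, hence remains in $\langle\Phi"_{1}\rangle^{\cC}$. I will not rewrite this bookkeeping.

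The substance of the lemma is (b), the rigorous version of the factorization sketched for Theorem \ref{out app 1} in the introduction. Identify $\cD(R')$ with $\cD(R)$ via the quasi-isomorphism $\rho\colon R'\xrightarrow{\sim}R$; under this identification $\Phi"_{n}$ becomes $R/\frka^{n}$, $\Phi"_{1}$ becomes $R/\frka$, and $\pi^{n}$ becomes the canonical projection. Let $k\colon R'\to K$ be an object of $\cU$. Then $K\in\langle\Phi"_{1}\rangle^{\cC}=\langle R/\frka\rangle^{\cC}$, so in $\cD(R)$ the object $K$ lies in $\langle R/\frka\rangle^{\cD}\subseteq\cD_{\frka\textup{-tor}}(R)$; by hypothesis $\cD(\frka\textup{-tor})\to\cD_{\frka\textup{-tor}}(R)$ is an equivalence, so $K$ is quasi-isomorphic to a complex $K'$ all of whose terms are $\frka$-torsion modules. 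The class of $k$ in $\Hom_{\cD(R)}(R,K')=\tuH^{0}(K')$ is represented by a cocycle $z\in\tuZ^{0}(K')\subseteq (K')^{0}$; since $(K')^{0}$ is $\frka$-torsion, $z\frka^{n}=0$ for some $n$, so the chain map $R\to K'$, $r\mapsto zr$, factors through the projection $R\to R/\frka^{n}$, and therefore $k$ factors through $\pi^{n}$ in the homotopy category, i.e. we obtain a morphism $\overline{\psi}\colon\pi^{n}\to k$ in $\Ho(\cC(R'))_{R'/}$. Now $\pi^{n}\colon R'\rightarrowtail\Phi"_{n}$ is an injective cofibration with $\Phi"_{n}$ injectively fibrant (it belongs to $\cU$), $R'$ is cofibrant, and $K$ is injectively fibrant, so Lemma \ref{model lem} (for the injective model structure on $\cC(R')$) lifts $\overline{\psi}$ to an honest morphism $\psi\colon\pi^{n}\to k$ of $\cC(R')_{R'/}$; since $\Phi"_{n}\in\langle R/\frka\rangle^{\cC}$ (using $R/\frka^{n}\in\langle R/\frka\rangle^{\cD}$), this $\psi$ is a morphism of $\cU$, which is what (b) demands.

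I expect the main obstacle to be precisely step (b): this is where the hypotheses on $\frka$ genuinely enter --- the equivalence $\cD(\frka\textup{-tor})\to\cD_{\frka\textup{-tor}}(R)$, needed to pass to a term-wise $\frka$-torsion model of $K$, and the condition $R/\frka^{n}\in\langle R/\frka\rangle^{\cD}$ --- and where one must upgrade a factorization valid only in the homotopy category to an actual morphism of the simplicial under-category via Lemma \ref{model lem}. Step (a) is formal once Lemmas \ref{107 225} and \ref{106 1146} are in hand.
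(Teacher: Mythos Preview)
Your proposal is correct and follows essentially the same route as the paper's own proof: first observe that $\sner(\cU)$ is co-filtered by the dual of the argument for Lemma \ref{107 225}, then invoke Lemma \ref{cofinal lem}, reducing to the factorization claim (b), which is handled exactly as you describe by passing to $\cD(R)$, replacing $K$ by a termwise $\frka$-torsion complex, and lifting via Lemma \ref{model lem}. You supply a bit more detail than the paper (the explicit cocycle argument, and the remark on where $R/\frka^{n}\in\langle R/\frka\rangle^{\cD}$ enters), but the structure and key steps coincide.
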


\begin{proof}
We can prove that 
the $\infty$-category $\sner(\cU)$ is co-filtered 
as in the same way of the proof of Lemma \ref{107 225}. 
In view of Lemma \ref{cofinal lem} 
it is enough to show that 
for any morphism  $k: R' \rightarrowtail K$ in  $\cC(R')$ 
with $K \in \left(\langle \Phi"_1 \rangle^{\cC}\right)^{\circ}$  
there exists a morphism $\pi^{n} \to k$ for some $n \geq 1$.  
By Lemma \ref{model lem} 
it is enough to prove the same statement in the derived category $\cD(R')$. 

The quasi-isomorphism $R' \xrightarrow{\sim} R$ induces the equivalence $\cD(R') \simeq \cD(R)$. 
Therefore the problem is reduced to prove the same statement in $\cD(R)$. 
Observe that  
under the equivalence, the object $\Phi"_1$ is isomorphic to $R/\frka$ 
and hence 
the thick subcategory $\langle \Phi"_{1} \rangle^{\cD} $ is equivalent to 
the thick subcategory $\langle R/\frka \rangle^{\cD}$.  
By the assumption that $\cD_{\frka\textup{-tor}}(R) \simeq \cD(\frka\textup{-tor})$, 
an object $K$ of $\langle R/\frka \rangle$ is quasi-isomorphic to
 a complex  each terms  of which is a $\frka$-torsion $R$ module. 
Hence we immediately see that 
 in the derived category $\cD(R)$ 
the morphism $k: R \to K$ with $K\in \langle R/\frka \rangle^{\cD}$ 
factors through the canonical projection $R \to R/\frka^{n}$ for some $n$.  
By Lemma \ref{model lem} we obtain a morphism $\pi^n \to k$ in $\cU$.   
\end{proof}

Now we finish the proof of Theorem \ref{comp thm 1}.

\section{Smashing localization  via derived bi-commutator} 

\subsection{Smashing localization of dg-categories}

A functor $F: \cT \to \cS$ between triangulated categories is called a \textit{smashing localization} 
if $F$ has a fully faithful right adjoint $G:\cS \hookrightarrow \cT$ which preserves direct sums. 
A dg-functor $f:\cA \to \cB$ of small dg-categories is called \textit{smashing localization (or homological epimorphisms)} 
if the restriction functor $f_{\ast}: \cD(\cB) \to \cD(\cA)$ is fully faithful. 
In the study of smashing localization, 
we mainly interest in the functors $\cD(\cA) \to \cD(\cB)$ 
between derived categories.(See e.g. \cite{Krause, NS}.)
Therefore we consider smashing localizations $f: \cA \to \cB$ of a dg-category $\cA$ 
up to Morita equivalence of $\cB$. 

Let $f: \cA \to \cB'$ be a smashing localization. 
We decompose $f$ into a cofibration $g: \cA \cofright \cB$ 
followed by a trivial fibration $h : \cB \trivfibright \cB'$. 
Then $f$ and $g$ induce the same smashing localization of 
the derived category $\cD(\cA)$. 
Hence in the study of smashing localization, 
we may assume that $f:\cA \to \cB$ is a cofibration of dg-categories. 
Moreover by the following lemma, 
we may replace $f$ with the essential image of a cofibration.

\begin{lemma}\label{smashing lem 1}
Let $f: \cA \to \cB'$ be a smashing localization.
We decompose $f$ into an essentially surjection $g: \cA \to \cB$ 
 followed by a fully faithful dg-functor  $h: \cB \to \cB'$. 
Then  $h$ induces Morita equivalence, 
i.e., $h_{\ast}:\cD(\cB') \to \cD(\cB) $ gives an equivalence,  
and hence $g$ is also smashing localization. 
\end{lemma}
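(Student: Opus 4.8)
The plan is to prove that $h$ is a Morita equivalence by showing that the derived induction functor $h^{\ast}=-\lotimes_{\cB}\cB'\colon\cD(\cB)\to\cD(\cB')$ is an equivalence; since $h^{\ast}$ is left adjoint to the restriction $h_{\ast}$, this is the same as saying $h_{\ast}$ is an equivalence. I would do this in two independent steps, ``$h^{\ast}$ is fully faithful'' and ``$h^{\ast}$ is essentially surjective'', and then deduce the statement about $g$ purely formally.

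\emph{Step 1: $h^{\ast}$ is fully faithful.} Since $h$ is quasi-fully faithful, for a representable module $b^{\wedge}$ ($b\in\cB$) one has $h^{\ast}(b^{\wedge})\cong h(b)^{\wedge}$, and the unit $b^{\wedge}\to h_{\ast}h^{\ast}(b^{\wedge})$ is, evaluated at $b'\in\cB$, the structure quasi-isomorphism $\cB^{\bullet}(b',b)\to\cB'^{\bullet}(h(b'),h(b))$; hence the unit is an isomorphism on representables. The full subcategory of $\cD(\cB)$ on which $\mathrm{id}\to h_{\ast}h^{\ast}$ is an isomorphism is localizing (both $h^{\ast}$ and $h_{\ast}$ are triangulated and preserve coproducts) and contains every $b^{\wedge}$; as the representables generate $\cD(\cB)$ as a localizing subcategory, it is all of $\cD(\cB)$. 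So $h^{\ast}$ is fully faithful, and $h_{\ast}h^{\ast}\cong\mathrm{id}_{\cD(\cB)}$.

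\emph{Step 2: $h^{\ast}$ is essentially surjective.} Here the hypothesis that $f$ is a smashing localization is used: $f_{\ast}$ is fully faithful, so the counit $f^{\ast}f_{\ast}\to\mathrm{id}_{\cD(\cB')}$ is an isomorphism and $f^{\ast}\colon\cD(\cA)\to\cD(\cB')$ is essentially surjective. Since $f^{\ast}$ is triangulated, preserves coproducts, satisfies $f^{\ast}(a^{\wedge})\cong f(a)^{\wedge}$, and the $a^{\wedge}$ generate $\cD(\cA)$ as a localizing subcategory, it follows that $\cD(\cB')=\operatorname{Loc}\bigl(f(a)^{\wedge}\mid a\in\cA\bigr)$ (standard argument: $\{X\in\cD(\cA)\mid f^{\ast}X\in\operatorname{Loc}(f(a)^{\wedge})\}$ is a localizing subcategory containing all $a^{\wedge}$). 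On the other hand, using Step 1, the essential image of $h^{\ast}$ is a localizing subcategory of $\cD(\cB')$, and it contains $h^{\ast}(g(a)^{\wedge})\cong h(g(a))^{\wedge}=f(a)^{\wedge}$ for every $a\in\cA$; hence it contains $\operatorname{Loc}(f(a)^{\wedge}\mid a)=\cD(\cB')$, so $h^{\ast}$ is essentially surjective. (Essential surjectivity of $g$ gives symmetrically that the $g(a)^{\wedge}$ generate $\cD(\cB)$, consistent with this but not strictly needed.)

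Combining the two steps, $h^{\ast}$, and therefore $h_{\ast}$, is an equivalence, i.e. $h$ is a Morita equivalence. Finally, from $f=h\circ g$ we get $f_{\ast}=g_{\ast}\circ h_{\ast}$, so $g_{\ast}\cong g_{\ast}h_{\ast}h^{\ast}\cong f_{\ast}\circ h^{\ast}$, a composite of the fully faithful functor $f_{\ast}$ with an equivalence; hence $g_{\ast}$ is fully faithful and $g$ is a smashing localization. This lemma is essentially formal, and the only point that needs genuine care is in Step 1 — the compatibility of $h^{\ast}$ and $h_{\ast}$ with homotopy colimits that allows one to propagate the ``unit is an isomorphism'' property from representables to all of $\cD(\cB)$; everything else is bookkeeping with localizing subcategories and adjunctions.
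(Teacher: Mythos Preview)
Your proof is correct. Both you and the paper prove that $h$ is a Morita equivalence by establishing fully faithfulness and essential surjectivity, but you work with the induction $h^{\ast}$ while the paper works with the restriction $h_{\ast}$ directly. Concretely: your Step~1 (that $h^{\ast}$ is fully faithful) uses only that $h$ is quasi-fully faithful and the standard localizing-subcategory propagation of the unit isomorphism from representables; the paper instead proves $h_{\ast}$ fully faithful by first observing (via the smashing hypothesis) that every object of $\cD(\cB')$ has the form $f^{\ast}K$, then checking $\Hom_{\cD(\cB')}(f^{\ast}K,f^{\ast}L)\cong\Hom_{\cD(\cB)}(h_{\ast}f^{\ast}K,h_{\ast}f^{\ast}L)$ using $f^{\ast}=h^{\ast}g^{\ast}$ and $h^{\ast}h_{\ast}h^{\ast}\cong h^{\ast}$. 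For essential surjectivity, you argue that the essential image of $h^{\ast}$ is a localizing subcategory containing the $f(a)^{\wedge}$, which generate $\cD(\cB')$; the paper instead shows $h_{\ast}$ is essentially surjective by checking that $h_{\ast}f^{\ast}(a^{\wedge})\cong g^{\ast}(a^{\wedge})$ are compact generators of $\cD(\cB)$ (invoking \cite{NS} for the fact that the $f(a)^{\wedge}$ compactly generate $\cD(\cB')$). Your organization has the mild advantage of isolating the smashing hypothesis entirely in Step~2, and avoids the external citation; the paper's route is slightly more hands-on with the representables. Either way the argument is essentially formal.
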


\begin{proof}
First we show that $h_{\ast}$ is fully faithful. 
Since $f_{\ast}$ is fully faithful, the counit morphism $f^{\ast}f_{\ast} N \to N$ is an isomorphism for any $N \in \cD(\cB')$. 
In particular every object of $\cD(\cB')$ is of form $f^{\ast}K$ for some $K \in \cD(\cA)$. 
Therefore it is enough to prove that 
the map $\Hom_{\cD(\cB')}(f^{\ast}K,f^{\ast}L) \to \Hom_{\cD(\cB)}(h_{\ast}f^{\ast}K,h_{\ast}f^{\ast}L),\psi \mapsto h_{\ast}(\psi)$ 
is an isomorphism for $K,L \in \cD(\cA)$.  
However it is easily shown by using the equation $f^{\ast}= h^{\ast}g^{\ast}$
 and the natural isomorphism $h^{\ast}h_{\ast} h^{\ast} \cong h^{\ast}$.

Since $h:\cB \to \cB'$ is fully faithful, 
the unite map $b^{\wedge} \to h_{\ast}h^{\ast}(b^{\wedge})$ is an isomorphism. 
Recall that we have a canonical isomorphism $g^{\ast}(a^{\wedge}) \cong (ga)^{\wedge}$. 
Therefore 
we have $h_{\ast}f^{\ast}(a^{\wedge}) \cong h_{\ast}h^{\ast}g^{\ast}(a^{\wedge}) \cong g^{\ast}(a^{\wedge})$. 
Since $g$ is essentially surjective, the set $\{g^{\ast}(a^{\wedge}) \mid a \in \cA\}$ 
of modules equals to the set $\{ b^{\wedge} \mid b \in \cB \}$. 
Therefore the modules $h_{\ast}(f^{\ast}(a^{\wedge}))$ form a set of compact generators of $\cD(\cB)$. 
On the other hands 
by \cite[Lemma in page 1231]{NS} the modules $(fa)^{\wedge} \cong f^{\ast}(a^{\wedge})$ 
form a set of compact generators of $\cD(\cB')$. 
Since the functor  $h_{\ast}$ is exact and commutes with arbitrary direct sum, 
we conclude that $h_{\ast}$ is essentially surjective.
\end{proof}

A notion of pure monomorphism and pure injective object for compactly generated triangulated category 
were introduced  by Krause \cite{Krause} to study smashing localizations.  
\begin{definition}\label{pure inj def}
(1) 
A morphism $M\to N$ in $\cD(\cA)$ is said to be a \textit{pure monomorphism} 
if the induced map $\Hom_{\cD(\cA)}(P,M) \to \Hom_{\cD(\cA)}(P,N)$ is a monomorphism 
for all $P \in \Perf \cA$. 
 
(2) 
A morphism $M\to N$ in $\cD(\cA)$ is said to be a \textit{cohomologically monomorphism} 
if the induced map $\tuH^{\ast}(M)\to \tuH^{\ast}(N)$ is a monomorphism.

(3) 
An object $J$ of $\cD(\cA)$ is called \textit{pure injective} 
(resp. \textit{cohomologically injective} (C.I. for short.)) 
if $\psi:M\to N$ is pure monomorphism (resp. cohomologically monomorphism),  
then every morphism $M \to J$ factors through $\psi$. 
\end{definition}

We call an object $E$ of $\cD(\cA)$ a \textit{co-generator} if, for $M\in \cD(\cA)$, 
the condition $\Hom_{\cD(\cA)}(M,J)=0$ implies $M =0$.  
(We remark that 
this definition is not a direct opposite version of generators for triangulated categories. See e.g. \cite[3.1.1.]{Ro}. ) 
We call an  object $J$ of $\cC(\cA)$ a pure injective, a C.I. , 
a co-generator 
if it is so as an object of $\cD(\cA)$. 

We can easily prove the following lemma. 
\begin{lemma}
(1) A pure injective mono morphism is a cohomologically   monomorphism. 

(2) A C.I.  object is a pure injective object. 
\end{lemma}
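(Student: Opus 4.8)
The statement to prove has two parts. Part (1): a pure monomorphism is a cohomological monomorphism; equivalently, if $\psi\colon M\to N$ induces injections $\Hom_{\cD(\cA)}(P,M)\to\Hom_{\cD(\cA)}(P,N)$ for all $P\in\Perf\cA$, then $\tuH^{\ast}(M)\to\tuH^{\ast}(N)$ is injective. Part (2): every C.I.\ object is pure injective. I would treat (1) first, then deduce (2) formally.

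For part (1), the plan is to take $P$ to run over the free modules $a^{\wedge}[n]$ for $a\in\cA$ and $n\in\ZZ$, which are perfect. By the Yoneda identification $\Hom_{\cD(\cA)}(a^{\wedge}[n],X)\cong\tuH^{-n}\bigl(X(a)\bigr)$ (cf.\ the Yoneda discussion preceding Lemma \ref{107 216}, passed to the derived category since $a^{\wedge}$ is projectively cofibrant), the hypothesis that $\psi_{\ast}\colon\Hom_{\cD(\cA)}(a^{\wedge}[n],M)\to\Hom_{\cD(\cA)}(a^{\wedge}[n],N)$ is injective translates directly into the statement that $\tuH^{-n}(M(a))\to\tuH^{-n}(N(a))$ is injective for every $a$ and $n$. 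That is exactly cohomological monomorphism. So the real content of (1) is just the observation that the free modules are perfect and the Yoneda isomorphism is natural in $X$; there is essentially no obstacle here, just bookkeeping.

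For part (2), let $J$ be a C.I.\ object. To show $J$ is pure injective, take a pure monomorphism $\psi\colon M\to N$ and a morphism $f\colon M\to J$; I must factor $f$ through $\psi$. By part (1), $\psi$ is a cohomological monomorphism. Hence by the definition of C.I.\ (Definition \ref{pure inj def}.(3)), every morphism $M\to J$ — in particular $f$ — factors through $\psi$. Thus $J$ satisfies the defining lifting property of a pure injective object. This is an immediate consequence of (1) together with the comparison of the two "monomorphism" classes; no further work is needed.

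The main (and only) subtlety worth flagging is making sure the Yoneda isomorphism in (1) is stated at the level of the derived category and is natural, so that injectivity of the induced map on $\Hom$-groups really does correspond to injectivity on cohomology; once that is pinned down, both parts are formal. I would write it up as follows.

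\begin{proof}
(1) For $a\in\cA$ and $n\in\ZZ$ the free module $a^{\wedge}[n]$ is a perfect (indeed strictly perfect) dg $\cA$-module, and it is projectively cofibrant. By Yoneda's lemma, for any dg $\cA$-module $X$ there is an isomorphism $\Hom_{\cD(\cA)}\bigl(a^{\wedge}[n],X\bigr)\cong\tuH^{-n}\bigl(X(a)\bigr)$, natural in $X$. Hence if $\psi\colon M\to N$ is a pure monomorphism, then applying the hypothesis with $P=a^{\wedge}[n]$ shows that $\tuH^{-n}\bigl(M(a)\bigr)\to\tuH^{-n}\bigl(N(a)\bigr)$ is injective for every $a\in\cA$ and every $n\in\ZZ$. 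Since cohomology is computed objectwise, this says precisely that $\tuH^{\ast}(M)\to\tuH^{\ast}(N)$ is a monomorphism, i.e.\ $\psi$ is a cohomological monomorphism.

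(2) Let $J$ be a C.I.\ object and let $\psi\colon M\to N$ be a pure monomorphism. By (1), $\psi$ is a cohomological monomorphism. Therefore, by the defining property of a C.I.\ object, every morphism $M\to J$ factors through $\psi$. This is exactly the defining property of a pure injective object, so $J$ is pure injective.
\end{proof}
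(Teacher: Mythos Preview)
Your proof is correct and is exactly the natural argument; the paper itself omits the proof entirely, simply remarking ``We can easily prove the following lemma.'' Your reading of part (1) as ``a pure monomorphism is a cohomological monomorphism'' is the intended one (the word ``injective'' in the statement is a slip), and your Yoneda argument with $P=a^{\wedge}[n]$ together with the formal deduction of (2) from (1) is precisely what the author has in mind.
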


\begin{example}\label{101 941}
Let $R$ be a ring and $J$ be an injective $R$-module. 
Then $J$ is a C.I.  object. 
Moreover if $J$ is an injective co-generator of $\Mod R$, 
then $\Pi_{n \in \ZZ} J[n]$ is a C.I.  co-generator of $\cD(R)$. 

\end{example}

Note that by \cite[Proposition 2.6]{Krause} if $f: \cA \to \cB$ is a smashing localization of dg-categories 
then the restriction functor $f_{\ast}: \cD(\cB) \to \cD(\cA)$ sends  pure-injective objects to pure-injective object. 
The following is main theorem of this section, 
which  says that 
every smashing localization is obtained as a derived bi-commutator of some pure-injective object.

\begin{theorem}\label{loc thm 0}
Let $f: \cA \to \cB $ be an essentially surjective smashing localization 
such that the induced adjoint pair 
\[
f^{\ast} : \cC(\cA) \rightleftarrows \cC(\cB) : f_{\ast}
\]
is a Quillen pair where 
both $\cC(\cA)$ and $\cC(\cB)$ are equipped with the injective model structures. 
Let $J$ be a pure injective co-generator of $\cD(\cB)$. 
Then we have quasi-equivalence of dg-categories over $\cA$ 
\[
\cB \simeq \BBic_{\cA}(f_{\ast}J')
\]
where 
$J'$ is a large enough product $J^{\Pi \kappa}$ of $J$. 
\end{theorem}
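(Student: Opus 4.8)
The plan is to factor the asserted quasi-equivalence through the derived bi-commutator computed on the $\cB$-side, by establishing the chain
\[
\BBic_{\cA}(f_{\ast}J') \;\simeq\; \BBic_{\cB}(J') \;\simeq\; \cB
\]
and then checking that it respects the structure maps over $\cA$. The first quasi-equivalence will come from Proposition~\ref{1010 314}, the second from Theorem~\ref{out loc thm} combined with Lemma~\ref{1010 321}.

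First I would take $\kappa$ large enough and set $J' := J^{\Pi\kappa}$ as in Theorem~\ref{out loc thm}, applied to $\cD(\cB)$ with the pure injective co-generator $J$ and the set $\cM := \{b^{\wedge}\mid b\in\cB\}$ of representable $\cB$-modules (a set, since $\cB$ is small): this makes the evaluation morphism $\epsilon_{b^{\wedge}}\colon b^{\wedge}\to (b^{\wedge})^{\wcdast}$, computed over $J'$, a quasi-isomorphism for every $b\in\cB$. By Lemma~\ref{1010 321} the canonical dg-functor $\cB\to\dgBic_{\cB}(J')$ is then a quasi-equivalence, so $\BBic_{\cB}(J')\simeq\cB$ in $\Ho(\dgCat_{\cB/})$.

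Next I would put $\cJ:=\{J'\}$, the full subcategory of $\cD(\cB)$ on the single object $J'$, so that $\BBic_{\cB}(\cJ)=\BBic_{\cB}(J')$ and $f_{\ast}\cJ=\{f_{\ast}J'\}$. Since $f$ is a smashing localization, $f_{\ast}\colon\cD(\cB)\to\cD(\cA)$ is fully faithful, hence restricts to an equivalence $\cJ\xrightarrow{\sim}f_{\ast}\cJ$; and $f$ is essentially surjective by hypothesis. Proposition~\ref{1010 314} together with the Remark following it then yields a quasi-equivalence $f^{\heartsuit}\colon\BBic_{\cA}(f_{\ast}J')\xrightarrow{\sim}\BBic_{\cB}(J')$ fitting into the commutative square with $\iota_{f_{\ast}\cJ}$, $\iota_{\cJ}$ and $f$. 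The hypothesis that $f^{\ast}\dashv f_{\ast}$ is a Quillen pair for the injective model structures is used here to run the comparison in the proof of Proposition~\ref{1010 314}: by Lemma~\ref{922 258} the restriction along $f$ of an injectively fibrant replacement of the diagonal module of $\{J'\}$ over $\cB$ is again injectively fibrant over $\cA$, so it may be fed into Construction~\ref{103 444} on the $\cA$-side (cf.\ Section~\ref{103 1010}). Combining the two quasi-equivalences and reading off from the square that $f^{\heartsuit}$ carries $\iota_{f_{\ast}\cJ}$ to $\iota_{\cJ}\circ f$, we obtain $\BBic_{\cA}(f_{\ast}J')\simeq\cB$ in $\Ho(\dgCat_{\cA/})$.

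The only genuinely non-formal ingredient is Theorem~\ref{out loc thm}; granting it, the argument is essentially bookkeeping. The one point deserving attention is the coordination of the single cardinal $\kappa$ — it must be chosen large enough to reflexivize all the modules $b^{\wedge}$, $b\in\cB$, at once, which is possible precisely because $\cB$ is small — together with verifying that the bi-commutator constructions over $\cA$ and over $\cB$ are compatible, which is exactly where the injective-Quillen-pair hypothesis is consumed.
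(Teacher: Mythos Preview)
Your proposal is correct and follows essentially the same approach as the paper: apply Theorem~\ref{loc thm 1} (the precise form of Theorem~\ref{out loc thm}) to the set $\{b^{\wedge}\mid b\in\cB\}$ so that Lemma~\ref{1010 321} gives $\cB\simeq\BBic_{\cB}(J')$, and then invoke Proposition~\ref{1010 314} together with its Remark (using that $f_{\ast}$ is fully faithful and $f$ is essentially surjective) to obtain $\BBic_{\cA}(f_{\ast}J')\simeq\BBic_{\cB}(J')$ compatibly with the canonical maps from $\cA$. One small clarification: in the paper's proof of Proposition~\ref{1010 314} the cofibration/Quillen-pair hypothesis is what guarantees that $f_{\ast}$ carries the chosen injectively fibrant replacement of the diagonal module on the $\cB$-side to an injectively fibrant $\cA\otimes\cF$-module, so your identification of where the injective-Quillen-pair assumption is consumed is exactly right.
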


\begin{remark}
Nicol\'as and Saorin \cite{NS} 
 proved that 
for any  smashing localization $F:\cD(\cA) \to \cS$  
there exists  a subcategory $\cI \subset \cD(\cA)$ 
such that 
the functor $\LL \iota_{\cI}^{\ast} : \cD(\cA) \to \cD(\dgBic_{\cA}(\cI))$ 
induced from the canonical morphism $\iota_{\cI}: \cA \to \dgBic_{\cA}(\cI)$ 
is equivalent to $F$. 

\end{remark}
   
In a proof of Theorem \ref{loc thm 0} 
an essential part is the following theorem.

\begin{theorem}\label{loc thm 1}
 Let  $J$ be an injective fibrant cohomologically  injective co-generator. 
 We fix a set  $\cM$ of objects of $\cC(\cA)$.    
If we take a product $J':= J^{\Pi \kappa}$ of copies of $J$ over a large enough cardinal $\kappa$, 
then for each $M$ in $\cM$, the bi-dual $M^{\wcdast}$ of $M$ taken over $J'$ 
is isomorphic to $M$. 

More precisely,
if we choose an injectively fibrant  representative of $J'$ appropriately, 
then  
the evaluation map $\epsilon_M: M \to S(M) $ is a quasi-isomorphism. 
\end{theorem}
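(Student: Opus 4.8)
\textbf{Proof proposal for Theorem \ref{loc thm 1}.}

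The plan is to reduce the statement to a vanishing of a certain ``error'' complex and to use the freedom in the cardinal $\kappa$ together with the co-generator property to kill that error. Fix $M\in\cM$ and an injectively fibrant representative $J$ of the cohomologically injective co-generator. First I would analyze the evaluation map $\epsilon_M\colon M\to S(M)$ using the main theorem: by Theorem \ref{bic holim thm} (after arranging, via Lemma \ref{103 450}, that $\epsilon_M$ is an injective cofibration) we have $S(M)\simeq\holim_{\cU^M_{J'}}\G$, where $\cU^M_{J'}$ is the fibrant-cofibrant part of $\langle J'\rangle^{\cC}_{M/}$. So it suffices to show that the tautological cone $k\colon M\to K$ (the identity object of this under-category) is \emph{homotopy initial}, i.e.\ that the structure morphism $M\to\holim\G$ is already a quasi-isomorphism. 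Equivalently, I must show that $M$ itself, viewed via $\epsilon_M$, is a homotopy limit of the diagram, which amounts to showing that $\langle J'\rangle^{\cC}_{M/}$ has an object $k\colon M\to K$ such that $k$ is a quasi-isomorphism onto a module in $\langle J'\rangle^{\cD}$ — or more precisely that the cofiber of $\epsilon_M$ vanishes in $\cD(\cA)$.

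The concrete mechanism: take the canonical map $\eta\colon M\to \prod_{\alpha}J^{(\alpha)}$ assembled from \emph{all} morphisms $M\to J$ in $\cD(\cA)$ (this is where the large cardinal $\kappa$ enters — $\kappa$ is chosen so that $J^{\Pi\kappa}$ receives every such map, which is possible since $\cD(\cA)$ is locally small and $J^{\Pi\kappa}$ is injectively fibrant by the remark after Lemma \ref{main mis lemma 1} on products of fibrant objects). Because $J$ is a co-generator, $\Hom_{\cD(\cA)}(C,J)=0$ forces $C=0$; applying this to the cone $C:=\cone(\eta)$: any map $C\to J$ would, by cohomological injectivity of $J$ and the construction of $\eta$ as the universal map to a power of $J$, have to factor through $\eta$, hence vanish — so $C=0$ and $\eta$ is a quasi-isomorphism onto the module $J':=J^{\Pi\kappa}\in\langle J'\rangle^{\cD}$ (trivially, as $J'$ itself is the generator of that subcategory; note $\langle J'\rangle\ni J'$). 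I would be slightly careful here: the naive co-generator argument gives that $M\to J'$ is a \emph{cohomological} monomorphism, not a quasi-isomorphism. So the honest route is: $M\to J'$ is cohomologically mono (by C.I.\ and universality of $\eta$), embed into the injective-fibrant $J'$, form the cofiber $C$; then \emph{both} $C\to$ (its own universal map to $J'$) and the long exact sequence show that $C\to J'$ is again cohomologically mono, and iterating produces a resolution $M\to J'\to J'\to\cdots$ all of whose maps are cohomological monos, whence $M^{\wcdast}$, computed as $\RR\Hom_{\cE}(\RR\Hom_{\cA}(M,J'),J')$, sees only this resolution and $\epsilon_M$ becomes a quasi-isomorphism by Lemma \ref{main mis lemma 2}.(1) applied termwise.

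The cleanest version of the argument avoids homotopy limits entirely and is what I would write: using Corollary \ref{103 824}, $S(M)\simeq\RR\Hom_{\cE}(\RR\Hom_{\cA}(M,J'),J')$; choose the representative $\ulcJ$ so that $\epsilon_M$ is an injective cofibration (Lemma \ref{103 450}); then factor $\epsilon_M$ through an injective resolution of $M$ by modules in $\langle J'\rangle^{\cC}$ built from the universal maps into powers of $J$, and apply Lemma \ref{main mis lemma 2}.(1) to each stage. \textbf{The main obstacle} I anticipate is precisely the gap between ``cohomological monomorphism'' and ``quasi-isomorphism'': a single universal map $M\to J'$ need not be a quasi-isomorphism, only a cohomological mono, so one genuinely needs the co-generator hypothesis to run an \emph{inductive/transfinite} construction of a $\langle J'\rangle^{\cC}$-resolution of $M$ and then argue that the derived double dual of a complex built from C.I.\ modules agrees with the complex — the bookkeeping of fibrancy/cofibrancy along this resolution, and verifying that the cardinal $\kappa$ can be chosen uniformly for the whole set $\cM$ (so that one fixed $J'$ works for every $M\in\cM$ simultaneously), is the technical heart. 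The choice of $\kappa$ for all of $\cM$ at once is harmless: take $\kappa$ to bound $\sup_{M\in\cM}|\Hom_{\cD(\cA)}(M,J)|$ and use that a product over the disjoint union still works for each factor.
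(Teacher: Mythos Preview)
Your reduction strategy is the same as the paper's: use the C.I.\ co-generator property to build, for each $M\in\cM$, a resolution by objects of $\langle J'\rangle$, choosing $\kappa$ uniformly (this is exactly how the paper deduces Theorem~\ref{loc thm 1} from Theorem~\ref{loc thm 2} via Lemma~\ref{101 1201}(4)). Your observation that the universal map $M\to J'$ is only a cohomological monomorphism, forcing an iterated resolution, is also correct.

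The gap is in the final step. Writing $I^n$ for the totalization of the first $n$ stages of the resolution, you have $M\simeq\holim_n I^n$ and, by Lemma~\ref{main mis lemma 2}(1), $\epsilon_{I^n}\colon I^n\xrightarrow{\sim} S(I^n)$ for each $n$. But ``applying Lemma~\ref{main mis lemma 2}(1) termwise'' does \emph{not} yield $\epsilon_M$ a quasi-isomorphism: that would require $S(\holim_n I^n)\simeq\holim_n S(I^n)$, and there is no reason for $S=D'D$ to commute with this homotopy limit --- $D(M)={}_{\cA}(\holim_n I^n,\ulcJ)$ is not $\hocolim_n D(I^n)$ in general. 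This commutation is precisely the nontrivial content, and what you dismiss as ``bookkeeping of fibrancy/cofibrancy'' is in fact the mathematical heart of the argument.

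The paper supplies the missing step as follows. By Theorem~\ref{bic holim thm} one has $S(M)\simeq\holim_{\cU}\Gamma$ where $\cU=(\langle J'\rangle^{\cC}_{M/})^{\circ}$. The tower $\{\pi^n\colon M\to I^n\}$ defines a functor $\Phi\colon(\ZZ_{\geq 0})^{\textup{op}}\to\cU$, and the key Proposition~\ref{loc key prop} shows that $\Phi$ is \emph{homotopy left cofinal}. Granting this, $S(M)\simeq\holim_n\Gamma(\pi^n)=\holim_n I^n\simeq M$ and one checks this identifies $\epsilon_M$ with a quasi-isomorphism. The cofinality proof has two parts: (i) for every $k\colon M\to K$ in $\cU$ there is some $\pi^n\to k$ (Lemma~\ref{104 153}, proved by induction on the level of $K$ in the Rouquier filtration $\langle J'\rangle^{\cD}_n$, using at each stage that $M^{n+1}\hookrightarrow J^{n+1}$ is a cohomological monomorphism so any map $M^{n+1}\to K^0$ with $K^0\in\langle J'\rangle^{\cD}_1$ extends); and (ii) the over-category $\sner(\Phi)_{/k}$ is weakly contractible, which requires showing that any finite diagram of factorizations $\pi^n\to k$ becomes null-homotopic after composing with $\varphi^{\ell,n}$ for $\ell$ large --- this is a genuinely homotopy-coherent argument (the long computation ending in diagram~(\ref{loc hoshii diagram})) and uses Lemma~\ref{loc cofinal lem} applied to the cokernel modules $M^{m+1}$. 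None of this is visible in your sketch.
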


\begin{proof}[A proof of Theorem \ref{loc thm 0} with assuming Theorem \ref{loc thm 1}]
By Theorem \ref{loc thm 1}, 
there exists an injective fibrant representative of a fixed  injective co-generator $J \in \cD(\cB)$,  
which will  be  also  denoted by $J' \in \cC(\cB)$,  
such that if we take a large enough cardinal $\kappa$, 
the the product $J':= J^{\Pi \kappa}$ satisfies the following property: 
let $S_{J'}$  be the bi-dual taken over $J'$,  
for any $b\in \cB$ 
the evaluation map $\epsilon_{b^{\wedge}}: \bwe \to S_{J'}(\bwe)$ is a quasi-isomorphism. 
Then by Lemma \ref{1010 321} the canonical functor $\iota_{J'}: \cB \to \BBic_{\cB}(J')$ 
is quasi-equivalence. 
By Proposition \ref{1010 314} 
we have the commutative diagram 
in $\Ho(\dgCat)$ 
\[
\begin{xymatrix}{
\cA \ar[rr]^{f} \ar[drr]_{\iota_{J}} && \cB  \ar@{=}[d]^{\wr}\\
&& \BBic_{\cA}(f_{\ast}J').
}\end{xymatrix}\]
Now we finish the proof. 
\end{proof}

\subsection{A proof of Theorem \ref{loc thm 1}.}

We denote by $\cohm(\cA)$ the homotopy category of the dg-category $\cA$. 
Namely this is a graded category 
such that  the objects of $\cohm(\cA)$ is the same with that of $\cA$ 
and the Hom-graded module is given by $\cohm(\cA)(a,b):= \cohm\left(\cA^{\bullet}(a,b)\right)$.

By a version of \cite[Theorem 1.8. Corollary 1.9]{Krause} (see also \cite{GP}), we can easily deduce the following lemma. 
\begin{lemma}\label{101 1201}
(1) 
Let $M$ be an object of $\cD(\cA)$ and $J$ a cohomologically  injective object of $\cD(\cA)$.
Then the map $\Hom_{\cD(\cA)}(M,J) \to \Hom_{\cG(\cohm(\cA))}(\cohm(M),\cohm(J))$ 
induced from the functor $\cohm: \cD(\cA) \to \cG(\cohm(\cA))$ is an isomorphism. 

(2) 
An object $J \in \cD(\cA)$ is a C.I.  object (resp. injective co-generator) 
if and only if the cohomology group $\tuH^{\bullet}(J)$ is a injective object (resp. injective co-generator) of $\cG(\tuH^{\bullet}(\cA))$. 

(3) 
Let $J$ be a C.I.  co-generator of $\cD(\cA)$. 
Then for any object $M \in \cD(\cA)$ there exists a C.I.  morphism $M \to J^{\Pi \kappa}$ 
for some cardinal $\kappa$. 

(4) Let $J$ be a C.I.  co-generator of $\cD(\cA)$.
 The graded  $\tuH^{\bullet}(\cA)$-module $\tuH^{\bullet}(M)$ admits a resolution 
\begin{equation}\label{101 607}
0 \to \cohm(M) \to \cohm(J^{\Pi \kappa_{0}^M}) \to \cohm(J^{\Pi \kappa_1^M}) \to \cohm(J^{\Pi \kappa_2^M}) \to \cdots. 
\end{equation}
for some cardinal $\kappa_{i}^M$ for $i \geq 0$. 
\end{lemma}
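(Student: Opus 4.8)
The plan is to obtain the four assertions as the translation to the present dg-categorical setting of \cite[Theorem 1.8, Corollary 1.9]{Krause} (see also \cite{GP}), the role of Krause's abelian target being played by the Grothendieck category $\cG(\tuH^\bullet\cA)$ of graded $\tuH^\bullet\cA$-modules and the role of his cohomology functor by the functor $\tuH^\bullet(-)\colon\cD(\cA)\to\cG(\tuH^\bullet\cA)$. First I would record the bookkeeping on which everything rests: $\tuH^\bullet(-)$ is a homological functor (it carries triangles to long exact sequences of graded modules), it sends coproducts to coproducts and products to products (coproducts and products of complexes of $\kk$-modules are exact), it carries free $\cA$-modules to free --- hence projective --- graded $\tuH^\bullet\cA$-modules, and for a free module $F$ and any $N$ the canonical map $\Hom_{\cD(\cA)}(F,N)\to\Hom_{\cG(\tuH^\bullet\cA)}(\tuH^\bullet F,\tuH^\bullet N)$ is an isomorphism by Yoneda. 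Finally $\cG(\tuH^\bullet\cA)$ is a Grothendieck category, so it has enough injectives and an injective cogenerator, and products of injectives are injective; these are the only structural facts about $\cG(\tuH^\bullet\cA)$ I will invoke.

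With this dictionary, part (2) is exactly \cite[Theorem 1.8]{Krause}: the only point to check is that the notions of cohomological monomorphism and cohomologically injective object of Definition \ref{pure inj def}, as well as the cogenerator condition, agree with Krause's after translation, which is immediate from the definition of $\tuH^\bullet(-)$ (using, for the cogenerator part, that $\tuH^\bullet(-)$ is conservative on $\cD(\cA)$). For part (1) I would argue as in \cite[Corollary 1.9]{Krause}: if $\tuH^\bullet(J)$ is injective then, since $\tuH^\bullet(-)$ is homological and $\Hom_{\cG(\tuH^\bullet\cA)}(-,\tuH^\bullet J)$ is exact, the functor $N\mapsto\Hom_{\cG(\tuH^\bullet\cA)}(\tuH^\bullet N,\tuH^\bullet J)$ is a cohomological functor on $\cD(\cA)^{\textup{op}}$ sending coproducts to products, hence representable by Brown representability; the natural transformation from $\Hom_{\cD(\cA)}(-,J)$ induced by $\tuH^\bullet(-)$ is an isomorphism on free modules by the Yoneda computation above, and a natural transformation of cohomological functors that send coproducts to products which is an isomorphism on the compact generators $\{a^\wedge[n]\}$ is an isomorphism everywhere. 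Combined with part (2) (a cohomologically injective $J$ has $\tuH^\bullet J$ injective), this proves part (1) as stated. If one prefers to avoid representability, one can instead correct a Yoneda lift by hand: for a free cover $\pi\colon F\twoheadrightarrow N$ with $\tuH^\bullet\pi$ an epimorphism, the cocone $K\to F$ of $\pi$ is a cohomological monomorphism by the long exact sequence, so the C.I.\ property of $J$ converts a lift over $F$ into a factorization through $\pi$, yielding surjectivity, and a parallel two-step argument yields injectivity.

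Parts (3) and (4) are then formal. By part (2), if $J$ is a C.I.\ cogenerator then $\tuH^\bullet(J)$ is an injective cogenerator of $\cG(\tuH^\bullet\cA)$; since $\tuH^\bullet(-)$ commutes with products, $\tuH^\bullet(J^{\Pi\kappa})=(\tuH^\bullet J)^{\Pi\kappa}$ is again injective, and $J^{\Pi\kappa}$ is C.I.\ by part (2). For (3): given $M$, the cogenerator property embeds $\tuH^\bullet M$ into $(\tuH^\bullet J)^{\Pi\kappa}=\tuH^\bullet(J^{\Pi\kappa})$ for a suitable $\kappa$, and by part (1), applicable since $J^{\Pi\kappa}$ is C.I., this monomorphism of graded modules lifts to a morphism $M\to J^{\Pi\kappa}$ in $\cD(\cA)$ inducing it on cohomology; such a morphism is by definition a cohomological monomorphism, i.e.\ a C.I.\ morphism. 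For (4): splicing the monomorphism $\tuH^\bullet M\hookrightarrow(\tuH^\bullet J)^{\Pi\kappa_0}$ with an embedding of its cokernel into $(\tuH^\bullet J)^{\Pi\kappa_1}$, and so on, in the Grothendieck category $\cG(\tuH^\bullet\cA)$ --- possible since $\tuH^\bullet J$ is an injective cogenerator --- produces an injective resolution $0\to\tuH^\bullet M\to(\tuH^\bullet J)^{\Pi\kappa_0}\to(\tuH^\bullet J)^{\Pi\kappa_1}\to\cdots$, and rewriting each term as $\tuH^\bullet(J^{\Pi\kappa_i})$ gives exactly (\ref{101 607}).

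The main obstacle is the lifting step inside part (1) --- equivalently, the surjectivity of $\Hom_{\cD(\cA)}(N,J)\to\Hom_{\cG(\tuH^\bullet\cA)}(\tuH^\bullet N,\tuH^\bullet J)$ when $\tuH^\bullet J$ is injective: a priori there are nonzero morphisms in $\cD(\cA)$ killed by $\tuH^\bullet(-)$, so controlling them genuinely requires either Brown representability or the cohomological-monomorphism correction, and this is the one place where \cite{Krause} does real work rather than mere translation. Everything else is unwinding definitions and standard Grothendieck-category bookkeeping.
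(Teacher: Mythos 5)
Your proposal is correct and follows essentially the same route as the paper: the paper simply asserts that the lemma is deduced from ``a version of'' \cite[Theorem 1.8, Corollary 1.9]{Krause} (see also \cite{GP}) without giving details, and what you write out is precisely the translation dictionary --- $\cG(\cohm\cA)$ in place of Krause's functor category, $\cohm(-)$ in place of the restricted Yoneda, the homological/coproduct-preserving/Yoneda-on-frees bookkeeping, Brown representability or d\'evissage on the compact generators $a^\wedge[n]$ for part~(1), and the standard Grothendieck-category splicing for parts~(3) and (4) --- that makes that citation work. One small caveat worth noting (not a gap in your argument relative to the paper, since the paper is silent on it too): in the ``only if'' direction of~(2) one is implicitly realizing injective objects of $\cG(\cohm\cA)$ as $\cohm$ of actual dg modules (e.g.\ injective hulls sit inside products of the cofree modules $\cohm(a^\vee)$, and summands split because idempotents split in $\cD(\cA)$ and part~(1) identifies the endomorphism rings), so the phrase ``immediate from the definition'' is a little optimistic; but this is the same amount of hidden work the paper itself waves at Krause.
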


Let $\cM$ be a set of objects of $\cD(\cA)$. 
The following theorem tells us that 
if we set $\kappa := \sup\{ \kappa_{i}^M \mid M \in \cM ,\,i \geq 0\}$ 
where $\kappa^M_{i}$ is a cardinal appearing in the above resolution (\ref{101 607}),  
then the bi-duality $S(M)$ taken over $J^{\Pi\kappa}$ is quasi-isomorphic to $M$. 

\begin{theorem}\label{loc thm 2} 
Let $J$ be a cohomological injective object of $\cD(\cA)$.  
We fix a set $\cM$ of objects of $\cC(\cA)$.  
Assume that 
for each $M\in \cM$ 
there exist objects $J^{i}_M$ for $i \geq 0$ 
such that 
the graded  $\tuH^{\bullet}(\cA)$-module $\tuH^{\bullet}(M)$ admits a resolution 
\begin{equation}\label{101 1200}
0 \to \cohm(M) \xrightarrow{ \tau_M} 
\cohm(J^0_M) \xrightarrow{d^0_M} \cohm(J^1_M) \xrightarrow{d^{1}_M} \cohm(J^2_M) \xrightarrow{d^{2}_M} \cdots 
\end{equation}  
 whose each term $J^i_M$ is a direct summand of a finite direct sum of $J_M$.
Then 
the evaluation map $\epsilon_{M}: M \to S_J(M)$ is a quasi-isomorphism. 

More precisely,
if we choose an injectively fibrant  representative of $J$ appropriately, 
then  
the evaluation map is a quasi-isomorphism 
\[
\epsilon_M: M \to S(M)
\]

\end{theorem}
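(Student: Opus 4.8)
The plan is to reduce everything to a statement about cohomology and then invoke the main homotopy-limit formula (Theorem \ref{bic holim thm}). First I would arrange, by Lemma \ref{103 450}, that the bi-duality functor $S = S_J$ is constructed so that $\epsilon_M\colon M \to S(M)$ is an injective cofibration for every $M \in \cM$; this is the harmless normalization alluded to in the statement, and it is exactly what is needed to apply Theorem \ref{bic holim thm}. Thus for each $M \in \cM$ we get a quasi-isomorphism $S(M) \simeq \holim_{\cU^M_J} \Gamma$, where $\cU = \cU^M_J$ is the fibrant-cofibrant part of $\langle \cJ\rangle^{\cC}_{M/}$ with $\cJ = \{J\}$ (so $\langle \cJ\rangle^{\cC} = \langle J\rangle^{\cC}$ is the thick subcategory generated by $J$). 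The goal becomes: show that the evaluation map exhibits $M$ itself as this homotopy limit.

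The key step is to produce a small, explicit co-final subcategory of $\cU$ coming from the chosen resolution (\ref{101 1200}). From the resolution $0 \to \cohm(M)\to \cohm(J^0_M)\to \cohm(J^1_M)\to\cdots$, whose terms are direct summands of finite sums of $J_M$, I would build (using Lemma \ref{101 1201}.(1), which identifies morphisms into cohomologically injective objects with morphisms of cohomology modules) a cochain of morphisms in $\cD(\cA)$ representing the truncations, i.e. objects $K_n := \cocone$-type assemblies with $K_n \in \langle J\rangle^{\cD}$ and compatible maps $M \to K_n$ realizing the $n$-th Postnikov-style truncation of the resolution, so that $\cohm(K_n)$ agrees with $\cohm(M)$ in degrees related to $n$ and the tower $(K_n)$ has homotopy limit with cohomology $\cohm(M)$ — concretely $\holim_n K_n \simeq M$ because the connectivity of $\cocone(M \to K_n)$ tends to infinity. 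Lifting this tower into $\cU$ via Lemma \ref{model lem} (replacing the $K_n$ by fibrant objects and the maps by cofibrations, as in the proof of Theorem \ref{comp thm 1}), I obtain a functor $\iota\colon (\ZZ_{\ge 1})^{\textup{op}} \to \cU$. Then, exactly as in Lemma \ref{comp lem 1} and Lemma \ref{comp lem 2}, I would show $\iota$ is homotopy left cofinal: any $k\colon M \to K$ with $K \in \langle J\rangle^{\cD}$ factors (up to homotopy) through some $K_n$ because, after representing $K$ by a complex built from $J$ and using that $J$ is cohomologically injective, the composite $\cohm(M)\to\cohm(K)$ factors through the truncated resolution; and the relevant mapping spaces $\Map_{\cU}(\pi^m,\pi^n)$ are weakly contractible by a $\RR\Hom$ vanishing argument parallel to the one with $\Ext^{-i-1}(\frka^m, R/\frka^n)=0$. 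Combining cofinality with Lemma \ref{923 043}, $\holim_{\cU}\Gamma \simeq \holim_n K_n \simeq M$, and chasing the evaluation map through these identifications (using Lemma \ref{main mis lemma 2}.(1), which says $\epsilon_{\Gamma}$ is a weak equivalence) gives that $\epsilon_M\colon M \to S(M)$ is a quasi-isomorphism.

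The main obstacle I anticipate is constructing the tower $(K_n)$ inside $\langle J\rangle^{\cC}$ compatibly and verifying the cofinality of $\iota$ with the correct homotopy-coherence: turning a resolution of $\cohm(M)$ by cohomology modules $\cohm(J^i_M)$ into an actual tower of dg-modules with maps from $M$ requires the strong faithfulness statement Lemma \ref{101 1201}.(1) at each stage and careful bookkeeping of connectivity so that $\holim_n K_n$ really is $M$ and not merely a completion — this is the analogue of the Artin--Rees / Grothendieck-vanishing input in the completion theorem, here replaced by the finiteness of the resolution terms (each $J^i_M$ a summand of a \emph{finite} sum of $J_M$, hence lying in $\langle J\rangle$). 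Once the tower and its cofinality are in place, the rest is a formal application of the machinery already developed (Theorem \ref{bic holim thm}, Lemma \ref{923 043}, Lemma \ref{921 514}), and Theorem \ref{loc thm 1} then follows immediately from Theorem \ref{loc thm 2} together with Lemma \ref{101 1201}.(4), which supplies exactly such a resolution with terms $\cohm(J^{\Pi\kappa_i^M})$ when $J$ is a cohomologically injective co-generator.
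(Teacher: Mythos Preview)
Your overall architecture is the same as the paper's: lift the cohomological resolution to a tower $\pi^n\colon M\to I^n$ with $I^n\in\langle J\rangle^{\cC}$, show the resulting functor $\Phi\colon(\ZZ_{\ge 0})^{\textup{op}}\to\cU$ is homotopy left cofinal, and then read off $\epsilon_M$ as a quasi-isomorphism from Theorem~\ref{bic holim thm} and Lemma~\ref{921 514}. The construction of the tower and the non-emptiness step (your analogue of Lemma~\ref{comp lem 2}) are fine in outline and match what the paper does in Lemma~\ref{104 153}.

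The gap is in your analogue of Lemma~\ref{comp lem 1}. You assert that $\Map_{\cU}(\pi^m,\pi^n)$ is weakly contractible by a vanishing argument parallel to $\Ext^{-i-1}(\frka^m,R/\frka^n)=0$. That parallel breaks down here. In the completion proof the fibre of $\pi^m$ is $\frka^m$, a module concentrated in degree~$0$, and the target $R/\frka^n$ is likewise a module, so negative Ext groups vanish for free. In the present situation the cofibre of $\pi^m$ is $M^{m+1}[-m]$ and the target $I^n$ is a genuine complex in degrees $0,\dots,n$; the required vanishing becomes $\tuH^{j}\RR\Hom(M^{m+1},I^n)=0$ for $0\le j\le m$, and already for $j=0$ this is $\Hom_{\cD(\cA)}(M^{m+1},M)$, which has no reason to vanish. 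Concretely, $\pi_0\Map_{\cU}(\pi^0,\pi^0)$ is a torsor over $\Hom_{\cD(\cA)}(M^1,J^0)$, which is nonzero as soon as $M$ is not itself cohomologically injective. So the mapping spaces between the $\pi^n$ are \emph{not} contractible in general, and the two-step factorisation $\sner(\cI)\to V\to\sner(\cU)$ used in the completion proof cannot be reproduced verbatim.

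The paper's Proposition~\ref{loc key prop} proves cofinality by a different mechanism: it shows directly that $\sner(\Phi)_{/k}$ is weakly contractible by arguing that any map $X\to\sner(\Phi)_{/k}$ from a finite simplicial set, after reduction to a map $\alpha\colon\pi^n\to k^X$, becomes homotopic to a map factoring through a point once one composes with $\varphi^{\ell,n}$ for some $\ell\gg n$. The key extra input is Lemma~\ref{loc cofinal lem}, which says that every object $M^{m+1}\to K$ in $\langle J\rangle^{\cC}_{M^{m+1}/}$ factors through some $\pi^{\ell}_{m+1}$; this lets one absorb the discrepancy between two lifts into a further stage of the tower, exactly as in the classical ``unique up to homotopy'' argument for maps of injective resolutions (cf.\ the discussion in Appendix~\ref{App 2}). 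What you are missing, then, is not the strategy but this pro-style replacement for contractibility: two morphisms $\pi^n\to k$ need not be homotopic, but their restrictions along $\varphi^{\ell,n}$ eventually are.
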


By Example \ref{101 941} we obtain the following corollary. 
\begin{corollary}\label{loc cor}
Let $R$ be a ring and $J$ an injective $R$-module. 
Assume that
an $R$-module $M$ admits an injective resolution 
\[
0 \to M \to J^0 \to J^1 \to J^2 \to \cdots 
\]
such that each term $J^i$ is a direct summand of finite direct sum of $J$. 
Then 
the evaluation map  $\epsilon_{M}: M \to M^{\wcdast}$ is a quasi-isomorphism. 
\end{corollary}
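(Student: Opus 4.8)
The plan is to obtain Corollary \ref{loc cor} as a direct specialization of Theorem \ref{loc thm 2}: take the ambient dg-category $\cA$ to be the ordinary ring $R$ (a dg-category with one object, concentrated in degree $0$), take $\cM := \{M\}$, and take as the cohomologically injective object the injective $R$-module $J$ itself.

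First I would check that $J$ qualifies as the object ``$J$'' in Theorem \ref{loc thm 2}: by Example \ref{101 941}, an injective $R$-module is a cohomologically injective object of $\cD(R)$. Next I would recast the given injective resolution into the shape the theorem demands. Because $R$ is an ordinary ring, $\cohm(R) = R$ is concentrated in degree $0$ and the category $\cG(\cohm(R))$ is just the category of $\ZZ$-graded $R$-modules; moreover both $M$ and each $J^i$, viewed as complexes concentrated in degree $0$, satisfy $\cohm(M) = M$ and $\cohm(J^i) = J^i$, again concentrated in degree $0$. Applying $\cohm$ to $0 \to M \to J^0 \to J^1 \to \cdots$ therefore yields precisely a resolution $0 \to \cohm(M) \to \cohm(J^0) \to \cohm(J^1) \to \cdots$ as in Theorem \ref{loc thm 2}, and the hypothesis that each $J^i$ is a direct summand of a finite direct sum of $J$ transfers verbatim.

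With both hypotheses verified, Theorem \ref{loc thm 2} produces an injectively fibrant representative of $J$ for which the evaluation map $\epsilon_M : M \to S_J(M)$ is a quasi-isomorphism; and since $S_J(M)$ computes the derived bi-dual $M^{\wcdast}$ of $M$ over $J$ (Corollary \ref{103 824}), this is exactly the stated assertion. I do not expect any genuine difficulty here, as all the substance is already contained in Theorem \ref{loc thm 2}; the only point requiring a little care is the routine translation between the classical formulation (an injective resolution of a module) and the dg/graded formulation of Theorem \ref{loc thm 2} — in particular the observation that for a one-object dg-category $\cG(\cohm(\cA))$ degenerates to ordinary graded modules, so that an ordinary injective resolution is automatically a resolution by cohomologically injective objects of the required form.
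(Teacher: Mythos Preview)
Your proposal is correct and follows exactly the paper's approach: the paper's entire proof is the single sentence ``By Example \ref{101 941} we obtain the following corollary,'' i.e.\ specialize Theorem \ref{loc thm 2} to $\cA = R$ using that an injective $R$-module is cohomologically injective. Your write-up simply spells out the routine translation between the module-theoretic hypothesis and the graded-module hypothesis of Theorem \ref{loc thm 2}, which the paper leaves implicit.
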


\begin{remark}
This corollary is already obtained by Shamir \cite{Shamir} 
in a different way. 
\end{remark}
In the rest of this section we devote to prove Theorem \ref{loc thm 2}.

We proceed a proof of Theorem \ref{loc thm 2}. 
Since the derived bi-duality functor $S(-)$ preserves quasi-isomorphisms,  
we may assume that all objects $M$ of $\cM$  are injective fibrant. 

Using the following Lemma \ref{106 1614}  
we inductively  
construct a $\#$-exact sequence of dg $\cA$-modules for each $M \in \cM$ 
\begin{equation}\label{105 740} 
0\to M \xrightarrow{\delta^{-1}_M} \tilde{J}^{0}_M \xrightarrow{\delta^0_M} \tilde{J}^{1}_M \xrightarrow{\delta^{1}_M} 
\tilde{J}^{2}_M \xrightarrow{\delta^{2}_M} \cdots
\end{equation}
such that 
\begin{enumerate}

\item 
 cohomology $\cohm(-)$ coincide  with 
the given resolution (\ref{101 1200}), 
(hence $\tilde{J}^i_M$ is an injectively  fibrant representative of $J^i_M$), 

\item
each $\coker(\delta^{i}_M)$ is injectively fibrant for $i \geq -1$.  
\end{enumerate}

\begin{lemma}\label{106 1614}
Let $\tilde{N}$ be an injectively fibrant object of $\cC(\cA)$. 
Assume that a morphism $r: \cohm(\tilde{N}) \to \cohm(L)$ is given 
with some C.I.  object $L$ of $\cD(\cA)$. 
Then 
there exists an injectively fibrant replacement $\tilde{L}$ of $L$ 
and a morphism $\underline{r}: \tilde{N} \to \tilde{L}$ 
such that 
$\cohm(\underline{r}) = r$ and 
$\coker(\underline{r} )$ is injectively fibrant. 
\end{lemma}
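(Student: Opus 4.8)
The plan is to realize the graded morphism $r$ first by an honest chain map into an injectively fibrant model of $L$, and then to correct that chain map into a $\#$-split monomorphism by the mapping–cylinder trick, arranging that its cokernel becomes a \emph{cone}, which is automatically injectively fibrant by Lemma \ref{101 1250}. First I would invoke Lemma \ref{101 1201}.(1): since $L$ is C.I., the functor $\cohm$ induces a bijection $\Hom_{\cD(\cA)}(\tilde N,L)\xrightarrow{\sim}\Hom_{\cG(\cohm(\cA))}(\cohm(\tilde N),\cohm(L))$, so $r$ lifts to a unique morphism $\phi\colon\tilde N\to L$ in $\cD(\cA)$ with $\cohm(\phi)=r$. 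Choose an injectively fibrant representative $L'$ of $L$, i.e.\ a trivial injective cofibration $L\trivcofright L'$. Because every object of $\cC(\cA)$ is cofibrant for the injective model structure and $L'$ is fibrant, the morphism of $\cD(\cA)$ underlying $\phi$ is represented by an actual chain map $f\colon\tilde N\to L'$, and under the canonical identification $\cohm(L)\cong\cohm(L')$ we have $\cohm(f)=r$.

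Next I would set
\[
\tilde L := L'\oplus\cone(1_{\tilde N}),\qquad \underline r := {}^{t}(f,\iota)\colon\tilde N\to\tilde L,
\]
where $\iota={}^{t}(1_{\tilde N},0)\colon\tilde N\hookrightarrow\cone(1_{\tilde N})$ is the canonical $\#$-split inclusion from the $\#$-exact sequence (\ref{1010 101}) (for the morphism $1_{\tilde N}$). Since its second component $\iota$ is a $\#$-split monomorphism, so is $\underline r$; and a direct computation identifies $\coker(\underline r)$ with $\cone(f)$, so that there is a $\#$-exact sequence $0\to\tilde N\xrightarrow{\underline r}\tilde L\to\cone(f)\to 0$.

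It then remains to check the three required properties, and these follow formally from what is already available. The module $\tilde L$ is injectively fibrant: $L'$ is by construction, $\cone(1_{\tilde N})$ is by Lemma \ref{101 1250}.(2) (it is the cone of a morphism between the injectively fibrant module $\tilde N$ and itself), and a finite direct sum—being a finite product—of injectively fibrant modules is injectively fibrant. Since $\cone(1_{\tilde N})$ is acyclic, the projection $\tilde L\to L'$ is a quasi-isomorphism, so $\tilde L\simeq L'\simeq L$ and $\tilde L$ is indeed a replacement of $L$. The second component of $\underline r$ vanishes in cohomology, so $\cohm(\underline r)$ coincides with $\cohm(f)=r$ under the identification $\cohm(\tilde L)\cong\cohm(L)$. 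Finally $\coker(\underline r)\cong\cone(f)$ is the cone of a morphism between the injectively fibrant modules $\tilde N$ and $L'$, hence injectively fibrant by Lemma \ref{101 1250}.(2).

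The only genuinely non-formal step is the passage from the cohomology-level morphism $r$ to a chain map $f$, which is exactly where the hypothesis that $L$ is C.I.\ enters, via Lemma \ref{101 1201}.(1); everything afterwards is the standard mapping–cylinder manipulation together with closure of injectively fibrant modules under cones. So the main obstacle is really just the careful bookkeeping of the identification $\cohm(\tilde L)\cong\cohm(L)$ under which the equality $\cohm(\underline r)=r$ is to be read.
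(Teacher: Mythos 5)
Your proposal is correct and is essentially the paper's own proof: the construction $\tilde L = L'\oplus\cone(1_{\tilde N})$ and $\underline r = {}^{t}(f,\iota)$ is exactly what the paper does after invoking Lemma \ref{101 1201} to realize $r$ at the level of $\cD(\cA)$. The only cosmetic difference is in the final fibrancy check: you identify $\coker(\underline r)$ directly with a cone and apply Lemma \ref{101 1250}.(2), while the paper exhibits $\coker(\underline r)$ as an extension of $\tilde N[1]$ by $\tilde L'$ via the snake lemma and applies Lemma \ref{101 1250}.(1) --- the two routes are interchangeable.
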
 

\begin{proof}
By Lemma \ref{101 1201} 
there exists a morphism $r': \tilde{N} \to L$ in $\cD(\cA)$ such that $\cohm(r') = r$. 
We pick an injectively fibrant replacement $\tilde{L}'$ 
and a representative $\underline{r}': \tilde{N} \to \tilde{L}'$ of $r'$. 
Let  $C := \cone(1_{\tilde{N}})$ be the cone of the identity $1_{\tilde{N}}$ 
and $\iota:={}^{t}(1_{\tilde{N}},0): \tilde{N} \to C $  the canonical inclusion. 
We set $\tilde{L} := \tilde{L}' \oplus C$ and $r := {}^{t}(\underline{r}', \iota): \tilde{N} \to \tilde{L}$.
Then we have the following commutative diagram 
\[
\begin{xymatrix}{ 
0\ar[r] & \tilde{N} \ar@{=}[d] \ar[r]^{\underline{r}} & \tilde{L} \ar[d]^{(0,1_C)} \ar[r] & 
\coker(\underline{r}) \ar[d] \ar[r] & 0\\
0 \ar[r]& \tilde{N} \ar[r]^{\iota} & C \ar[r] & 
\tilde{N}[1] \ar[r] &0
}\end{xymatrix}
\]
where the top and the bottom rows are $\#$-exact. 
By snake Lemma we obtain 
 the $\#$-exact sequence $0 \to \tilde{L}' \to \coker(\underline{r}) \to \tilde{N}[1] \to 0$. 
Therefore by Lemma \ref{101 1250} we conclude that $\coker(\underline{r})$ is injectively fibrant. 
Since the cone $C$ is weakly contractible, we see that $\cohm(\underline{r}) = r$.  
\end{proof}
We choose an injectively fibrant representative $\tilde{J}$ of $J$. 
Then for $M \in \cM$ and $i \geq 0$, 
the module $\tilde{J}^i_M$ belongs to $(\langle  \tilde{J} \rangle^{\cC})^{\circ}$.

The above resolution is a main tool for the proof. 
We need to fix  notations. 
From now for simplicity
we denote $\delta^{i}_M$, $\tilde{J}^i_M$ and $\tilde{J}_M$ by $\delta^i_M$, $J^i$ and $J$ 
respectively. 
Since we prove that the evaluation map $\epsilon_M$ is a quasi-isomorphism for each $M$ separately, 
these modifications are harmless for our purpose. 
 We set $M^0 := M$ and $M^{n}=\coker(\delta^{n-2})$ for $n \geq 1$.  
We denote by $\lambda^n: M^{n} \hookrightarrow J^n$ and $\rho^: J^{n} \twoheadrightarrow M^{n+1}$ 
 the canonical morphisms. 
Therefore we have  $\delta^{n}: = \lambda^{n+1} \circ \rho^n$. 
\[
\begin{xymatrix}{
\ar[r]^{\delta^{n-1}} & J^n \ar[rr]^{\delta^n} \ar@{->>}[dr]_{\rho^n} &&
J^{n+1} \ar[r]^-{\delta^{n+1}} & \\ 
&& M^{n+1} \ar@{^{(}->}[ur]_{\lambda^{n+1}} &&
}\end{xymatrix}
\]
For $n \leq m$ 
we denote by $I^{[n,m]}$ the totalization of the complex 
\[
J^n \xrightarrow{\delta^n} J^{n+1} \xrightarrow{\delta^{n+1}} \cdots \xrightarrow{\delta^{m-1}} J^m.
\]
More precisely, 
the dg $\cA$-module $I^{[n,m]}$ is defined in the following way:    
the underlying graded module of $I^{[n,m]}$ is given by 
\[
 J^m[-(m-n)] \bigoplus J^{m-1}[-(m-n-1)] \bigoplus \cdots \bigoplus J^{n+1}[-1] \bigoplus J^n
\]
and the differential $d_{I^{[n,m]}}$ is given by 
\[
(-1)^n
\begin{pmatrix}
(-1)^{m-n}d_{J^m} & \delta^{m-1}[-(m-n-1)] & 0 & \cdots & 0 & 0 \\ 
0 & (-1)^{m-n-1}d_{J^{m-1}} & \delta^{m-2}[-(m-n-2)] & \cdots & 0& 0\\
0&0  & (-1)^{m-n-2}d_{J^{m-2}} &  \cdots  &0&0  \\ 
\vdots & \vdots & \vdots &  & & \\
0&0&0& \cdots &-d_{J^{n+1}} & \delta^{n}\\ 
0& 0& 0& \cdots & 0 & d_{J^n}. 
\end{pmatrix} 
\]
We denote by $\pi^m_n$ the morphism 
${}^{t}(0,0, \cdots 0,\lambda^n) : M^n \to I^{[n,m]}.$
For $\ell> m > n$ we denote by $\varphi^{\ell,m}_{n}$ 
the morphism  $I^{[n,\ell]} \to I^{[n,m]}$ induced from the morphism of complexes 
\[
\begin{xymatrix}{
J^{n} \ar[d]^{1_{J^n}} \ar[r] & J^{n+1} \ar[d]^{1_{J^{n+1}}} \ar[r] & *++++{\cdots} \ar[r] & 
 J^{m} \ar[d]^{1_{J^{m}}} \ar[r] & J^{m+1} \ar[d]^0 \ar[r] & 
*++++{\cdots} \ar[r] &  J^{\ell}\ar[d]^0\\
J^{n}  \ar[r] & J^{n+1}  \ar[r] & *++++{\cdots} \ar[r] &  J^{m} \ar[r] &
0 \ar[r] & *++++{\cdots} \ar[r]  &0
 }\end{xymatrix}    
\]  
Then we have $\pi^m_n = \varphi^{\ell,m} \circ \pi^\ell_n$. 
 We set $I^n:= I^{[0,n]}$,$\pi^n := \pi^n_0$ and $\varphi^{m,n}:= \varphi^{m,n}_0$.\[
\begin{xymatrix}{
& *++{M} \ar@{>->}[d]_{\pi^{n}} \ar@{>->}[drr]^{\pi^{n-1}} & \\
*+++{\cdots} \ar[r]_{\varphi^{n+1,n} }& I^{n} \ar[rr]_{\varphi^{n, n-1}} && I^{n-1} \ar[r]_{\varphi^{n-1,n-2}}& *+++{\cdots} &.  
}\end{xymatrix}\]
The limit $\lim_{n \to \infty} I^n$ is the totalization of $J^0 \to J^1 \to \cdots $.  
By \cite{ddc} the system $\{\pi^n \}_{n \geq 0} $ of morphisms induces a quasi-isomorphism
\[
\pi^{\infty}: M \xrightarrow{\sim} \lim_{n\to \infty }I^{n}. 
\]

We denote by $\cU$ 
the full subcategory $(\langle J \rangle^{\cC}_{M/})^{\circ}$. 
We denote by $\cI $ the category $(\ZZ_{\geq 0})^{\textup{op}}$. 
We define a functor $\Phi: \cI \to \cU$ 
to be a functor which sends an object $n$ to $\pi^n$ 
and a unique morphism $\textbf{geq}^{m,n}: m \to n$ to $\varphi^{m,n}$.

The key of the proof of Theorem \ref{loc thm 2} 
is the following proposition. 

\begin{proposition}\label{loc key prop}
The functor $\Phi$ is homotopy left cofinal. 
\end{proposition}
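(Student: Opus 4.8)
The plan is to verify the criterion of Theorem~\ref{cofinal thm} for the $\infty$-functor $\sner(\Phi)\colon \sner(\cI)\to\sner(\cU)$: it is left cofinal if and only if, for every object $k\colon M\to K$ of $\cU$, the simplicial set $\Phi_{/k}:=\sner(\cI)\times_{\sner(\cU)}\sner(\cU)_{/k}$ is weakly contractible. The first step would be to make $\Phi_{/k}$ explicit. Since $\cU$ is a fibrant simplicial category, $\sner(\cU)$ is an $\infty$-category and the slice projection $\sner(\cU)_{/k}\to\sner(\cU)$ is a right fibration, hence so is its pullback $\Phi_{/k}\to\sner(\cI)=\sner((\ZZ_{\geq 0})^{\textup{op}})$; its fibre over $n$ is the mapping space $\Map_{\cU}(\pi^n,k)$, and a morphism $\varphi^{n+1,n}$ acts on fibres by precomposition. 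By the Grothendieck construction the total space $\Phi_{/k}$ is therefore weakly equivalent to the homotopy colimit of the sequence
\[
\Map_{\cU}(\pi^0,k)\ \longrightarrow\ \Map_{\cU}(\pi^1,k)\ \longrightarrow\ \Map_{\cU}(\pi^2,k)\ \longrightarrow\ \cdots,
\]
so it suffices to show this homotopy colimit is weakly contractible.

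Next I would reduce to a statement about $\RR\Hom$, in the same spirit as the reduction performed in the proof of Lemma~\ref{comp key prop}. Because $\pi^n$ is an injective cofibration and $K$ is injectively fibrant, the map $(\pi^n)^{\ast}\colon\Map_{\cA}(I^n,K)\to\Map_{\cA}(M,K)$ is a Kan fibration whose homotopy fibre over $k$ is $\Map_{\cU}(\pi^n,k)$. Filtered homotopy colimits commute with homotopy fibres, and $\Map_{\cA}(M,K)$ is constant in $n$ (with contractible indexing category), so $\Phi_{/k}$ is the homotopy fibre of $\hocolim_{n}\Map_{\cA}(I^n,K)\to\Map_{\cA}(M,K)$. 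Hence it is enough to prove that this last map is a weak equivalence for every injectively fibrant $K$ whose class lies in $\langle J\rangle^{\cD}$; since $\Map_{\cA}(-,K)$ only records the connective truncation of $\RR\Hom_{\cA}(-,K)$ and $M\simeq\lim_{n}I^n$ is a genuine homotopy limit (a tower of injective fibrations, realised by $\pi^{\infty}$), this amounts to showing that the canonical map
\[
\colim_{n}\ \RR\Hom_{\cA}(I^n,K)\ \longrightarrow\ \RR\Hom_{\cA}(M,K)
\]
is a quasi-isomorphism.

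The key computation is the control of the cofibres of $(\pi^n)^{\ast}$. The brutal-truncation filtration of the totalisation $I^n=I^{[0,n]}$ yields a triangle $M\xrightarrow{\pi^n}I^n\to M^{n+1}[-n]\to M[1]$ in $\cD(\cA)$, natural in $n$ with respect to the $\varphi^{n+1,n}$, where $M^{n+1}=\coker(\delta^{n-1})$ is the $(n+1)$-st cosyzygy; applying $\RR\Hom_{\cA}(-,K)$, the cofibre of $(\pi^n)^{\ast}$ is $\RR\Hom_{\cA}(M^{n+1},K)[n+1]$. Now $K\in\langle J\rangle^{\cD}$ is built from $J$ by finitely many shifts, cones and retracts, and $J$ — hence each $J^i$, being a summand of a finite direct sum of $J$ — is cohomologically injective, so Lemma~\ref{101 1201}(1) computes the cohomology of $\RR\Hom_{\cA}(N,J^i[\ast])$ by graded homomorphisms $\Hom^{\ast}_{\cG(\cohm\cA)}(\cohm(N),\cohm(J^i))$. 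Since $\cohm(M^{n+1})$ embeds into $\cohm(J^{n+1})$, its internal degrees are confined, uniformly in $n$, to the finite band in which $\cohm(J)$ is concentrated; combined with the fact that $M^{n+1}\simeq\cone(M^n\to J^n)$ is finitely built from $M$ and $J^0,\dots,J^n$, this shows that $\RR\Hom_{\cA}(M^{n+1},K)$ lies in a fixed band of cohomological degrees independent of $n$. Consequently $\RR\Hom_{\cA}(M^{n+1},K)[n+1]$ is concentrated in degrees tending to $-\infty$, so in every fixed cohomological degree the maps $\RR\Hom_{\cA}(I^n,K)\to\RR\Hom_{\cA}(M,K)$ are isomorphisms for $n\gg 0$; hence $\colim_{n}\RR\Hom_{\cA}(I^n,K)\to\RR\Hom_{\cA}(M,K)$ is a quasi-isomorphism, which finishes the proof.

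The hard part is exactly this last point: producing the $n$-independent cohomological band for $\RR\Hom_{\cA}(M^{n+1},K)$. This is where the hypothesis of Theorem~\ref{loc thm 2} — that $\cohm(M)$ has a coresolution by summands of finite direct sums of the cohomologically injective $J$, whose own cohomology occupies only finitely many degrees — together with Lemma~\ref{101 1201}(1) does the real work, and it is precisely what differs from the completion situation of Lemma~\ref{comp key prop}: there the cones $\cone(\pi^n)\simeq\frka^n[1]$ sit in a single fixed cohomological degree, so the relevant $\RR\Hom$'s are outright concentrated in strictly positive degrees and the mapping spaces $\Map_{\cU}(\pi^m,\pi^n)$ are already contractible, whereas here the cosyzygies $M^{n+1}[-n]$ drift upward and one must instead exploit that $\RR\Hom_{\cA}(M^{n+1},K)$ stays in a fixed band, so that the drift pushes the cofibres below any fixed degree and only the colimit — not the individual mapping spaces — is contractible. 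The remaining ingredients (that $\Phi_{/k}$ is the total space of a right fibration, hence a homotopy colimit of the fibre mapping spaces; that filtered homotopy colimits are left exact; and that $M=\lim_n I^n$ is a homotopy limit of a tower of fibrations) are standard and I would simply cite them.
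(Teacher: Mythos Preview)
There is a genuine gap at the step you yourself flag as ``the hard part''. You assert that $\cohm(J)$ is concentrated in a finite band of degrees, and from this deduce that $\RR\Hom_{\cA}(M^{n+1},K)$ lies in a band independent of $n$. But no boundedness on $\cohm(J)$ is assumed in Theorem~\ref{loc thm 2}, and it fails in the cases that matter: Example~\ref{101 941} shows that the basic C.I.\ cogenerator of $\cD(R)$ is $\prod_{n\in\ZZ}J[n]$, which has cohomology in every degree, and this is exactly the kind of $J$ used to deduce Theorem~\ref{loc thm 1} from Theorem~\ref{loc thm 2}. Without the band, the individual groups $\Ext^{d+n}_{\cA}(M^{n+1},K)$ need not become eventually zero in a fixed degree $d$, and your argument stops there. (The auxiliary remark that $M^{n+1}$ is finitely built from $M,J^0,\dots,J^n$ does not help either, since the number of building blocks grows with $n$.)

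Your global strategy --- identifying $\Phi_{/k}$ with $\hocolim_n\Map_{\cU}(\pi^n,k)$ via the Grothendieck construction and reducing to $\colim_n\RR\Hom_{\cA}(I^n,K)\to\RR\Hom_{\cA}(M,K)$ --- is sound and can be repaired, but by a different mechanism: what is true is that the \emph{transition maps} in the system $\{\Ext^{d+n}_{\cA}(M^{n+1},J)\}_n$ vanish. They are the connecting morphisms of the triangle $M^{n+1}\to J^{n+1}\to M^{n+2}\to M^{n+1}[1]$, and in the resulting long exact sequence the map $\Ext^i_{\cA}(J^{n+1},J)\to\Ext^i_{\cA}(M^{n+1},J)$ induced by $\lambda^{n+1}$ is surjective, because by Lemma~\ref{101 1201}(1) it identifies with restriction along the monomorphism $\cohm(M^{n+1})\hookrightarrow\cohm(J^{n+1})$ and $\cohm(J)$ is injective in $\cG(\cohm\cA)$ by Lemma~\ref{101 1201}(2). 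A thick-subcategory argument then gives $\colim_n\Ext^{d+n}_{\cA}(M^{n+1},K)=0$ for every $K\in\langle J\rangle^{\cD}$, which is what you need. By contrast, the paper proves the Proposition by an explicit diagram chase in $\cU$: first Lemma~\ref{104 153} gives non-emptiness of $\sner(\Phi)_{/k}$ by inducting on the level of $K$ in $\langle J\rangle^{\cD}_n$, then contractibility is shown by factoring any finite diagram through some $\pi^\ell$ using Lemma~\ref{loc cofinal lem}, never invoking the hocolim identification.
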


First we assume Proposition \ref{loc key prop} and prove Theorem \ref{loc thm 2}. 
We have the commutative diagram 
\begin{equation}\label{101 1134}
\begin{xymatrix}{
\ct_{\cI} M \ar[d]_{\kappa} \ar[r]^{\epsilon_M} & 
\ct_{\cI} S(M) \ar[d]^{S(\kappa)} \\ 
\G|_{\cI} \ar[r]_{\epsilon_\G} & 
S\circ \G|_{\cI} . 
}\end{xymatrix}\end{equation} 
The bottom arrow is a weak equivalence. 
By Proposition \ref{loc key prop} the right vertical arrow exhibits $S(M)$ as a homotopy limit of $S\circ \G$. 
We claim that the left vertical arrow exhibits $M$ as a homotopy limit of $\G|_{\cI}$. 
Indeed 
we see that there exists  a canonical  isomorphism $\lim_{\cI}\G|_{\cI} \cong \lim_{n \to \infty} I^{n}$ 
and under this isomorphism the morphism $\lim\kappa : M \to \lim_{\cI} \G|_{\cI} $ 
corresponds to the quasi-isomorphism $\pi^{\infty} : M \stackrel{\sim}{\rightarrow} \lim_{n \to \infty } I^n$.  
Since each $\G(\textbf{geq}^{n,n-1}) =\varphi^{n,n-1}$ is a projective fibration, 
the limit $\lim \G|_{\cI}$ computes  a homotopy  limit of $\G|_{\cI}$. 
Therefore we verify the claim.  

Taking limits of the above diagram (\ref{101 1134}), 
by the uniqueness of homotopy limit up to weak equivalence (Lemma \ref{923 017}),   
we conclude that 
the evaluation map $\epsilon_M: M \to S(M) $ is a quasi-isomorphism. 
This finishes the proof of Theorem \ref{loc thm 2} 
except Proposition \ref{loc key prop}.

In the rest of this section we devote to prove Proposition \ref{loc key prop}.
According to Theorem \ref{cofinal thm}, 
 it is enough to verify  that 
the simplicial set $\sner(\Phi)_{/k}$ is non-empty and  weakly contractible for each object $k: M \to K$ of $\cU$. 
First we prove  
\begin{lemma}\label{104 153}
The simplicial set $\sner(\Phi)_{/k}$ is non-empty. 
Namely  for some $m \geq 0$  
there exists a morphism $\psi: I^{m} \to K$ 
which satisfies $k= \psi \circ \pi^m$.
\end{lemma}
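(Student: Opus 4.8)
\textbf{Proof proposal for Lemma \ref{104 153}.}

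The plan is to exploit the fact that $k \colon M \to K$ has codomain $K \in (\langle J\rangle^{\cC})^{\circ}$, so $K$ lies in the thick subcategory generated by $J$ in $\cD(\cA)$, together with the tautological description of $I^m$ as the totalization $J^0 \to \cdots \to J^m$ of the beginning of the (chosen) cohomologically injective resolution of $M$. The point is that the maps $\pi^m \colon M \rightarrowtail I^m$ are, in the derived category, ``$m$-step truncations'' of the quasi-isomorphism $\pi^{\infty}\colon M \xrightarrow{\sim} \lim_n I^n$; the cocone of $\pi^m$ is (a shift of) $M^{m+1}$, and $M^{m+1}$ has cohomology concentrated in a range that moves off to $+\infty$ as $m$ grows. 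So for a fixed $K$, which is built from $J$ in finitely many steps, all maps $K \to M^{m+1}[\,\cdot\,]$ should vanish once $m$ is large, which will let me lift $k$ through $\pi^m$.

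First I would make precise the ``cofiber'' statement: from the $\#$-exact sequences defining the $I^{[n,m]}$ and the morphisms $\varphi^{m,n}$, extract an exact triangle in $\cD(\cA)$ of the form $M \xrightarrow{\pi^m} I^m \to M^{m+1}[-m] \to M[1]$, or equivalently identify $\operatorname{cocone}(\pi^m)$ with $M^{m+1}[-m-1]$ up to the usual shift bookkeeping; this is a routine diagram chase with the sequences \eqref{1010 101} and the definition of $I^{[n,m]}$, and I would not grind through the signs. Then, applying $\Hom_{\cD(\cA)}(K,-)$ to this triangle, lifting $k\colon K \to M$... wait, $k$ goes $M\to K$, so I actually apply $\Hom_{\cD(\cA)}(-,\,\cdot\,)$ is the wrong variance; instead I apply $\Hom_{\cD(\cA)}(M,-)$? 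No — I want to lift $k\in\Hom(M,K)$ through $\pi^m\colon M\to I^m$, i.e.\ find $\psi\colon I^m\to K$ with $\psi\pi^m=k$. So I should apply $\Hom_{\cD(\cA)}(-,K)$ to the triangle $M \xrightarrow{\pi^m} I^m \to C^m \to$, getting an exact sequence
\[
\Hom_{\cD(\cA)}(C^m,K) \to \Hom_{\cD(\cA)}(I^m,K) \xrightarrow{(\pi^m)^{*}} \Hom_{\cD(\cA)}(M,K) \to \Hom_{\cD(\cA)}(C^m[-1],K),
\]
where $C^m = \operatorname{cone}(\pi^m)$, which is quasi-isomorphic to $M^{m+1}[-m]$ (up to shift). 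So it suffices to show $\Hom_{\cD(\cA)}(C^m[-1],K)=0$ for $m$ large, i.e.\ that $\Hom_{\cD(\cA)}(M^{m+1}[-m-1],K)=0$. For this I would use that $K\in\langle J\rangle^{\cD}$ is a finite extension/summand of shifts of $J$, so it suffices to bound $\Hom_{\cD(\cA)}(M^{m+1}[j],J)$ for fixed $j$ and $m\to\infty$. Since $J$ is cohomologically injective, by Lemma \ref{101 1201}(1) this Hom is computed on cohomology, $\Hom_{\cG(\cohm(\cA))}(\cohm(M^{m+1})[j],\cohm(J))$; and $\cohm(M^{m+1})=\operatorname{coker}(d^{m-1}_M)$ sits inside the injective coresolution \eqref{101 1200}, so it is a ``high syzygy'' — its cohomology lives in degrees $\geq$ roughly $m$ — forcing the Hom into $\cohm(J)$ (which is a fixed graded module) to vanish for $m\gg 0$. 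This kills the obstruction, produces a derived lift $\overline\psi\colon I^m\to K$ in $\cD(\cA)$ with $\overline\psi\circ\pi^m = k$, and then Lemma \ref{model lem} (applied to $\cC(\cA)$ with the injective model structure, using that $\pi^m$ is an injective cofibration and $K$ is injectively fibrant-cofibrant) upgrades $\overline\psi$ to an honest morphism $\psi\colon I^m\to K$ in $\cC(\cA)$ with $\psi\circ\pi^m=k$ on the nose, giving the desired object of $\sner(\Phi)_{/k}$.

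The main obstacle I anticipate is the cohomological vanishing bound: one must make sure the degrees in which $\cohm(M^{m+1})$ is supported really do escape to $+\infty$ uniformly, and that $K$ — being only a \emph{retract} of a finite iterated cone of shifts of $J$, with shifts in a bounded range — only sees $\cohm(J)$ in a bounded window of degrees, so that the pairing $\Hom(\cohm(M^{m+1})[j],\cohm(J))$ genuinely vanishes rather than merely becoming hard to control. This is where the hypothesis that the resolution \eqref{101 1200} has terms which are summands of \emph{finite} sums of $J$, and the earlier setup of $\langle J\rangle^{\cD}$ as the \emph{finitely} built thick subcategory, are doing the real work; everything else (the triangle identification, the passage from $\cD(\cA)$ to $\cC(\cA)$) is formal.
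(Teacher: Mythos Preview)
Your approach has a real gap at the vanishing step. You assert that $\cohm(M^{m+1})$ ``lives in degrees $\geq$ roughly $m$'', but this conflates the position of $M^{m+1}$ in the coresolution with the internal cohomological grading of $M^{m+1}$ as a dg $\cA$-module. The resolution \eqref{101 1200} is an exact sequence in the category $\cG(\cohm(\cA))$ of graded $\cohm(\cA)$-modules; the cosyzygies $\cohm(M^{m+1})$ are simply quotients of $\cohm(J^m)$ and have no reason to be concentrated in high (or bounded) degrees. The shift you are thinking of is the $[-m]$ in $C^m\simeq M^{m+1}[-m]$, but since $\cohm(M^{m+1})$ may be unbounded below, shifting by $m$ does not push its support off to $+\infty$. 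On the other side, $\cohm(J)$ is also not assumed to be bounded in any direction, so even granting a lower bound on $\cohm(M^{m+1})$ you would still not get $\Hom_{\cG}(\cohm(M^{m+1}),\cohm(J)[a_i+m+1])=0$ for large $m$. Your final paragraph correctly identifies exactly this as the obstacle, but the finiteness hypotheses you invoke (each $J^i_M$ a summand of a finite sum of $J$; $K$ built in finitely many steps) only bound the \emph{number} of shifts of $J$ involved, not the cohomological amplitude of $\cohm(J)$ or of the cosyzygies. So the vanishing of $\Hom_{\cD(\cA)}(C^m[-1],K)$ simply does not follow in the stated generality.

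The paper's proof sidesteps this entirely: it never tries to kill an obstruction group. Instead it inducts on the filtration level $n$ with $K\in\langle J\rangle^{\cD}_{n+1}$. For $n=0$, $K$ is a summand of a finite sum of shifts of $J$, hence itself C.I., so the lifting property of C.I.\ objects (Definition \ref{pure inj def}) applied to the cohomological monomorphism $\pi^0\colon M\to J^0$ produces the factorization directly. For the inductive step one uses a triangle $K\to K^n\to K^0\to$ with $K^n\in\langle J\rangle^{\cD}_{n+1}$ and $K^0\in\langle J\rangle^{\cD}_1$, factors $M\to K\to K^n$ through $\pi^n$ by induction, fills in $M^{n+1}\to K^0$ by the axiom TR3, factors \emph{that} through the cohomological mono $\lambda^{n+1}\colon M^{n+1}\to J^{n+1}$ (again using that $K^0$ is C.I.), and finally uses the description of $I^{n+1}$ as a cocone to assemble the lift $I^{n+1}\to K$. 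The passage from $\cD(\cA)$ to $\cC(\cA)$ via Lemma \ref{model lem} is as you describe. Note that in the special case of Corollary \ref{loc cor} (an ordinary ring, $J$ an injective module concentrated in degree $0$) your degree-counting argument does go through, since everything lives in a single cohomological degree; but Lemma \ref{104 153} is needed in the full dg generality of Theorem \ref{loc thm 2}.
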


To prove this lemma and complete the proof of Proposition \ref{loc key prop}, 
we need little more informations of the  resolution (\ref{105 740}). 
We denote by $\xi^m_n$ the canonical projection $I^{[n,m]} \to \coker(\pi^m_n)$. 
\begin{lemma}\label{tri fib lem}
There exists a injective trivial fibration 
$\eta^{m}_n: \coker(\pi^m_n) \stackrel{\sim}{\twoheadrightarrow} M^{m+1}[-(m-n)]$ 
such that the composite morphism $\eta^{m}_n\circ \xi^m_n$ 
is equal to $(\rho^m[-(m-n)],0,\cdots 0)$. 
\end{lemma}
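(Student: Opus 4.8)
The plan is to construct $\eta^m_n$ explicitly by induction on $m\ge n$, at each stage taking the morphism that projects $\coker(\pi^m_n)$ onto its top summand $J^m[-(m-n)]$ and then applies $\rho^m$. First I would record the underlying graded structure of the cokernel: since $\pi^m_n={}^{t}(0,\dots,0,\lambda^n)$ embeds $M^n$ into the bottom summand $J^n$ of $I^{[n,m]}$ and $0\to M^n\xrightarrow{\lambda^n}J^n\xrightarrow{\rho^n}M^{n+1}\to 0$ is $\#$-exact, the graded module underlying $\coker(\pi^m_n)$ is $J^m[-(m-n)]\oplus J^{m-1}[-(m-n-1)]\oplus\cdots\oplus J^{n+1}[-1]\oplus M^{n+1}$, and $\xi^m_n$ is the identity on the summands $J^i$ with $i\ge n+1$ and equals $\rho^n$ on $J^n$. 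I then put $\eta^m_n:=(\rho^m[-(m-n)],0,\dots,0)$; composing with $\xi^m_n$ immediately gives $(\rho^m[-(m-n)],0,\dots,0)\colon I^{[n,m]}\to M^{m+1}[-(m-n)]$, so the task reduces to proving that $\eta^m_n$ is a well-defined chain map and a trivial injective fibration (and then, its target being injectively fibrant, $\coker(\pi^m_n)$ is injectively fibrant as well, which is exactly what the next induction step needs).

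For $m=n$ one has $I^{[n,n]}=J^n$ and $\pi^n_n=\lambda^n$, so $\coker(\pi^n_n)=M^{n+1}$ with $\xi^n_n=\rho^n$, and $\eta^n_n:=\id_{M^{n+1}}$ works: it is an isomorphism, and $M^{n+1}=\coker(\delta^{n-1}_M)$ is injectively fibrant by the construction of the resolution $(\ref{105 740})$. For the step from $m-1$ to $m$, I would run the snake ($3\times3$) argument on the projection $\varphi^{m,m-1}_n\colon I^{[n,m]}\to I^{[n,m-1]}$ --- which carries $\pi^m_n$ to $\pi^{m-1}_n$ --- to obtain a $\#$-exact sequence $0\to J^m[-(m-n)]\to\coker(\pi^m_n)\to\coker(\pi^{m-1}_n)\to 0$ on which $\eta^m_n$ restricts to $\rho^m$ on $J^m[-(m-n)]$ and kills the quotient $\coker(\pi^{m-1}_n)$. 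Hence $\ker(\eta^m_n)$ has underlying graded module $M^m[-(m-n)]\oplus\coker(\pi^{m-1}_n)$, using $\ker\rho^m=\image\lambda^m\cong M^m$, and I claim its differential coincides with the $\cocone$ differential of $\eta^{m-1}_n$: the unique off-diagonal block is the one induced by $\delta^{m-1}$, which by the factorization $\delta^{m-1}=\lambda^m\rho^{m-1}$ takes values in $\ker\rho^m$ and there equals $\rho^{m-1}$ on the $J^{m-1}$-summand and $0$ on the rest --- exactly $\eta^{m-1}_n$ in its inductive form. So $\ker(\eta^m_n)\cong\cocone(\eta^{m-1}_n)$.

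To finish, by the inductive hypothesis $\eta^{m-1}_n$ is a quasi-isomorphism with injectively fibrant source and target (the target is a shift of $\coker(\delta^{m-2}_M)$; the source is injectively fibrant because it is the source of a trivial fibration onto an injectively fibrant module), so $\cocone(\eta^{m-1}_n)$ is acyclic and, by Lemma \ref{101 1250}.(2), injectively fibrant; therefore $\ker(\eta^m_n)$ is acyclic and injectively fibrant. Since $\eta^m_n$ is manifestly term-wise surjective, Lemma \ref{922 142}.(2) makes it an injective fibration, and having acyclic kernel it is a quasi-isomorphism, so it is a trivial injective fibration, which closes the induction. The step I expect to be the real work is the sign bookkeeping: one must pin down the shift conventions so that $\eta^m_n$ is literally a chain map into the honest shift $M^{m+1}[-(m-n)]$ (absorbing the factors $(-1)^n$ and $(-1)^{m-n}$ occurring in the differential of $I^{[n,m]}$) and check on the nose that the differential inherited by $\ker(\eta^m_n)$ agrees with that of $\cocone(\eta^{m-1}_n)$; this is routine, but it must be done carefully.
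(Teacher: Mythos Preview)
Your argument is correct. It follows the same inductive skeleton as the paper (same base case $\eta^n_n=\id_{M^{n+1}}$, same passage from $m-1$ to $m$ via the projection $\varphi^{m,m-1}_n$), but you organize the inductive step differently: you write down $\eta^m_n=(\rho^m[-(m-n)],0,\dots,0)$ explicitly and then identify its kernel with $\cocone(\eta^{m-1}_n)$, concluding via Lemma~\ref{101 1250}.(2) and Lemma~\ref{922 142}.(2). The paper instead packages the step through the auxiliary Lemma~\ref{loc cone fib lem}: it identifies $\coker(\pi^{m+1}_n)$ with $\cone(\lambda^{m+1}[-(m-n)]\circ\eta^m_n)[-1]$ and then produces $\eta^{m+1}_n$ as a composite of three maps (an isomorphism and two trivial injective fibrations coming from parts (1), (2), (3) of that lemma). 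Your route is a bit more hands-on and avoids introducing that auxiliary lemma; the paper's route is more modular and makes the ``trivial fibration'' property visible at each factor without computing a kernel. The sign bookkeeping you flag is genuine but routine, and both arguments need it in exactly the same place (matching the differential on the top summand $J^m[-(m-n)]$ with the shift convention on $M^{m+1}[-(m-n)]$).
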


First we claim 
\begin{lemma}\label{loc cone fib lem}
(1) 
Assume that the following commutative diagram 
such that the top row is $\#$-exact 
is given in $\cC(\cA)$
\[
\begin{xymatrix}{
0 \ar[r] & X \ar@{-->}[d]_{\alpha} 
\ar[r]^{f} &
Y \ar@{=}[d] \ar[r] & Z \ar[d]^{h} \ar[r] & 0\\
& \cocone(g) \ar[r]^{(0,1_{Y'})} & Y' \ar[r]^{g} & Z' &.
}\end{xymatrix}\]
Then 
the morphism   $\alpha : = {}^{t}(0,f) :X \to  Z'[-1]\oplus Y'$ of graded modules 
become the morphism $\alpha : X \to \cocone(g)$  in $\cC(\cA)$ and completes the above diagram. 
Moreover we have a canonical isomorphism 
$\coker(\alpha) \cong \cone(h)[-1]$.

(2)
Assume that 
the commutative diagram such that the vertical arrows are injective fibrations is given in $\cC(\cA)$  
\[
\begin{xymatrix}{
X \ar@{->>}[d]^{\wr} \ar[r]^{f} & Y \ar@{->>}[d]^{\wr} \\
X' \ar[r]_{f'} & Y'. 
}\end{xymatrix} 
\]
Then the induced morphism $\cone(f) \to \cone(f')$ is an injective fibration. 

(3) 
Let $f:X \to Y$ be a morphism of dg $\cA$-modules. 
We denote by $g$ the canonical projection $Y \to \coker(f)$. 
Then we have the $\#$-exact sequence of dg $\cA$-modules 
\[
0\to \ker(f) \to \cone(1_X) \xrightarrow{
\textup{diag}(f,1_{X[1]})
} \cone(f) \xrightarrow{(g,0)} \coker(f) \to 0
\]
In particular, 
if $f$ is an  injective  cofibration and $X$ is injective fibrant, 
then the morphism $(g,0)$ is a trivial injective fibration. 

\end{lemma}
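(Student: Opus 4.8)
The plan is to establish all three parts by direct computation with the explicit matrix formulas for $\cone(-)$ and $\cocone(-)$ recalled in the excerpt, using Lemma \ref{922 142} to translate between the homological and the homotopical descriptions of (trivial) injective fibrations and Lemma \ref{101 1250}.(2) for the stability of injective fibrancy under cones. For (1), I would first verify that the graded morphism $\alpha={}^{t}(0,f):X\to Z'[-1]\oplus Y'$ is in fact a chain map into $\cocone(g)$: expanding $d_{\cocone(g)}\circ\alpha$ with the differential $\begin{pmatrix}-d_{Z'} & g\\ 0 & d_{Y'}\end{pmatrix}$ of $\cocone(g)$, the only identity one has to check is $g\circ f=0$, and this follows because commutativity of the right-hand square gives $g=h\circ q$ with $q:Y\to Z$ the surjection of the top row, while $q\circ f=0$ by $\#$-exactness of the top row. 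Commutativity of the left-hand square, $(0,1_{Y'})\circ\alpha=f$, is then immediate. For the cokernel, $\alpha$ identifies $X$ with a graded summand of the $Y'$-component, so $\coker(\alpha)^{\#}\cong Z'[-1]\oplus(Y'/f(X))^{\#}\cong Z'[-1]\oplus Z^{\#}$; pushing the differential of $\cocone(g)$ to the quotient, the entry $g$ descends (again via $g=h\circ q$) to $h:Z\to Z'$, which is precisely the differential of $\cone(h)[-1]$.

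For (2), the induced morphism is $\begin{pmatrix}v & 0\\ 0 & u[1]\end{pmatrix}:Y\oplus X[1]\to Y'\oplus X'[1]$, where $u,v$ denote the two vertical injective fibrations; it is a chain map because the square commutes and term-wise surjective because $u$ and $v$ are. I would then observe that its kernel, as a dg $\cA$-module, is the cone of the morphism $\ker(u)\to\ker(v)$ induced by $f$. By Lemma \ref{922 142}.(2) the modules $\ker(u)$ and $\ker(v)$ are injectively fibrant, hence so is that cone by Lemma \ref{101 1250}.(2); applying Lemma \ref{922 142}.(2) once more to the term-wise surjection with injectively fibrant kernel shows that $\cone(f)\to\cone(f')$ is an injective fibration.

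For (3), writing $\cone(1_X)=X\oplus X[1]$ and $\cone(f)=Y\oplus X[1]$, the map $\textup{diag}(f,1_{X[1]})$ is $(x,x')\mapsto(f(x),x')$, so its graded kernel is $\ker(f)\oplus 0$, a sub-dg-module since $f$ is a chain map, which supplies the arrow $\ker(f)\to\cone(1_X)$; its image $f(X)\oplus X[1]$ is exactly the kernel of $(g,0):(y,x')\mapsto g(y)$ because $\ker(g)=\image(f)$, and $(g,0)$ is term-wise surjective since $g$ is. This gives the $\#$-exact sequence. When $f$ is an injective cofibration it is term-wise injective, so $\ker(f)=0$ and the sequence collapses to $0\to\cone(1_X)\to\cone(f)\xrightarrow{(g,0)}\coker(f)\to 0$; since $\cone(1_X)$ is weakly contractible and, when $X$ is injectively fibrant, injectively fibrant by Lemma \ref{101 1250}.(2), the term-wise surjection $(g,0)$ has acyclic injectively fibrant kernel, hence is simultaneously an injective fibration (Lemma \ref{922 142}.(2)) and a quasi-isomorphism, i.e. a trivial injective fibration.

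I expect the only real friction to be in (1): getting the sign conventions in the differentials of $\cone$ and $\cocone$ to line up so that $\coker(\alpha)$ equals $\cone(h)[-1]$ on the nose with the conventions fixed in the excerpt, rather than merely up to a non-canonical isomorphism. Parts (2) and (3) are formal once Lemma \ref{922 142} and Lemma \ref{101 1250}.(2) are invoked, so no obstacle is anticipated there.
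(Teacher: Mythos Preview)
Your proposal is correct and follows essentially the same route as the paper, which simply declares the proof ``straightforward'' and points to Lemma~\ref{922 142} and Lemma~\ref{101 1250} for the final assertion in (3). Your anticipated sign friction in (1) is real but harmless: with the paper's conventions $\coker(\alpha)$ is literally $\cocone(h)$, which differs from $\cone(h)[-1]$ only by the canonical sign isomorphism $\operatorname{diag}(1,-1)$ on $Z'[-1]\oplus Z$.
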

The proof is straightforward.
The last statement of (3) follows from Lemma \ref{922 142} and Lemma \ref{101 1250}.

\begin{proof}[Proof of Lemma \ref{tri fib lem}]
We prove by induction on $m \geq n$. 
For the case $m=n$, 
it is enough to set $\eta^{n}_n:= 1_{M^{n+1}}$, since $\coker(\pi^n_n) = M^{n+1}$.
We assume that 
the case when $m$ is verified. 
Observe that 
the module $I^{[n,m+1]}$ is the co-cone of 
$(\delta^{m}[-(m-n)],0,\dots ,0): I^{[n,m]} \to J^{m+1}[-(m-n)]$ 
and that 
the associated  morphism $I^{[n,m+1]}\to I^{[n,m]}$ coincides with $\varphi^{m+1,m}_n$. 
Then by Lemma \ref{loc cone fib lem}.(1), 
we obtain the following commutative diagram   
\[
\begin{xymatrix}{ 
*++{M^n} \ar@{>->}[rr]^{\pi^m_n} \ar@{>->}[dd]_{\pi^{m+1}_n} &&
I^{[n,m]} \ar@{=}[d] \ar@{->>}[rrr]^{\xi^m_n} &&
& \coker(\pi^m_n) \ar@{->>}[d]^{\wr \eta^m_n}  \\
&&
I^{[n,m]}\ar[rrr]_{(\rho^m[-(m-n)], 0,\cdots , 0 )\quad\quad} \ar@{=}[d]
&&& *++{M^{m+1}[-(m-n)]} \ar@{>->}[d]^{\lambda^{m+1}[-(m-n)]} \\
I^{[n,m+1]} \ar[rr]_{\varphi^{m+1,m}_n} && I^{[n,m]} \ar[rrr]_{(\delta^m[-(m-n)], 0,\cdots , 0 )\quad\quad} &&& 
J^{m+1}[-(m-n)].   
}\end{xymatrix}
\]

Then by Lemma \ref{loc cone fib lem}, 
we have the following sequence of morphisms 
\[
\begin{split}
\coker(\pi^{m+1}_n) &\cong{} \cone(\lambda^{m+1}[-(m-n)] \circ \eta^m_n)[-1] \\
                     & \stackrel{\sim}{\twoheadrightarrow}{} \cone(\lambda^{m+1}[-(m-n)])[-1]\\
                     & \stackrel{\sim}{\twoheadrightarrow}{} \coker(\lambda^{m+1}) [-(m-n+1)] 
                     \cong M^{m+2}[-(m+1-n)] 
\end{split}
\]
where for $a =1,2,3$ we obtain the $i-th$ morphisms  by Lemma \ref{loc cone fib lem} (a). 
Note that 
to show that the 3-ed morphism is a trivial injective fibration, 
we use the fact that $M^{m+2}$ is injectively fibrant.       
We denote by $\eta^{m+1}_n$ the above composition. 
Then it follows from Lemma \ref{loc cone fib lem} that 
$\eta^{m+1}_n \circ \xi^{m+1}_n =(\rho^{m+1}[-(m+1-n)], 0,\cdots , 0 )$. 
\end{proof}

Note that 
for $\ell > m>n$ we have the following commutative diagram 
\begin{equation}\label{105 833}
\begin{xymatrix}{ 
*++{M^n} \ar@{>->}[rr]^{\pi^m_n} \ar@{>->}[dd]_{\pi^{\ell}_n} &&
I^{[n,m]} \ar@{=}[d] \ar@{->>}[rrr]^{\xi^m_n} &&
& \coker(\pi^m_n) \ar@{->>}[d]^{\wr \eta^m_n}  \\
&&
I^{[n,m]}\ar[rrr]_{(\rho^m[-(m-n)], 0,\cdots , 0 )\quad\quad} \ar@{=}[d]
&&& *++{M^{m+1}[-(m-n)]} \ar@{>->}[d]^{\pi^{\ell}_{m+1}[-(m-n)]} \\
I^{[n,\ell ]} \ar[rr]_{\varphi^{\ell,m}_n} && I^{[n,m]} \ar[rrr] &&& 
I^{[m+1,\ell]}[-(m-n)]
}\end{xymatrix}
\end{equation} 
where the term $I^{[n,\ell]}$ is obtained as  the co-cone of the bottom right arrow 
$I^{[n,m]} \to I^{[m+1,\ell]}[-(m-n)]$.

\begin{proof}[Proof of Lemma \ref{104 153}] 
%
By Lemma \ref{model lem}, it is enough to show that  
in the derived category $\cD(\cA)$ 
there exists  a morphism $\psi: I^{n} \to K$ for some $n$  
such that $k= \psi \circ \pi^n$. 

First 
we recall the definition of the $n$-generated thick subcategory 
$\langle J \rangle_{n}^{\cD}$ introduced in  \cite{Ro}. 
Let $\cT$ be a triangulated category. 
For a full subcategory  $\cS$ of   $\cT$ 
we denote by $\langle \cI \rangle_1^{\cD}$ the smallest full  subcategory  of $\cT$ 
containing $\cI$ which is closed under taking shifts, 
finite direct sums, direct summands and isomorphisms. 
For full subcategories $\cS$ and $\cR$ of $\cT$ 
we denote by $\cS \ast \cR$ the full subcategory of $\cT$ 
consisting of those objects $T \in \cT$ 
such that 
there exists an exact triangle 
$S \to T \to R \to $ 
with $S \in \cS$ and $R \in \cR$. 
Set $\cS \diamond \cR := \langle \cS \ast \cR\rangle_{1}^{\cD}$.  
For $n \geq 2$  we define inductively  
\[
\langle \cS \rangle_{n}^{\cD} := 
\langle \cS \rangle_{n-1}^{\cD} \diamond \langle \cS \rangle_{1}^{\cD} 
. 
\]
Note that 
$\langle \cS \rangle^{\cD}  = \bigcup_{n \geq 1}\langle \cS \rangle^{\cD}_{n}$.

Let $k : M \to K$ be a morphism in $\cD(\cA)$ with $K \in \langle J \rangle^{\cD}$. 
We claim that if $K \in \langle J \rangle_{n+1}$ then there exists morphism $\pi^n \to k$. 
We prove this claim by induction on $n$. 
In the case when $n=0$, 
$K$ is a direct summand of a finite direct sum of $J$. 
Hence it is a  cohomologically  injective object.   
Since by definition the morphism $\pi^0 :M \to J^0$ is cohomologically  monomorphism, 
we have a morphism $\pi^0 \to k$. 
We assume that the claim is proved for $n$. 
We may assume that $K$ fits into an exact triangle 
$K \to K^{n} \to K^0\to K[1]$ 
for some $K^{n} \in \langle J \rangle^{\cD}_{n+1}$ and 
$K^0 \in \langle J \rangle^{\cD}_{1}$. 
By induction hypothesis, 
the composite morphism $M \xrightarrow{k} K \to K^{n}$ factors through $\pi^{n}$. 
We set $M^{n+1} := \image(d^{n})$. 
Then  we have the following commutative diagram 
except for the  dotted arrow:
\[\begin{xymatrix}{
M \ar[r]^{\pi^{n}} \ar[d]_{k} & I^{n} \ar[d] \ar[r] & M^{n+1} \ar@{-->}[d]^{k'} \ar[r] & M[1]\ar[d]^{k[1]} \\
K \ar[r]             & K^{n} \ar[r] & K^{0} \ar[r] & K[1].
}\end{xymatrix}\]
where the top and bottom rows are exact.  
By an axiom of triangulated category, 
we have a morphism $k': M^n \to K^0$ 
which complete the above commutative diagram. 
Since 
the morphism $\lambda^{n+1} :M^{n+1} \to J^{n+1}$ constructed 
in the resolution (\ref{105 740}) 
is cohomologically monomorphism, 
there exists a morphism $h': J^{n+1} \to K^0$ such that 
$k'= h' \circ \iota^{n+1}$.

Recall that $I^{n+1}$ is the co-cone of the composite morphism  $I^{n} \to M^{n+1} \xrightarrow{\lambda^{n+1}} J^{n+1}$. 
We have the following commutative diagram except for the dotted arrow: 
\[
\begin{xymatrix}{
 I^n \ar@{=}[d] \ar[r] & M^{n+1} \ar[d]_{\lambda^{n+1}} \ar[r]  & M[1] \ar[d]_{\pi^{n+1}[1]} \ar@/^2.7pc/[dd]^{k[1]}  \\
 I^{n} \ar[d] \ar[r] & J^{n+1} \ar[d]_{h'} \ar[r]& I^{n+1}[1] \ar@{-->}[d]_{h}\\
 K^{n} \ar[r] & K^0 \ar[r] & K[1]
}\end{xymatrix}
\]
Since $I^n[1]$ is a homotopy fiber co-product of $J^{n+1}$ and $M[1]$ under $M^{n+1}$, 
there exists a morphism $h: I^{n+1}[1] \to K[1] $ 
which complete the above commutative  diagram \cite[Section 1.3]{Neemantricat}. 
Therefore the morphism $ h[-1]: I^{n+1} \to K $ give a desired factorization.    
\end{proof}

In a similar way, 
we can prove more general statement 
by using the diagram (\ref{105 833}).

\begin{lemma}\label{loc cofinal lem}
Let $k: M^n \cofright K$ be an injectively cofibrant morphism of dg $\cA$-modules with $K \in \langle J \rangle^{\cC}$. 
Then there exists $m \geq n$ such that 
there exists a morphism $\psi: I^{[n,m]} \to K$ 
which satisfies $k= \psi \circ \pi^m_n$.
\end{lemma}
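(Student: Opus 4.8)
The plan is to imitate the proof of Lemma~\ref{104 153} essentially line by line, the only change being that the resolution (\ref{105 740}) of $M$ is replaced by its truncation
\[
0\to M^n \xrightarrow{\lambda^n} J^n \xrightarrow{\delta^n} J^{n+1} \xrightarrow{\delta^{n+1}}\cdots,
\]
which is again a resolution of $M^n$ all of whose terms are direct summands of finite direct sums of $J$ and all of whose structure maps $\lambda^i$ $(i\ge n)$ are cohomologically monomorphisms, and that the co-cone presentation of $I^{n+1}$ over $I^n$ is replaced by the presentation of $I^{[n,m+1]}$ as the co-cone of
\[
I^{[n,m]}\longrightarrow M^{m+1}[-(m-n)] \xrightarrow{\ \lambda^{m+1}[-(m-n)]\ } J^{m+1}[-(m-n)]
\]
recorded in diagram (\ref{105 833}). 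Note that $I^{[n,n]}=J^n$ and $\pi^n_n=\lambda^n$, and that each $J^i$ is injectively fibrant, so $I^{[n,m]}$, being a finite iterated cone of $J^n,\dots,J^m$, is injectively fibrant by Lemma~\ref{101 1250}.

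First I would reduce to a statement in $\cD(\cA)$: since $\pi^m_n$ is an injective cofibration with fibrant-cofibrant target $I^{[n,m]}$, Lemma~\ref{model lem} (applicable with $K$ fibrant, as it is in the intended application) shows it is enough to produce, for some $m\ge n$, a morphism $\psi\colon I^{[n,m]}\to K$ in $\cD(\cA)$ with $k=\psi\circ\pi^m_n$.

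Next I would prove by induction on $p$ the claim: \emph{if $K\in\langle J\rangle^{\cD}_{p+1}$ then such a factorization exists}. For $p=0$ the object $K$ is a direct summand of a finite direct sum of $J$, hence cohomologically injective; since $\pi^n_n=\lambda^n$ is a cohomologically monomorphism, Definition~\ref{pure inj def}.(3) provides a factorization $k=\psi\circ\pi^n_n$, so $m=n$ works. For the inductive step choose an exact triangle $K\to K'\to K''\to K[1]$ with $K'\in\langle J\rangle^{\cD}_{p}$ and $K''\in\langle J\rangle^{\cD}_{1}$; by induction the composite $M^n\xrightarrow{k}K\to K'$ factors as $\bar k\circ\pi^{\ell}_n$ for some $\ell\ge n$ and some $\bar k\colon I^{[n,\ell]}\to K'$. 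Exactly as in Lemma~\ref{104 153}, completing morphisms of exact triangles along the rows of diagram (\ref{105 833}) (and using $\coker(\pi^{\ell}_n)\simeq M^{\ell+1}[-(\ell-n)]$ from Lemma~\ref{tri fib lem}) produces a morphism $k'\colon M^{\ell+1}[-(\ell-n)]\to K''$; since $\lambda^{\ell+1}$ is a cohomologically monomorphism and $K''$ is cohomologically injective, $k'$ factors through $\lambda^{\ell+1}[-(\ell-n)]$ as $h'\circ\lambda^{\ell+1}[-(\ell-n)]$; and, by the homotopy fibre coproduct argument of Lemma~\ref{104 153} (cf.\ \cite[Section 1.3]{Neemantricat}), the maps $h'$ and $k[1]$ glue to a morphism $h\colon I^{[n,\ell+1]}[1]\to K[1]$ whose desuspension $h[-1]\colon I^{[n,\ell+1]}\to K$ satisfies $k=h[-1]\circ\pi^{\ell+1}_n$. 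Thus $m=\ell+1$ works, and since $\langle J\rangle^{\cD}=\bigcup_{p\ge1}\langle J\rangle^{\cD}_p$ the claim, and with it the lemma, follows.

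The only real work is the verification that the truncated sequence is a resolution of the required type (immediate from the construction of (\ref{105 740})) and the bookkeeping of the suspensions $[-(m-n)]$ as one runs through diagram (\ref{105 833}) and the homotopy fibre coproduct step. I expect this shift bookkeeping, together with checking its compatibility with the cokernel identifications of Lemma~\ref{tri fib lem}, to be the main — and only mildly delicate — point, since everything else is literally the argument of Lemma~\ref{104 153} with $M$, $I^m$, $\pi^m$ replaced by $M^n$, $I^{[n,m]}$, $\pi^m_n$.
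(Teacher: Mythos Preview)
Your proposal is correct and is exactly the approach the paper takes: the paper's own proof consists of the single sentence ``In a similar way, we can prove [the] more general statement by using the diagram (\ref{105 833}),'' and you have spelled out precisely that similarity, including the correct use of Lemma~\ref{tri fib lem} to identify $\coker(\pi^\ell_n)$ with $M^{\ell+1}[-(\ell-n)]$ and the co-cone presentation from (\ref{105 833}) in the homotopy fibre coproduct step. Your parenthetical remark that Lemma~\ref{model lem} requires $K$ fibrant is well observed; the lemma as stated omits this hypothesis, but in the only application (to $K^X$ with $K$ injectively fibrant and $X$ finite) it is satisfied.
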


To finish the proof of Proposition \ref{loc key prop}, 
in the rest of this section 
we devote to prove that 
the simplicial set $\sner(\Phi)_{/k}$ is weakly contractible.

We  will show that 
 any simplicial map $f:X \to \sner(\Phi)_{/k}$ with finite simplicial set $X$ 
 factor $\ast$ in the homotopy category $\textup{Ho}(\sSet)$. 
By the same method of the proof of Lemma \ref{comp lem 1} 
 we may assume that 
 the composition morphism $f_1=\pr_1 \circ f : X \to \sner(\cI)$ 
 is a constant morphism with the value, say, $n$, 
 and that 
 the map $f$ factors $\Homr_{\sner(\cU)}(\pi^n,k)$. 
\[\begin{xymatrix}{
X \ar@/^1pc/[drr]^{f} \ar@/_1pc/[ddr]  \ar@{-->}[dr]^{f'} &&& \\
&\Homr_{\sner(\cU)}(\pi^n,k) \ar[d]\ar[r] &
 \sner(\Phi)_{/k} \ar[d]_{\textup{pr}_1} \ar[r]^{\textup{pr}_2}  & \sner(\cU)_{/k} \ar[d] \\
 & \{n\} \ar[r] & \sner(\cI) \ar[r]_{\Phi} & \sner(\cU)
 }\end{xymatrix}\]
 
Let $\ell$ be an integer greater  than $n$. 
We denote by $[\textbf{uni}^{\ell,n}]$   
the simplicial map $\Delta^1 \to \sner(\cI)$ corresponding to 
the morphism $\textbf{uni}^{\ell,n}$.  
 Let $H$ be the composite map 
 \[
 H: X \times\{1\} \xrightarrow{\alpha  \times \iota_1}  
 \Delta^0 \times \Delta^1  \cong \Delta^1 \xrightarrow{[\textbf{uni}^{\ell,n}]} \sner(\cI). 
\]
 where $\alpha : X \to \Delta^0$ is a unique map. 
We can check that   the following diagram is commutative except the dotted arrow  
 \[
 \begin{xymatrix}{
 X \times \{1\} \ar[d]_{\alpha \times \iota_1 } \ar@{=}[r]^-{\sim} & 
 X \ar[r]^-{f} & \sner(\Phi)_{/k} \ar[d]^{\pr_1} \\
 X \times \Delta^1 \ar@{-->}[urr]^{H'}  \ar[rr]_{H} && \sner(\cI) 
 }\end{xymatrix}
 \]
 By the same consideration as in the proof of Lemma \ref{comp lem 1}, 
 we see that 
 there exists a dotted arrow which completes above commutative diagram.
Observe that  the composite map 
\[\pr_1 \circ H' \circ (1_X \times \iota_0) :
X\times \{0\} \to X \times \Delta^1 \to \sner(\Phi)_{/k} \to \sner(\cI) 
\] 
factors through $\{\ell\} \to \sner(\cI)$. 
Since $\Homr(\pi^{\ell},k)$ is  the  fiber product $\{\ell\} \times_{\sner(\cU)} \sner(\cU)_{/k}$, 
the composite map $H' \circ (1_X \times \iota_0)$ is decomposed 
into a map $g': X \to \Homr(\pi^\ell,k) $ followed by a canonical map 
 $\Homr(\pi^\ell,k) \to \sner(\Phi)_{/k}$.  
 We define  $\Homr(\varphi^{\ell,n}, k)$ to be the fiber product
  $\{\textbf{uni}^{\ell,n}\} \times_{\sner(\cU)} \sner(\cU)_{/k}$. 
  Then  we have the commutative diagram 
\[
\begin{xymatrix}{
X \times \{0\} \ar[d]_{g'} \ar[r]^{ 1_X \times \iota_0} &
X\times \Delta^1  \ar[d]_{H'}  & 
Z \times \{1\}  \ar[d]^{f'} \ar[l]_{1_X\times \iota_1} \\
\Homr(\pi^\ell ,k ) \ar[r] & 
\Homr(\varphi^{\ell,n},k)  & 
\Homr(\pi^n,k) \ar[l] 
}\end{xymatrix}\]

By \cite{HTT} for $\ell \geq n$ we have  the commutative diagram 
\[
\begin{xymatrix}{ 
X \ar[rr]^-{f'} \ar@/_1.3pc/[drr]_-{g'} && \Homr(\pi^n,k)  \ar[d]^{(\varphi^{\ell,n})^{\ast}} \\
&& \Homr(\pi^\ell,k)  
}\end{xymatrix}
\]
where the vertical arrow $(\varphi^{\ell,n})^{\ast}$ 
is the morphism induced from the morphism $\varphi^{\ell,n}: \pi^\ell \to \pi^n$. 
Therefore it is enough to prove that 
for some $\ell\geq n$ we have the following homotopy commutative diagram in $\sSet$ 
\[
\begin{xymatrix}{
X\ar[r]^-{f'} \ar[d] & \Homr(\pi^n,k) \ar[d]^{(\varphi^{\ell,n})^\ast} \\
\ast \ar[r] & \Homr(\pi^m,k). 
}\end{xymatrix}
\]
(For simplicity we denote by $[-,+]= \Hom_{\textup{h}\sSet}(-,+)$ 
the Hom set of the homotopy category $\textup{h}\sSet$.) 
In other words, 
for every  element $f'$ of $[X,\Homr(\pi^n,k)]$ 
there exists a natural number $\ell\geq n$ 
such that 
the image of $f'$ by the induced morphism 
$[X,\Homr(\pi^n,k)] \to [X, \Homr(\pi^\ell,k)]$ 
lies in the image of the morphism 
$[\ast,\Homr(\pi^\ell,k)] \to [X,\Homr(\pi^\ell,k)]$ 
induced from a unique map $ X \to \ast$. 

By \cite[2.2]{HTT} we have a weak homotopy  equivalence $\Homr(\pi^n,k)\simeq \Map_{\cU}(\pi^n,k)$. 
On the other hand 
we have a  natural isomorphism
$\Map_{\sSet}(X, \Map_{\cU}(\pi^n,k)) \cong \Map_{\cU}(\pi^n,k^X)$
Therefore we have a natural isomorphism $[X,\Homr(\pi^n,k)] \cong \Hom_{\textup{h}\cU}(\pi^n,k^X)$.
Hence it is enough to show that 
for every morphism $\alpha: \pi^n \to k^X$ in $\cU$ 
there exists a natural number $\ell\geq n$ 
such that we have the following homotopy commutative diagram in $\cU$ 
\begin{equation}\label{loc hoshii diagram}
\begin{xymatrix}{
\pi^\ell \ar[d]_{\varphi^{\ell,n}} \ar[r] & k \ar[d]^{\beta}\\
\pi^n \ar[r]_{\alpha} & k^X
}\end{xymatrix}
\end{equation}
where we denote by $\beta$ the morphism induced from a unique map $X \to \ast$.

By Lemma \ref{loc cofinal lem} 
there exists a morphism $\beta':\pi^m \to k$ in $\cU$. 
Replacing $m$ with $\max\{m,n\}$,   
we may assume that $m \geq n$. 
We set $\alpha": = \alpha \circ \varphi^{m,n} $ and $\beta" := \beta \circ \beta'$. 
We also denote by $\alpha"$ and $\beta"$ the morphisms $I^{m} \to K^X$, $I^m \to K^X$ 
between the co-domains respectively. 
We define a object $k_1: M \to K_1$  of $\cU$ 
to be a morphism $(0,{}^{t}(\pi^m,\pi^m)): M \to \cocone(\alpha",-\beta")$. 
Since $(\alpha",-\beta")\circ {}^{t}(1_{I^m} ,1_{I^m}) \circ \pi^m= \alpha"\circ \pi^m -\beta"\circ \pi^m = 
k^X -k^X=0$, 
there exists a morphism $\zeta: \coker(\pi^m) \to K^X$ 
which completes the following commutative diagram 
\begin{equation}\label{loc diagram 1}
\begin{xymatrix}{
M \ar[d]_{k_1} \ar[r]^{\pi^m} & I^m \ar[d]^{{}^{t}(1_{I^m},1_{I^m})} \ar[r]^{\xi^m} & \coker(\pi^m)\ar[d]^{\zeta}\\
K_1 \ar[r]_{\gamma\quad } & I^{m} \bigoplus I^m \ar[r]_{(\alpha", -\beta")} & K^X.
}\end{xymatrix}\end{equation} 
where $\gamma$ is a canonical morphism for the  co-cone construction \ref{1010 101}.
We set $\gamma_a := \pr_a \circ \gamma: K_1 \to  I^m \oplus I^m \to I^a$ for $a =1,2$. 
Then we have an equation $\pi^m = \gamma_a \circ k_1$ of morphisms in $\cC(\cA)$. 
Therefore $\gamma_a$ induces a morphism $k_1 \to \pi^m$, 
which will also be denoted by $\gamma_a$. 
In the same way of the proof of Lemma \ref{106 1146}, 
from the above diagram (\ref{loc diagram 1}) 
we obtain the homotopy commutative diagram and an equation of morphisms in $\textup{h}\cU$ 
\begin{equation}\label{loc diagram 3} 
\begin{xymatrix}{
k_1 \ar[d]_{\gamma_1} \ar[r]^{\gamma_2} & \pi^m \ar[d]^{\beta"} \\
\pi^m \ar[r]_{\alpha"} & k^X,  
}\end{xymatrix}\qquad 
\alpha" \circ \gamma_1 = \beta" \circ \gamma_2. 
\end{equation}

We denote by $\pi^m \oplus \pi^m$ the object 
${}^{t}(\pi^m,\pi^m):M \to I^m \bigoplus I^m$ of $\cU$. 
We define a morphism  $\delta: \pi^m \to \pi^m\oplus \pi^m$ in $\cU$ 
to be the morphism induced from ${}^{t}(1_{I^m},1_{I^m}): I^m \to I^{m} \bigoplus I^m$. 
Now we have equations of morphisms in $\cU$. 
\begin{equation}\label{loc diagram 2}
\begin{split}
1_{\pi^m}= \pr_1 \circ \delta,\,\, & \pr_1 \circ \gamma = \gamma_1, \\
1_{\pi^m}= \pr_2 \circ \delta, \,\,& \pr_2 \circ \gamma = \gamma_2.  
\end{split}
\end{equation}

By Lemma \ref{tri fib lem} 
we have  a injective trivial fibration $\eta^m:\coker(\pi^m) \trivfibright M^{m+1}[-m]$. 
Since every object of $\cC(\cA)$ is injectively cofibrant, 
therefore using the lifting property,  
we can check that  the morphism $\eta^m$ splits. 
Hence $\coker(\pi^m)$ is isomorphic to $M^{m+1}[-m] \bigoplus N$ for some weakly contractible module $N$. 
From now we identify $\coker(\pi^m)$ with $M^{m+1}\bigoplus N$. 
By Lemma \ref{tri fib lem} the first component 
$\xi_1^m$ of $\xi^m : I^m \to \coker(\pi^m)\cong M^{m+1}[-m]\bigoplus N$ is 
equal to $(\rho^m,0,\cdots,0)$. 
Let $\zeta_1: M^{m+1}[-m] \to K^X$ be the first component of $\zeta: \coker(\pi^m) \to K^X$. 
Then by Lemma \ref{loc cofinal lem} 
there exists a natural number $\ell\geq m+1$ such that 
there exists a morphism $\psi: I^{[m+1,\ell ]}[-m] \to K^X$ 
which satisfies $\zeta_1= \psi\circ \pi^{\ell}_{m+1}[-m]$. 
We denote  by $\omega$ the morphism 
\[
\begin{pmatrix} \pi^\ell_{m+1}[-m]& 0 \\ 0 & 1_N \end{pmatrix}: M^{m+1}[-m]\bigoplus N \to I^{[m+1,\ell]}[-m]\bigoplus N
\]
Then we have 
the equation 
$\zeta = (\zeta_1,\zeta_2)= (\psi,\zeta_2)\circ \omega$. 
We denote by $K_2$ the co-cone of $\omega\circ \xi^m : I^m \to I^{[m+1,\ell]}[-m]\bigoplus N$.  
We set $ K_2: = \cocone(\omega \circ \xi^m)$. 
By Lemma \ref{loc cone fib lem} 
the morphism ${}^{t}(0,\pi^m): M \to K_2$ of graded modules 
become a morphism in $\cC(\cA)$, which will be denoted by $k_2$. 
Now we have the following commutative diagram except dotted arrow. 
\begin{equation}\label{loc diagram 4}
\begin{xymatrix}{
 &M\ar@/_2pc/[dd]_{k_1} \ar[d]^{k_2} \ar[r]^{\pi^m} & I^m \ar@{=}[d] \ar[r]^-{\xi^m} & 
M^{m+1}[-m]\bigoplus N \ar[d]^{\omega} \ar@/^5pc/[dd]^{\zeta}\\
&K_2 \ar@{-->}[d]^{\psi'} \ar[r]_{\gamma'} & I^m \ar[d]^{(1_{I^m},1_{I^m})} \ar[r]^-{\omega\circ \xi^m} 
& I^{[m+1,\ell]}[-m] \bigoplus N \ar[d]^{(\psi,\zeta_2)}&\\
&K_1 \ar[r]_{\gamma} & I^m \bigoplus I^m \ar[r]_{(\alpha",-\beta")} & K^X &
}\end{xymatrix}\end{equation}
Since $K_1$ and $K_2$ are defined to be the co-cones, 
a morphism $\psi': K_2 \to K_1$ which completes the above diagram 
 is induced from the lower right square.   
The above diagram 
 gives the following commutative diagram  and an equation of morphisms in $\cU$  
\begin{equation}\label{loc diagram 5}
\begin{xymatrix}{
k_2 \ar[d]_{\psi'} \ar[r]^{\gamma'} & \pi^m \ar[d]^{\delta} \\
k_1 \ar[r]_-{\gamma} & \pi^m \oplus \pi^m, 
}\end{xymatrix}
\qquad 
\delta \circ \gamma' = \gamma \circ\psi'. 
\end{equation}
On the other hand, 
since $(1,0) \circ \omega = (\pi^m[-m],0)$, 
we have the following commutative diagram except the dotted arrow in $\cC(\cA)$ 
\begin{equation}\label{loc diagram 6}
\begin{xymatrix}{
 &M\ar@/_2pc/[dd]_{\pi^{\ell}} \ar[d]^{k_2} \ar[r]^{\pi^\ell_{m+1}} & 
 I^m \ar@{=}[d] \ar[rr]^-{\xi^m} &&
M^{m+1}[-m]\bigoplus N \ar[d]^{\omega} \ar@/^5pc/[dd]^{(\pi^m[-m],0)}\\
&K_2 \ar@{-->}[d]^{\psi"} \ar[r]_{\gamma'} & I^m \ar@{=}[d] \ar[rr]^-{\omega\circ \xi^m }& 
& I^{[m+1,\ell]}[-m] \bigoplus N \ar[d]^{(1,0)}&\\
&I^{\ell} \ar[r]_{\varphi^{\ell,m}} & 
I^m  \ar[rr]_{((\pi^\ell_{m+1}\circ\rho^m)[-m],0,\cdots 0 )} & & 
I^{[m+1,\ell]}[-m] &
}\end{xymatrix}\end{equation}
Recall that 
$K_2$ and $I^\ell$ are given as the co-cones, 
a morphism $\psi": K_2 \to I^\ell $ which completes the above diagram 
is induced from the lower right square. 
Note that 
since $N$ is weakly contractible, the induced  morphism $\psi": K_2 \to I^\ell$ is quasi-isomorphism. 
By Lemma \ref{925 1053} 
the morphism $\psi": k_2 \to \pi^\ell$ is weak homotopy equivalence. 
The above diagram (\ref{loc diagram 6}) gives the following commutative diagram 
and an equation in $\cU$ 
\begin{equation}\label{loc diagram 7} 
\begin{xymatrix}{
k_2 \ar[r]^{\gamma'} \ar[d]_{\psi"} & \pi^m \ar@{=}[d] \\ 
I^\ell \ar[r]_{\varphi^{\ell,m} } & \pi^m 
}\end{xymatrix}
\qquad 
 \varphi^{\ell,m}\circ \psi"= \gamma'.
\end{equation}
Combining the equations  (\ref{loc diagram 2},\ref{loc diagram 3},\ref{loc diagram 5},\ref{loc diagram 7}), 
we obtain a homotopy commutative diagram in $\cU$ 
\begin{equation}\label{loc diagram 8}
\begin{xymatrix}{
 \pi^{\ell} \ar[d]_{\varphi^{\ell,m}} \ar[r] & \pi^m \ar[d]^{\beta"}\\
 \pi^m \ar[r]_{\alpha"} & k^X.  
 }\end{xymatrix}\end{equation} 
We explain  the combining process. 
First note that 
the morphism $\psi"$ is a weak equivalence in $\cU$. 
Therefore from the equation (\ref{loc diagram 7}),
 we obtain an equation $\varphi^{\ell,m} = \gamma'\circ \psi"^{-1}$ in the homotopy category $\textup{h}\cU$.
 From  the equations (\ref{loc diagram 2}, \ref{loc diagram 5}) 
 we deduce the  equation in  the first line below for $a= 1,2$.  
 Finally, using the equation (\ref{loc diagram 3})
  we obtain the  equation in the bottom line. 
  \[\begin{split}
 \varphi^{\ell,m}= \pr_a \circ\delta \circ \gamma'\circ \psi"^{-1} 
  = \pr_a \circ \gamma \circ \psi' \circ \psi"^{-1}
  = \gamma_{a} \circ \psi'\circ\psi"^{-1}, \\
\alpha" \circ \varphi^{\ell,m}=\alpha"\circ \gamma_{1} \circ \psi'\circ\psi"^{-1}
= \beta\circ \gamma_{2} \circ \psi'\circ\psi"^{-1} =\beta" \circ \varphi^{\ell,m}.
 \end{split}\]

Recalling the definitions of $\alpha",\beta"$, 
we see that the diagram (\ref{loc diagram 8}) gives a desired diagram (\ref{loc hoshii diagram}). 
Now  the proof of Theorem \ref{loc  thm 1} is completed. 

\appendix

\section{Conceptually this paper is very simple.}

This paper is lengthy, because we need to work with homotopy theory. 
However the ideas behind Main theorem and applications are so simple that 
we require the reader to have knowledge of elementary category theory 
and homological algebra.

\subsection{Main Theorem}\label{obs for bic holim thm} 
We explain that 
an elementary observation leads to the main theorem \ref{out main thm} of this paper. 

We use the notations in Introduction. 
So we have the duality over $J$. 
\[
(-)^{\circledast} :=\RR\Hom_{\cA}(-,J) :
 \cD(\cA) \rightleftarrows \cD(\cE)^{\textup{op}}: \RR\Hom_{\cE}(-,J)=: (-)^{\circledast}
\]
We denote by $\langle J \rangle$ 
the smallest thick subcategory containing $J$. 
We claim  that 
if $K$ belongs to $\langle J \rangle$,
 then the evaluation map  $\varepsilon_{K} : K \to K^{\wcdast} $ is an isomorphism.  
Indeed for the case $K=J$ is clear. 
Since the bi-dual $(-)^{\wcdast}$ is an exact functor, 
we can check the claim  for general $K \in \langle J \rangle$. 

We fix a dg $\cA$-module $M$. 
It follows  from the above claim that  
every morphism $k: M \to K$ with $K \in \langle J \rangle$ 
factors though $\epsilon_{M}: M \to M^{\wcdast}$. 
\[
\begin{xymatrix}{
M \ar[d]^{k} \ar[r]^{\epsilon_{M}} & M^{\wcdast} \ar[dl]^{k^{\vee}} \\
K  &     ,
}\end{xymatrix}
\quad \quad 
\begin{xymatrix}{
M \ar[d]^{k} \ar[r]^{\epsilon_{M}} & M^{\wcdast} \ar[d]^{k^{\wcdast}} \\
K  &     \ar[l]_{\epsilon_{K}^{-1}}^{\cong}  K^{\wcdast}
}\end{xymatrix}
\]
It seems that 
the derived bi-dual module $M^{\wcdast}$ 
satisfies one of the two conditions of the limit of the family $M \to K$ of morphisms. 
In the following way, 
we can catch a glimpse of the other condition 
that 
we can reach  from $K \in \langle J \rangle $ to $M^{\wcdast}$.

It is   well-known  that 
a dg-module is obtained as a filtered homotopy colimit perfect modules. 
Hence  the dg $\cE$-module $M^{\circledast}$ is  quasi-isomorphic to the homotopy colimit 
of   some family $\{P_\lambda \}_{\L}$ of perfect $\cE$-modules.
\[M^{\circledast} \simeq  \hocolim_{\L}P_{\lambda} \] 
Applying the dual functor $(-)^{\circledast}$ to this quasi-isomorphism, 
 we obtain  the quasi-isomorphisms 
\[
M^{\wcdast} \simeq (\hocolim_{\L}P_{\lambda})^{\circledast} \simeq \holim_{\L}(P_{\lambda}^{\circledast}). 
\]
It is clear that $\cE^{\circledast} \simeq J$. 
Therefore, since $P_{\lambda}$ is a perfect $\cE$-module, 
the dual $P_\lambda^{\circledast}$ belongs to $\langle J \rangle$.  
This shows that 
we can reach  from $K \in \langle J \rangle $ to $M^{\wcdast}$. 

These observations suggest  that $M^{\wcdast}$ is the  limit of $k:M \to K$ with $K \in \langle J \rangle$. 
Actuarially it become true after some modification.

\subsection{Localization Theorem}\label{App 2}
We explain that 
the view point that 
a bi-duality is a completion, naturally leads a  proof of Theorem \ref{out loc thm}.

We discuss the case when $\cA$ is an ordinary ring, $M$ an $\cA$-module and $J$ 
be an injective co-generator of $\Mod \cA$. 
Then the module $M$ has an injective resolution by the products of $J$
\[
0 \to M \to J^{\Pi\kappa_0} \to J^{\Pi \kappa_1} \to J^{\Pi \kappa_2} \to \cdots .
\]
We can reduce the problem  the following theorem 
by setting $\kappa : =\sup\{ \kappa_i \mid i\in \ZZ\}$. 

\begin{theorem}
We take an injective resolution $M \xrightarrow{\sim} J^{\bullet}$ of $M$. 
\begin{equation}\label{1015 1003}
0 \to M \to J^{0} \to J^{1} \to J^{2} \to \cdots .
\end{equation}
Assume that $J^{i}$ is a direct summand of $J$. 
Then the evaluation map  $\epsilon_M: M \to M^{\wcdast}$ is a quasi-isomorphism. 
\end{theorem}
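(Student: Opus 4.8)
The plan is to derive this from the duality theory already set up, using that each $J^i$ is a direct summand of $J$ and hence lies in $\langle J\rangle$. First I would observe that, by the claim established in the "Main Theorem" observation subsection, the evaluation map $\varepsilon_K:K\to K^{\wcdast}$ is an isomorphism for every $K\in\langle J\rangle$; in particular $\varepsilon_{J^i}$ is an isomorphism for all $i\geq 0$. The strategy is then to apply the bi-dual functor $(-)^{\wcdast}=\RR\Hom_{\cE}(\RR\Hom_{\cA}(-,J),J)$ to the resolution (\ref{1015 1003}) and compare it term by term with the original complex.

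The key steps, in order, are as follows. Regard (\ref{1015 1003}) as a quasi-isomorphism $M\xrightarrow{\sim} J^\bullet$, where $J^\bullet=(J^0\to J^1\to\cdots)$ is a complex (a totalization). Since $(-)^{\circledast}=\RR\Hom_{\cA}(-,J)$ is exact (triangulated), it carries this quasi-isomorphism to a quasi-isomorphism $(J^\bullet)^{\circledast}\xrightarrow{\sim} M^{\circledast}$ in $\cD(\cE)^{\textup{op}}$, i.e.\ $M^{\circledast}\xrightarrow{\sim}(J^\bullet)^{\circledast}$ in $\cD(\cE)$. Applying the other duality $(-)^{\circledast}=\RR\Hom_{\cE}(-,J)$, again exact, yields a quasi-isomorphism $(J^\bullet)^{\wcdast}\xrightarrow{\sim} M^{\wcdast}$. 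Now the point is that $(J^\bullet)^{\wcdast}$ is computed degreewise: because the complex $J^\bullet$ is built from the $J^i$ by (possibly infinite, but here the relevant finiteness is that we only need the comparison map) shifts and cones, and the evaluation maps $\varepsilon_{J^i}$ are isomorphisms, the natural map $J^\bullet\to (J^\bullet)^{\wcdast}$ induced by $\varepsilon$ is a quasi-isomorphism. Finally, chasing the naturality square
\[
\begin{xymatrix}{
M \ar[d]_{\sim} \ar[r]^{\epsilon_M} & M^{\wcdast} \ar[d]^{\sim} \\
J^\bullet \ar[r]_-{\sim}^-{\varepsilon_{J^\bullet}} & (J^\bullet)^{\wcdast}
}\end{xymatrix}
\]
shows $\epsilon_M$ is a quasi-isomorphism, since the other three arrows are.

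The main obstacle I expect is the middle step: making precise that $\varepsilon_{J^\bullet}:J^\bullet\to (J^\bullet)^{\wcdast}$ is a quasi-isomorphism when $J^\bullet$ is an \emph{unbounded} complex assembled from the $J^i$. Term by term $\varepsilon_{J^i}$ is an isomorphism, but $(-)^{\wcdast}$ does not obviously commute with infinite totalizations, so one cannot simply say "apply $\varepsilon$ in each degree." The honest way around this is exactly the mechanism developed in the body of the paper: filter $J^\bullet$ by its stupid truncations $I^{[0,n]}$ (the totalizations of $J^0\to\cdots\to J^n$), note each $I^{[0,n]}\in\langle J\rangle$ so $\varepsilon_{I^{[0,n]}}$ is an isomorphism, and pass to the limit using that $(-)^{\wcdast}$ is a homotopy limit of the tautological functor (Theorem \ref{bic holim thm} / Theorem \ref{loc thm 2}), together with the fact that $M\xrightarrow{\sim}\lim_n I^{[0,n]}$. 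In other words, this "simple" proof is the shadow of the rigorous argument in Section on Theorem \ref{loc thm 2}, and the real content — that the tower $\{\pi^n:M\to I^{[0,n]}\}$ is left cofinal in $\cU$ — is precisely Proposition \ref{loc key prop}. For the appendix's purposes I would present only the clean three-arrow diagram chase above, flagging that the justification of $\varepsilon_{J^\bullet}$ being a quasi-isomorphism is where the homotopy-theoretic work of the main text is needed.
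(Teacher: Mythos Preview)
Your naturality square is not a genuine reduction: since $M\xrightarrow{\sim}J^{\bullet}$ is a quasi-isomorphism, the objects $M$ and $J^{\bullet}$ are isomorphic in $\cD(\cA)$, and hence $\epsilon_{M}$ and $\epsilon_{J^{\bullet}}$ are isomorphic as arrows in $\cD(\cA)$. The square commutes with two sides being isomorphisms, so ``$\epsilon_{M}$ is a quasi-isomorphism'' and ``$\epsilon_{J^{\bullet}}$ is a quasi-isomorphism'' are the \emph{same} statement. Your diagram chase therefore restates the conclusion rather than reducing it. (Relatedly, the remark that $(J^{\bullet})^{\wcdast}$ is ``computed degreewise'' has no meaning here: $(-)^{\wcdast}$ is a derived functor on $\cD(\cA)$, not a termwise functor on complexes.) You do correctly diagnose that the real content is the cofinality of the truncation tower, i.e.\ Proposition \ref{loc key prop}.

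The paper's appendix bypasses the square entirely and goes directly to that point. It invokes the main formula $M^{\wcdast}\simeq \holim_{\langle J\rangle_{M/}}\Gamma$ (``Theorem'' \ref{out main thm}), introduces the subcategory $\cI\subset\langle J\rangle_{M/}$ given by the truncated resolutions $\pi^{n}:M\to I^{n}$, observes $\lim_{\cI}\Gamma|_{\cI}\cong\lim_{n}I^{n}\simeq M$, and then argues that $\cI$ is homotopy left cofinal. The entire heuristic discussion (non-emptiness of $\cI_{/k}$ via the extension property of injectives, and the failure of strict connectedness repaired by homotopy connectedness) is devoted to explaining why cofinality, not any diagram chase, is the substance. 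Your proposal ends up at the same destination, but the wrapper you put around it does no work; the paper's presentation is the cleaner heuristic because it makes the dependence on the main theorem and on cofinality explicit from the first line.
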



We explain an outline of a proof. 
\begin{Assumption}
We assume that $\holim = \lim$.  
\end{Assumption}

We denote by $I^n$ the totalization of the $n$-th truncated resolution.  
\[
I^n := \textup{tot}[ J^{0} \to J^{1} \to \cdots \to J^{n}].
\]
Then
by assumption 
the complex  $I^n$ belongs to the thick subcategory  $\langle J \rangle$ 
generated by $J$. 
Therefore  the canonical morphism $\pi^n : M \to I^n$ 
belongs to the under category $\langle J \rangle_{M/}$. 
Moreover we have a canonical morphism $\varphi^{n+1,n} : I^{n+1} \to I^n$ for $n \geq 0$ 
which is compatible with $\pi^n$. 
\[\begin{xymatrix}{
& 
*++{M} \ar[d]_{\pi^{n}} \ar[drr]^{\pi^{n-1}} 
&&& \\
*+++{\cdots} \ar[r]_{\varphi^{n+1,n} } 
& 
I^{n} \ar[rr]_{\varphi^{n, n-1}}
 && I^{n-1} \ar[r]_{\varphi^{n-1,n-2}}& *+++{\cdots} &.  
}\end{xymatrix}\]
Note that since the limit $\lim_{n \to \infty } I^{n}$ is 
the totalization of the injective resolution (\ref{1015 1003}), 
the morphisms $\{ \pi^n\}$ induces a (quasi-)isomorphism $M \to \lim_{n \to \infty} I^n$.   

We denote by $\cI$ 
the subcategory of $\langle J \rangle_{M/}$ 
consisting of objects $\pi^n: M \to I^n$ and of morphisms $\phi^{m,n}:\pi^m\to \pi^n$ 
so that $\cI$ is isomorphic to $(\ZZ_{\geq 0})^{\textup{op}}$. 
Then we have 
\[
\lim_{\cI}\Gamma|_{\cI} \cong \lim_{n \to \infty}I^n \simeq M.  
\]
Therefore
by Theorem \ref{out main thm}
 it is enough to prove that the subcategory $\cI \subset \langle J \rangle_{M/}$ is left co-final. 
Namely for each $k \in \langle J \rangle_{M/}$ the over category $\cI_{/k}$ is non-empty and connected.

We recall an elements of Homological algebra: 
Let $M'$ be another $\cA$-module and $M \xrightarrow{\sim} J'^{\bullet}$  
an injective resolution. 
Assume that an $\cA$-homomorphism $f : M \to M'$ is given. 
Then (1) 
there exists a morphism $\psi : J^{\bullet} \to J'^{\bullet} $ of complexes 
which completes the commutative diagram 
\[
\begin{xymatrix}{
M \ar[d]_{f} \ar[r] & J^{\bullet} \ar[d]^{\psi} \\
M' \ar[r] & J'^{\bullet}.  
}\end{xymatrix}
\]
(2) This morphism $\psi$ is not uniquely determined. 
(3) However it is uniquely determined up to homotopy. 

Using the same methods of the proof of (1), 
we can check that $\cI_{/k}$ is non-empty. 
By the same reason with (2), 
the category $\cI_{/k}$ is \textit{not} connective. 
However 
 in the same way of the proof of (3) 
we can verify that $\cI_{/k}$ is \textit{homotopy connective}. 
We explain a little bit more about this 
in the special case when 
the co-domain $K$ of $k: M \to K$ is an injective module: 

Since the canonical morphism $\pi^0: M \to I^0 = J^0$ is injective, 
there exists an extension  $\psi: I^0 \to K$ of $\pi^0$. 
This shows that $\cI_{/k} \neq \emptyset$. 
However there is no canonical choice of an extension.  
Moreover 
since the degree $0$-part of the canonical morphism $\varphi^{n,0}:I^n \to I^0$ 
is the identity map $1_{J^0}: J^0 \to J^0$, 
two extensions $\psi$ and $\psi'$ are not connected to each other 
in $\cI_{/k}$, 
unless $\psi= \psi'$. 
Nevertheless 
we can  see that 
for any two extensions $\psi$ and $\psi'$ 
there exists a homotopy commutative diagram 
\[
\begin{xymatrix}{
\pi^1 \ar[d]_{\varphi^{1,0}} \ar[r]^{\varphi^{1,0}} & 
\pi^0 \ar[d]^{\psi} \\ 
\pi^0 \ar[r]_{\psi'} & k 
}\end{xymatrix} 
\]
This shows that the objects $\psi$ and $\psi'$ of $\cI_{/k}$ 
is \textit{homotopically connected} to each other in $\cI_{/k}$. 
Therefore it is inevitable to work with homotopy theory.

{\small

}

{Graduate School of Mathematics, Nagoya University, Chikusa-ku, Nagoya, 464-8602 Japan}

{x12003g@math.nagoya-u.ac.jp}

\end{document}